\newcommand\ignore[1]{}
\numberwithin{equation}{section}
\numberwithin{equation}{subsection} 
\newtheorem{theorem}{Theorem}[section]
\newtheorem{proposition}[theorem]{Proposition}
\newtheorem{lemma}[theorem]{Lemma}
\newtheorem{remark}[theorem]{Remark}
\begin{document}
\baselineskip=15.5pt

\title
[Geometric cycles and automorphic representations]{Geometric cycles in compact Riemannian 
locally symmetric spaces of type IV and automorphic representations of 
complex simple Lie groups}  
\author{Pampa Paul}
\address{Department of Mathematics, Presidency University, 86/1 College Street, Kolkata 700073, India}
\email{pampa.maths@presiuniv.ac.in}
\subjclass[2010]{22E40, 22E46, 22E15, 17B10, 17B40, 57S15.  \\
Keywords and phrases: arithmetic lattice, automorphism of finite order of Lie algebra, orientation preserving isometry, 
geometric cycle, automorphic representation.}

\thispagestyle{empty}
\date{}

\begin{abstract}
Let $G$ be a connected complex simple Lie group with maximal compact subgroup $U$. Let $\frak{g}$ be the Lie 
algebra of $G$, and $X = G/U$ be the associated Riemannian globally symmetric space of type IV. We have constructed 
three types of arithmetic uniform lattices in $G$, say of type $1$, type $2$, and type $3$ respectively. If 
$\frak{g} \neq \frak{b}_n\ (n \ge 1)$, then for each 
$1 \le i \le 3$, there is an arithmetic uniform torsion-free lattice $\Gamma$ in $G$ which is commensurable with a lattice 
of type $i$ such that the corresponding locally symmetric space 
$\Gamma \backslash X$ has some non-vanishing (in the cohomology level) geometric cycles, and the Poincar\'{e} duals of 
fundamental classes of such cycles are not represented by $G$-invariant differential forms on $X$. As a consequence, we 
are able to detect some automorphic representations of $G$, when $\frak{g} = \frak{\delta}_n\ (n >4),\ \frak{c}_n\ (n \ge 6)$,  
or $\frak{f}_4$. To prove these, we have 
simplified Ka\v{c}'s description of finite order automorphisms of $\frak{g}$ with respect to a Chevalley basis of $\frak{g}$. 
Also we have determined some orientation preserving group action on some subsymmetric spaces of $X$. 
\end{abstract}
\maketitle

\noindent 
\section{Introduction} 

  Let $G$ be a non-compact semisimple Lie group with finite centre and $K$ be a maximal compact subgroup of $G$ with $\theta$, the 
corresponding Cartan involution of $G$. Let 
$\Gamma$ be a torsion-free uniform lattice in $G$. Then $\Gamma$ acts freely on the 
Riemannian globally symmetric space $X := G \slash K$ and the canonical projection $\pi : X \longrightarrow \Gamma \backslash X$ 
is a covering projection. Let $B$ be a reductive subgroup of $G$ such that $K_B = B \cap K$ is a maximal compact subgroup of $B$. 
Set $X_B = B \slash K_B$ and $\Gamma_B = B \cap \Gamma$. Note that $X_B$ is a connected totally geodesic submanifold of $X$. 
Assume that the natural map $j : \Gamma_B \backslash X_B \longrightarrow  \Gamma \backslash X$ is an embedding. Then the image 
$C_B := j(\Gamma_B \backslash X_B)$ is called a geometric cycle. In literature, these are also known as modular symbols. 
Under certain conditions, the fundamental class $[C_B] \in 
H_d (\Gamma \backslash X ; \mathbb{C})\  (d = \textrm{dim}(\Gamma_B \backslash X_B))$ is non-trivial. So the Poincar\'{e} dual of 
$[C_B]$ contributes nontrivially to $H^* (\Gamma \backslash X ; \mathbb{C})$. 
See \cite[Th. 2.1]{mira}, \cite[Th. 4.11]{rs}. These theorems are restated here as Theorem \ref{mira}, Theorem \ref{rosch} 
respectively. 

  If $\Gamma \subset G$ be any lattice, 
the Hilbert space $L^2 (\Gamma \backslash G)$ of square integrable functions on $\Gamma \backslash G$ with respect to 
a $G$-invariant measure, is a unitary representation of $G$. Here the group action on 
$L^2 (\Gamma \backslash G)$ is given by the right translation of 
$G$ on $\Gamma \backslash G$. 
When $\Gamma$ is a uniform lattice, we have 
\[L^2 (\Gamma \backslash G) \cong \widehat{\bigoplus}_{\pi \in \hat{G}} m(\pi , \Gamma ) H_\pi, \] 
due to Gelfand and Pyatetskii-Shapiro \cite{ggp}, \cite{gp}; where $H_\pi$ is the representation space of $\pi \in \hat{G}$; 
$m(\pi , \Gamma ) \in \mathbb{N} \cup \{0\}$, the multiplicity of $\pi$ in $L^2 (\Gamma \backslash G)$. 
If $(\tau, \mathbb{C})$ is the trivial representation of $G$, then $m(\tau , \Gamma) = 1$. 
A unitary representation $\pi \in \hat{G}$ such that $m(\pi , \Gamma ) > 0$ for some uniform lattice $\Gamma$, is called an
automorphic representation of $G$ with respect to $\Gamma$.  

  The connection between the geometric cycles and automorphic representations has been made 
by the Matsushima's isomorphism. Assume now that $\Gamma$ is a torsion-free uniform lattice in $G$. Then the isomorphism 
$L^2 (\Gamma \backslash G) \cong \widehat{\bigoplus}_{\pi \in \hat{G}} m(\pi, \Gamma) H_\pi$ implies 
\[\bigoplus_{\pi \in \hat{G}} m_\pi H_{\pi,K} \xhookrightarrow { } C^\infty (\Gamma \backslash G)_K.\] 
Matsushima's formula \cite{matsushima} says that the above inclusion induces an isomorphism 
\begin{equation} \label{matsushima} 
\bigoplus_{\pi \in \hat{G}} m_\pi H^p (\frak{g}^\mathbb{C}, K; H_{\pi,K}) \cong 
H^p (\frak{g}^\mathbb{C}, K; C^\infty (\Gamma \backslash G)_K) \cong 
H^p (\Gamma \backslash X; \mathbb{C}), 
\end{equation} 
where $\frak{g}$ is the Lie algebra of $G$. \\
Hence a non-vanishing (in the cohomology level) geometric cycle will contribute to the RHS of \eqref{matsushima} 
and it may help to detect occurrence of 
some $\pi \in \hat{G}$ with non-zero $(\frak{g}^\mathbb{C}, K)$-cohomology. In fact, Theorem 2.1 in \cite{mira} states 
that under certain conditions, we have a pair of geometric cycles such that the corresponding cohomology classes are 
not only non-zero, but also these have non-zero $H^* (\frak{g}^\mathbb{C}, K; H_{\pi,K})$-components for some non-trivial 
$\pi \in \hat{G}$. 

  Based on Theorem 2.1 in \cite{mira}, this technique was used by Millson and Raghunathan \cite{mira} when $G = SU(p,q), 
SO_0(p,q), Sp (p,q)$. Based on Theorem 2.1 in \cite{mira} and Theorem 4.11 in \cite{rs}, 
Schwermer and Waldner \cite{sw} have done the case for $G = SU^*(2n)$, Waldner \cite{waldner} has done 
the case when $G$ is the non-compact real form of the exceptional complex Lie group $G_2$. 
The cases $G = SL(n,\mathbb{R}), SL(n,\mathbb{C})$ were considered by 
Schimpf \cite{schimpf}, the case $G = SO^*(2n)$ and more generally the case when 
$G/K$ is a Hermitian symmetric space were considered by Mondal and 
Sankaran \cite{mondal-sankaran1}, \cite{mondal-sankaran2}. Here we consider the case when $G$ is a connected 
complex simple Lie group. The main results are stated as Theorem \ref{th1} and Theorem \ref{th2}. In obtaining our results, 
we have first considered three types of arithmetic uniform lattices of a connected complex semisimple Lie group of adjoint type. 
The three types of lattices depend on how one views the Lie algebra $\frak{g}$. 
Type $1$ corresponds to viewing it as a real Lie algebra. Type $2$ views it 
as the complexification of the compact real form of $\frak{g}$. Type $3$ involves a choice of a non-compact real form of $\frak{g}$. 
Actually type $3$ is a union of a family of types, one for each non-compact real form of $\frak{g}$. Any lattice of type $i\ (i=1, 2, 
\textrm{or }3)$ is $\theta$-stable. See \S \ref{complexlattice} for details. 

    Let $\mathcal{L}_i(G)$ be the collection of 
$\theta$-stable torsion-free lattices of $G$ which are commensurable to $\textrm{Ad}^{-1}(\Gamma)$ for 
some $\Gamma$ of type $i$ ($i = 1, 2, 3$). 

\begin{theorem}\label{th1} 
Let $G$ be a connected complex simple Lie group with maximal compact subgroup $U$, and $X = G/U$. 
For each $i = 1, 2$; there exists  
$\Gamma \in \mathcal{L}_i (G)$ such that $H^k (\Gamma \backslash X ; \mathbb{C})$ contains a 
non-zero cohomology class which is not represented by $G$-invariant differential forms on $X$
for all $k$ of the form dim$(X(\bar{\sigma}))$, dim$(X(\bar{\sigma}\bar{\theta}))$ given in the Table \ref{resulttable}. 
Also depending on each pair $X(\bar{\sigma}),\ X(\bar{\sigma}\bar{\theta})$ in the Table \ref{resulttable}, there exists 
$\Gamma \in \mathcal{L}_3 (G)$ such that $H^k (\Gamma \backslash X ; \mathbb{C})$ contains a 
non-zero cohomology class which is not represented by $G$-invariant differential forms on $X$
for $k$ of the form dim$(X(\bar{\sigma}))$, dim$(X(\bar{\sigma}\bar{\theta}))$ given in the Table \ref{resulttable}.  
\end{theorem}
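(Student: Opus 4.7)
The plan is to apply the Millson--Raghunathan criterion (Theorem \ref{mira}), together with the Rohlfs--Schwermer refinement (Theorem \ref{rosch}), to pairs of geometric cycles attached to involutions $\bar{\sigma}$ of $G$ commuting with the Cartan involution $\bar{\theta}$. For each such $\bar{\sigma}$ one takes $B = G^{\bar{\sigma}}$ and $B' = G^{\bar{\sigma}\bar{\theta}}$; the associated sub-symmetric spaces $X(\bar{\sigma}) = B/(B \cap U)$ and $X(\bar{\sigma}\bar{\theta}) = B'/(B'\cap U)$ are totally geodesic in $X$ of complementary dimensions, and their images in $\Gamma \backslash X$ furnish the pair of cycles required by the criterion. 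The conclusion of Theorem \ref{mira} provides a non-zero cohomology class with a non-zero component in $H^*(\mathfrak{g}^{\mathbb{C}}, K; H_{\pi, K})$ for some non-trivial $\pi \in \hat{G}$, which by Matsushima's isomorphism \eqref{matsushima} is exactly the statement that the class is not representable by a $G$-invariant differential form on $X$.

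The first step is to produce the involutions themselves. I would use a case-by-case reading of Ka\v{c}'s classification of finite order automorphisms of $\mathfrak{g}$, rewritten with respect to a fixed Chevalley basis, to exhibit for every complex simple $\mathfrak{g}$ with $\mathfrak{g} \ne \mathfrak{b}_n$ the involutions giving rise to the entries of Table \ref{resulttable}. The Chevalley basis reformulation is essential because the arithmetic lattices of types $1$, $2$ and $3$ constructed in \S\ref{complexlattice} are defined via $\mathbb{Z}$-structures expressible in that basis; only in this presentation can one verify that the chosen $\bar{\sigma}$ preserves the relevant $\mathbb{Q}$-structure and hence commutes (up to commensurability) with the action of a type $i$ lattice.

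Having fixed $\bar{\sigma}$, I would then intersect a lattice of type $i$ with a sufficiently deep principal congruence subgroup to obtain a torsion-free $\Gamma \in \mathcal{L}_i(G)$ stable under both $\bar{\sigma}$ and $\bar{\theta}$. Standard separation arguments yield that $\Gamma_B \backslash X_B \hookrightarrow \Gamma \backslash X$ is an embedding, and similarly for $B'$. For $i = 1, 2$ the $\mathbb{Q}$-structure is independent of $\bar{\sigma}$, so a single $\Gamma$ can be chosen that is $\bar{\sigma}$-stable for all pairs in Table \ref{resulttable}; for $i = 3$ the $\mathbb{Q}$-structure depends on the chosen non-compact real form of $\mathfrak{g}$ and hence varies with $\bar{\sigma}$, which is why the theorem only claims a separate $\Gamma$ for each pair in that case. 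It remains to verify the orientation hypothesis of Theorem \ref{mira}, namely that $\Gamma_B$ (resp.\ $\Gamma_{B'}$) acts by orientation-preserving isometries on $X_B$ (resp.\ $X_{B'}$); this is the content of the orientation-preserving group action on some sub-symmetric spaces announced in the abstract, and is checked by computing the action of the finite group generated by $\bar{\sigma}$ and $\bar{\theta}$ on the volume form of $X_B$.

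The main obstacle I expect is the joint compatibility, in the construction above, between Ka\v{c}'s description of the involution $\bar{\sigma}$ and the explicit Chevalley $\mathbb{Z}$-form underlying each of the three types of lattices. It is not automatic that the realization of $\bar{\sigma}$ coming from Ka\v{c}'s theory preserves the chosen $\mathbb{Z}$-lattice, especially in the type $3$ construction where $\bar{\sigma}$ interacts non-trivially with the choice of non-compact real form; this is precisely why the simplification of Ka\v{c}'s description relative to a Chevalley basis is singled out in the abstract as a key ingredient. Once that compatibility is established, the remaining verifications (torsion-freeness, embedding, orientation, and non-triviality via Theorem \ref{mira} or Theorem \ref{rosch}) proceed uniformly over the cases listed in Table \ref{resulttable}.
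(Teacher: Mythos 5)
Your high-level plan matches the paper's: apply Theorem \ref{mira} (with the Rohlfs--Schwermer refinement \ref{rosch} supplying the non-vanishing of intersection numbers, via Remark \ref{nonvanishing}) to the pair of cycles $C(\bar{\sigma},\Gamma), C(\bar{\sigma}\bar{\theta},\Gamma)$ arising from an involution $\bar{\sigma}$ commuting with $\bar{\theta}$; realize $\bar{\sigma}$ explicitly via the Chevalley-basis form of Ka\v{c}'s classification from \S\ref{victor} so that it is defined over $\mathbb{Q}$ and hence stabilizes each of the three types of arithmetic lattices (with the type~$3$ lattice built relative to the real form $\mathfrak{g}^{\sigma}$ and a $\sigma$-adapted basis $B'$, exactly as you anticipate); pass to a $\langle\bar{\sigma},\bar{\theta}\rangle$-stable torsion-free finite-index subgroup; and finally descend from $\bar{G}=\mathrm{Ad}(G)$ to $G$. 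The observation that one $\Gamma$ works for all pairs when $i=1,2$ but must vary with the pair when $i=3$ is also correct and is the reason for the asymmetry in the statement.

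The genuine gap is in your treatment of the orientation hypothesis. You propose to verify it ``by computing the action of the finite group generated by $\bar{\sigma}$ and $\bar{\theta}$ on the volume form of $X_B$.'' This is not the right object: $\bar{\sigma}$ and $\bar{\theta}$ act trivially on $X(\bar{\sigma})$ and $X(\bar{\sigma}\bar{\theta})$ respectively, so this computation is vacuous. What Theorem \ref{mira} actually requires is that the possibly disconnected groups $G(\bar{\sigma})$ and $G(\bar{\sigma}\bar{\theta})$ act orientation-preservingly on their symmetric spaces, and the obstruction lives in the component group. The paper reduces this (in \S\ref{op}) to checking $\det\bigl(\mathrm{Ad}(\exp X)\big|_{\mathfrak{u}_0}\bigr)=1$ for $X$ ranging over a maximal abelian subspace $\mathfrak{a}\subset\mathfrak{u}_1$ with $\exp(-2X)$ in the kernel of the covering $\tilde{U}\to U$, and this in turn becomes a Weyl-group determinant $\det(s_X|_{\mathfrak{t}_0^{\mathbb{C}}})$ computed case by case through the ten families in \S\ref{op}. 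This is the bulk of the work, and crucially it can \emph{fail}: the resulting Table \ref{ortable} shows that for $\mathfrak{g}=\mathfrak{b}_n$ the condition holds for no involution, and for $\mathfrak{c}_n$, $\mathfrak{d}_n$, $\mathfrak{e}_7$ and for the split forms of $\mathfrak{a}_{n-1}$ it fails for specific $\sigma$ (with delicate congruence conditions on $n$ and $p$). Your proposal gives no mechanism to detect these failures, so as written it would wrongly assert the theorem for $\mathfrak{b}_n$ and for the excluded involutions; the orientation analysis is not a routine check but the determining input into which pairs appear in Table \ref{resulttable}.
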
 

  The proof of the above theorem is given in \S \ref{pfth1}. To prove the theorem we have used Ka\v{c}'s classification 
of finite order automorphisms of a complex simple Lie algebra $\frak{g}$ (\cite{kac}). Actually we have given a simple description of a finite 
order automorphism with respect to a Chevalley basis of $\frak{g}$ (see \S \ref{victor}), which is a new addition, as far as 
we know. Also we have described all finite order automorphisms of $\frak{g}^\mathbb{R}$ up to conjugation 
(see Remark \ref{autoremark}(iii)). Remark \ref{autoremark}(i), (ii) might be interesting from representation 
theoretic point of view. We also need to determine some orientation preserving group action on a subsymmetric space 
of a Riemannian globally symmetric space of type IV. The work has been done in \S \ref{op} and the result is summarised in 
Table \ref{ortable}. These are important in topology and other areas of mathematics also. 

  If $G$ is a connected complex simple Lie group, Theorem \ref {th1} gives us some non-vanishing (in the cohomology level) 
geometric cycles in the RHS of \eqref{matsushima}. To detect some automorphic representation of $G$, it is important to know 
the irreducible unitary representations of $G$ with non-zero relative Lie algebra cohomology, which appear in the LHS of 
\eqref{matsushima}. Let $G$ be a connected semisimple Lie group with finite centre, $\frak{g}=$Lie$(G)$, $K$ be a maximal compact subgroup of $G$ 
with Cartan involution $\theta$. The irreducible unitary representations of $G$ with non-zero $(\frak{g}^\mathbb{C}, K)$-cohomology 
are classified in terms of the $\theta$-stable parabolic 
subalgebras $\frak{q} \subset \frak{g}^\mathbb{C}$. A $\theta$-stable parabolic subalgebra of $\frak{g}$ is by definition, a parabolic subalgebra 
$\frak{q}$ of $\frak{g}^\mathbb{C}$ such that $\theta(\frak{q}) = \frak{q}$ and $\bar{\frak{q}} \cap \frak{q}$ is a Levi subalgebra of $\frak{q}$, 
where $\bar{ }$ denotes the conjugation of $\frak{g}^\mathbb{C}$ with respect to $\frak{g}$. Associated with a $\theta$-stable parabolic 
subalgebra $\frak{q}$, we have an irreducible unitary representation 
$A_\frak{q}$ of $G$ with trivial infinitesimal character and non-zero $(\frak{g}^\mathbb{C}, K)$-cohomology. 
If $\frak{q}$ is a $\theta$-stable parabolic subalgebra, then so is Ad$(k)(\frak{q})\ (k \in K)$; and $A_\frak{q}$, 
$A_{\textrm{Ad}(k)(\frak{q})}$ are unitarily equivalent. 
If $\frak{q} = \frak{g}$, then $A_\frak{q} = \mathbb{C}$, the trivial representation of $G$. 
If rank$(G)=$ rank$(K)$ and $\frak{q}$ is a $\theta$-stable Borel subalgebra, then $A_\frak{q}$ is a discrete series representation 
of $G$ with trivial infinitesimal character. See \S \ref{general} for more details. 

  Now let $G$ be complex, $\frak{u}$ be a compact real form 
of $\frak{g}$, and $\theta$ be the corresponding Cartan involution of $\frak{g}$. Let $\frak{h}$ be a $\theta$-stable Cartan subalgebra of 
$\frak{g}$. 
Choose a system of positive roots $\Delta^+$ in the set of all non-zero roots 
$\Delta = \Delta(\frak{g} , \frak{h})$. Let $\frak{b}$ be the corresponding Borel subalgebra of $\frak{g}$ and $\Phi$ be the set of 
all simple roots in $\Delta^+$. We can deduce that the $\theta$-stable parabolic subalgebras of $\frak{g}$ are of the form 
$\frak{q} \times \frak{q}$, where $\frak{q}$ is a parabolic subalgebra of $\frak{g}$ containing a $\theta$-stable Cartan subalgebra 
of $\frak{g}$ (see \S \ref{particular}). Also it is sufficient to consider the $\theta$-stable parabolic subalgebras of $\frak{g}$ of the 
form $\frak{q} \times \frak{q}$, where $\frak{q}$ is a parabolic subalgebra of $\frak{g}$ containing the Borel subalgebra $\frak{b}$ 
(see \S \ref{particular} again).
The parabolic subalgebras of $\frak{g}$ containing $\frak{b}$ are in one-one correspondence with the power set of $\Phi$. 
That is, $\frak{q}$ is of the form 
$\frak{q}_{\Phi'} = \frak{l}_{\Phi'} \oplus \frak{u}_{\Phi'}$, where 
$\frak{l}_{\Phi'} = \frak{h} \oplus \sum\limits_{\substack{n_\psi (\alpha) =0 \\ \forall \psi \in \Phi'}} \frak{g}_\alpha$,  
$\frak{u}_{\Phi'} = \sum\limits_{\substack{n_\psi (\alpha) >0 \\ \textrm{for some } \psi \in \Phi'}} \frak{g}_\alpha$; and 
$\alpha =\sum\limits_{\psi \in \Phi}  n_\psi (\alpha) \psi \in \Delta$, for some $\Phi' \subset \Phi$. 
Let $A_{\Phi'}$ be the irreducible 
unitary representation of $G$ with non-zero $(\frak{g} \times \frak{g}, U)$-cohomology corresponding to the 
$\theta$-stable parabolic subalgebra $\frak{q}_{\Phi'} \times \frak{q}_{\Phi'}$, where $U$ is the connected Lie subgroup of $G$ 
with Lie algebra $\frak{u}$. Then we have, \\ 
the Poincar\'{e} polynomial of $H^* (\frak{g} \times \frak{g}, U; A_{\Phi' , U})$ is given by 
\[ P( \Phi' , t ) = t^{\textrm{dim}(\frak{u}_{\Phi'})}(1 + t)^{|\Phi' |} P(\frak{l}_1 , t) P(\frak{l}_2, t) \cdots P(\frak{l}_k , t), \]
where $\frak{l}_1 , \frak{l}_2 , \ldots \frak{l}_k $ are the simple factors of the semisimple part $[\frak{l}_{\Phi'}, \frak{l}_{\Phi'}]$ of 
$\frak{l}_{\Phi'}$ and each $ P(\frak{l}_i , t)$ is given by the formula given below : 
 
  If $\frak{s}$ is a finite dimensional complex simple Lie algebra, the Poincar\'{e} polynomial $P(\frak{s}, t)$ is given by 
\[ P(\frak{s}, t) = (1 + t^{2d_1 + 1} )(1 + t ^{2d_2 +1 }) \cdots (1 + t^{2d_l + 1} ), \]
where $l = \textrm{rank}(\frak{s})$ and $d_1 , d_2 , \ldots d_l$ are the exponents of $\frak{s}$. 
We have deduced the formula for $P(\Phi' , t)$ from a more general result in \cite{knappvogan}. Also for $\Phi',\ \Phi'' \subset \Phi$, 
$A_{\Phi'}$ is unitarily equivalent to $A_{\Phi''}$ if and only if $\Phi' = \Phi''$. See \S \ref{particular} for details. 

Now combining these with Theorem \ref{th1}, we get 

\begin{theorem} \label{th2} 
Let $G$ be a connected complex simple Lie group. For each $i = 1, 2, 3$, there exists a uniform lattice 
$\Gamma \in \mathcal{L}_i (G)$ of $G$ such that $L^2 (\Gamma \backslash G)$ has an irreducible 
component $A_{\Phi'}$, where \\  
(i) $\Phi' = \{\psi_1\}$, or $\{ \psi_2 \}$, or $\{\psi_1 , \psi_2 \} \subset \Phi$, if $G$ is of $C_n$-type ($n \ge 6$). 
If $n=6,\ 8$, or $10$, we can discard $\{\psi_1\}$ among these. \\ 
(ii) $\Phi' = \{\psi_1\} \subset \Phi$, if $G$ is of $D_n$-type ($n >4$). \\
(iii) $\Phi' = \{\psi_1\}$ or $\{\psi_4\} \subset \Phi$, if $G$ is of $F_4$-type. 
\end{theorem}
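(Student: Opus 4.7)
The plan is to combine Theorem~\ref{th1} with Matsushima's isomorphism~\eqref{matsushima} and the explicit Poincar\'e polynomial formula recalled above. For each $i \in \{1,2,3\}$ I would use Theorem~\ref{th1} to pick $\Gamma \in \mathcal{L}_i(G)$ together with a class in $H^k(\Gamma\backslash X;\mathbb{C})$ that is non-zero and is \emph{not} represented by any $G$-invariant form, where $k = \dim X(\bar\sigma)$ or $\dim X(\bar\sigma\bar\theta)$ taken from Table~\ref{resulttable}. For $G$ complex, Matsushima's isomorphism reads
\[
  H^k(\Gamma\backslash X;\mathbb{C}) \;\cong\; \bigoplus_{\pi\in\hat G} m(\pi,\Gamma)\, H^k(\frak{g}\times\frak{g},U;H_{\pi,U}),
\]
and the contribution of the trivial representation $\tau = A_\emptyset$ is exactly the subspace of $G$-invariant classes. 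Hence the selected cohomology class forces some $\pi \in \hat G$ with $m(\pi,\Gamma) > 0$, $\pi \neq \tau$, and non-vanishing degree-$k$ cohomology. By the Vogan--Zuckerman description reviewed in the introduction, such a $\pi$ must be unitarily equivalent to $A_{\Phi'}$ for some non-empty $\Phi' \subset \Phi$, uniquely determined by $\pi$.

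The second step is the combinatorial task of pinning down which $\Phi'$ can actually occur for the given $k$. By the formula
\[
  P(\Phi',t) \;=\; t^{\dim\frak{u}_{\Phi'}}\,(1+t)^{|\Phi'|}\prod_{j=1}^{r} P(\frak{l}_j,t),
\]
the support of $P(\Phi',t)$ is contained in the interval $[\dim\frak{u}_{\Phi'},\;\dim\frak{u}_{\Phi'}+|\Phi'|+\sum_{j=1}^{r}\dim\frak{l}_j]$, using $\sum_i(2d_i+1) = \dim\frak{l}_j$ for each simple factor. The inputs $\dim\frak{u}_{\Phi'}$ (the number of positive roots of $\frak{g}$ whose support meets $\Phi'$) and the simple factors $\frak{l}_j$ of $[\frak{l}_{\Phi'},\frak{l}_{\Phi'}]$ (read off by deleting the nodes of $\Phi'$ from the Dynkin diagram of $\frak{g}$) are easy to tabulate. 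I would then run through all non-empty $\Phi' \subset \Phi$ for $\frak{g} = \frak{c}_n$ ($n\ge 6$), $\frak{d}_n$ ($n>4$), $\frak{f}_4$, and retain only those for which the $t^k$-coefficient of $P(\Phi',t)$ is non-zero at the specific $k$ of Table~\ref{resulttable}. The expectation is that the only survivors are the subsets listed in (i)--(iii).

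The main obstacle is this enumeration. It reduces to a case analysis in which the restrictive shape of $P(\Phi',t)$ (a product of binomials and $(1+t^{2d+1})$-factors) combined with the specific $k$-values of Table~\ref{resulttable} eliminates all but a short list of $\Phi'$. For $\frak{g} = \frak{c}_n$ one must in addition explain why $\{\psi_1\}$ can be discarded when $n \in \{6,8,10\}$; this is presumably because in those small ranks the $k$-values of Table~\ref{resulttable} are already covered by $\{\psi_2\}$ or $\{\psi_1,\psi_2\}$, while $\{\psi_1\}$ either contributes a degree already realised by invariant classes or does not hit the targeted $k$ at all. The $\frak{f}_4$ case is handled directly from the Dynkin diagram and the exponents $1,5,7,11$ of $F_4$, which leaves $\{\psi_1\}$ and $\{\psi_4\}$ as the only non-empty subsets whose support reaches the relevant dimensions.
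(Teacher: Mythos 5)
Your framework coincides with the paper's: use Theorem~\ref{th1} to produce $\Gamma\in\mathcal{L}_i(G)$ and a non-zero class in $H^k(\Gamma\backslash X;\mathbb{C})$ outside the image of the Matsushima map for a degree $k$ from Table~\ref{resulttable}, feed this through~\eqref{matsushima}, and note that since $A_\emptyset$ is the trivial representation (which contributes exactly the invariant classes), some $A_{\Phi'}$ with $\Phi'\neq\emptyset$ and $H^k(\frak{g}\times\frak{g},U;A_{\Phi',U})\neq 0$ must have positive multiplicity in $L^2(\Gamma\backslash G)$. This is exactly what Section~\ref{pfth2} does. What you have not done, and where all the actual content of the theorem lies, is the coefficient enumeration; you correctly flag it as ``the main obstacle'' but then only guess at how it goes.

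The gap is that a support-interval argument of the kind you sketch cannot by itself produce the $n=6,8,10$ exception for type $C_n$, and your two proposed explanations for it are respectively irrelevant and imprecise. ``Already realised by invariant classes'' plays no role: the class chosen via Theorem~\ref{th1} is \emph{not} in the image of the Matsushima map by construction, so the trivial representation was eliminated before any comparison of $\Phi'$'s began. And ``does not hit the targeted $k$'' is not what happens either: for $\frak{g}=\frak{c}_n$ and $k=4(n-1)$ the degree $k$ lies strictly inside the support interval $[\dim\frak{u}_{\{\psi_1\}},\,\dim\frak{g}-\dim\frak{u}_{\{\psi_1\}}]$ of $P(\{\psi_1\},t)=t^{2n-1}(1+t)(1+t^3)(1+t^7)\cdots(1+t^{4n-5})$. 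What actually fails is an interior coefficient: one must check whether $2n-3$ is a sum of distinct elements of $\{1,3,7,11,\ldots,4n-5\}$, and this is possible when $n$ is odd (take $4\cdot\tfrac{n-1}{2}-1$) or when $n$ is even with $n\ge 12$ (take $3+7+(2n-13)$), but not when $n\in\{6,8,10\}$. Because $P(\Phi',t)$ is a sparse product of binomials, interior zeros of this kind appear throughout, and managing them requires the explicit lemmas (notably Lemma~\ref{lemma3}) and the per-type tabulation of $\dim\frak{u}_{\{\psi_j\}}$ that the paper carries out. The paper also invokes the palindromic symmetry of $P(\Phi',t)$ about $\dim_{\mathbb{C}}\frak{g}/2$ to restrict attention to $k=\min\{\dim X(\bar\sigma),\dim X(\bar\sigma\bar\theta)\}$, halving the casework; you would want this observation as well before attempting the enumeration.
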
 

  The proof of the above theorem is given in \S \ref{pfth2}. In literature, there are non-vanishing results of the multiplicity 
of automorphic representations in $L^2(\Gamma \backslash G)$, for example see \cite{clozel}, \cite{anderson}, 
\cite{degeorge-wallach}, \cite{li}, \cite[Ch. VIII]{borel-wallach}, \cite[\S 6]{parthasarathy2}, \cite{mondal-sankaran2}. 
In all those cases, $G$ is an equi-rank group, that is rank of $G$ is equal to the rank of a maximal compact subgroup. 
But in our case, $G$ is complex, so can not be an equi-rank group. Schimpf \cite{schimpf} has identified 
some automorphic representation, when $G = SL(n, \mathbb{C})\ (n=2,\ 3)$. We also get Schimpf's result for $n=3$. 
See Remark \ref{conclusion}(i). The problem in identifying automorphic representation using this technique is that 
if a geometric cycle gives non-zero cohomology class in $H^k (\Gamma \backslash X ; \mathbb{C})$, then most of the times 
it happens that there are more than one $A_\frak{q}$ with $ H^k(\frak{g}^\mathbb{C}, K; A_{\frak{q}, K}) \neq 0$. 
Theorem 4.1 in \cite{mondal1}, or Theorem 1.2 in \cite{kobpreprint} might be a way to solve this problem. 
See Remark \ref{conclusion}(ii). 

\noindent
\section{Cartan involution of real semisimple Lie algebra with complex structure}\label{cartancomplex} 

  Let $\frak{g}$ be a complex semisimple Lie algebra and $\frak{u}$ be a compact real form of $\frak{g}$.
Let $\frak{g}^\mathbb{R}$ denote the Lie algebra $\frak{g}$ considered as a real Lie algebra and let $J$ denote the complex structure 
of $\frak{g}^\mathbb{R}$ corresponding to the multiplication by $i$ of $\frak{g}$. Then 
$\frak{g}^\mathbb{R} = \frak{u} \oplus J\frak{u}$ is a Cartan decomposition of $\frak{g}^\mathbb{R}$ with the corresponding 
Cartan involution $\theta$ (say). The complex linear extension of $\theta$ to the complexification $(\frak{g}^\mathbb{R})^\mathbb{C}$ 
is denoted by the same notation $\theta$. 
 
  Let $s$ denote the the involution $(X, Y) \mapsto (Y, X)$ of the product algebra $\frak{l} = \frak{u} \times \frak{u}$. Then $(\frak{l}, s)$ 
is an orthogonal symmetric algebra of the compact type and $\frak{l} = \frak{u}_* + \frak{e}_*$ is the decomposition of $\frak{l}$ 
into eigenspaces of $s$, where $\frak{u}_* = \{ (X, X) : X \in \frak{u} \}$ and $\frak{e}_* = \{ (X, -X) : X \in \frak{u} \}$. Let 
$(\frak{l}^*, s^*)$ denote the dual of $(\frak{l}, s)$, where $\frak{l}^*$ is the subset $\frak{u}_* + i\frak{e}_*$ of the complexification 
$\frak{l}^\mathbb{C}$ of $\frak{l}$ and $s^*$ is the map $T + iX \mapsto T - iX ( T \in \frak{u}_*, X \in \frak{e}_*)$. Now $\frak{g}$ is 
 isomorphic to $\frak{l}^*$ (as a real Lie algebra) via the map $\phi : X + JY \mapsto (X, X) + i(Y, -Y)$, where $X, Y \in \frak{u}$. 
Also we have $\phi \circ \theta = s^* \circ \phi$. Hence the complexification $(\frak{g}^\mathbb{R})^\mathbb{C}$ is isomorphic to 
$(\frak{l}^*)^\mathbb{C} = \frak{l}^\mathbb{C} \cong \frak{g} \times \frak{g}$ in such a way that $\theta$ corresponds to the 
complex linear extension of $s$, that is $\theta$ corresponds to the map $(Z_1, Z_2) \mapsto (Z_2, Z_1)$  of $\frak{g} \times \frak{g}$.

\noindent
\section{Arithmetic uniform lattices of connected complex semisimple Lie group}\label{complexlattice} 

  Let $G$ be a connected semisimple Lie group. The natural way to construct arithmetic uniform discrete subgroups 
of $G$ is Weil's restriction of scalars, which is described below:

  Let $F$ be an algebraic number field of degree $>1$ and $G'$ be a linear connected semisimple Lie group defined over 
$F$ such that $G$ is isogenous with $G'$. Then It is sufficient to consider arithmetic uniform discrete subgroups of $G'$. 
Let $S$ be the set of all infinite places of $F$. For each $s \in S$, 
define $F_s = \mathbb{R}$, if $s(F) \subset \mathbb{R}$; and $F_s = \mathbb{C}$, if $s(F) \not \subset \mathbb{R}$.
We can identify $G'$ with a subgroup of $SL(N, F_{\textrm{id}})$ defined over $F$ that is, there exists a finite subset 
$P$ of $F[x_{11}, \ldots , x_{NN}]$ such that 
$G'$ is the identity component of the group $\{ g \in SL(N, F_{\textrm{id}}) : p(g) = 0 \textrm{ for all } p \in P \}$. 
For each $s \in S$, let $G'^s$ be the identity component of the group $\{ g \in SL(N, F_s) : s(p)(g) = 0 \textrm{ for all } p \in P \}$. 
Let $\mathcal{O}$ be the ring of integers of $F$, and 
$G'_{\mathcal{O}} = G' \cap \textrm{GL}_N (\mathcal{O})$. Then 
$G'_{\mathcal{O}}$ is an arithmetic uniform lattice of $G'$ if $G'^s$ is compact for all $s \in S \setminus \{\textrm 
{id}\}$. 

  We shall follow the construction of Borel \cite{Borel} to construct some arithmetic uniform lattices in a connected 
complex semisimple Lie group. Let $G$ be a connected 
complex semisimple Lie group with Lie algebra $\frak{g}$. As before, let $\frak{g}^\mathbb{R}$ denote the Lie algebra $\frak{g}$ considered 
as a real Lie algebra and let $J$ denote the complex structure of $\frak{g}^\mathbb{R}$ corresponding to the multiplication 
by $i$ of $\frak{g}$. 
Note that $G /Z \cong \textrm{Ad}(G)$, where $Z$ denotes the centre of $G$. As $G$ is a connected complex semisimple Lie group, 
$Z$ is finite. So it is sufficient to determine arithmetic uniform lattices of $\textrm{Ad}(G)$, which is the identity component of 
$\textrm{Aut}(\frak{g}^\mathbb{R})$. As the Lie group $\textrm{Aut}(\frak{g}^\mathbb{R})$ has finitely many components, 
it is sufficient to determine uniform arithmetic lattices of $\textrm{Aut}(\frak{g}^\mathbb{R})$. We shall construct three types of 
arithmetic uniform lattices in $\textrm{Aut}(\frak{g}^\mathbb{R})$. But before 
proceeding further, we need some facts about algebraic number fields. 

\noindent 
\subsection{Algebraic number fields}\label{snf}  
Let $F$ be an algebraic number field and $S$ be the set of all real places of $F$. 
By the Theorem of the Primitive Element, we may write $F = \mathbb{Q}(u)$ for some $u \in F$. 

\begin{proposition}\label{nf}  
For any $k,l \in \mathbb{N} \cup \{0\}$ with $k + l = |S|$, We may choose a primitive element $u \in F$ such that 
the number of positive real conjugates of $u$ is $k$ and the number of negative real conjugates of $u$ is $l$. 
\end{proposition}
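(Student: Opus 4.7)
The plan is to construct $u$ by combining the density of $F$ in its Minkowski space with a dimension-counting argument that excludes $u$ from any proper subfield. Let $V := F \otimes_{\mathbb{Q}} \mathbb{R} \cong \mathbb{R}^{r_1} \times \mathbb{C}^{r_2}$, where $r_1 = |S|$ and $r_2$ is the number of complex places of $F$. Under the Minkowski embedding $\mathcal{O}_F$ is a full-rank lattice in $V$, so $F = \mathbb{Q}\cdot\mathcal{O}_F$ is a dense $\mathbb{Q}$-subspace of $V$; the real embeddings $\sigma_1,\ldots,\sigma_{r_1}$ of $F$ extend to the $\mathbb{R}$-linear coordinate projections $p_1,\ldots,p_{r_1} : V \to \mathbb{R}$.

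Fix signs $\epsilon_1,\ldots,\epsilon_{r_1}\in\{+1,-1\}$ with exactly $k$ positive and $l$ negative entries, and set
\[
U := \{\, v \in V : \epsilon_i\, p_i(v) > 0 \text{ for all } 1 \le i \le r_1 \,\},
\]
a non-empty open cone in $V$. Let $F_1,\ldots,F_m$ be the finitely many proper subfields of $F$. Tensoring each inclusion $F_j \hookrightarrow F$ with $\mathbb{R}$ yields an $\mathbb{R}$-linear injection $F_j \otimes_{\mathbb{Q}} \mathbb{R} \hookrightarrow V$ with image $W_j$ of real dimension $[F_j:\mathbb{Q}] < [F:\mathbb{Q}] = \dim_{\mathbb{R}} V$, so each $W_j$ is a proper (hence Lebesgue-null) $\mathbb{R}$-subspace of $V$. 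The set $U \setminus \bigcup_{j=1}^{m} W_j$ is therefore still a non-empty open subset of $V$, and by density of $F$ in $V$ I may pick $u \in F \cap \bigl(U \setminus \bigcup_{j=1}^{m} W_j\bigr)$.

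By construction $\epsilon_i\sigma_i(u) > 0$ for every $i$, so the real conjugates of $u$ split into exactly $k$ positive and $l$ negative values; and $u \notin F_j$ for every proper subfield $F_j$ forces $\mathbb{Q}(u) = F$, so $u$ is primitive. The only step requiring care is the claim that each $W_j$ is a proper $\mathbb{R}$-subspace of $V$, which is not immediate when $F$ is not totally real (then $\dim_{\mathbb{R}} V > r_1$ and the real embeddings alone fail to separate points of $V$); this follows from flatness of $\mathbb{R}$ over $\mathbb{Q}$, which preserves injectivity and $\mathbb{Q}$-dimensions under extension of scalars. The remainder is a standard density-and-measure repackaging of weak approximation.
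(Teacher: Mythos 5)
Your proof is correct, but it takes a genuinely different route from the paper. The paper's argument is elementary and constructive: take an arbitrary primitive element $u$, sort its real conjugates $u_1<\cdots<u_n$, pick a rational $r$ with $u_l<r<u_{l+1}$, and observe that $u-r$ is still primitive (as $\mathbb{Q}(u-r)=\mathbb{Q}(u)=F$) while having exactly $l$ negative and $k$ positive real conjugates. Your argument instead works in the Minkowski space $V=F\otimes_{\mathbb{Q}}\mathbb{R}$: you fix a sign pattern, consider the open cone $U$ it cuts out, observe that the finitely many proper subfields $F_j$ sit inside proper $\mathbb{R}$-subspaces $W_j$ of $V$ (flatness gives injectivity and dimension preservation), and use density of $F$ in $V$ together with the fact that $U\setminus\bigcup W_j$ is open and nonempty to find the desired $u$. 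Both proofs are complete. The paper's rational-shift trick buys explicitness — given any primitive element one can write down the shift directly — and avoids any appeal to the Minkowski embedding or measure theory. Your generic-position argument is less explicit but more conceptual and robust: it simultaneously produces every prescribed sign pattern at the real places as well as arbitrary open conditions at the complex places, and the same template (open condition in $V$ minus a finite union of proper subspaces) extends painlessly to other approximation problems. One small remark: your caveat about the totally real case is a bit of a red herring — the injectivity $W_j\hookrightarrow V$ and the dimension count $\dim_{\mathbb{R}}W_j=[F_j:\mathbb{Q}]<[F:\mathbb{Q}]=\dim_{\mathbb{R}}V$ follow from flatness of $\mathbb{R}$ over $\mathbb{Q}$ regardless of the signature of $F$, so there is no special difficulty in the non-totally-real case.
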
 

\begin{proof} 
Let $S = \{s_1, s_2, \ldots , s_n \}$, where $n=|S|$. Let $u_i = s_i(u)$ for all $1 \le i \le n$. Assume that 
$u_1 < u_2 < \cdots < u_l < u_{l+1} < \cdots < u_{l+k}$ (here $k+l = n$). Choose $r \in \mathbb{Q}$ such that 
$u_l < r < u_{l+1}$. Then $u_i - r < 0$ for $1 \le i \le l$, and $u_{l+j} - r>0$ for $1 \le j \le k$. Clearly 
$F = \mathbb{Q}(u) = \mathbb{Q}(u-r)$, and $u_1 - r, u_2 - r, \ldots , u_l - r, u_{l+1} - r, \ldots , u_{l+k} - r$ are all real 
conjugates of $u$. So this $u-r$ is a primitive element with the required property. 
\end{proof} 

\begin{remark}\label{nfr}
{\em 
(i) If $F$ is a totally real number field, $F$ has a primitive element $u$ such that $u$ has exactly one positive conjugate 
(by Proposition \ref{nf}). Replacing $F$ by a conjugate of $F$ (if necessary), we may assume that $F = \mathbb{Q}(u)$ with 
$u > 0$ and $s(u) < 0$ for all $s \in S - \{\textrm{id} \}$, $S$ is the set of all infinite places of $F$.  

(ii) Let $F$ be an algebraic number field such that $F \not \subset \mathbb{R}$ and all other conjugates of $F$ are real. 
Then again by Proposition \ref{nf}, we may write $F = \mathbb{Q}(u)$, where $u \in \mathbb{C}$ with $s(u) < 0$ for all 
$s \in S - \{\textrm{id} \}$, $S$ is the set of all infinite places of $F$. 
} 
\end{remark} 

\noindent 
{\bf Examples :} 1. If $m$ is a positive square-free integer, the quadratic number field $\mathbb{Q}(\sqrt{m})$ is a totally real number field. 
More generally, if $f \in \mathbb{Q}[x]$ is irreducible and all roots of $f$ are real, then $\mathbb{Q}(\alpha)$ is a 
totally real number field, where $\alpha$ is a root of $f$. 

2. Let $h \in \mathbb{Q}[x]$ be an irreducible polynomial such that $h$ has exactly two non-real roots. 
For each $n \in \mathbb{N}$ with $n \ge 2$, 
there exists such a polynomial of degree $n$. For example, start with $f(x) = (x^2 + k) (x-k_1)\cdots (x-k_{n-2})$, where 
$k, k_1, \cdots , k_{n-2}$ are positive even integers and $k_1, k_2, \ldots , k_{n-2}$ are distinct. 
Let $x_1, x_2, \ldots , x_m (n-3 \le m \le n-1)$ be the real roots of $f'(x) = 0$. Since the real roots of $f$ are all distinct, $f(x_i) \neq 0$ 
for all $1 \le i \le m$. Let $\epsilon =$ min$\{|f(x_i)| : 1 \le i \le m \}$. For any $a \in \mathbb{R}$ with $|a| < \epsilon$, let 
$g_a(x) = f(x) + a$. Then $g_a(x) = 0$ has exactly $n-2$ real roots. For if $f$ has a local optimum value above (respectively, below) 
the $x$-axis, the corresponding local optimum value of $g_a$ is above (respectively, below) the $x$-axis; and vice versa. 
Let $q$ be an odd integer such that $\frac{2}{q} < \epsilon$. Then 
$f(x) + \frac{2}{q}=0$ has exactly $n-2$ real roots. Hence if $h(x) = qf(x) + 2$, then $h(x) = 0$ also has exactly $n-2$ real roots. 
If $f(x) = x^n + a_{n-1} x^{n-1} + \cdots + a_1x +a_0$, then $a_0, a_1, \cdots , a_{n-1}$ are all even integers. Also 
$h(x) = qx^n +(qa_{n-1}) x^{n-1} + \cdots + (qa_1)x + (qa_0 +2 )$. So $h \in \mathbb{Z}[x]$ is irreducible, by Eisenstein's Criterion
(see \cite[Ch. 4]{jacobson}). 

  The algebraic number field $\mathbb{Q}(\alpha)$ has exactly one complex place, where $\alpha$ is a root of $h$.

\noindent 
\subsection{Construction of some arithmetic uniform lattices in $\textrm{Aut}(\frak{g}^\mathbb{R})$}\label{lattice}
  Let $\frak{u}$ be a compact real form of $\frak{g}$ and $\frak{h}$ be a Cartan subalgebra of $\frak{g}$ with $\frak{h} =( \frak{u} \cap 
\frak{h}) \oplus (J\frak{u} \cap \frak{h})$. Let $\Delta = \Delta (\frak{g}, \frak{h})$ be the set of all non-zero roots of $\frak{g}$ 
with respect to the Cartan subalgebra $\frak{h}$, $\Delta ^+$ be the set of positive roots in $\Delta$ with respect to some chosen 
ordering and $\Phi$ the set of all simple roots in $\Delta ^+$. Let $B$ denote the Killing form of 
$\frak{g}$. 
For each $\alpha \in \Delta$, there exists unique $H_\alpha \in \frak{h}$ such that 
\[ \alpha (H) = B (H, H_\alpha ) \textrm{ for all } H \in \frak{h} . \]
  Let $H_\alpha ^* = 2 H_\alpha /\alpha (H_\alpha)$ for all $\alpha \in \Delta$. For each $\alpha \in \Delta$ there exists 
$E_\alpha \in \frak{g}$ such that 
\begin{equation}\label{chevalley}
\begin{aligned}
\ \ \ \ \ \ \ \ \ \ \ \ \ [H, E_\alpha] & =  \alpha (H) E_\alpha   \textrm{ for all } H \in \frak{h}, \\
 [E_\alpha , E_{-\alpha} ]  & =  H_\alpha ^* \textrm{ for all } \alpha \in \Delta , \\
 [E_\alpha , E_\beta ] & = 0 \textrm{ if } \alpha , \beta \in \Delta, \alpha + \beta \not\in \Delta , \alpha + \beta \neq 0 , \\
 [E_\alpha , E_\beta ] & = N_{\alpha , \beta} E_{\alpha + \beta} \textrm{ if } \alpha , \beta ,\alpha + \beta \in \Delta , \textrm{ where } \\
  N_{\alpha , \beta} & = -  N_{-\alpha , -\beta} = \pm(1-p), 
\end{aligned}
\end{equation}
and $\beta + n\alpha (p \le n \le q)$ is the $\alpha$-series containing $\beta$. Also we can choose $E_\alpha \ (\alpha \in \Delta)$ 
in such a way that 
\[ E_\alpha - E_{-\alpha}, \  i( E_\alpha + E_{-\alpha}) \in \frak{u} \textrm{ for all } \alpha \in \Delta .\]
Then $\{ H_\phi ^* \  , E_\alpha : \phi \in \Phi , \alpha \in \Delta \}$ is a Chevalley basis of $\frak{g}$ such that 
\begin{equation}\label{ubasis} 
\frak{u} =  \sum_{\phi \in \Phi} \mathbb{R} (i  H_\phi ^*) \oplus \sum_{\alpha \in \Delta^+} \mathbb{R} (E_\alpha - E_{-\alpha}) \oplus 
\sum_{\alpha \in \Delta^+} \mathbb{R} (i(E_\alpha + E_{-\alpha})). 
\end{equation} 
Let $X_\alpha = E_\alpha - E_{-\alpha} , Y_\alpha = i(E_\alpha + E_{-\alpha})$ for all $\alpha \in \Delta^+$. 

  Let $F$ be an algebraic number field of degree $>1$, $\mathcal{O}$ be the ring of integers of $F$ and $S$ be the set of all infinite places of $F$. 
Assume that $s(F) \subset \mathbb{R}$ for all $s \in S \setminus \{\textrm{id}\}$ (see Examples in \S \ref{nf}). If $G$ is real, we assume that 
$F \subset \mathbb{R}$. If $G$ is complex, we assume that $F \not \subset \mathbb{R}$. In any case, we may write $F = \mathbb{Q}(u)$, 
where $s(u) < 0$ for all $s \in S \setminus \{\textrm{id}\}$ (Remark \ref{nfr}). If $F \subset \mathbb{R}$, then we may choose $u >0$. Otherwise $u \in 
\mathbb{C}$. Let $v = \sqrt{u}$ and $v_s = \sqrt{-s(u)}$ for all $s \in S \setminus \{\textrm{id}\}$. Note that $v_s > 0$ for all 
 $s\in S \setminus \{\textrm{id}\}$. Also if $u >0$, then $v>0$. 
 
  Now we shall construct some arithmetic uniform lattices of $\textrm{Aut}(\frak{g}^\mathbb{R})$ as follows : 

$\bf{1.}$  First view $\frak{g}$ as a real Lie algebra $\frak{g}^\mathbb{R}$. Let $F$ be an algebraic number field as above with 
$F \subset \mathbb{R}$. 
Recall that $\frak{g}$ is isomorphic to the non-compact real form $\frak{l}^*$ of $\frak{g} \times \frak{g}$ in such a way that the 
Cartan decomposition $\frak{g}^\mathbb{R} = \frak{u} \oplus J \frak{u}$ corresponds to the Cartan decomposition $\frak{l}^* = 
\frak{u}_* \oplus \frak{e}$ of $\frak{l}^*$, where $\frak{u}_* = \{ (X, X) : X \in \frak{u} \}$ and $\frak{e} = \{ (iX, -iX) : X \in \frak{u} \}$ 
(see \S \ref{cartancomplex}). 
 Then $\{ (iH_\phi ^*, iH_\phi ^*) , (X_\alpha , X_\alpha ), (Y_\alpha , Y_\alpha) : \phi \in \Phi , \alpha \in \Delta^+ \} \cup 
\{ H_\phi ^*, -H_\phi ^*) , (iX_\alpha , -iX_\alpha ), (iY_\alpha , -iY_\alpha) : \phi \in \Phi , \alpha \in \Delta^+ \}$ is a basis 
of $\frak{g}^\mathbb{R}$ (via this identification) consisting of vectors belonging to either $\frak{u}_*$ or to $\frak{e}$, 
with respect to which the structural constants are integers.  \\ 
\indent 
  Let $\frak{m}$ be the vector space over $F$ spanned by the set $\{ (iH_\phi ^*, iH_\phi ^*) , (X_\alpha , X_\alpha ), 
(Y_\alpha , Y_\alpha) : \phi \in \Phi , \alpha \in \Delta^+ \} \cup \{v H_\phi ^*, -v H_\phi ^*) , (iv X_\alpha , -iv X_\alpha ), 
(ivY_\alpha , -ivY_\alpha) : \phi \in \Phi , \alpha \in \Delta^+ \}$ 
and $\frak{m}^s$ be the vector space over $F^s=s(F)$ spanned by 
the set $\{ (iH_\phi ^*, iH_\phi ^*) , (X_\alpha , X_\alpha ), (Y_\alpha , Y_\alpha) : \phi \in \Phi , \alpha \in \Delta^+ \} \cup 
\{iv_s H_\phi ^*,- iv_s H_\phi ^*) , (-v_s X_\alpha , v_s X_\alpha ), (-v_s Y_\alpha , v_s Y_\alpha) : \phi \in \Phi , \alpha \in \Delta^+ \}$ 
for all $s \in S-\{\textrm{id}\}$. Then $\frak{m}$ is a Lie algebra over $F$, $\frak{m}^s$ is a Lie algebra over $F^s$,  and the structural constants of 
$\frak{m}^s$ are the conjugates by $s$ of the structural constants of $\frak{m}$ with respect to the given bases 
for all $s \in S-\{\textrm{id}\}$. Thus $\frak{m}^s$ is the conjugate of $\frak{m}$ by $s$. We also have 
$\frak{m} \otimes \mathbb{R} = \frak{g}^\mathbb{R} ,\  \frak{m}^s \otimes \mathbb{R} = \frak{u} \times \frak{u}$ for all 
$s \in S-\{\textrm{id} \}$. \\
\indent
  Take a basis of $\frak{g}^\mathbb{R}$ contained in $\frak{m}$ and identify $\textrm{Aut}((\frak{g}^\mathbb{R})^\mathbb{C})$ with 
an algebraic subgroup $G'$ of $ GL (n, \mathbb{C}) \  (n = \textrm{dim} (\frak{g}^\mathbb{R}))$ defined over $F$, via this basis. Then 
$\textrm{Aut}(\frak{g}^\mathbb{R})$ is identified with $G'_\mathbb{R}$, the group of real matrices in $G'$. The group 
$(G'_\mathbb{R})^s$ is then $\textrm{Aut}(\frak{u} \times \frak{u})$, hence compact,  for all $s \in S-\{\textrm{id} \}$. Let $\mathcal{O}$ 
 be the ring of algebraic integers of $F$ and $\Gamma = G'_{\mathcal{O}} = G' \cap GL(n, \mathcal{O})$. As $(G'_\mathbb{R})^s$ is compact 
 for all $s \in S-\{\textrm{id} \}$, $\Gamma$ is a cocompact arithmetic lattice in $\textrm{Aut}(\frak{g}^\mathbb{R})$. 
An arithmetic uniform lattice of 
$\textrm{Aut}(\frak{g}^\mathbb{R})$ constructed in this way, is called a lattice of type $1$. 

$\bf{2.}$ Now view $\frak{g}$ as a complex Lie algebra, and $F$ be an algebraic number field $F \not\subset \mathbb{R}$ with 
$s(F) \subset \mathbb{R}$ for all $s \in S \setminus \{\textrm{id}\}$.  \\ 
(i) Let $B = \{iH_\phi ^*,  X_\alpha , Y_\alpha : \phi \in \Phi , \alpha \in \Delta^+ \}$.  Let $\frak{m}$ be the vector space over 
$F$ spanned by the set $B$ and $\frak{m}^s$ be the vector space over $F^s$ spanned by the set $B$ for all $s \in S-\{\textrm{id}\}$. 
Then $\frak{m}$ is a Lie algebra over $F$, $\frak{m}^s$ is a Lie algebra over $F^s$,  and the structural constants of $\frak{m}$ and 
$\frak{m}^s$ are integers with respect to the basis $B$ for all $s \in S-\{\textrm{id}\}$. Thus $\frak{m}^s$ is the conjugate of 
$\frak{m}$ by $s$. We also have $\frak{m} \otimes \mathbb{C} = \frak{g} ,\  \frak{m}^s \otimes \mathbb{R} = \frak{u}$ for all 
$s \in S-\{\textrm{id} \}$. Here note that the real span of $B$ is the compact real form of $\frak{g}$. \\
(ii) Let $\frak{g}_0$ be a non-compact real form of $\frak{g}$ and $\frak{g}_0 = \frak{k}_0 \oplus \frak{p}_0$ be a Cartan decomposition 
of $\frak{g}_0$ such that $\frak{u}= \frak{k}_0 \oplus i \frak{p}_0$. Let $\{ e_\lambda \}$ be a basis of 
$\frak{g}_0$ consisting of vectors belonging either to $\frak{k}_0$ or to $\frak{p}_0$, 
with respect to which the structural constants are all rational numbers \cite[Prop. 3.7]{Borel}. Let $k$ and $p$ stand for indices 
of the subbases for $\frak{k}_0$ and $\frak{p}_0$ respectively. \\
\indent
  Let $\frak{m}$ be the vector space over $F$ spanned by the elements 
$e_k$ and $v e_p$, and $\frak{m}^s$ be the vector space over $F^s$ spanned by the elements $e_k$ and $i v_s e_p$, 
for all $s \in S-\{\textrm{id}\}$. Then $\frak{m}$ is a Lie algebra over $F$, $\frak{m}^s$ is a Lie algebra over $F^s$, $\frak{m}^s$ is 
the conjugate of $\frak{m}$ by $s$ and $\frak{m} \otimes \mathbb{C} = \frak{g} ,\  \frak{m}^s \otimes \mathbb{R} = \frak{u}$ 
for all $s \in S-\{\textrm{id} \}$. 
Let $B'$ be the set consisting of vectors $e_k$ and $v e_p$.  \\  
\indent
Identify $\textrm{Aut}(\frak{g})$ with an algebraic subgroup $G'$  of $ GL (n, \mathbb{C}) \  
(n = \textrm{dim}_{\mathbb{C}}(\frak{g}))$ defined over $F$, 
via the basis $B$ (respectively, $B'$) in case (i) (respectively case (ii)). The group $(G')^s$ is then $\textrm{Aut}(\frak{u})$, 
hence compact,  for all $s \in S-\{\textrm{id} \}$. Let $\mathcal{O}$ be the ring of algebraic integers of $F$ and 
$\Gamma = G'_{\mathcal{O}} = G' \cap GL(n, \mathcal{O})$. As $(G')^s$ is compact for all $s \in S-\{\textrm{id} \}$, $\Gamma$ is a 
cocompact arithmetic lattice in $\textrm{Aut}(\frak{g})$. In case (i), $G'_\mathbb{R}$ is $\textrm{Aut}(\frak{u})$, which is compact. 
And in case (ii), $G'_\mathbb{R}$ is $\textrm{Aut}(\frak{g}_0)$, which is non-compact.  An arithmetic uniform lattice of 
$\textrm{Aut}(\frak{g})$ constructed as in 2.(i), is called a lattice of type $2$; and 
an arithmetic uniform lattice of $\textrm{Aut}(\frak{g})$ constructed in 2.(ii), is called a lattice of type $3$. 
 
   Note that $\frak{g}^\mathbb{R} = \frak{u} \oplus J\frak{u}$ is a Cartan decomposition of 
$\frak{g}^\mathbb{R}$. Let $\theta$ be the corresponding Cartan involution. Let $\Gamma$ be a 
cocompact arithmetic lattice of $\textrm{Aut}(\frak{g}^\mathbb{R})$ constructed as in 1 or 2. Note that, \\
(i) if $\Gamma$ is as in 1, then $\theta \in \Gamma$; and \\
(ii) if $\Gamma$ is as in 2, then $\theta \not \in \Gamma$, as $\theta \not \in 
\textrm{Aut}(\frak{g})$. \\
But $\theta \Gamma {\theta}^{-1} = \Gamma$, in both 1 and 2. \\ 
Also $\frak{g} \cong \textrm{ad}(\frak{g})$ and the real Lie algebra isomorphism of 
$\textrm{ad}(\frak{g})$ corresponding to the Cartan involution $\theta$ of 
$\frak{g}^\mathbb{R}$ is given by $\textrm{ad}(X) \mapsto \textrm{ad}(\theta X) = \theta \textrm{ad}(X)
{\theta}^{-1}$, which is denoted by the same notation $\theta$. Then $\theta$ is the differential at 
identity of the Lie group isomorphism $\tilde{\theta}$ of $\textrm{Aut}(\frak{g}^\mathbb{R})$ 
given by $\tilde{\theta}(\sigma )=\theta \sigma {\theta}^{-1}$. The Lie group isomorphism of $G$ whose differential 
at identity is $\theta$, is also denoted by the same notation $\theta$. Then we have $\textrm{Ad} 
\circ \theta = \tilde{\theta} \circ \textrm{Ad}$. So if $\Gamma$ is a cocompact arithmetic lattice of 
$\textrm{Aut}(\frak{g}^\mathbb{R})$ constructed as in 1 or 2, then $\theta (\textrm{Ad}^{-1}(\Gamma)) 
= \textrm{Ad}^{-1}(\Gamma)$.

\noindent
\section{Special cycles in Riemannian globally symmetric space of type IV} 

  Let $G$ be a real semisimple Lie group with finite centre and $K$ be a maximal compact subgroup of $G$. Let 
$\Gamma$ be a torsion-free uniform discrete subgroup of $G$. Then $\Gamma$ acts freely on the 
Riemannian globally symmetric space $X := G \slash K$ and the canonical projection $\pi : X \longrightarrow \Gamma \backslash X$ 
is a covering projection. One can identify the group cohomology $H^* (\Gamma ; \mathbb{C})$ with the cohomology 
$H^* (\Gamma \backslash X ; \mathbb{C})$ of the locally symmetric space $\Gamma \backslash X$. 

   Let $B$ be a reductive subgroup of $G$ such that $K_B = B \cap K$ is a maximal compact subgroup of $B$. 
 Set $X_B = B \slash K_B$ and $\Gamma_B = B \cap \Gamma$. Note that $X_B$ is a connected totally geodesic submanifold of $X$. 
Assume that 
the natural map $j : \Gamma_B \backslash X_B \longrightarrow  \Gamma \backslash X$ is an embedding. Then the image 
$C_B := j(\Gamma_B \backslash X_B)$ is called a geometric cycle. Under certain conditions, the fundamental class $[C_B] \in 
H_d (\Gamma \backslash X ; \mathbb{C})\  (d = \textrm{dim}(\Gamma_B \backslash X_B))$ is non-trivial. So the Poincar\'e dual of 
$[C_B]$ contributes nontrivially to $H^* (\Gamma \backslash X ; \mathbb{C})$. 

  If the reductive subgroup $B$ is the fixed point set of a finite order automorphism $\mu$ of $G$ such that $\mu (K) = K$ and 
$\mu (\Gamma) = \Gamma $, then we denote $B$ by $G(\mu)$, $K_B$ by $K(\mu)$, $X_B$ by $X(\mu)$ and $\Gamma_B$ by 
$\Gamma(\mu)$. In this case, the  natural map $j : \Gamma (\mu) \backslash X (\mu) \longrightarrow \Gamma \backslash X$ is 
an embedding and the image $C (\mu, \Gamma) := j(\Gamma (\mu) \backslash X (\mu))$ is called a special cycle. 

  Let $X_u$ denote the compact dual of $X$. We can identify the cohomology $H^* (X_u ; \mathbb{C})$ of $X_u$ with the cohomology 
$H^* (\Omega (X ; \mathbb{C})^G)$ of the complex $\Omega (X ; \mathbb{C})^G$ of $G$-invariant complex valued differential forms 
on $X$. Since $\Gamma$ is a cocompact discrete subgroup of $G$, the inclusion $j_\Gamma : \Omega (X ; \mathbb{C})^G \hookrightarrow 
\Omega (X ; \mathbb{C})^\Gamma $ induces an injective map $j_\Gamma ^ * : H^*(\Omega (X ; \mathbb{C})^G) \hookrightarrow 
H^*(\Omega (X ; \mathbb{C})^\Gamma)$ (the so called Matsushima map). Now we can identify the cohomology 
$H^* (\Gamma \backslash X ; \mathbb{C})$ of $\Gamma \backslash X$ with the cohomology $H^* (\Omega (X ; \mathbb{C})^\Gamma)$ 
of the complex $\Omega (X ; \mathbb{C})^\Gamma$. In this way we have an injective map $k_\Gamma : H^* (X_u ; \mathbb{C}) 
\longrightarrow H^* (\Gamma \backslash X ; \mathbb{C})$. So the elements in the image $k_\Gamma ( H^* (X_u ; \mathbb{C}))$ 
are represented by the $G$-invariant differential forms on $X$. 

  The following results state some conditions under which fundamental class of a special cycle is non-zero and the corresponding 
cohomology class does not lie in the image of the Matsushima map that is, it is not represented by a $G$-invariant differential form on $X$.

\begin{theorem}\label{mira}(Th. 2.1, \cite{mira})

Let $F$ be an algebraic number field of degree $>1$ with ring of integers $\mathcal{O}$. 
Let $G$ be a linear connected semisimple Lie group defined over $F$,  
$\theta$ be a Cartan involution of $G$ defined over $F$ and $K = \{g \in G : \theta (g) = g \}$. Let $\sigma$ be an 
involutive automorphism of $G$ defined over $F$ with $\sigma \theta = \theta \sigma$ and $\Gamma \subset G_{\mathcal{O}}$ be 
a torsion-free, $\langle \sigma , \theta \rangle$-stable, arithmetic uniform lattice of $G$ such that  
the Lie groups $G, G(\sigma), G(\sigma \theta)$ act orientation preservingly on $X, X(\sigma)$ and $X(\sigma \theta)$ respectively. \\
Then there exists a $\langle \sigma , \theta \rangle$-stable subgroup $\Gamma'$ of $\Gamma$ of finite index such that the cohomology 
classes defined by $[C(\sigma , \Gamma')], [C(\sigma \theta, \Gamma')]$ via Poincar\'e duality are non-zero and are not represented 
by $G$-invariant differential forms on $X$. 

\end{theorem}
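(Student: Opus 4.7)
The plan is to realize $C(\sigma,\Gamma')$ and $C(\sigma\theta,\Gamma')$ as Poincar\'e-dual cycles in $\Gamma'\backslash X$ whose algebraic intersection number can be computed explicitly and shown to be a non-zero integer, and then to rule out $G$-invariant representatives by a volume-growth comparison against the compact dual.

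The first step is the infinitesimal picture. Writing the Cartan decomposition $\frak{g}=\frak{k}\oplus\frak{p}$ and using $\sigma\theta=\theta\sigma$, I would split $\frak{k}=\frak{k}^+\oplus\frak{k}^-$ and $\frak{p}=\frak{p}^+\oplus\frak{p}^-$ into $\pm 1$-eigenspaces of $\sigma$. Then $\mathrm{Lie}(G(\sigma))=\frak{k}^+\oplus\frak{p}^+$ and $\mathrm{Lie}(G(\sigma\theta))=\frak{k}^+\oplus\frak{p}^-$ (the sign on $\frak{p}$ flips because $\theta$ acts as $-1$ on $\frak{p}$). Hence $T_{eK}X(\sigma)=\frak{p}^+$ and $T_{eK}X(\sigma\theta)=\frak{p}^-$ are complementary in $T_{eK}X=\frak{p}$. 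Both $X(\sigma)$ and $X(\sigma\theta)$ are totally geodesic, being fixed-point sets of isometries of $X$, and meet transversally at $eK$ with complementary dimensions.

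The second step is to pass to a suitable finite-index $\langle\sigma,\theta\rangle$-stable congruence subgroup $\Gamma'\subset\Gamma$ so that: (i) the maps $j_\sigma:\Gamma'(\sigma)\backslash X(\sigma)\hookrightarrow\Gamma'\backslash X$ and $j_{\sigma\theta}:\Gamma'(\sigma\theta)\backslash X(\sigma\theta)\hookrightarrow\Gamma'\backslash X$ are embeddings, and (ii) their images intersect in finitely many transverse points. The standard arithmetic input is that congruence subgroups of $G_{\mathcal{O}}$ of large enough level separate any prescribed finite configuration of double cosets and can be chosen stable under the $F$-rational automorphisms $\sigma$ and $\theta$. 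The intersection points are parametrised by $\Gamma'$-orbits of elements $g\in G$ with $g^{-1}\sigma(g)\in K$ and $g^{-1}\sigma\theta(g)\in K$, translated into the transversality statement of the first step.

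The third step is the intersection count. By (i)--(ii), the algebraic intersection pairing $[C(\sigma,\Gamma')]\cdot[C(\sigma\theta,\Gamma')]$ equals a signed count of finitely many transverse points. The hypothesis that $G,G(\sigma),G(\sigma\theta)$ act orientation preservingly on $X,X(\sigma),X(\sigma\theta)$ means that, up to a global sign, the ambient orientation on $X$ is always the wedge of the two tangential orientations at every intersection point. Thus all local contributions agree in sign, and the count is strictly positive, in particular non-zero. Consequently both Poincar\'e dual cohomology classes are non-zero in $H^*(\Gamma'\backslash X;\mathbb{C})$.

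The fourth and hardest step is to show these duals are not in $k_{\Gamma'}(H^*(X_u;\mathbb{C}))$. The idea is that if $PD[C(\sigma,\Gamma')]=k_{\Gamma'}(\eta)$ were represented by a $G$-invariant form $\omega$ coming from $\eta\in H^*(X_u;\mathbb{C})$, then $G$-invariance of $\omega$ together with integration along $C(\sigma\theta,\Gamma')$ would express the pairing as a product of $\mathrm{vol}(\Gamma'(\sigma\theta)\backslash X(\sigma\theta))$ with a constant determined by the corresponding compact-dual pairing $\langle\eta,[X_u(\sigma_u\theta_u)]\rangle$. I would then produce a tower of congruence subgroups $\Gamma_n'\subset\Gamma'$ along which the geometric intersection count from the third step grows non-linearly in $[\Gamma':\Gamma_n']$ (each ambient $G$-orbit of $X(\sigma)\cap X(\sigma\theta)$ contributing new intersection points in the cover), whereas the invariant-form prediction scales linearly with the covolume; the resulting mismatch gives the required contradiction. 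The main obstacle here is controlling this volume growth precisely and ensuring via strong approximation inside $G_{\mathcal{O}}$ that the tower can be chosen $\langle\sigma,\theta\rangle$-stable with the right separation of double cosets, which is the technical heart of the original Millson--Raghunathan argument.
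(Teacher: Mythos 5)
Your proposal captures the broad Millson--Raghunathan framework correctly: the $\pm 1$ eigenspace decomposition of $\frak{p}$ under $\sigma$, the complementary transversal cycles $X(\sigma)$ and $X(\sigma\theta)$, passage to a $\langle\sigma,\theta\rangle$-stable finite-index subgroup so that the cycles embed, and a signed count of intersection points. Note that the paper does not actually reprove this theorem: it cites it as Theorem~2.1 of Millson--Raghunathan, and explains in Remark~\ref{nonvanishing}(ii) that the version with the orientation-preservation hypothesis (rather than the original ``positive multiplicity'' hypothesis) is obtained by first invoking Theorem~\ref{rosch} (Rohlfs--Schwermer) to get the non-zero cup product, and then following the rest of the Millson--Raghunathan proof for non-invariance.

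There is a real gap in your Step 3. You assert that the hypothesis that $G$, $G(\sigma)$, $G(\sigma\theta)$ act orientation-preservingly on $X$, $X(\sigma)$, $X(\sigma\theta)$ ``means that, up to a global sign, the ambient orientation on $X$ is always the wedge of the two tangential orientations at every intersection point.'' This does not follow. At an intersection point $\Gamma' g K$ with $g\in G(\sigma)$, the other cycle arrives via some $h = \gamma g k \in G(\sigma\theta)$ with $\gamma\in\Gamma'$, $k\in K$, and after transporting to the reference tangent space $\frak{p}$ the local intersection sign is that of the determinant of the direct-sum map $\frak{p}^+\oplus\mathrm{Ad}(k)\,\frak{p}^-\to\frak{p}$. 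This sign varies with $k$, and the orientation-preservation hypotheses only ensure that each cycle is coherently oriented, not that the relative orientations agree at every intersection. Millson--Raghunathan's original Theorem~2.1 explicitly postulates ``all intersections of positive multiplicity'' precisely because this cannot be deduced from orientability alone. Rohlfs--Schwermer's Theorem~4.11 (the paper's Theorem~\ref{rosch}) is what bridges this gap: by passing to a suitable normal finite-index subgroup and interpreting the intersection number as a Lefschetz-type (equivariant Euler) invariant, they show the sum of local signs is non-zero without needing all signs to agree. Without citing that result or an equivalent argument, your Step 3 does not go through. Step 4 (the tower-of-covers volume comparison against the compact dual) is a reasonable sketch of the non-invariance half of Millson--Raghunathan's argument, though it would require careful control of how the number of connected components of each cycle grows along the tower relative to the covering degree.
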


\begin{theorem}\label{rosch}(Th. 4.11, \cite{rs}) 

Let $F, \mathcal{O}, G, \theta, K$ be as in the above theorem. Let $\sigma$ and $\tau$ be finite order automorphisms of $G$ 
defined over $F$ with $\sigma \theta = \theta \sigma , \tau \theta = \theta \tau$ and $\sigma \tau = \tau \sigma$. 
Let $\Gamma \subset G_{\mathcal{O}}$ be a torsion-free, 
$\langle \sigma , \tau \rangle$-stable, arithmetic uniform lattice of $G$ such that $\Gamma \backslash X , C(\sigma, \Gamma), 
C(\tau, \Gamma)$ and all connected components of their intersection are orientable. Assume that \\ 
(i) $\textrm{dim}(C(\sigma, \Gamma)) + \textrm{dim}(C(\tau, \Gamma)) = \textrm{dim}(\Gamma \backslash X)$, \\
(ii) the Lie groups $G, G(\sigma), G(\tau)$ act orientation preservingly on $X, X(\sigma)$ and $X(\tau)$ respectively, and \\
(iii) the group $G(\langle \sigma , \tau \rangle )$ is compact. \\
Then there exists a $\langle \sigma , \tau \rangle$-stable normal subgroup $\Gamma''$ of $\Gamma$ of finite index such that \\ 
$[C(\sigma , \Gamma'')][C(\tau, \Gamma'')] \neq 0$.  

\end{theorem}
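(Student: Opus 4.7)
The plan is to interpret the cup product $[C(\sigma,\Gamma'')]\cdot[C(\tau,\Gamma'')]$ as a signed intersection number of two cycles in the compact oriented manifold $\Gamma''\backslash X$, and to establish its non-vanishing by first controlling the dimension and topology of the intersection locus and then using a congruence/finite-index argument to prevent sign cancellation.

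First I would invoke the complementary-dimension assumption (i) together with orientability of $\Gamma''\backslash X$, $C(\sigma,\Gamma'')$ and $C(\tau,\Gamma'')$ to translate the cup product, via Poincaré duality, into an algebraic intersection number equal to a signed count of points of $C(\sigma,\Gamma'')\cap C(\tau,\Gamma'')$. To see that this intersection actually consists of isolated points, I would lift everything to $X$ and observe that each connected component of $C(\sigma,\Gamma)\cap C(\tau,\Gamma)$ descends from a totally geodesic submanifold of the form $gX(\langle \sigma',\tau'\rangle)$, where $\sigma',\tau'$ are $\Gamma$-conjugates of $\sigma,\tau$. Hypothesis (iii) forces $G(\langle \sigma',\tau'\rangle)$ to be compact, so $X(\langle \sigma',\tau'\rangle)$ is a compact totally geodesic submanifold of the non-positively curved space $X$ and therefore reduces to a single point; this matches the zero-dimensional expectation dictated by (i). At each such intersection point hypothesis (ii), applied to $G(\sigma)$ and $G(\tau)$, provides compatible local orientations on the tangent spaces and hence a well-defined local sign $\pm 1$.

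Next I would parametrize the intersection points of $C(\sigma,\Gamma)\cap C(\tau,\Gamma)$ by a suitable double-coset / non-abelian cohomology set attached to the action of the finite group $\langle \sigma,\tau\rangle$ on $\Gamma$ (essentially $H^1(\langle \sigma,\tau\rangle,\Gamma)$ in the Rohlfs sense), and compute the signed sum as a sum indexed by this set. The main obstacle — and the reason the theorem asks for a finite-index $\langle \sigma,\tau\rangle$-stable normal subgroup $\Gamma''\subset\Gamma$ rather than $\Gamma$ itself — is that the individual local signs could conspire to cancel in the sum over $\Gamma$. To resolve this I would take $\Gamma''$ to be a sufficiently deep congruence subgroup: each fixed intersection point in $\Gamma\backslash X$ then lifts to a positive number of separate intersection points in $\Gamma''\backslash X$, and the congruence condition can be arranged so that one pre-chosen intersection class survives while its would-be cancelling partners are pushed off into distinct classes. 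Combining this with the orientation-preservation hypothesis (ii), which implies that all lifts of a single point carry the \emph{same} local sign, one concludes that the total signed count on $\Gamma''\backslash X$ is a nonzero integer multiple of a single $\pm 1$ contribution, which gives $[C(\sigma,\Gamma'')][C(\tau,\Gamma'')]\neq 0$. The most delicate step is the sign-coherence analysis after passing to $\Gamma''$, which relies on a Galois-cohomological bookkeeping of how $\langle\sigma,\tau\rangle$ acts on the orientations of the relevant tangent spaces; without the orientation-preservation assumption (ii), arbitrarily deep level structure would not be enough to rule out cancellation.
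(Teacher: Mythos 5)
This statement is Theorem~4.11 of Rohlfs--Schwermer \cite{rs}; the paper you are reading states it as a black box and does not reprove it, so there is no ``paper's own proof'' to compare against. That said, here is how your sketch lines up with the actual Rohlfs--Schwermer argument, and where it goes astray.

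Your broad outline is correct in spirit: one does interpret $[C(\sigma,\Gamma'')][C(\tau,\Gamma'')]$ as an oriented intersection number via Poincar\'e duality and condition~(i); one does decompose the intersection $C(\sigma,\Gamma)\cap C(\tau,\Gamma)$ into components indexed by a nonabelian Galois cohomology set $H^1(\langle\sigma,\tau\rangle,\Gamma)$ in the Rohlfs sense; and the orientation--preservation hypothesis (ii) is indeed the mechanism that keeps the local contributions from cancelling. This is the correct architecture.

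Where the sketch breaks down is in its account of why one must pass to $\Gamma''$. You write that ``the individual local signs could conspire to cancel in the sum over $\Gamma$'' and that the congruence subgroup is chosen ``so that one pre-chosen intersection class survives while its would-be cancelling partners are pushed off into distinct classes.'' This has no force as stated: pushing contributions into distinct cohomology classes does nothing to stop their signed sum from cancelling, and meanwhile you concede at the end that (ii) is what really rules out cancellation. These two claims are in tension. In the Rohlfs--Schwermer scheme condition~(Or) — your (ii) — ensures \emph{a priori} that every component contributes with the same sign (nonnegative Euler numbers), independently of the depth of the level; the finite-index subgroup is not a device against cancellation. Its actual role is to make the intersection-number decomposition of \cite[Thm.~4.5]{rs} applicable (clean intersection, correctly functioning parametrization by $H^1(\langle\sigma,\tau\rangle,\cdot)$, coherent choices of orientations on the twisted fixed-point sets), after which the nonvanishing comes for free from the fact that the trivial class contributes a strictly positive term.

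A second gap: you assert that ``Hypothesis (iii) forces $G(\langle\sigma',\tau'\rangle)$ to be compact'' for the twisted automorphisms $\sigma',\tau'$ that arise from cocycles. This is not automatic, since $\langle b(\sigma)\sigma, b(\tau)\tau\rangle$ is not in general $G$-conjugate to $\langle\sigma,\tau\rangle$. Compactness of the twisted fixed-point groups requires a genuine Galois-descent argument exploiting the $F$-rationality of $\sigma,\tau$ and of the lattice; Rohlfs--Schwermer supply this, but your sketch treats it as immediate. Without it, one cannot conclude that the intersection components are compact (and hence, in the complementary-dimensional case, points), and the entire count collapses.
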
 

\begin{remark}\label{nonvanishing}
{\em 
  (i) If $\sigma$ is an involution with $\sigma\theta = \theta\sigma$, and 
$\tau = \sigma \theta$, then obviously  
$\textrm{dim}(C(\sigma, \Gamma)) + \textrm{dim}(C(\tau, \Gamma)) = \textrm{dim}(\Gamma \backslash X)$, 
and the group $G(\langle \sigma , \tau \rangle )$ is a closed subgroup of $K$, hence compact. Also in this case, the cycles 
$C(\sigma, \Gamma), C(\sigma \theta, \Gamma)$ intersect transversely, and so the connected components of their intersection are points. Hence if 
the Lie groups $G(\sigma), G(\sigma \theta)$ act orientation preservingly on $X(\sigma)$ and $X(\sigma \theta)$ respectively, then 
in particular, $C(\sigma, \Gamma), C(\sigma \theta, \Gamma)$ and all connected components of their intersection are orientable.

  (ii) Originally, Th. 2.1 in \cite{mira} has been stated under the assumption that $C(\sigma, \Gamma)$, $C(\sigma \theta, \Gamma)$ are orientable, 
and all intersections of $C(\sigma, \Gamma), C(\sigma\theta, \Gamma)$ 
are of positive multiplicity. Now the assumption in Th. \ref{mira} implies that there is a $\langle \sigma , \theta \rangle$-stable 
subgroup $\Gamma''$ of $\Gamma$ of finite index such that $[C(\sigma , \Gamma'')][C(\tau, \Gamma'')] \neq 0$, by Th. \ref{rosch}. Now  
the Th. \ref{mira} follows from the proof of Th. 2.1 in \cite{mira}. 

  (iii) If $G$ is a connected complex semisimple Lie group, then since the simply connected cover of $G$ 
is a linear Lie group, without loss of generality we may assume that 
$\Gamma \backslash X , C(\sigma, \Gamma), C(\sigma \theta, \Gamma)$ and 
all connected components of their intersection are orientable \cite{mira}[Prop. 2.3 and its Cor.]. In general, Rohlfs and Schwermer 
\cite{rs} proved that by passing to a suitable subgroup of finite index in $\Gamma$ if necessary, we may assume that 
$\Gamma \backslash X , C(\sigma, \Gamma), C(\sigma \theta, \Gamma)$ and all connected components of their intersection are orientable. 

  (iv) We say that the {\it condition Or} (as in \cite{rs}) is satisfied for $G,\ \sigma,\ \tau$ if the canonical action of 
$G(\mu)$ on $X(\mu)$ is orientation preserving for $\mu = \sigma , \tau$. 
}
\end{remark}

  Now the hypotheses of the above theorems have been checked in the following subsections for a connected complex simple Lie group $G$ 
so that we can apply the above theorems.

\noindent
\subsection{Automorphisms of finite order of a complex simple Lie algebra}\label{victor}
  
  Here we describe Victor Ka\v{c}'s Classification \cite{kac} of finite order automorphisms of a complex simple Lie algebra. We follow 
\cite[\S 5, Ch. X]{helgason} for this purpose. 
 
  Let $\frak{g}$ be a complex simple Lie algebra and $\frak{u}$ be a compact real form of $\frak{g}$.  As before, $\frak{g}^\mathbb{R} = 
\frak{u} \oplus J\frak{u}$ is a Cartan decomposition of $\frak{g}^\mathbb{R}$, where $\frak{g}^\mathbb{R}$ is the underlying real 
Lie algebra of $\frak{g}$ and $J$ is the complex structure of $\frak{g}^\mathbb{R}$ corresponding to the multiplication by $i$ of $\frak{g}$. 
Let $\theta$ be the corresponding Cartan involution. Let $\frak{h}$ be a $\theta$-stable Cartan subalgebra of $\frak{g}$. 
Choose a system of positive roots $\Delta^+$ in the set of all non-zero roots 
$\Delta = \Delta(\frak{g} , \frak{h})$. Let $\Phi$ be the set of all simple roots in $\Delta^+$. Let 
$\{ H_\phi ^* \  , E_\alpha : \phi \in \Phi , \alpha \in \Delta \}$ be a Chevalley basis for $\frak{g}$ as in  \eqref{chevalley}. Then the Lie algebra 
$\frak{g}$  is generated by the vectors $ H_\phi ^*,\  E_\phi ,\  E_{-\phi} \  (\phi \in \Phi)$. 

  For any $\sigma \in \textrm{Aut}(\frak{g})$ 
with $\sigma (\frak{h}) = \frak{h}$, define $\sigma (\alpha) (H) = \alpha (\sigma H)$ for all $H \in \frak{h}$, 
where $\alpha \in \frak{h}^*$. Then $\sigma (\Delta) = \Delta $. 
Now assume that $\sigma$ is a finite order automorphism of $\frak{g}$ with $\sigma \theta = \theta \sigma $, 
$\sigma (\frak{h}) = \frak{h}$ and $\sigma (\Delta^+ ) = 
\Delta^+$. Then $\sigma$ induces an automorphism of the Dynkin diagram of $\frak{g}$. As the order of any automorphism of a Dynkin 
diagram is $1, 2, $ or $3$; $\sigma |_{\frak{h}}$ has order $1,2,$ or $3$ respectively.  

  Conversely,  let $\bar{\nu}$ be an automorphism of the Dynkin diagram of $\frak{g}$ of order $k \ (k = 1, 2,\textrm{or } 3)$. As $\frak{g}$ 
is generated by $ H_\phi ^*,\  E_\phi ,\  E_{-\phi} \  (\phi \in \Phi)$, there exists a unique $\nu \in \textrm{Aut}(\frak{g})$ with 
\[ \nu ( H_\phi ^*) =  H_{\bar{\nu} (\phi)} ^* , \ \ \  \nu (E_\phi) = E_{\bar{\nu} (\phi)} , \ \ \  \nu (E_{-\phi}) = E_{-\bar{\nu} (\phi)} \ \ \ 
(\phi \in \Phi). \]
Note that $\nu$ is of order $k$, and $\nu\theta = \theta\nu$. We call $\nu$, 
an automorphism of $\frak{g}$ induced by an automorphism of the Dynkin diagram 
of $\frak{g}$. Let $\epsilon_0 = e^{\frac{2\pi i}{k}}$ be a primitive $k$-th root of unity. 
As $\nu$ has order $k$, any eigenvalue of $\nu$ has the form 
$\epsilon_0^i \ (i \in \mathbb{Z}_k )$  and $\frak{g} = \mathop{\oplus}\limits_{i \in \mathbb{Z}_k} \frak{g}_i ^\nu $ such that 
$[\frak{g}_i ^\nu,  \frak{g}_j ^\nu ] \subset \frak{g}_{i + j} ^\nu $, where 
$ \frak{g}_i ^\nu $ is the eigenspace of $\nu$ corresponding to the eigenvalue $\epsilon_0^i$. Since $k= 1,2,$ or $3$, 
$\frak{g}_{0}^\nu,\ \frak{g}_{\bar{1}}^\nu,\ \frak{g}_{\bar{2}}^\nu \neq 0$, 
where $\bar{a} = a + k \mathbb{Z} \in \mathbb{Z}_k$ for all $a \in \mathbb{Z}$.  
The Lie algebra $\frak{g}_0^\nu$ is 
reductive (in fact, it is simple \cite[the proof of Lemma 5.11, Ch. X]{helgason}) and $\frak{h}^\nu = \frak{h} \cap \frak{g}_0^\nu$ 
is a Cartan subalgebra of $\frak{g}_0^\nu$. Define a root of $\frak{g}$ with respect to $\frak{h}^\nu$ as a pair 
$(\alpha , i)\ (\alpha \in (\frak{h}^\nu)^* , i \in \mathbb{Z}_k )$, if the joint eigenspace $\frak{g}_{(\alpha , i)} = 
\{X \in \frak{g}_i ^\nu : [ H , X ] = \alpha (H) X \textrm{ for all } H \in \frak{h}^\nu \} \neq 0$. Note that a root of $\frak{g}$ 
with respect to $\frak{h}^\nu$ is just a weight of the $\frak{g}_0^\nu$-module $\frak{g}_i ^\nu$. 
We may add pairs by $(\alpha , i) + (\beta , j) = (\alpha +\beta , i + j )$. Let $\bar{\Delta}$ denote the set of all non-zero roots 
and $\bar{\Delta}_0$ the set of roots of the form $(0 , i) , i \in \mathbb{Z}_k$. Then we have 
\begin{equation}\label{1}
\frak{g} = \frak{h}^\nu \oplus \sum\limits_{(\alpha , i) \in \bar{\Delta}} \frak{g}_{(\alpha , i)}, \ \frak{h} = \sum\limits_{(\alpha , i) 
\in \bar{\Delta}_0} \frak{g}_{(\alpha , i)}, \  \frak{h}^\nu = \frak{g}_{(0,0)},
\end{equation}
\begin{equation}\label{2}
[ \frak{g}_{(\alpha , i)} , \frak{g}_{(\beta , j)}] \subset \frak{g}_{(\alpha , i) + (\beta , j)}, 
\end{equation}
\begin{equation}\label{3} 
\textrm{dim } \frak{g}_{(\alpha , i)} = 1 \textrm{ for all } (\alpha , i) \in \bar{\Delta} \setminus \bar{\Delta}_0, 
\end{equation} 
\begin{equation}\label{4}
[ \frak{g}_{(\alpha , i)} , \frak{g}_{(\beta , j)}] \neq 0 , \textrm{ if } (\alpha , i)\in  \bar{\Delta} \setminus \bar{\Delta}_0 \  ; (\beta , j), 
(\alpha , i ) + (\beta , j ) \in \bar{\Delta}, 
\end{equation}
\cite[\S 5, Ch. X]{helgason}. 

  Let $\Delta_0 = \Delta (\frak{g}_0^\nu , \frak{h}^\nu)$ be the set of all non-zero roots of the 
simple Lie algebra $\frak{g}_0^\nu$ with respect to the Cartan subalgebra $\frak{h}^\nu$. Then 
$\Delta_0 = \{ (\alpha, 0) \in \bar{\Delta} : \alpha \neq 0 \}$. Define 
\[ \bar{H}_\phi^* = \sum\limits_{i = 0}^{k-1}  H_{\bar{\nu}^i (\phi)}^* , \ 
\bar{E}_\phi = \sum\limits_{i =0} ^ {k-1} E_{\bar{\nu}^i (\phi)}, \  
\textrm{and} \ \bar{E}_{-\phi} = \sum\limits_{i = 0}^{k-1} E_{-\bar{\nu}^i (\phi)}\  (\phi \in \Phi).\]
Note that $\frak{h}^\nu = \sum\limits_{\phi \in \Phi } \mathbb{C} \bar{H}_\phi ^*$, and the vectors $\bar{E}_\phi, \ 
\bar{E}_{-\phi} \in \frak{g}_0^\nu$ for all $\phi \in \Phi$. Also the vectors $\sum\limits_{i=0}^{k-1} 
\epsilon_0 ^ i E_{\bar{\nu}^i (\phi)} , \ \sum\limits_{i=0}^{k-1} \epsilon_0^i E_{-\bar{\nu}^i (\phi)}
\in \frak{g}^\nu _{\overline{k-1}}$, and for $k=3$, the vectors $E_\phi + \epsilon_0 ^2 E_{\bar{\nu}(\phi)} + \epsilon_0 
E_{\bar{\nu}^2 (\phi)} , \ E_{-\phi} + \epsilon_0^2 E_{-\bar{\nu}(\phi)} + \epsilon_0 E_{-\bar{\nu}^2 (\phi)} \in 
\frak{g}^\nu _{\bar{1}}$ for all $\phi \in \Phi$ with $\phi \neq \bar{\nu}(\phi)$. 
Let $a_{\phi \psi} = \phi (H_\psi ^*) \textrm{ for all } \phi , \psi \in \Phi$. Then we have 
\[  [\bar{H}_\psi ^* , \bar{E}_\phi ] = \sum\limits_{i,j = 0}^{k-1} [ H_{\bar{\nu}^i (\psi)}^* ,  E_{\bar{\nu}^j (\phi)} ] 
= \sum\limits_{i,j = 0}^{k-1} a_{\bar{\nu}^j (\phi) \bar{\nu}^i (\psi)} E_{\bar{\nu}^j (\phi)} 
=\sum\limits_{j =0}^{k-1} (\sum\limits_{i=0}^{k-1} a_{\bar{\nu}^j (\phi) \bar{\nu}^i (\psi)})  E_{\bar{\nu}^j (\phi)} \]
\[=\sum\limits_{j =0}^{k-1} (\sum\limits_{i=0}^{k-1} a_{\phi \bar{\nu}^{i-j} (\psi)}) E_{\bar{\nu}^j (\phi)}\  (\textrm{as } 
a_{\phi \psi} = a_{\bar{\nu} (\phi) \bar{\nu} (\psi)}) 
=(\sum\limits_{i=0}^{k-1} a_{\phi \bar{\nu}^i(\psi)})\sum\limits_{j =0}^{k-1} E_{\bar{\nu}^j (\phi)}\ (\textrm{as } 
\bar{\nu}^k = \textrm{id})\]
\[=(\sum\limits_{i=0}^{k-1} a_{\phi \bar{\nu}^i(\psi)}) \bar{E}_\phi .\ \ \ \ \ \ \ \ \ \ \ \ \ \ \ \ \ \ \ \ \ \ \ \ \ \ \ \ \ \ \ \ \ \ \ \ \ \ \ \ \ \ \ \ \ \ 
\ \ \ \ \ \ \ \ \ \ \ \ \ \ \ \ \ \ \ \ \ \ \ \ \ \ \ \ \ \ \ \ \ \ \ \ \ \ \] 
Similarly 
\[  [\bar{H}_\psi ^* , \bar{E}_{-\phi} ] = -(\sum\limits_{i=0}^{k-1} a_{\phi \bar{\nu}^i(\psi)}) \bar{E}_{-\phi} ,\]
for all $\phi, \psi \in \Phi$. Thus $\bar{E}_\phi $ is a root vector corresponding to some root $\psi \in \Delta_0$, 
$\bar{E}_{-\phi }$ is a root vector corresponding to $-\psi \in \Delta_0$. Also note that  
\[ [\bar{H}_\psi ^* , \sum\limits_{j=0}^{k-1} \epsilon_0 ^j E_{\bar{\nu}^j (\phi)} ] 
 =(\sum\limits_{i=0}^{k-1} a_{\phi \bar{\nu}^i(\psi)}) \sum\limits_{j =0}^{k-1}\epsilon_0 ^j E_{\bar{\nu}^j (\phi)}, \textrm{and for } 
k =3 ,\]
\[[\bar{H}_\psi ^* ,  E_\phi + \epsilon_0 ^2 E_{\bar{\nu}(\phi)} + \epsilon_0 E_{\bar{\nu}^2 (\phi)}] 
=(\sum\limits_{i=0}^{k-1} a_{\phi \bar{\nu}^i(\psi)})( E_\phi + \epsilon_0 ^2 E_{\bar{\nu}(\phi)} + \epsilon_0 E_{\bar{\nu}^2 (\phi)})
\]
for all $\phi , \psi \in \Phi$. So if $\phi \in \Phi$ with $\phi \neq \bar{\nu}(\phi)$, and $\bar{E}_\phi$ is a root vector corresponding 
to the root $\psi \in \Delta_0$, then 
$\sum\limits_{j=0}^{k-1} \epsilon_0 ^j E_{\bar{\nu}^j (\phi)}$ is a weight vector corresponding to the weight  $\psi \in (\frak{h}^\nu)^*$ 
of the $\frak{g}_0^\nu$-module $\frak{g}^\nu _{\overline{k-1}}\ $, and for $k=3$, 
$E_\phi + \epsilon_0 ^2 E_{\bar{\nu}(\phi)} + \epsilon_0  E_{\bar{\nu}^2 (\phi)}$ is weight vector corresponding to the weight  
$\psi$ of $\frak{g}^\nu _{\bar{1}}$. Similarly  
$\sum\limits_{j=0}^{k-1} \epsilon_0 ^j E_{-\bar{\nu}^j (\phi)}$ is a weight vector corresponding to the weight  $-\psi \in (\frak{h}^\nu)^*$ 
of $\frak{g}^\nu _{\overline{k-1}}\ $, and for $k=3$, 
$E_{-\phi} + \epsilon_0 ^2 E_{-\bar{\nu}(\phi)} + \epsilon_0  E_{-\bar{\nu}^2 (\phi)}$ is weight vector corresponding to the weight  
$-\psi $ of $\frak{g}^\nu _{\bar{1}}$. 

  Actually there exists a basis $\Psi = \{\psi_1, \psi_2, \ldots , \psi_n \}$ of the root system $\Delta_0 = \Delta (\frak{g}_0^\nu , \frak{h}^\nu)$ 
such that $\bar{E}_\phi $ is a root vector corresponding to some root $\psi_i \in \Psi$,  $\bar{E}_{-\phi }$ is a root vector corresponding 
to $-\psi_i$, and $\{ \bar{H}_\phi^* ,\  \bar{E}_\phi ,\  \bar{E}_{-\phi} : \phi \in \Phi \}$ generates $\frak{g}_0^\nu$ 
\cite[the proof of Lemma 5.11, Ch. X ]{helgason}. Let $\Delta_0^+$ be the system of positive roots in $\Delta_0$ generated 
by the basis $\Psi$. Let $\alpha_0$ be the lowest weight (with respect to $\Delta_0^+$) of the 
$\frak{g}_0^\nu$-module $\frak{g}_{\bar{1}}^\nu $. Then $\alpha_0 \neq 0$, the set $B := \{ \alpha_0 , \psi_1, \ldots , \psi_n\}$ 
is linearly dependent and $B$ generates $\bar{\Delta}$ in the sense that each $(\alpha , i) \in \bar{\Delta}$ can be 
written in the form
\begin{equation}\label{5} 
(\alpha , i) =\pm ( n_0 (\alpha_0 , \bar{1}) + \sum\limits_{j=1}^{n} n_j (\psi_j, 0))  \ (n_j \in {\mathbb{N} \cup \{0 \}}
\textrm{ for all } 0\le j \le n), 
\end{equation} 
 \cite[Lemma 5.7, Ch. X]{helgason}. Note that if $\alpha \in (\frak{h}^\nu)^*$ is a weight of the 
 $\frak{g}_0^\nu$-module $\frak{g}_{\bar{a}} ^\nu \ ( 0 \le a \le k-1)$, then we may take $n_0 = a$ in the above decomposition. 
Also if $(\alpha , i) \in \bar{\Delta}$ with $ i \neq 0$ or $\alpha \in \Delta_0 ^+$ for $i = 0$, then $\alpha$ can be written as 
\begin{equation}\label{6}
\alpha = \beta_1 + \cdots+  \beta_k , 
\end{equation} 
where all $\beta_i \in B$, not necessarily distinct, such that each partial sum $\beta_1 + \cdots +\beta_j$ is the first 
component of some root in $\bar{\Delta}$ 
\cite[follows from (v) of Lemma 5.7, Ch. X]{helgason}. 
 For $X_1, X_2, \ldots , X_r \in \frak{g}$, let $[X_1, \ldots ,X_{r-1}, X_r ]$ denote the element $\textrm{ad}(X_1)\cdots  
\textrm{ad}(X_{r-1})(X_r) \in \frak{g}$. If $(\alpha , \bar{a}) \in \bar{\Delta} \setminus \bar{\Delta}_0 \ 
(a \in \mathbb{N})$ with $(\alpha , \bar{a}) = a (\alpha_0, \bar{1}) + \sum\limits_{i=1}^{n} n_i (\psi_i , 0) \ (n_i \in 
\mathbb{N} \cup \{0 \} \textrm{ for all }1 \le i \le n)$, then by \eqref{3}, \eqref{4} and \eqref{6} we have 
\begin{equation}\label{7} 
\frak{g}_{(\alpha , \bar{a})} = \mathbb{C} [ X_1 , \ldots , X_r ], 
\end{equation} 
for suitable vectors $X_1 , \ldots ,X_r $ lie in the eigenspaces of roots $(\alpha_0, \bar{1}) , (\psi_i , 0) \ (1 \le i \le n)$ 
such that the sum of the corresponding roots is $a (\alpha_0 , \bar{1}) + \sum\limits_{i =1}^{n} n_i (\psi_i , 0)$. 

  Choose $E_0 (\neq 0) \in \frak{g}_{(\alpha_0 , \bar{1})}$.  Then the vectors 
$E_0, \ \bar{E}_\phi \ (\phi \in \Phi)$ generate the Lie algebra $\frak{g}$ \cite[Th. 5.15(i), Ch. X]{helgason}. Let $\alpha_0 + 
\sum\limits_{i=1}^{n} a_i \psi_i =0 \ (a_i \in \mathbb{N} \textrm{ for all } 1 \le i \le n)$ \cite[Tables of Diagrams $S(A)$, \S 5, Ch. X]
{helgason}. Let $s_0, s_1, \ldots , s_n$ be non-negative integers without non-trivial common factor and put $m = k (s_0 + 
\sum\limits_{i = 1}^{n} a_i s_i )$. Let $\epsilon$ be a primitive $m$-th root of unity and $s_\phi : = s_i$,  if $\bar{E}_\phi $ is a 
root vector corresponding to the simple root $\psi_i \in \Psi $. Note that $s_{\bar{\nu}^j (\phi)} = s_\phi$ for all $\phi \in \Phi$. 
There exists a unique automorphism $\sigma$ of $\frak{g}$ of order $m$ with 
\begin{equation}\label{sigma}
\sigma (E_0 ) = \epsilon^{s_0} E_0 , \  \sigma ( \bar{E}_\phi ) = \epsilon^{s_\phi} \bar{E}_\phi \  (\phi \in \Phi) 
\end{equation}
\cite[Th. 5.15(i), Ch. X]{helgason}. The automorphism $\sigma$ is called an automorphism of type $(s_0, s_1, \ldots ,  s_n ; k)$. 
Note that the automorphism $\nu$ induced by the Dynkin diagram automorphism $\bar{\nu}$ is of type $(1, 0, \ldots , 0; k)$. 
The automorphism $\sigma$ is inner if and only if $k=1$ \cite[Th. 5.16(i), Ch. X]{helgason}. 

  For $1 \le i \le n$, if $\frak{g}_{(\psi_i, \bar{1})} \neq 0$, the decomposition \eqref{5} for $(\psi_i, \bar{1})$ is given 
by 
\[(\psi_i , \bar{1}) = (\alpha_0, \bar{1}) + \sum\limits_{\substack{j=1 \\ j\neq i}}^{n} a_j (\psi_j , 0) + (a_i + 1) (\psi_i, 0),
\textrm{ as } \alpha_0 + \sum\limits_{j=0}^{n} a_j \psi_j =0. \]
Similarly $\frak{g}_{(\psi_i, \bar{2})} \neq 0$ implies 
\[(\psi_i , \bar{2}) = 2(\alpha_0, \bar{1}) + \sum\limits_{\substack{j=1 \\ j\neq i}}^{n} 2a_j (\psi_j , 0) + (2a_i + 1) (\psi_i, 0),  \]
$\frak{g}_{(-\psi_i, \bar{1})} \neq 0$ implies 
\[(-\psi_i , \bar{1}) = (\alpha_0, \bar{1}) + \sum\limits_{\substack{j=1 \\ j\neq i}}^{n} a_j (\psi_j , 0) + (a_i - 1) (\psi_i, 0),  \textrm{ and}\]  
 $\frak{g}_{(-\psi_i, \bar{2})} \neq 0$ implies 
\[(-\psi_i , \bar{2}) = 2(\alpha_0, \bar{1}) + \sum\limits_{\substack{j=1 \\ j\neq i}}^{n} 2a_j (\psi_j , 0) + (2a_i - 1) (\psi_i, 0). \]
As $\frak{g}^\nu_{\bar{k}} = \frak{g}_0^\nu$,  $\frak{g}_{(-\psi_i, \bar{k})} \neq 0$ and 
\[(-\psi_i , \bar{k}) = k(\alpha_0, \bar{1}) + \sum\limits_{\substack{j=1 \\ j\neq i}}^{n} ka_j (\psi_j , 0) + (ka_i - 1) (\psi_i, 0). \]
By \eqref{7} for $\phi \in \Phi$ with $\phi \neq \bar{\nu}(\phi)$, and $k \neq 3$, we have 
\[ \sigma (\sum\limits_{j=0}^{k-1} \epsilon_0 ^j E_{\bar{\nu}^j (\phi)}) = \epsilon^{s_i +\frac{m}{k}}
\sum\limits_{j=0}^{k-1} \epsilon_0 ^j E_{\bar{\nu}^j (\phi)}= \epsilon_0 \epsilon^{s_i}
\sum\limits_{j=0}^{k-1} \epsilon_0 ^j E_{\bar{\nu}^j (\phi)} , \]
\[ \sigma (\sum\limits_{j=0}^{k-1} \epsilon_0 ^j E_{-\bar{\nu}^j (\phi)}) = \epsilon^{-s_i +\frac{m}{k}}
\sum\limits_{j=0}^{k-1} \epsilon_0 ^j E_{-\bar{\nu}^j (\phi)}= \epsilon_0 \epsilon^{-s_i}
\sum\limits_{j=0}^{k-1} \epsilon_0 ^j E_{-\bar{\nu}^j (\phi)} , \]
as $m = k(s_0 + \sum\limits_{j=1}^{n} a_j s_j)$ implies $s_0 + \sum\limits_{\substack{j=1 \\ j \neq i}}^{n} a_js_j + (a_i +1)s_i = 
s_i + \frac{m}{k}$, $s_0 + \sum\limits_{\substack{j=1 \\ j \neq i}}^{n} a_js_j + (a_i -1)s_i = - s_i + \frac{m}{k}$, and 
$\epsilon ^{\frac{m}{k}} = \epsilon_0$ for $k = 1,2$. 
Similarly for $\phi \in \Phi$ with $\phi \neq \bar{\nu}(\phi)$, and $k=3$, we have 
\[ \sigma (\sum\limits_{j=0}^{k-1} \epsilon_0 ^j E_{\bar{\nu}^j (\phi)}) = \epsilon^{s_i +\frac{2m}{3}}
\sum\limits_{j=0}^{k-1} \epsilon_0 ^j E_{\bar{\nu}^j (\phi)}= 
\begin{cases}
\epsilon_0^2 \epsilon^{s_i}\sum\limits_{j=0}^{k-1} \epsilon_0 ^j E_{\bar{\nu}^j (\phi)},  & \textrm{if } 
\epsilon^{\frac{m}{3}}= \epsilon_0 \\
\epsilon_0 \epsilon^{s_i}\sum\limits_{j=0}^{k-1} \epsilon_0 ^j E_{\bar{\nu}^j (\phi)},  & \textrm{if } 
\epsilon^{\frac{m}{3}}= \epsilon_0^2.
\end{cases}
\]
\[ \sigma (E_\phi + \epsilon_0 ^2 E_{\bar{\nu}(\phi)} + \epsilon_0 E_{\bar{\nu}^2 (\phi)}) = \epsilon^{s_i +\frac{m}{3}}
(E_\phi + \epsilon_0 ^2 E_{\bar{\nu}(\phi)} + \epsilon_0 E_{\bar{\nu}^2 (\phi)}) \ \ \ \ \ \ \ \ \ \ \ \ \ \ \ \ \ \ \ \ \ \ \ \ \ \ \]
\[\ \ \ \ \ \ \ \ \ \ \ \ \ \ \ \ \ \ \ \ \ \ \ \ \ \ \ \ \ =
\begin{cases}
\epsilon_0 \epsilon^{s_i} (E_\phi + \epsilon_0 ^2 E_{\bar{\nu}(\phi)} + \epsilon_0 E_{\bar{\nu}^2 (\phi)}),  & \textrm{if } 
\epsilon^{\frac{m}{3}}= \epsilon_0 \\
 \epsilon_0^2 \epsilon^{s_i} (E_\phi + \epsilon_0 ^2 E_{\bar{\nu}(\phi)} + \epsilon_0 E_{\bar{\nu}^2 (\phi)}),  & \textrm{if } 
\epsilon^{\frac{m}{3}}= \epsilon_0^2.
\end{cases}
\]
\[ \sigma (\sum\limits_{j=0}^{k-1} \epsilon_0 ^j E_{-\bar{\nu}^j (\phi)}) = \epsilon^{-s_i +\frac{2m}{3}}
\sum\limits_{j=0}^{k-1} \epsilon_0 ^j E_{-\bar{\nu}^j (\phi)}= 
\begin{cases}
\epsilon_0^2 \epsilon^{-s_i}\sum\limits_{j=0}^{k-1} \epsilon_0 ^j E_{-\bar{\nu}^j (\phi)},  & \textrm{if } 
\epsilon^{\frac{m}{3}}= \epsilon_0 \\
 \epsilon_0 \epsilon^{-s_i}\sum\limits_{j=0}^{k-1} \epsilon_0 ^j E_{-\bar{\nu}^j (\phi)},  & \textrm{if } 
\epsilon^{\frac{m}{3}}= \epsilon_0^2.
\end{cases}
\]
\[ \sigma (E_{-\phi} + \epsilon_0 ^2 E_{-\bar{\nu}(\phi)} + \epsilon_0 E_{-\bar{\nu}^2 (\phi)}) = \epsilon^{-s_i +\frac{m}{3}}
(E_{-\phi} + \epsilon_0 ^2 E_{-\bar{\nu}(\phi)} + \epsilon_0 E_{-\bar{\nu}^2 (\phi)}) \ \ \ \ \ \ \ \ \ \ \ \ \ \ \ \ \ \ \ \ \ \ \ \ \ \ \]
\[\ \ \ \ \ \ \ \ \ \ \ \ \ \ \ \ \ \ \ \ \ \ \ \ \ \ \ \ \ =
\begin{cases}
\epsilon_0 \epsilon^{-s_i} (E_{-\phi }+ \epsilon_0 ^2 E_{-\bar{\nu}(\phi)} + \epsilon_0 E_{-\bar{\nu}^2 (\phi)}),  & \textrm{if } 
\epsilon^{\frac{m}{3}}= \epsilon_0 \\
 \epsilon_0^2 \epsilon^{-s_i} (E_{-\phi} + \epsilon_0 ^2 E_{-\bar{\nu}(\phi)} + \epsilon_0 E_{-\bar{\nu}^2 (\phi)}),  & \textrm{if } 
\epsilon^{\frac{m}{3}}= \epsilon_0^2.
\end{cases}
\]
Also for any $k$, 
\[ \sigma (\bar{E}_{-\phi}) = \epsilon^{-s_i +m} \bar{E}_{-\phi}= \epsilon^{-s_i}\bar{E}_{-\phi} \ (\textrm{for all } \phi \in \Phi), \]
as $\bar{E}_{-\phi}$ is a root vector of  $\frak{g}_0^\nu$ corresponding to the root $-\psi_i$ and 
$-\psi_i = k \alpha_0 + \sum\limits_{\substack{j = 1 \\ j \neq i}}^{n} ka_j \psi_j + (ka_i - 1) \psi_i$ via the identification 
$\frak{g}_0^\nu = \frak{g}^\nu_{\bar{k}}$. 
Note that $s_i = s_\phi$. 
Obviously
\[\boxed{\sigma (E_\phi) = \epsilon^{s_\phi} E_\phi,\ \sigma (E_{-\phi}) = \epsilon^{-s_\phi} E_{-\phi}, \textrm{ and}} \]
\[\boxed{\sigma (H_\phi ^*) = \sigma ([E_\phi , E_{-\phi}])=[\sigma(E_\phi) , \sigma(E_{-\phi})]= 
[\epsilon^{s_\phi}E_\phi , \epsilon^{-s_\phi}E_{-\phi}]= H_\phi ^* \textrm{ for all } 
\phi \in \Phi \textrm{ with } \phi = \bar{\nu}(\phi). }\]
For $\phi \in \Phi \textrm{ with } \phi \neq \bar{\nu}(\phi)$, and $k=2$, 
\[ \sigma (E_\phi ) = \sigma \bigg{(}\frac{E_\phi + E_{\bar{\nu}(\phi)}}{2}\bigg{)} + \sigma \bigg{(}\frac{E_\phi - E_{\bar{\nu}(\phi)}}{2}
\bigg{)} = \epsilon^{s_\phi}\frac{E_\phi + E_{\bar{\nu}(\phi)}}{2} - \epsilon^{s_\phi} \frac{E_\phi - E_{\bar{\nu}(\phi)}}{2}= 
\epsilon^{s_\phi} E_{\bar{\nu}(\phi)},\]
\[ \sigma (E_{-\phi} ) = \sigma \bigg{(}\frac{E_{-\phi} + E_{-\bar{\nu}(\phi)}}{2}\bigg{)} + \sigma \bigg{(}\frac{E_{-\phi} - E_{-\bar{\nu}(\phi)}}{2}
\bigg{)} = \epsilon^{-s_\phi}\frac{E_{-\phi} + E_{-\bar{\nu}(\phi)}}{2} - \epsilon^{-s_\phi} \frac{E_{-\phi} - E_{-\bar{\nu}(\phi)}}{2}\]
\[= \epsilon^{-s_\phi} E_{-\bar{\nu}(\phi)}, \ \ \ \ \ \ \ \ \ \ \ \ \ \ \ \ \ \ \ \ \ \ \ \ \ \ \ \ \ \ \ \ \ \ \ \ \ \ \ \ \ \ \ \ \ \ \ \ \ \ \ \ \ \ \ \ \ \ \ \ \ \ \ \ 
\ \ \ \ \ \ \ \ \ \ \ \ \ \ \]
as $\epsilon_0 = -1$ here. That is 
\[\boxed{\sigma (E_\phi) = \epsilon^{s_\phi} E_{\bar{\nu}(\phi)},\ \sigma (E_{-\phi}) = \epsilon^{-s_\phi} E_{-\bar{\nu}(\phi)},
\textrm{ and }\sigma (H_\phi ^*)=  H_{\bar{\nu}(\phi)} ^*
\textrm{ for all } \phi \in \Phi \textrm{ with } \phi \neq \bar{\nu}(\phi), \textrm{ for } k=2. }\]
For $\phi \in \Phi \textrm{ with } \phi \neq \bar{\nu}(\phi)$, and $k=3$, 
\[ \sigma (E_\phi ) = \sigma \bigg{(}\frac{E_\phi + E_{\bar{\nu}(\phi)}+ E_{\bar{\nu}^2(\phi)}}{3}\bigg{)} + 
\sigma \bigg{(}\frac{E_\phi + \epsilon_0 E_{\bar{\nu}(\phi)}+\epsilon_0^2 E_{\bar{\nu}^2(\phi)}}{3}\bigg{)} 
+\sigma \bigg{(}\frac{E_\phi + \epsilon_0^2 E_{\bar{\nu}(\phi)}+\epsilon_0 E_{\bar{\nu}^2(\phi)}}{3}\bigg{)}
\]
\[\ \ \ \ \ \ \ \ \ \ \ \ =
\begin{cases}
\epsilon^{s_\phi} \frac{E_\phi + E_{\bar{\nu}(\phi)}+ E_{\bar{\nu}^2(\phi)}}{3} + 
\epsilon_0^2 \epsilon^{s_\phi} \frac{E_\phi + \epsilon_0 E_{\bar{\nu}(\phi)}+\epsilon_0^2 E_{\bar{\nu}^2(\phi)}}{3} + 
\epsilon_0 \epsilon^{s_\phi} \frac{E_\phi + \epsilon_0^2 E_{\bar{\nu}(\phi)}+\epsilon_0 E_{\bar{\nu}^2(\phi)}}{3}, & \textrm{if } 
 \epsilon^{\frac{m}{3}} = \epsilon_0 \\
\epsilon^{s_\phi} \frac{E_\phi + E_{\bar{\nu}(\phi)}+ E_{\bar{\nu}^2(\phi)}}{3} + 
\epsilon_0 \epsilon^{s_\phi} \frac{E_\phi + \epsilon_0 E_{\bar{\nu}(\phi)}+\epsilon_0^2 E_{\bar{\nu}^2(\phi)}}{3} + 
\epsilon_0^2 \epsilon^{s_\phi} \frac{E_\phi + \epsilon_0^2 E_{\bar{\nu}(\phi)}+\epsilon_0 E_{\bar{\nu}^2(\phi)}}{3}, & 
\textrm{if } \epsilon^{\frac{m}{3}} = \epsilon_0^2 
\end{cases}
\]
\[=
\begin{cases}
\epsilon^{s_\phi} E_{\bar{\nu}(\phi)}, & \textrm{if } \epsilon^{\frac{m}{3}} = \epsilon_0 \\
\epsilon^{s_\phi} E_{\bar{\nu}^2(\phi)}, & \textrm{if } \epsilon^{\frac{m}{3}} = \epsilon_0^2, 
\end{cases}
\ \ \ \ \ \ \ \ \ \ \ \ \ \ \ \ \ \ \ \ \ \ \ \ \ \ \ \ \ \ \ \ \ \ \ \ \ \ \ \ \ \ \ \ \ \ \ \ \ \ \ \ \ \ \ \ \ \ \ \ \ \ \ \ \ \ \]
\[ \sigma (E_{-\phi} ) = \sigma \bigg{(}\frac{E_{-\phi} + E_{-\bar{\nu}(\phi)}+ E_{-\bar{\nu}^2(\phi)}}{3}\bigg{)} + 
\sigma \bigg{(}\frac{E_{-\phi} + \epsilon_0 E_{-\bar{\nu}(\phi)}+\epsilon_0^2 E_{-\bar{\nu}^2(\phi)}}{3}\bigg{)}\ \ \ \ \ \ \ \  \ \ \ \ \ \ \ \ 
\ \ \ \ \ \ \ \ \ \ \ \ \ \ \ \ \ \ \ \ \ \ \ \ \ \]
\[+\sigma \bigg{(}\frac{E_{-\phi} + \epsilon_0^2 E_{-\bar{\nu}(\phi)}+\epsilon_0 E_{-\bar{\nu}^2(\phi)}}{3}\bigg{)}
\ \ \ \ \ \ \ \ \ \ \ \ \ \ \ \ \ \ \ \ \ \ \ \ \ \ \ \ \ \ \ \ \ \ \ \ \ \ \ \ \ \ \ \ \ \ \ \ \ \ \ \]
\[=
\begin{cases}
\epsilon^{-s_\phi} \frac{E_{-\phi} + E_{-\bar{\nu}(\phi)}+ E_{-\bar{\nu}^2(\phi)}}{3} + 
\epsilon_0^2 \epsilon^{-s_\phi} \frac{E_{-\phi} + \epsilon_0 E_{-\bar{\nu}(\phi)}+\epsilon_0^2 E_{-\bar{\nu}^2(\phi)}}{3} + 
\epsilon_0 \epsilon^{-s_\phi} \frac{E_{-\phi} + \epsilon_0^2 E_{-\bar{\nu}(\phi)}+\epsilon_0 E_{-\bar{\nu}^2(\phi)}}{3}, & \textrm{if } 
 \epsilon^{\frac{m}{3}} = \epsilon_0 \\
\epsilon^{-s_\phi} \frac{E_{-\phi} + E_{-\bar{\nu}(\phi)}+ E_{-\bar{\nu}^2(\phi)}}{3} + 
\epsilon_0 \epsilon^{-s_\phi} \frac{E_{-\phi} + \epsilon_0 E_{-\bar{\nu}(\phi)}+\epsilon_0^2 E_{-\bar{\nu}^2(\phi)}}{3} + 
\epsilon_0^2 \epsilon^{-s_\phi} \frac{E_{-\phi} + \epsilon_0^2 E_{-\bar{\nu}(\phi)}+\epsilon_0 E_{-\bar{\nu}^2(\phi)}}{3}, & 
\textrm{if } \epsilon^{\frac{m}{3}} = \epsilon_0^2 
\end{cases}
\]
\[=
\begin{cases}
\epsilon^{-s_\phi} E_{-\bar{\nu}(\phi)}, & \textrm{if } \epsilon^{\frac{m}{3}} = \epsilon_0 \\
\epsilon^{-s_\phi} E_{-\bar{\nu}^2(\phi)}, & \textrm{if } \epsilon^{\frac{m}{3}} = \epsilon_0^2, 
\end{cases}
\ \ \ \ \ \ \ \ \ \ \ \ \ \ \ \ \ \ \ \ \ \ \ \ \ \ \ \ \ \ \ \ \ \ \ \ \ \ \ \ \ \ \ \ \ \ \ \ \ \ \ \ \ \ \ \ \ \ \ \ \ \ \ \ \ \ \ \ \ \ \ \ \ \ \ \ \ \ \]
as $\epsilon_0 = \omega$ here. Hence $\textrm{for all } \phi \in \Phi \textrm{ with } \phi \neq \bar{\nu}(\phi)$,
\[\boxed{\sigma (E_\phi) = \epsilon^{s_\phi} E_{\bar{\nu}(\phi)},\ \sigma (E_{-\phi}) = \epsilon^{-s_\phi} E_{-\bar{\nu}(\phi)}, 
\textrm{ and }\sigma (H_\phi ^*)=  H_{\bar{\nu}(\phi)} ^*
 \textrm{ for } k=3 , \textrm{ if }\epsilon^{\frac{m}{3}} = \epsilon_0,}\]
or
\[\boxed{\sigma (E_\phi) = \epsilon^{s_\phi} E_{\bar{\nu}^2(\phi)},\ \sigma (E_{-\phi}) = \epsilon^{-s_\phi} E_{-\bar{\nu}^2(\phi)}, 
 \textrm{ and }\sigma (H_\phi ^*)=  H_{\bar{\nu}^2(\phi)} ^*
\textrm{ for } k=3 , \textrm{ if }\epsilon^{\frac{m}{3}} = \epsilon_0^2.}\]
So if we want the Dynkin diagram automorphism induced by $\sigma$ to be $\bar{\nu}$, we must need to take $\epsilon$ to be a primitive 
$m$-th root of unity with $\epsilon^{\frac{m}{k}} = \epsilon_0$. In this case, we have 
\[ \sigma (E_\alpha) = q_\alpha \epsilon^{n_\alpha} E_{\nu(\alpha)},\  \sigma (E_{-\alpha}) = q_\alpha \epsilon^{-n_\alpha} 
 E_{-\nu(\alpha)}, \textrm{ and }\sigma (H_\phi ^*)=  H_{\bar{\nu}(\phi)} ^*  \textrm{ for all } \alpha \in \Delta^+, \phi \in \Phi ;\] 
where $\{ H_\phi ^* \  , E_\alpha : \phi \in \Phi , \alpha \in \Delta \}$ is a Chevalley basis for $\frak{g}$ as in \eqref{chevalley}, 
$q_\alpha = \pm 1$, and $n_\alpha = \sum\limits_{\phi \in \Phi} n_\phi (\alpha ) s_\phi$ (if 
$\alpha = \sum\limits_{\phi \in \Phi} n_\phi (\alpha ) \phi,\  n_\phi (\alpha ) \in \mathbb{N} \cup \{0\}$) for all $\alpha \in \Delta^+$. 
Recall that $\nu$ is the unique automorphism of $\frak{g}$ with 
\[ \nu ( H_\phi ^*) =  H_{\bar{\nu} (\phi)} ^* , \ \ \  \nu (E_\phi) = E_{\bar{\nu} (\phi)} , \ \ \  \nu (E_{-\phi}) = E_{-\bar{\nu} (\phi)} \ \ \ 
(\phi \in \Phi). \]
So $\nu(\frak{h}) = \frak{h}$ and hence $\nu(\alpha) \ (\alpha \in \frak{h}^*)$ makes sense, where 
$\nu(\alpha)(H) = \alpha (\nu H)$ for all $H \in \frak{h}$. Note that $\nu(\phi) =\bar{\nu}(\phi)$ for all $\phi \in \Phi$. Also $q_\alpha = 
\pm 1$ by \eqref{chevalley}, for if $\beta + n\alpha \ (p \le n \le q)$ is the $\alpha$-string containing $\beta$, then 
$\nu(\beta) + n\nu(\alpha) \ (p \le n \le q)$ is the $\nu(\alpha)$-string containing $\nu(\beta)$, where $\alpha, \beta \in \Delta$. 

  If  $X_\alpha = E_\alpha - E_{-\alpha} , Y_\alpha = i(E_\alpha + E_{-\alpha})\ (\alpha \in \Delta^+)$, then the compact real form 
 $\frak{u}$ is given by 
\[ \frak{u} =  \sum_{\phi \in \Phi} \mathbb{R} (i  H_\phi ^*) \oplus \sum_{\alpha \in \Delta^+} \mathbb{R} X_\alpha \oplus 
\sum_{\alpha \in \Delta^+} \mathbb{R} Y_\alpha. \]
Now 
\[\sigma (X_\alpha ) = q_\alpha (\epsilon^{n_\alpha}  E_{\nu(\alpha)} - \epsilon^{-n_\alpha}  E_{-\nu(\alpha)})\ \ \ \ \ \ \ \ \ \ \ \ \ \ \ \ \ \ \ 
\ \ \ \ \ \ \ \ \ \ \ \ \ \ \ \ \ \ \ \ \ \ \ \ \ \ \ \ \ \ \ \ \ \ \ \ \ \ \ \ \ \ \ \ \ \ \ \ \ \ \ \ \ \ \ \ \ \ \ \ \ \ \ \ \ \ \ \ \ \ \ \ \ \ \ \ \ \ \ \ \ \]
\[=q_\alpha \bigg{(} \cos \frac{2bn_\alpha \pi}{m} E_{\nu(\alpha)} + i \sin \frac{2bn_\alpha \pi}{m} E_{\nu(\alpha)} - \cos \frac{2bn_\alpha \pi}{m} 
 E_{-\nu(\alpha)} + i \sin \frac{2bn_\alpha \pi}{m} E_{-\nu(\alpha)}\bigg{)} \]
\[= q_\alpha \cos \frac{2bn_\alpha \pi}{m}(E_{\nu(\alpha)} -  E_{-\nu(\alpha)}) + i q_\alpha \sin \frac{2bn_\alpha \pi}{m}(E_{\nu(\alpha)} + E_{-\nu(\alpha)}) 
\ \ \ \ \ \ \ \ \ \ \ \ \ \ \ \ \ \ \ \ \ \ \]
\[= q_\alpha \cos \frac{2bn_\alpha \pi}{m}  X_{\nu(\alpha)} + q_\alpha \sin \frac{2bn_\alpha \pi}{m}  Y_{\nu (\alpha)}, \ \ \ \ \ \ \ \ \ \ \ \ \ \ \ \ \ \ \ \ \ \ \ \ \ 
\ \ \ \ \ \ \ \ \ \ \ \ \ \ \ \ \ \ \ \ \ \ \ \ \ \ \ \]
\[\sigma (Y_\alpha ) = i q_\alpha (\epsilon^{n_\alpha}  E_{\nu(\alpha)} + \epsilon^{-n_\alpha}  E_{-\nu(\alpha)})\ \ \ \ \ \ \ \ \ \ \ \ \ \ \ \ \ \ 
\ \ \ \ \ \ \ \ \ \ \ \ \ \ \ \ \ \ \ \ \ \ \ \ \ \ \ \ \ \ \ \ \ \ \ \ \ \ \ \ \ \ \ \ \ \ \ \ \ \ \ \ \ \ \ \ \ \ \ \]
\[= q_\alpha \bigg{(} i \cos \frac{2bn_\alpha \pi}{m} E_{\nu(\alpha)} - \sin \frac{2bn_\alpha \pi}{m} E_{\nu(\alpha)} + i \cos \frac{2bn_\alpha \pi}{m} 
 E_{-\nu(\alpha)} + \sin \frac{2bn_\alpha \pi}{m} E_{-\nu(\alpha)}\bigg{)} \]
\[= i  q_\alpha \cos \frac{2bn_\alpha \pi}{m}(E_{\nu(\alpha)} +  E_{-\nu(\alpha)}) - q_\alpha \sin \frac{2bn_\alpha \pi}{m}(E_{\nu(\alpha)} - E_{-\nu(\alpha)}) 
\ \ \ \ \ \ \ \ \ \ \ \ \ \ \ \ \ \ \ \ \ \ \ \ \]
\[= q_\alpha \cos \frac{2bn_\alpha \pi}{m}  Y_{\nu(\alpha)} - q_\alpha \sin \frac{2bn_\alpha \pi}{m}  X_{\nu (\alpha)}, \ \ \ \ \ \ \ \ \ \ \ \ \ \ \ \ \ \ \ \ \ \ \ \ \ \ 
\ \ \ \ \ \ \ \ \ \ \ \ \ \ \ \ \ \ \ \ \ \ \ \ \ \ \ \]
for all $\alpha \in \Delta^+$, where $\epsilon = e^{\frac{2b\pi i}{m}}$ with gcd$(b,m)= 1$, is a primitive $m$-th root of unity. Obviously 
 $\sigma (i H_\phi ^*) = i H_{\bar{\nu}(\phi)}^*$ for all $\phi \in \Phi$. Hence $\sigma$ is an  automorphism of $\frak{g}$ of order $m$ such that 
$\sigma \theta  = \theta \sigma$, 
$\sigma(\frak{h}) = \frak{h}$, $\sigma (\Delta^+ )=\Delta^+$, and the Dynkin diagram automorphism induced by $\sigma$ is 
$\bar{\nu}$.  
Let $i_1 , \ldots, i_t$ be all the indices with $s_{i_1} = \cdots = s_{i_t}=0$. Then the Lie algebra $\frak{g}_0 ^\sigma = \{ X \in \frak{g} : 
\sigma (X) = X \}$ is the direct sum of an $(n-t)$-dimensional centre and a semisimple Lie algebra whose Dynkin diagram is the subdiagram 
of the follwing diagram $\frak{g}^{(k)}$ consisting of the vertices $\psi_{i_1} , \ldots , \psi_{i_t}$ \cite[Th. 5.15(ii), Ch. X]{helgason}.

\newpage 
\begin{tikzpicture}

\draw (0,0) circle [radius = 0.1];
\draw (1,0) circle [radius = 0.1]; 
\draw (2.5,0) circle [radius = 0.1]; 
\draw (3.5,0) circle [radius = 0.1]; 
\draw (1.75,0.75) circle [radius = 0.1]; 
\node [below] at (0.05,-0.05) {$\psi_1$}; 
\node [below] at (1.05,-0.05) {$\psi_2$}; 
\node [below] at (2.75,-0.05) {$\psi_{n-1}$}; 
\node [below] at (3.55,-0.05) {$\psi_n$}; 
\node [above] at (1.80, 0.80) {$\alpha_0$}; 
\draw (0.1,0) -- (0.9,0); 
\draw (1.1,0) -- (1.5,0); 
\draw [dotted] (1.5,0) -- (2,0); 
\draw (2,0) -- (2.4,0); 
\draw (2.6,0) -- (3.4,0); 
\draw (0.1,0.05) -- (1.65,0.75); 
\draw (1.85,0.75) -- (3.4,0.05); 
\node [left] at (-0.5,0) {$\frak{a}_n^{(1)} :$};  
\node [left] at (-0.5,-0.6) {$(n >1)$};  

\node [left] at (8.5,0) {$\frak{a}_{2n}^{(2)} :$}; 
\node [left] at (8.5, -0.6) {$(n >1)$}; 
\draw (9,0) circle [radius = 0.1]; 
\draw (10,0) circle [radius = 0.1]; 
\draw (11,0) circle [radius = 0.1]; 
\draw (12.5,0) circle [radius = 0.1]; 
\draw (13.5,0) circle [radius = 0.1]; 
\node [below] at (9.05,-0.15) {$\alpha_0$}; 
\node [below] at (10.05,-0.05) {$\psi_1$}; 
\node [below] at (11.05,-0.05) {$\psi_2$}; 
\node [below] at (12.75,-0.05) {$\psi_{n-1}$}; 
\node [below] at (13.55,-0.05) {$\psi_n$}; 
\draw (9.9,0) -- (9.8,0.1); 
\draw (9.9,0) -- (9.8,-0.1); 
\draw (9.1,0.025) -- (9.85,0.025); 
\draw (9.1,-0.025) -- (9.85,-0.025); 
\draw (10.1,0) -- (10.9,0); 
\draw (11.1,0) -- (11.5,0); 
\draw [dotted] (11.5,0) -- (12,0); 
\draw (12,0) -- (12.4,0); 
\draw (13.4,0) -- (13.3,0.1); 
\draw (13.4,0) -- (13.3,-0.1); 
\draw (12.6,0.025) -- (13.35, 0.025); 
\draw (12.6,-0.025) -- (13.35,-0.025); 

\draw (0,-2) circle [radius = 0.1]; 
\draw (1,-2) circle [radius = 0.1]; 
\node [below] at (0.05,-2.15) {$\alpha_0$}; 
\node [below] at (1.05,-2.05) {$\psi_1$}; 
\node [left] at (-0.5, -2) {$\frak{a}_1^{(1)} :$}; 
\draw (0.08,-1.984) -- (0.92,-1.984); 
\draw (0.04,-1.93) -- (0.96,-1.93); 
\draw (0.08,-2.035) -- (0.92,-2.035); 
\draw (0.04,-2.09) -- (0.96,-2.09); 

\node [left] at (8.5,-2) {$\frak{a}_2^{(2)} :$}; 
\draw (9,-2) circle [radius = 0.1]; 
\draw (10,-2) circle [radius = 0.1]; 
\node [below] at (9.05,-2.05) {$\psi_1$}; 
\node [below] at (10.05,-2.15) {$\alpha_0$}; 
\draw (9.1,-2) -- (9.2,-1.9); 
\draw (9.1,-2) -- (9.2,-2.1); 
\draw (9.08,-1.984) -- (9.92,-1.984); 
\draw (9.18,-1.93) -- (9.96,-1.93); 
\draw (9.08,-2.035) -- (9.92,-2.035); 
\draw (9.18,-2.09) -- (9.96,-2.09); 

\draw (0,-4) circle [radius = 0.1]; 
\draw (1,-4) circle [radius = 0.1]; 
\draw (2,-4) circle [radius = 0.1]; 
\draw (1,-5) circle [radius = 0.1]; 
\draw (3.5,-4) circle [radius = 0.1]; 
\draw (4.5,-4) circle [radius = 0.1]; 
\node [above] at (0.05,-3.95) {$\psi_1$}; 
\node [above] at (1.05,-3.95) {$\psi_2$}; 
\node [above] at (2.05,-3.95) {$\psi_2$}; 
\node [below] at (1.05,-5.15) {$\alpha_0$}; 
\node [above] at (3.75,-3.95) {$\psi_{n-1}$}; 
\node [above] at (4.55,-3.95) {$\psi_n$}; 
\node [left] at (-0.5,-4) {$\frak{b}_n^{(1)} :$}; 
\node[ left] at (-0.5,-4.6) {$(n > 2)$}; 
\draw (0.1,-4) -- (0.9,-4); 
\draw (1.1,-4) -- (1.9,-4);
\draw (1,-4.1) -- (1,-4.9); 
\draw (2.1,-4) -- (2.5,-4); 
\draw [dotted] (2.5,-4) -- (3,-4); 
\draw (3,-4) -- (3.4,-4); 
\draw (4.4,-4) -- (4.3,-3.9); 
\draw (4.4,-4) -- (4.3,-4.1); 
\draw (3.6,-3.975) -- (4.35,-3.975); 
\draw (3.6,-4.025) -- (4.35,-4.025); 

\node [left] at (8.5,-4) {$\frak{\delta}_{n+1}^{(2)} :$}; 
\node [left] at (8.6,-4.6) {$(n > 1)$}; 
\draw (9,-4) circle [radius = 0.1]; 
\draw (10,-4) circle [radius = 0.1]; 
\draw (11.5,-4) circle [radius = 0.1]; 
\draw (12.5,-4) circle [radius = 0.1]; 
\node [above] at (9.05, -3.95) {$\alpha_0$}; 
\node [above] at (10.05,-3.95) {$\psi_1$}; 
\node [above] at (11.75,-3.95) {$\psi_{n-1}$}; 
\node [above] at (12.55,-3.95) {$\psi_n$}; 
\draw (9.1,-4) -- (9.2,-3.9); 
\draw (9.1,-4) -- (9.2,-4.1); 
\draw (9.15,-3.975) -- (9.9,-3.975); 
\draw (9.15,-4.025) -- (9.9,-4.025); 
\draw (10.1,-4) -- (10.5,-4); 
\draw [dotted] (10.5,-4) -- (11,-4); 
\draw (11,-4) -- (11.4,-4); 
\draw (12.4,-4) -- (12.3,-3.9); 
\draw (12.4,-4) -- (12.3,-4.1); 
\draw (11.6,-3.975) -- (12.35,-3.975); 
\draw (11.6,-4.025) -- (12.35,-4.025); 

\draw (0,-7) circle [radius = 0.1]; 
\draw (1,-7) circle [radius = 0.1]; 
\draw (2.5,-7) circle [radius = 0.1]; 
\draw (3.5,-7) circle [radius = 0.1]; 
\node [above] at (0.05,-6.95) {$\alpha_0$}; 
\node [above] at (1.05,-6.95) {$\psi_1$}; 
\node [above] at (2.75,-6.95) {$\psi_{n-1}$}; 
\node [above] at (3.55,-6.95) {$\psi_n$}; 
\node [left] at (-0.5,-7) {$\frak{c}_n^{(1)} :$}; 
\node [left] at (-0.5,-7.6) {$(n > 1)$}; 
\draw (0.9,-7) -- (0.8,-6.9); 
\draw (0.9,-7) -- (0.8,-7.1); 
\draw (0.1,-6.975) -- (0.85,-6.975); 
\draw (0.1,-7.025) -- (0.85,-7.025); 
\draw (1.1,-7) -- (1.5,-7); 
\draw [dotted] (1.5,-7) -- (2,-7); 
\draw (2,-7) -- (2.4,-7); 
\draw (2.6,-7) -- (2.7,-6.9); 
\draw (2.6,-7) -- (2.7,-7.1); 
\draw (2.65,-6.975) -- (3.4,-6.975); 
\draw (2.65,-7.025) -- (3.4,-7.025); 

\node [left] at (8.5,-7) {$\frak{a}_{2n-1}^{(2)} :$}; 
\node [left] at (8.5,-7.6) {$(n > 2)$}; 
\draw (9,-7) circle [radius = 0.1]; 
\draw (10,-7) circle [radius = 0.1]; 
\draw (11,-7) circle [radius = 0.1]; 
\draw (10,-8) circle [radius = 0.1]; 
\draw (12.5,-7) circle [radius = 0.1]; 
\draw (13.5,-7) circle [radius = 0.1]; 
\node [above] at (9.05,-6.95) {$\psi_1$}; 
\node [above] at (10.05,-6.95) {$\psi_2$}; 
\node [above] at (11.05,-6.95) {$\psi_3$}; 
\node [below] at (10.05,-8.15) {$\alpha_0$}; 
\node [above] at (12.75,-6.95) {$\psi_{n-1}$}; 
\node [above] at (13.55,-6.95) {$\psi_n$}; 
\draw (9.1,-7) -- (9.9,-7); 
\draw (10.1,-7) -- (10.9,-7); 
\draw (10,-7.1) -- (10,-7.9); 
\draw (11.1,-7) -- (11.5,-7); 
\draw [dotted] (11.5,-7) -- (12,-7); 
\draw (12,-7) -- (12.4,-7); 
\draw (12.6,-7) -- (12.7,-6.9); 
\draw (12.6,-7) -- (12.7,-7.1); 
\draw (12.65,-6.975) -- (13.4,-6.975); 
\draw (12.65,-7.025) -- (13.4,-7.025); 

\draw (0,-10) circle [radius = 0.1]; 
\draw (1,-10) circle [radius = 0.1]; 
\draw (2,-10) circle [radius = 0.1]; 
\draw (1,-11) circle [radius = 0.1]; 
\draw (3.5,-10) circle [radius = 0.1]; 
\draw (4.5,-10) circle [radius = 0.1]; 
\draw (5.5,-9) circle [radius = 0.1]; 
\draw (5.5,-11) circle [radius = 0.1]; 
\node [above] at (0.05,-9.95) {$\psi_1$}; 
\node [above] at (1.05,-9.95) {$\psi_2$}; 
\node [above] at (2.05,-9.95) {$\psi_3$}; 
\node [below] at (1.05,-11.15) {$\alpha_0$}; 
\node [above] at (3.35,-9.95) {$\psi_{n-3}$}; 
\node [above] at (4.35,-9.95) {$\psi_{n-2}$}; 
\node [above] at (5.55,-8.95) {$\psi_{n-1}$}; 
\node [below] at (5.55,-11.05) {$\psi_n$}; 
\node [left] at (-0.5,-10) {$\frak{\delta}_n^{(1)} :$}; 
\node [left] at (-0.5,-10.6) {$(n > 3)$}; 
\draw (0.1,-10) -- (0.9,-10); 
\draw (1.1,-10) -- (1.9,-10); 
\draw (1,-10.1) -- (1,-10.9); 
\draw (2.1,-10) -- (2.5,-10); 
\draw [dotted] (2.5,-10) -- (3,-10); 
\draw (3,-10) -- (3.4,-10); 
\draw (3.6,-10) -- (4.4,-10); 
\draw (4.6,-10) -- (5.45,-9.05); 
\draw (4.6,-10) -- (5.45,-10.95); 

\node [left] at (8.5,-10) {$\frak{e}_6^{(2)} :$}; 
\draw (9,-10) circle [radius = 0.1]; 
\draw (10,-10) circle [radius = 0.1]; 
\draw (11,-10) circle [radius = 0.1]; 
\draw (12,-10) circle [radius = 0.1]; 
\draw (13,-10) circle [radius = 0.1]; 
\node [above] at (9.05,-9.95) {$\psi_1$}; 
\node [above] at (10.05,-9.95) {$\psi_2$}; 
\node [above] at (11.05,-9.95) {$\psi_3$}; 
\node [above] at (12.05,-9.95) {$\psi_4$}; 
\node [above] at (13.05,-9.95) {$\alpha_0$}; 
\draw (9.1,-10) -- (9.9,-10); 
\draw (10.9,-10) -- (10.8,-9.9); 
\draw (10.9,-10) -- (10.8,-10.1); 
\draw (10.1,-9.975) -- (10.85,-9.975); 
\draw (10.1,-10.025) -- (10.85,-10.025); 
\draw (11.1,-10) -- (11.9,-10); 
\draw (12.1,-10) -- (12.9,-10); 

\draw (0,-14) circle [radius = 0.1]; 
\draw (1,-14) circle [radius = 0.1]; 
\draw (2,-14) circle [radius = 0.1]; 
\draw (2,-13) circle [radius = 0.1]; 
\draw (2,-12) circle [radius = 0.1]; 
\draw (3,-14) circle [radius = 0.1]; 
\draw (4,-14) circle [radius = 0.1]; 
\node [below] at (0.05,-14.05) {$\psi_6$}; 
\node [below] at (1.05,-14.05) {$\psi_5$}; 
\node [below] at (2.05,-14.05) {$\psi_4$}; 
\node [right] at (2.05,-13) {$\psi_2$}; 
\node [right] at (2.05,-12) {$\alpha_0$}; 
\node [below] at (3.05,-14.05) {$\psi_3$}; 
\node [below] at (4.05,-14.05) {$\psi_1$}; 
\node [left] at (-0.5,-14) {$\frak{e}_6^{(1)} :$}; 
\draw (0.1,-14) -- (0.9,-14); 
\draw (1.1,-14) -- (1.9,-14); 
\draw (2,-13.9) -- (2,-13.1); 
\draw (2,-12.9) -- (2,-12.1); 
\draw (2.1,-14) -- (2.9,-14); 
\draw (3.1,-14) -- (3.9,-14); 

\node [left] at (8.5,-14) {$\frak{\delta}_4^{(3)} :$}; 
\draw (9,-14) circle [radius = 0.1]; 
\draw (10,-14) circle [radius = 0.1]; 
\draw (11,-14) circle [radius = 0.1]; 
\node [below] at (9.05,-14.05) {$\psi_2$}; 
\node [below] at (10.05,-14.05) {$\psi_1$}; 
\node [below] at (11.05,-14.15) {$\alpha_0$}; 
\draw (9.9,-14) -- (9.8,-13.9);
\draw (9.9,-14) -- (9.8,-14.1); 
\draw (9.1,-14) -- (9.9,-14); 
\draw (9.1,-13.925) -- (9.8,-13.925); 
\draw (9.1,-14.075) -- (9.8,-14.075); 
\draw (10.1,-14) -- (10.9,-14); 

\draw (0,-16) circle [radius = 0.1]; 
\draw (1,-16) circle [radius = 0.1]; 
\draw (2,-16) circle [radius = 0.1]; 
\draw (3,-16) circle [radius = 0.1]; 
\draw (3,-15) circle [radius = 0.1]; 
\draw (4,-16) circle [radius = 0.1]; 
\draw (5,-16) circle [radius = 0.1]; 
\draw (6,-16) circle [radius = 0.1]; 
\node [below] at (0.05,-16.05) {$\psi_7$}; 
\node [below] at (1.05,-16.05) {$\psi_6$}; 
\node [below] at (2.05,-16.05) {$\psi_5$}; 
\node [below] at (3.05,-16.05) {$\psi_4$}; 
\node [right] at (3.05,-15) {$\psi_2$}; 
\node [below] at (4.05,-16.05) {$\psi_3$}; 
\node [below] at (5.05,-16.05) {$\psi_1$}; 
\node [below] at (6.05, -16.15) {$\alpha_0$}; 
\node [left] at (-0.5,-16) {$\frak{e}_7^{(1)} :$}; 
\draw (0.1,-16) -- (0.9,-16); 
\draw (1.1,-16) -- (1.9,-16); 
\draw (2.1,-16) -- (2.9,-16); 
\draw (3,-15.9) -- (3,-15.1); 
\draw (3.1,-16) -- (3.9,-16); 
\draw (4.1,-16) -- (4.9,-16); 
\draw (5.1,-16) -- (5.9,-16); 

\draw (0,-18) circle [radius = 0.1]; 
\draw (1,-18) circle [radius = 0.1]; 
\draw (2,-18) circle [radius = 0.1]; 
\draw (3,-18) circle [radius = 0.1]; 
\draw (4,-18) circle [radius = 0.1]; 
\draw (5,-18) circle [radius = 0.1]; 
\draw (5,-17) circle [radius = 0.1]; 
\draw (6,-18) circle [radius = 0.1]; 
\draw (7,-18) circle [radius = 0.1]; 
\node [below] at (0.05,-18.15) {$\alpha_0$}; 
\node [below] at (1.05,-18.05) {$\psi_8$}; 
\node [below] at (2.05,-18.05) {$\psi_7$}; 
\node [below] at (3.05,-18.05) {$\psi_6$}; 
\node [below] at (4.05,-18.05) {$\psi_5$}; 
\node [below] at (5.05,-18.05) {$\psi_4$}; 
\node [right] at (5.05,-17) {$\psi_2$}; 
\node [below] at (6.05,-18.05) {$\psi_3$}; 
\node [below] at (7.05,-18.05) {$\psi_1$}; 
\node [left] at (-0.5,-18) {$\frak{e}_8^{(1)} :$}; 
\draw (0.1,-18) -- (0.9,-18); 
\draw (1.1,-18) -- (1.9,-18); 
\draw (2.1,-18) -- (2.9,-18); 
\draw (3.1,-18) -- (3.9,-18); 
\draw (4.1,-18) -- (4.9,-18); 
\draw (5,-17.9) -- (5,-17.1); 
\draw (5.1,-18) -- (5.9,-18); 
\draw (6.1,-18) -- (6.9,-18); 

\draw (0,-20) circle [radius = 0.1]; 
\draw (1,-20) circle [radius = 0.1]; 
\draw (2,-20) circle [radius = 0.1]; 
\draw (3,-20) circle [radius = 0.1]; 
\draw (4,-20) circle [radius = 0.1]; 
\node [below] at (0.05,-20.15) {$\alpha_0$}; 
\node [below] at (1.05,-20.05) {$\psi_1$}; 
\node [below] at (2.05,-20.05) {$\psi_2$}; 
\node [below] at (3.05,-20.05) {$\psi_3$}; 
\node [below] at (4.05,-20.05) {$\psi_4$}; 
\node [left] at (-0.5,-20) {$\frak{f}_4^{(1)} :$}; 
\draw (0.1,-20) -- (0.9,-20); 
\draw (1.1,-20) -- (1.9,-20); 
\draw (2.9,-20) -- (2.8,-19.9); 
\draw (2.9,-20) -- (2.8,-20.1); 
\draw (2.1,-19.975) -- (2.85,-19.975); 
\draw (2.1,-20.025) -- (2.85,-20.025); 
\draw (3.1,-20) -- (3.9,-20); 

\draw (0,-21) circle [radius = 0.1]; 
\draw (1,-21) circle [radius = 0.1]; 
\draw (2,-21) circle [radius = 0.1]; 
\node [below] at (0.05,-21.15) {$\alpha_0$}; 
\node [below] at (1.05,-21.05) {$\psi_2$}; 
\node [below] at (2.05,-21.05) {$\psi_1$}; 
\node [left] at (-0.5,-21) {$\frak{g}_2^{(1)} :$}; 
\draw (0.1,-21) -- (0.9,-21); 
\draw (1.9,-21) -- (1.8,-20.9); 
\draw (1.9,-21) -- (1.8,-21.1); 
\draw (1.1,-21) -- (1.9,-21); 
\draw (1.1,-20.925) -- (1.8,-20.925); 
\draw (1.1,-21.075) -- (1.8,-21.075); 

\end{tikzpicture}

  Except for conjugation, these are all automorphisms of $\frak{g}$ of order $m$ \cite[Th. 5.15(iii), Ch. X]{helgason}. 

\begin{remark}\label{autoremark} 
{\em 
(i) {\it Let $\delta \in \Delta^+$ be the highest root of $\frak{g}$. Then $\frak{g}_\delta ,\  \frak{g}_{-\delta} \subset \frak{g}_{0}^\nu$, 
 except for $\frak{g} = \frak{a}_{2n} \ (n \ge 1)\ (k=2)$. 
For $\frak{g} = \frak{a}_{2n} \ (n \ge 1)$ with $k=2$,  
$\frak{g}_\delta, \frak{g}_{-\delta} \subset \frak{g}_{\bar{1}}^\nu$. Consequently $\alpha_0 = - \delta \big{|}_{\frak{h}^\nu}$ for 
$\frak{g} = \frak{a}_{2n} \ (n \ge 1)$ with $k=2$} : For $k=1,\  \frak{g}_0^\nu = 
\frak{g}_{\bar{1}}^\nu = \frak{g}$. Then obviously, 
$\frak{g}_\delta , \frak{g}_{-\delta} \subset \frak{g}_{0}^\nu$. For $k=2, \textrm{ or }3$, we prove it via case by case consideration. 
\\
Note that $\nu (\delta) = \delta$ and hence for any $E (\neq 0) \in \frak{g}_\delta $, $\nu (E) = E$ or $\nu (E) = -E$, by the 
definition of $\nu$. 
Thus if $\nu$ is an automorphism of order $3$, then $\frak{g}_\delta \subset \frak{g}_0^\nu$. Similarly 
$\frak{g}_{-\delta} \subset \frak{g}_0^\nu$. \\
Now assume that $k=2$. \\
Let $\frak{g} = \frak{a}_{2n}\ (n \ge 1)$. 

\begin{center}
\begin{tikzpicture} 

\draw (0,0) circle [radius = 0.1]; 
\draw (1,0) circle [radius = 0.1]; 
\draw (2.5,0) circle [radius = 0.1]; 
\draw (3.5,0) circle [radius = 0.1]; 
\draw (5,0) circle [radius = 0.1]; 
\draw (6,0) circle [radius = 0.1]; 
\node [left] at (-0.5,0) {$\frak{a}_{2n} :$}; 
\node [left] at (-0.5,-0.5) {$(n \ge 1)$}; 
\node [below] at (0.025,-0.025) {$\phi_1$}; 
\node [below] at (1.025,-0.025) {$\phi_2$}; 
\node [below] at (2.525,-0.025) {$\phi_n$}; 
\node [below] at (3.575,-0.025) {$\phi_{n+1}$}; 
\node [below] at (5.075,-0.025) {$\phi_{2n-1}$}; 
\node [below] at (6.1,-0.025) {$\phi_{2n}$}; 
\draw (0.1,0) -- (0.9,0); 
\draw (1.1,0) -- (1.5,0); 
\draw[dotted] (1.5,0) -- (2,0); 
\draw (2,0) -- (2.4,0); 
\draw (2.6,0) -- (3.4,0); 
\draw (3.6,0) -- (4,0); 
\draw[dotted] (4,0) -- (4.5,0); 
\draw (4.5,0) -- (4.9,0); 
\draw (5.1,0) -- (5.9,0); 

\end{tikzpicture}
\end{center} 

The highest root $\delta = \phi_1 + \cdots +\phi_{2n}$. Note that $\bar{\nu}$ is given by 
$\bar{\nu}(\phi_j) = \phi_{2n - j +1}$ for all $1 \le j \le 2n$. Let $E_j,\ E_{-j}$ be non-zero 
root vectors corresponding to the roots $\phi_j , \ -\phi_j$ respectively, for all $1 \le j \le 2n$. Then $[E_ n, E_{n-1}, \ldots , E_1], \ 
[E_{n+1}, E_{n+2}, \ldots , E_{2n}] \neq 0$, as $\phi_i + \cdots + \phi_j$ is a root for all $1 \le i < j \le 2n$. Let 
\[E = \big{[}[E_ n, E_{n-1}, \ldots , E_1], [E_{n+1}, E_{n+2}, \ldots , E_{2n}]\big{]}. \]  
Then $E \neq 0, \ E \in \frak{g}_\delta , \ \textrm{and } \nu (E) = - E$. Hence $\frak{g}_\delta  \subset \frak{g}_{\bar{1}}^\nu$. Similarly 
$\frak{g}_{-\delta} \subset \frak{g}_{\bar{1}}^\nu$.  \\
Let $\frak{g} = \frak{a}_{2n-1}\ (n \ge 2)$.  

\begin{center} 
\begin{tikzpicture} 

\draw (0,0) circle [radius = 0.1]; 
\draw (1,0) circle [radius = 0.1]; 
\draw (2.5,0) circle [radius = 0.1]; 
\draw (3.5,0) circle [radius = 0.1]; 
\draw (4.5,0) circle [radius = 0.1]; 
\draw (6,0) circle [radius = 0.1]; 
\draw (7,0) circle [radius = 0.1]; 
\node [left] at (-0.5,0) {$\frak{a}_{2n-1} :$}; 
\node [left] at (-0.5,-0.5) {$(n \ge 2)$}; 
\node [below] at (0.025,-0.025) {$\phi_1$}; 
\node [below] at (1.025,-0.025) {$\phi_2$}; 
\node [below] at (2.575,-0.025) {$\phi_{n-1}$}; 
\node [below] at (3.525,-0.025) {$\phi_n$}; 
\node [below] at (4.575,-0.025) {$\phi_{n+1}$}; 
\node [below] at (6.075,-0.025) {$\phi_{2n-2}$}; 
\node [below] at (7.3,-0.025) {$\phi_{2n-1}$}; 
\draw (0.1,0) -- (0.9,0); 
\draw (1.1,0) -- (1.5,0); 
\draw [dotted] (1.5,0) -- (2,0); 
\draw (2,0) -- (2.4,0); 
\draw (2.6,0) -- (3.4,0); 
\draw (3.6,0) -- (4.4,0); 
\draw (4.6,0) -- (5,0); 
\draw [dotted] (5,0) -- (5.5,0); 
\draw (5.5,0) -- (5.9,0); 
\draw (6.1,0) -- (6.9,0); 

\end{tikzpicture} 
\end{center} 

The highest root $\delta = \phi_1 + \cdots +\phi_{2n-1}$. Note that $\bar{\nu}$ is given by 
$\bar{\nu}(\phi_j) = \phi_{2n - j}$ for all $1 \le j \le 2n-1$. Let $E_j,\ E_{-j}$ be non-zero 
root vectors corresponding to the roots $\phi_j , \ -\phi_j$ respectively, for all $1 \le j \le 2n-1$. Then $[E_{n-1}, \ldots , E_1], \ 
[E_{n+1}, \ldots , E_{2n-1}] \neq 0$. Let 
\[E = \big{[}[E_{n-1}, \ldots , E_1], [E_n, [E_{n+1}, \ldots , E_{2n-1}]]\big{]}. \]  
Then $E \neq 0, \ E \in \frak{g}_\delta , \ \textrm{and } \nu (E) = E$. Hence $\frak{g}_\delta  \subset \frak{g}_0^\nu$. Similarly 
$\frak{g}_{-\delta} \subset \frak{g}_0^\nu$. \\
Let $\frak{g} = \frak{\delta}_{n+1}\ (n \ge 3)$. 

\begin{center} 
\begin{tikzpicture} 

\draw (0,0) circle [radius = 0.1]; 
\draw (1,0) circle [radius = 0.1]; 
\draw (2.5,0) circle [radius = 0.1]; 
\draw (3.5,1) circle [radius = 0.1]; 
\draw (3.5,-1) circle [radius = 0.1]; 
\node [left] at (-0.5,0) {$\frak{\delta}_{n+1} :$}; 
\node [left] at (-0.5,-0.5) {$(n \ge 3)$}; 
\node [below] at (0.025,-0.025) {$\phi_1$}; 
\node [below] at (1.025,-0.025) {$\phi_2$}; 
\node [below] at (2.475,-0.025) {$\phi_{n-1}$}; 
\node [right] at (3.525,1) {$\phi_{n}$}; 
\node [right] at (3.525,-1) {$\phi_{n+1}$}; 
\draw (0.1,0) -- (0.9,0); 
\draw (1.1,0) -- (1.5,0); 
\draw [dotted] (1.5,0) -- (2,0); 
\draw (2,0) -- (2.4,0);
\draw (2.6,0) -- (3.425,0.925); 
\draw (2.6,0) -- (3.425,-0.925); 

\end{tikzpicture} 
\end{center}

The highest root $\delta = \phi_1 + 2\phi_2 + \cdots + 2\phi_{n-1} + \phi_{n} 
+ \phi_{n+1}$. Note that $\bar{\nu}$ is given by 
$\bar{\nu}(\phi_j) = \phi_j$ for all $1 \le j \le n-1$, $\bar{\nu}(\phi_{n}) = \phi_{n+1}$. Let $E_j,\ E_{-j}$ be non-zero 
root vectors corresponding to the roots $\phi_j , \ -\phi_j$ respectively, for all $1 \le j \le n+1$. Then $[E_ 2,\ldots , E_{n-1}, E_{n}], \ 
[E_2, \ldots , E_{n-1}, E_{n+1}] \neq 0$. Let 
\[E = \big{[}[E_ 2,\ldots , E_{n-1}, E_n] , [E_1, [E_2, \ldots , E_{n-1}, E_{n+1}] ]\big{]}. \]  
Then $E \neq 0, \ E \in \frak{g}_\delta , \ \textrm{and } \nu (E) = E$. Hence $\frak{g}_\delta  \subset \frak{g}_{0}^\nu$. Similarly 
$\frak{g}_{-\delta} \subset \frak{g}_{0}^\nu$.  \\
Let $\frak{g} = \frak{e}_6$. 

\begin{center} 
\begin{tikzpicture} 

\draw (0,0) circle [radius = 0.1]; 
\draw (1,0) circle [radius = 0.1]; 
\draw (2,0) circle [radius = 0.1]; 
\draw (2,1) circle [radius = 0.1]; 
\draw (3,0) circle [radius = 0.1]; 
\draw (4,0) circle [radius = 0.1]; 
\node [left] at (-0.5,0) {$\frak{e}_6 :$}; 
\node [below] at (0.025,-0.025) {$\phi_6$}; 
\node [below] at (1.025,-0.025) {$\phi_5$}; 
\node [below] at (2.025,-0.025) {$\phi_4$}; 
\node [above] at (2.025,1.025) {$\phi_2$}; 
\node [below] at (3.025,-0.025) {$\phi_3$}; 
\node [below] at (4.025,-0.025) {$\phi_1$}; 
\draw (0.1,0) -- (0.9,0); 
\draw (1.1,0) -- (1.9,0); 
\draw (2,0.1) -- (2,0.9); 
\draw (2.1,0) -- (2.9,0); 
\draw (3.1,0) -- (3.9,0); 

\end{tikzpicture} 
\end{center} 

The highest root $\delta = \phi_1 + 2\phi_2 + 2\phi_3 + 3\phi_4 + + 2\phi_5 + \phi_6$. Note that 
$\bar{\nu}$ is given by 
$\bar{\nu}(\phi_1) = \phi_6, \ \bar{\nu}(\phi_3) = \phi_5, \ \bar{\nu}(\phi_2) = \phi_2, \ \bar{\nu}(\phi_4) = \phi_4$. 
Let $E_j,\ E_{-j}$ be non-zero 
root vectors corresponding to the roots $\phi_j , \ -\phi_j$ respectively, for all $1 \le j \le 6$. Let 
$E'_1 = \big{[}[E_1, E_3, E_4], [E_2, [E_6, E_5, E_4]]\big{]}$. Then $E'_1 \neq 0$ and $\nu (E'_1) = E'_1$. 
Let $E'_2 = [E_5, [E_3, E'_1]]$. Then $E'_2 \neq 0$ and $\nu (E'_2) = E'_2$. Let 
\[E = [E_ 2, E_4, E'_2]. \]  
Then $E \neq 0, \ E \in \frak{g}_\delta , \ \textrm{and } \nu (E) =  E$. Hence $\frak{g}_\delta  \subset \frak{g}_{0}^\nu$. Similarly 
$\frak{g}_{-\delta} \subset \frak{g}_{0}^\nu$.  

(ii) {\it The module $\frak{g}_{\bar{a}}^\nu$ is an irreducible $\frak{g}_0^{\nu}$-module for all $0 \le a \le k-1$} :  It remain to prove that 
 the $\frak{g}_0^\nu$-modules $\frak{g}_{\bar{1}}^\nu$ (for $k=2, 3$) and $\frak{g}_{\bar{2}}^\nu$ (for $k=3$) are irreducible. 
Let $\{ H_\phi ^* \  , E_\alpha : \phi \in \Phi , \alpha \in \Delta \}$ be a Chevalley basis for $\frak{g}$ as in \eqref{chevalley}. As 
$\frak{g}_{\bar{a}}^\nu$ are finite dimensional, $\frak{g}_{\bar{a}}^\nu$ are direct sums of irreducible $\frak{g}_0^\nu$-modules. First 
we show that the module $\frak{g}_{\bar{1}}^\nu$ is irreducible. \\
Recall that $E_0 \ (\neq 0) \in \frak{g}_{(\alpha_0,1)} \subset \frak{g}_{\bar{1}}^\nu$ and dim$(\frak{g}_{(\alpha_0,1)})= 1$. By 
\eqref{7}, any weight space of the $\frak{g}_0^\nu$-module $\frak{g}_{\bar{1}}^\nu$ corresponding to a non-zero weight is generated by $E_0$. 
So if $V$ is the irreducible submodule of $\frak{g}_{\bar{1}}^\nu$ containing $\frak{g}_{(\alpha_0,1)}$, then any 
 weight space of $\frak{g}_{\bar{1}}^\nu$ corresponding to a non-zero weight is contained in $V$. Now we show that $V$ also contains  
the weight space corresponding to the zero weight. If not, then there is a non-zero vector $H_0$ corresponding to the zero weight such that 
$[H_0, \bar{E}_\phi] = 0$ for all $\phi \in \Phi$. Here recall that $\bar{E}_\phi= \sum\limits_{i =0} ^ {k-1} E_{\bar{\nu}^i (\phi)}$ is a 
root vector corresponding to a simple root of $\frak{g}_0^\nu$. 
\\ 
Assume that $k=2$. The weight space of $\frak{g}_{\bar{1}}^\nu$ corresponding to the zero weight is given by 
$\frak{h}_{\bar{1}}^\nu = \sum\limits_{\substack{\phi \in \Phi \\ \phi \neq \bar{\nu}(\phi)}} \mathbb{C} (H_\phi^* - H_{\bar{\nu}(\phi)}^*)$. Now 
\[ [H_\phi^* - H_{\bar{\nu}(\phi)}^* , \ \bar{E}_\psi ] = [H_\phi^* - H_{\bar{\nu}(\phi)}^* , \ E_\psi + E_{\bar{\nu}(\psi)}] =  
(a_{\psi \phi} - a_{\psi \bar{\nu}(\phi)}) (E_\psi - E_{\bar{\nu}(\psi)}), \]
where $a_{\psi \phi}= \psi (H_\phi^*)$ for all $\phi , \psi \in \Phi$. So for $\psi \in \Phi$, if $\psi = \bar{\nu}(\psi)$, then 
$[H , \ \bar{E}_\psi ] = 0$, for all $H \in \frak{h}_{\bar{1}}^\nu$. Note that $H_\phi^* - H_{\bar{\nu}(\phi)}^* = -(H_{\bar{\nu}(\phi)}^* 
 - H_{\bar{\nu}^2(\phi)}^* ), \ (\phi \in \Phi, \ \phi \neq \bar{\nu}(\phi))$. So the vectors $H_\phi^* - H_{\bar{\nu}(\phi)}^*  
\ (\phi \in \Phi, \ \phi \neq \bar{\nu}(\phi))$ are linearly dependent. Choose a maximal linearly independent subset 
$\{ H_{\phi_i}^* - H_{\bar{\nu}(\phi_i)}^* : 1 \le i \le p\}$ in the linearly dependent set 
$\{ H_\phi^* - H_{\bar{\nu}(\phi)}^* :  \phi \in \Phi, \ \phi \neq \bar{\nu}(\phi)\}$ and define $a_{ij} = 
a_{\phi_i \phi_j},\ a_{i\bar{\nu}(j)} = a_{\phi_i \bar{\nu}(\phi_j)}$ for all $1 \le i, j \le p$. Note that $p \le n$, where 
$n = $rank$(\frak{g}_0^\nu)$. Let $H_0 = \sum\limits_{i = 1}^{p} c_i (H_{\phi_i}^* - H_{\bar{\nu}(\phi_i)}^*)$.  Now 
$[H_0 , \bar{E}_{\phi_i}] = 0$ for all $1 \le i \le p$ implies 
\[ \sum\limits_{j=1}^{p}(a_{ij} - a_{i\bar{\nu}(j)})c_j = 0 \textrm{ for all } 1 \le i \le p .\] 
So if the $(p \times p)$ matrix $A = (a_{ij} - a_{i\bar{\nu}(j)})$ is non-singular, then $H_0$ must be zero, which contradicts our assumption. 
So we show that the matrix $A$ is non-singular, via case by case consideration. \\ 
 Let $\frak{g} = \frak{a}_{2n} \ (n \ge 1)$. 

\begin{center}
\begin{tikzpicture} 

\draw (0,0) circle [radius = 0.1]; 
\draw (1,0) circle [radius = 0.1]; 
\draw (2.5,0) circle [radius = 0.1]; 
\draw (3.5,0) circle [radius = 0.1]; 
\draw (5,0) circle [radius = 0.1]; 
\draw (6,0) circle [radius = 0.1]; 
\node [left] at (-0.5,0) {$\frak{a}_{2n} :$}; 
\node [left] at (-0.5,-0.5) {$(n \ge 1)$}; 
\node [below] at (0.025,-0.025) {$\phi_1$}; 
\node [below] at (1.025,-0.025) {$\phi_2$}; 
\node [below] at (2.525,-0.025) {$\phi_n$}; 
\node [below] at (3.575,-0.025) {$\phi_{n+1}$}; 
\node [below] at (5.075,-0.025) {$\phi_{2n-1}$}; 
\node [below] at (6.1,-0.025) {$\phi_{2n}$}; 
\draw (0.1,0) -- (0.9,0); 
\draw (1.1,0) -- (1.5,0); 
\draw[dotted] (1.5,0) -- (2,0); 
\draw (2,0) -- (2.4,0); 
\draw (2.6,0) -- (3.4,0); 
\draw (3.6,0) -- (4,0); 
\draw[dotted] (4,0) -- (4.5,0); 
\draw (4.5,0) -- (4.9,0); 
\draw (5.1,0) -- (5.9,0); 

\end{tikzpicture}
\end{center} 

Here $p = n$, $\frak{h}_{\bar{1}}^\nu = \sum\limits_{i = 1}^{n} \mathbb{C} (H_{\phi_i}^* - H_{\phi_{2n-i+1}}^*)$,  
and for all $1 \le i, j \le n$, we have 
\[ a_{ij} = 
\begin{cases}
2, & \textrm{if } i = j \\
-1, & \textrm{if } |i-j| = 1 \\
0, & \textrm{otherwise};  
\end{cases}
\]
\[ a_{i\bar{\nu}(j)} = 
\begin{cases}
-1, & \textrm{if } i = j= n, \\
0, & \textrm{otherwise}.
\end{cases}
\]
Hence the matrix $A = 
\left ({\begin{array}{cccccc}
2&-1&0&\cdots &0&0 \\
-1&2&-1&\cdots &0&0 \\
\cdots &\cdots &\cdots &\cdots &\cdots &\cdots \\
\cdots &\cdots &\cdots &\cdots &\cdots &\cdots \\
0&0&\cdots &-1&2&-1 \\
0&0&\cdots &0&-1&3 \\
\end{array}
}\right )$, which is non-singular. \\ 
Let $\frak{g} = \frak{a}_{2n-1}\ (n \ge 2)$.  

\begin{center} 
\begin{tikzpicture} 

\draw (0,0) circle [radius = 0.1]; 
\draw (1,0) circle [radius = 0.1]; 
\draw (2.5,0) circle [radius = 0.1]; 
\draw (3.5,0) circle [radius = 0.1]; 
\draw (4.5,0) circle [radius = 0.1]; 
\draw (6,0) circle [radius = 0.1]; 
\draw (7,0) circle [radius = 0.1]; 
\node [left] at (-0.5,0) {$\frak{a}_{2n-1} :$}; 
\node [left] at (-0.5,-0.5) {$(n \ge 2)$}; 
\node [below] at (0.025,-0.025) {$\phi_1$}; 
\node [below] at (1.025,-0.025) {$\phi_2$}; 
\node [below] at (2.575,-0.025) {$\phi_{n-1}$}; 
\node [below] at (3.525,-0.025) {$\phi_n$}; 
\node [below] at (4.575,-0.025) {$\phi_{n+1}$}; 
\node [below] at (6.075,-0.025) {$\phi_{2n-2}$}; 
\node [below] at (7.3,-0.025) {$\phi_{2n-1}$}; 
\draw (0.1,0) -- (0.9,0); 
\draw (1.1,0) -- (1.5,0); 
\draw [dotted] (1.5,0) -- (2,0); 
\draw (2,0) -- (2.4,0); 
\draw (2.6,0) -- (3.4,0); 
\draw (3.6,0) -- (4.4,0); 
\draw (4.6,0) -- (5,0); 
\draw [dotted] (5,0) -- (5.5,0); 
\draw (5.5,0) -- (5.9,0); 
\draw (6.1,0) -- (6.9,0); 

\end{tikzpicture} 
\end{center} 

Here $p = n-1$, $\frak{h}_{\bar{1}}^\nu = \sum\limits_{i = 1}^{n-1} \mathbb{C} (H_{\phi_i}^* - H_{\phi_{2n-i}}^*)$,  
and for all $1 \le i, j \le n-1$, we have 
\[ a_{ij} = 
\begin{cases}
2, & \textrm{if } i = j \\
-1, & \textrm{if } |i-j| = 1 \\
0, & \textrm{otherwise};  
\end{cases}
\]
\[ \textrm{and }a_{i\bar{\nu}(j)} = 0 \textrm{ always}.\]
Hence the matrix $A = 
\left ({\begin{array}{cccccc}
2&-1&0&\cdots &0&0 \\
-1&2&-1&\cdots &0&0 \\
\cdots &\cdots &\cdots &\cdots &\cdots &\cdots \\
\cdots &\cdots &\cdots &\cdots &\cdots &\cdots \\
0&0&\cdots &-1&2&-1 \\
0&0&\cdots &0&-1&2 \\
\end{array}
}\right )$, which is non-singular. \\ 
Let $\frak{g} = \frak{\delta}_{n+1}\ (n \ge 3)$. 

\begin{center} 
\begin{tikzpicture} 

\draw (0,0) circle [radius = 0.1]; 
\draw (1,0) circle [radius = 0.1]; 
\draw (2.5,0) circle [radius = 0.1]; 
\draw (3.5,1) circle [radius = 0.1]; 
\draw (3.5,-1) circle [radius = 0.1]; 
\node [left] at (-0.5,0) {$\frak{\delta}_{n+1} :$}; 
\node [left] at (-0.5,-0.5) {$(n \ge 3)$}; 
\node [below] at (0.025,-0.025) {$\phi_1$}; 
\node [below] at (1.025,-0.025) {$\phi_2$}; 
\node [below] at (2.475,-0.025) {$\phi_{n-1}$}; 
\node [right] at (3.525,1) {$\phi_{n}$}; 
\node [right] at (3.525,-1) {$\phi_{n+1}$}; 
\draw (0.1,0) -- (0.9,0); 
\draw (1.1,0) -- (1.5,0); 
\draw [dotted] (1.5,0) -- (2,0); 
\draw (2,0) -- (2.4,0);
\draw (2.6,0) -- (3.425,0.925); 
\draw (2.6,0) -- (3.425,-0.925); 

\end{tikzpicture} 
\end{center}

Here $p = 1$, $\frak{h}_{\bar{1}}^\nu = \mathbb{C} (H_{\phi_n}^* - H_{\phi_{n+1}}^*)$, and the matrix $A = (2)$, obviously 
non-singular. \\
Let $\frak{g} = \frak{e}_6$. 

\begin{center} 
\begin{tikzpicture} 

\draw (0,0) circle [radius = 0.1]; 
\draw (1,0) circle [radius = 0.1]; 
\draw (2,0) circle [radius = 0.1]; 
\draw (2,1) circle [radius = 0.1]; 
\draw (3,0) circle [radius = 0.1]; 
\draw (4,0) circle [radius = 0.1]; 
\node [left] at (-0.5,0) {$\frak{e}_6 :$}; 
\node [below] at (0.025,-0.025) {$\phi_6$}; 
\node [below] at (1.025,-0.025) {$\phi_5$}; 
\node [below] at (2.025,-0.025) {$\phi_4$}; 
\node [above] at (2.025,1.025) {$\phi_2$}; 
\node [below] at (3.025,-0.025) {$\phi_3$}; 
\node [below] at (4.025,-0.025) {$\phi_1$}; 
\draw (0.1,0) -- (0.9,0); 
\draw (1.1,0) -- (1.9,0); 
\draw (2,0.1) -- (2,0.9); 
\draw (2.1,0) -- (2.9,0); 
\draw (3.1,0) -- (3.9,0); 

\end{tikzpicture} 
\end{center} 

Here $p = 2$, $\frak{h}_{\bar{1}}^\nu =\mathbb{C} (H_{\phi_1}^* - H_{\phi_6}^*) \oplus \mathbb{C} 
(H_{\phi_3}^* - H_{\phi_5}^*)$, and the matrix $A = \left ({
\begin{array}{cc} 
2&-1 \\
-1&2 \\
\end{array}
}\right)$, which is non-singular. \\ 
Now let $k=3$. Then $\frak{g} = \frak{\delta}_{4}$. 

\begin{center} 
\begin{tikzpicture}

\draw (0,0) circle [radius = 0.1]; 
\draw (1,0) circle [radius = 0.1]; 
\draw (2,1) circle [radius = 0.1]; 
\draw (2,-1) circle [radius = 0.1]; 
\node [left] at (-0.5,0) {$\frak{\delta}_4 :$}; 
\node [below] at (0.025,-0.025) {$\phi_1$}; 
\node [below] at (1.025,-0.025) {$\phi_2$};  
\node [right] at (2.025,1) {$\phi_3$}; 
\node [right] at (2.025,-1) {$\phi_4$}; 
\draw (0.1,0) -- (0.9,0); 
\draw (1.1,0) -- (1.925,0.925); 
\draw (1.1,0) -- (1.925,-0.925); 

\end{tikzpicture} 
\end{center} 

The zero weight space of $\frak{g}_{\bar{1}}^\nu$ is given by $\frak{h}_{\bar{1}}^\nu = 
\mathbb{C} (H_{\phi_1}^* +\epsilon_0^2 H_{\bar{\nu}(\phi_1)}^* + \epsilon_0 H_{\bar{\nu}^2(\phi_1)}^*)$, 
$\epsilon_0 = e^{\frac{2\pi i}{3}}$. Now $\bar{E}_{\phi_1} = E_{\phi_1} + E_{\bar{\nu}(\phi_1)} + E_{\bar{\nu}^2(\phi_1)}$ is 
a non-zero root vector of $\frak{g}_0^\nu$, and 
\[ [ (H_{\phi_1}^* +\epsilon_0^2 H_{\bar{\nu}(\phi_1)}^* + \epsilon_0 H_{\bar{\nu}^2(\phi_1)}^*, 
E_{\phi_1} + E_{\bar{\nu}(\phi_1)} + E_{\bar{\nu}^2(\phi_1)}] = 2 ( E_{\phi_1} + \epsilon_0^2 E_{\bar{\nu}(\phi_1)} + 
\epsilon_0 E_{\bar{\nu}^2(\phi_1)}) \neq 0. \]
Hence $\frak{h}_{\bar{1}}^\nu$ is not invariant under $\frak{g}_0^\nu$ and so it is contained in $V$. Now we show that 
 $\frak{g}_{\bar{2}}^\nu$ for $\frak{g} = \frak{\delta_4}$, is irreducible. In this case, $\frak{g}_0^\nu = \frak{g}_2$. 

\begin{center}
\begin{tikzpicture}[scale = 2]

\node [left] at (-0.5,0) {$\frak{g}_2 :$}; 
\draw (0,0) circle [radius = 0.05]; 
\draw (1,0) circle [radius = 0.05]; 
\node [below] at (0.025,-0.025) {$\psi_1$}; 
\node [below] at (1.025,-0.025) {$\psi_2$}; 
 \draw (0.05,0) -- (0.1,0.05);
\draw (0.05,0) -- (0.1,-0.05); 
\draw (0.05,0) -- (0.95,0); 
\draw (0.075,0.025) -- (0.975,0.025); 
\draw (0.075,-0.025) -- (0.975,-0.025); 
 
\end{tikzpicture}
\end{center}

Note that $\bar{E}_{\phi_1} = E_{\phi_1} + E_{\phi_3} + E_{\phi_4}$ and $\bar{E}_{\phi_2} = 3 E_{\phi_2}$ are root vectors of 
$\frak{g}_0^\nu$ corresponding to the roots $\psi_1$ and $\psi_2$ respectively. Then $E_{\phi_1} + \epsilon_0 E_{\bar{\nu}(\phi_1)} 
+ \epsilon_0^2 E_{\bar{\nu}^2(\phi_1)}$, $E_{-\phi_1} + \epsilon_0 E_{-\bar{\nu}(\phi_1)} + 
\epsilon_0^2 E_{-\bar{\nu}^2(\phi_1)}$  are weight vectors of $\frak{g}_{\bar{2}}^\nu$ corresponding to the weights $\psi_1$, 
$-\psi_1$  respectively. Clearly, 
\[ [\bar{E}_{-\phi_1}, E_{\phi_1} + \epsilon_0 E_{\bar{\nu}(\phi_1)} + \epsilon_0^2 E_{\bar{\nu}^2(\phi_1)}] = 
-(H_{\phi_1}^* +\epsilon_0 H_{\bar{\nu}(\phi_1)}^* + \epsilon_0^2 H_{\bar{\nu}^2(\phi_1)}^*). \]
\[ [\bar{E}_{-\phi_1}, -(H_{\phi_1}^* +\epsilon_0 H_{\bar{\nu}(\phi_1)}^* + \epsilon_0^2 H_{\bar{\nu}^2(\phi_1)}^*)] = 
-2 (E_{-\phi_1} + \epsilon_0 E_{-\bar{\nu}(\phi_1)} + \epsilon_0^2 E_{-\bar{\nu}^2(\phi_1)}).\]
Also $[\bar{E}_{\phi_2},  E_{\phi_1} + \epsilon_0 E_{\bar{\nu}(\phi_1)} + \epsilon_0^2 E_{\bar{\nu}^2(\phi_1)}] \neq 0,\ 
[\bar{E}_{\phi_1}, \bar{E}_{\phi_2},  E_{\phi_1} + \epsilon_0 E_{\bar{\nu}(\phi_1)} + \epsilon_0^2 E_{\bar{\nu}^2(\phi_1)}] \neq 0, \ 
[\bar{E}_{-\phi_2},  E_{-\phi_1} + \epsilon_0 E_{-\bar{\nu}(\phi_1)} + \epsilon_0^2 E_{-\bar{\nu}^2(\phi_1)}] \neq 0,\ 
[\bar{E}_{-\phi_1}, \bar{E}_{-\phi_2},  E_{-\phi_1} + \epsilon_0 E_{-\bar{\nu}(\phi_1)} + \epsilon_0^2 E_{-\bar{\nu}^2(\phi_1)}] \neq 0$. 
These are weight vectors of $\frak{g}_{\bar{2}}^\nu$ corresponding to the weights $\psi_1 + \psi_2,\ 2\psi_1 + \psi_2,\ -\psi_1 - \psi_2,\ 
-2\psi_1 - \psi_2$ respectively. As dim$(\frak{g}_{\bar{2}}^\nu)=7$, $\frak{g}_{\bar{2}}^\nu$ is generated by 
$E_{\phi_1} + \epsilon_0 E_{\bar{\nu}(\phi_1)} + \epsilon_0^2 E_{\bar{\nu}^2(\phi_1)}$ as a $\frak{g}_0^\nu$-module. Hence 
$\frak{g}_{\bar{2}}^\nu$ is irreducible. Note that the lowest weight of $\frak{g}_{\bar{2}}^\nu$ is $-2\psi_1 - \psi_2 = \alpha_0$, as 
$\alpha_0 +2\psi_1 + \psi_2 = 0$, \cite[Tables of Diagrams $S(A)$, Ch. X]{helgason}. Hence $\frak{g}_{\bar{2}}^\nu \cong 
\frak{g}_{\bar{1}}^\nu$, as $\frak{g}_0^\nu$-modules. 

(iii) {\it Except for conjugation, $\{\sigma, \sigma \theta : \sigma \textrm{ is defined as in } \eqref{sigma} \}$ are 
all automorphisms of $\frak{g}^\mathbb{R}$ of order $m$ and leave $\frak{u}$ invariant} : 
Since $\frak{g}$ is simple, Aut$(\frak{g})$ is a subgroup of Aut$(\frak{g}^\mathbb{R})$ of index $2$. Hence 
$\textrm{Aut}(\frak{g}^\mathbb{R}) = \textrm{Aut}(\frak{g})  \cup \textrm{Aut}(\frak{g}) \theta$. So it is sufficient to prove that if 
$\sigma_1, \sigma_2$ are automorphisms of $\frak{g}$ of order $m$ such that these are conjugate in Aut$(\frak{g})$ and leave $\frak{u}$ 
invariant, then $\sigma_1, \sigma_2$  are conjugate in Aut$(\frak{u})$. To prove this, we follow the argument of \cite[Prop. 1.4, Ch. X]
{helgason}. Let $g \in \textrm{Aut}(\frak{g})$ be such that $\sigma_2 = g\sigma_1 g^{-1}$. Now $g\frak{u}$ is also a compact real form 
of $\frak{g}$. So there exists $g_0 \in \textrm{Int}(\frak{g})$ such that $g\frak{u} = g_0 \frak{u}$, where Int$(\frak{g})$ is the identity 
component of Aut$(\frak{g}^\mathbb{R})$. Hence $g_0^{-1}g \in \tilde{U}$, the normaliser of $\frak{u}$ in Aut$(\frak{g}^\mathbb{R})$. 
So we can write $g$ as $g = pu$, where $p \in \textrm{exp}(J\frak{u}),\ u \in \tilde{U}$. Thus $\sigma_2 = pu\sigma_1 u^{-1}p^{-1}\ 
(\sigma_1, \sigma_2, u \in \tilde{U})$. Let $\tilde{\theta}$ be the Lie group automorphism of Aut$(\frak{g}^\mathbb{R})$ given by 
$\tilde{\theta}(\sigma) = \theta \sigma \theta^{-1}$. Now applying $\tilde{\theta}$ on both sides of equation 
$\sigma_2 = pu\sigma_1 u^{-1}p^{-1}$, we have $\sigma_2 = p^{-1}u\sigma_1 u^{-1}p$. This implies $Ap^2 A^{-1} = p^2$, where 
$A = u \sigma_1 u^{-1}$. Let $p = \textrm{exp}(JX)\ (X \in \frak{u})$. Now $Ap^2 A^{-1} = p^2$ implies 
exp$(2J\textrm{Ad}(A)(X)) =$ exp$(2JX)$. As exp is one-to-one on $J\frak{u}$, we have Ad$(A)(X) = X$ and so $A$ commutes with 
exp$(JX) = p$. Hence $\sigma_2 = pu\sigma_1 u^{-1}p^{-1} = u\sigma_1 u^{-1}$, and $\sigma_1, \sigma_2$  are conjugate 
in Aut$(\frak{u})$. 
}
\end{remark} 

\noindent 
\subsection{ The condition Or}\label{op} 

  Let $G$ be a connected complex simple Lie group and $U$ be a maximal compact subgroup of $G$. Let Lie$(G) = \frak{g}$, Lie$(U) 
= \frak{u}$, and $\theta$ be the Cartan involution corresponding to the Cartan decomposition $\frak{g}^\mathbb{R} = 
\frak{u} \oplus J\frak{u}$, 
where $J$ denotes, as usual, the complex structure of $\frak{g}^\mathbb{R}$ corresponding to the multiplication by $i$ of $\frak{g}$. 
Let $\bar{\theta}$ denote the corresponding Cartan involution of $G$. Let $\frak{t}$ be a maximal abelian subspace of $\frak{u}$ 
and $\frak{h} = \frak{t}^\mathbb{C}$. Then $\frak{h}$ is a Cartan subalgebra of $\frak{g}$. Choose a system of positive roots 
 $\Delta^+ $ in the set of all non-zero roots $\Delta = \Delta(\frak{g}, \frak{h})$. Let $\Phi$ be the set of simple roots in $\Delta^+$. Let 
$\{ H_\phi ^* \  , E_\alpha : \phi \in \Phi , \alpha \in \Delta \}$ be a Chevalley basis for $\frak{g}$ as in  \eqref{chevalley}. 
Then 
\[ \frak{u} =  \sum_{\phi \in \Phi} \mathbb{R} (i  H_\phi ^*) \oplus \sum_{\alpha \in \Delta^+} \mathbb{R} X_\alpha \oplus 
\sum_{\alpha \in \Delta^+} \mathbb{R} Y_\alpha, \]
where $X_\alpha = E_\alpha - E_{-\alpha} , Y_\alpha = i(E_\alpha + E_{-\alpha})$ for all $\alpha \in \Delta^+$. 

Let $\bar{\sigma}$ be an involution of $G$ whose differential at identity is an automorphism $\sigma$ of $\frak{g}$ of order $2$ 
as in \eqref{sigma}. Recall that $\sigma (\frak{u}) = \frak{u}$. Let $\frak{u} = \frak{u}_0 \oplus \frak{u}_1$, $\frak{g} = 
\frak{g}_0 \oplus \frak{g}_1$ be the decompositions of $\frak{u}, \frak{g}$ into $1$ and $-1$ eigenspaces of $\sigma$ respectively. 
Note that $U$ is invariant under $\bar{\sigma}$. 
Let $G(\mu) = \{ g \in G : \mu(g) = g\}$ and $U(\mu) = \{ u \in U : \mu(u) = u\}$, where $\mu = \bar{\sigma}, \bar{\sigma}\bar{\theta}$. 
Then $G(\mu)$ is a closed reductive subgroup of $G$ and $U(\mu)$ is a maximal compact subgroup of $G(\mu)$. $X(\mu) = 
G(\mu)/U(\mu)$ is a Riemannian globally symmetric space of non-compact type. Note that $X(\bar{\sigma}\bar{\theta})$ is an irreducible 
Riemannian globally symmetric space of type III. For our purpose, it is important to know that when the canonical action of 
$G(\mu)$ on $X(\mu)$ is orientation preserving for $\mu = \bar{\sigma}, \bar{\sigma}\bar{\theta}$. We proceed as follows: 

  Note that $G(\bar{\sigma}) = U(\bar{\sigma})\textrm{exp}(J\frak{u}_0)$ and $G(\bar{\sigma}\bar{\theta}) = 
U(\bar{\sigma})\textrm{exp}(J\frak{u}_1)$. So it is sufficient to check whether the canonical action of $U(\bar{\sigma})$ on $X(\mu)$ is 
orientation preserving. If $\it o = U(\bar{\sigma})$ is the identity coset in $X(\mu)$, then $U(\bar{\sigma})(\it o) = \it o$ and the differential  
of this action is given by $\textrm{Ad} :  U(\bar{\sigma}) \longrightarrow T_{\it o}(X(\mu))$. Hence it is sufficient to check whether 
det$(\textrm{Ad}(u)|_{i\frak{u}_k})= 1$ for all $u \in  U(\bar{\sigma})$, where $k=0, 1$. 

  Let $\tilde{U}$ be the simply connected Lie group with Lie algebra $\frak{u}$ and $p : \tilde{U} \longrightarrow U$ be the covering projection 
whose differential is the identity map of $\frak{u}$. Let $\tilde{Z}$ denote the centre of $\tilde{U}, S = \textrm{ker}(p) 
\subset \tilde{Z}$, and $\tilde{\sigma}$ be the automorphism of $\tilde{U}$ with $d\tilde{\sigma} = (d\tilde{\sigma})_e = 
\sigma|_{\frak{u}}$. Then $\tilde{U}(\tilde{\sigma})$, the set of fixed points of $\tilde{\sigma}$ is connected 
\cite[Th. 8.2, Ch. VII]{helgason}. Let $L = p^{-1}(U(\bar{\sigma})) = \{ u \in \tilde{U} : \bar{\sigma}p(u) = p(u) \} = 
\{ u \in \tilde{U} : p\tilde{\sigma}(u) = p(u) \} = \{ u \in \tilde{U} : p(\tilde{\sigma}(u)u^{-1}) =  e \} = \{ u \in \tilde{U} : 
\tilde{\sigma}(u)u^{-1} \in S \}$. Then $L$ is a closed subgroup $\tilde{U}$ and $\tilde{U}(\tilde{\sigma})$ is the connected component of 
$L$. Also note that $\tilde{U}(\tilde{\sigma})S \subset L$. If $\tilde{U}(\tilde{\sigma})S = L$, then $U(\bar{\sigma}) = p(L) = 
p(\tilde{U}(\tilde{\sigma}))$, hence connected. But it may happen that $\tilde{U}(\tilde{\sigma})S \subset L$ but $\tilde{U}(\tilde{\sigma})S 
 \neq L$. Since the covering projection $p$ is orientation preserving, we need to check that det$(\textrm{Ad}(u)|_{i\frak{u}_k})= 1$ 
for all $u \in L$, where $k=0, 1$.

  Let $\frak{a}$ be a maximal abelian subspace of $\frak{u}_1$. For any $u \in \tilde{U}$, there exist $u_1, u_2 \in \tilde{U}(\tilde{\sigma})$ 
and $X \in \frak{a}$ such that $u = \textrm{exp}(\textrm{Ad}(u_1)(X))u_2$ \cite[Th. 8.6, Ch. VII]{helgason}. Now \\
$u \in L \Leftrightarrow \tilde{\sigma}(u)u^{-1} \in S \Leftrightarrow 
\textrm{exp}(\textrm{Ad}(u_1)(-X))u_2 u_2^{-1}\textrm{exp}(\textrm{Ad}(u_1)(-X)) = \textrm{exp}(\textrm{Ad}(u_1)(-2X)) \in S
\Leftrightarrow u_1 \textrm{exp}(-2X) u_1^{-1} \in S \Leftrightarrow \textrm{exp}(-2X) \in S, \textrm{as } S\subset \tilde{Z}$. \\
To check whether det$(\textrm{Ad}(u)|_{i\frak{u}_k})= 1$ for all $u \in L$, where $k=0, 1$; it is sufficient to check whether 
det$(\textrm{Ad}(\textrm{exp}(X))|_{i\frak{u}_k})= 1$ for all $X \in \frak{a}$ with $\textrm{exp}(-2X) \in S$, where $k=0, 1$. Now  
det$(\textrm{Ad}(u)|_{i\frak{u}_0})$ det$(\textrm{Ad}(u)|_{i\frak{u}_1})=$ det$(\textrm{Ad}(u)|_{i\frak{u}})= 1$ for all $u \in L$. 
So it is sufficient to check whether det$(\textrm{Ad}(\textrm{exp}(X))|_{\frak{u}_0})= 1$ for all $X \in \frak{a}$ with 
$\textrm{exp}(-2X) \in S$. 

   Let the Dynkin diagram automorphism induced by $\sigma$ be $\bar{\nu}$. 
Let $\{\gamma_1, \gamma_2, \ldots , \gamma_r\}$ be a maximal set of strongly orthogonal roots in 
$\{\alpha \in \Delta^+ : \sigma(H_\alpha ^*) = H_\alpha ^* ,\ \frak{g}_\alpha \subset \frak{g}_1 \}$. Then 
$\frak{a} = \sum\limits_{\substack{\phi \in \Phi \\ \bar{\nu}(\phi) \neq \phi }}  \mathbb{R} i(H_\phi ^* - H_{\bar{\nu}(\phi)}^*) \oplus 
\sum\limits_{j =1}^{r} \mathbb{R} Y_{\gamma_j}$ is a maximal abelian subspace of $\frak{u}_1$. Let $c = \textrm{Ad}(\textrm{exp}
(\frac{\pi}{4}\sum\limits_{j=1}^{r} X_{\gamma_j}))$. Then $c(Y_{\gamma_j}) = iH_{\gamma_j}^*$ for all $1 \le j \le r$ and 
$c(H_\phi ^* - H_{\bar{\nu}(\phi)}^*) = H_\phi ^* - H_{\bar{\nu}(\phi)}^*$ for all $\phi \in \Phi$. So $c(\frak{a}) = 
 \sum\limits_{\substack{\phi \in \Phi \\ \bar{\nu}(\phi) \neq \phi }}  \mathbb{R} i(H_\phi ^* - H_{\bar{\nu}(\phi)}^*) \oplus 
\sum\limits_{j =1}^{r} \mathbb{R}(iH_{\gamma_j}^*)$. Let $\frak{a}^\perp$ be the orthogonal complement of $c(\frak{a})$ in $\frak{t}$
with respect to the positive definite symmetric bilinear form $-B (H, H') (H, H' \in \frak{t})$. 
Since $\gamma_j (H) = 0$ for all $H \in \frak{a}^\perp$ and for all $1 \le j \le r$, we have $c(H) = H$ for all $H \in \frak{a}^\perp$. 
Hence if $\frak{t}' = \frak{a}^\perp \oplus \frak{a}$, then $c(\frak{t}') = \frak{a}^\perp \oplus c(\frak{a})= \frak{t}$. For $X \in \frak{a}, 
\textrm{exp}(-2X) \in \tilde{Z} \Leftrightarrow \alpha'(-2X) \in 2\pi i \mathbb{Z}$ for all 
$\alpha' \in \Delta (\frak{g}, \frak{t}'^{\mathbb{C}})$ \cite[Th. 6.7, Ch.VII]{helgason} $\Leftrightarrow \alpha(c(X)) \in \pi i \mathbb{Z}$ 
for all $\alpha \in \Delta$. So if $X = iH + \sum\limits_{j=1}^{r} c_j Y_{\gamma_j} (H \in \sum\limits_{\substack
{\phi \in \Phi \\ \bar{\nu}(\phi) \neq \phi }}  \mathbb{R} (H_\phi ^* - H_{\bar{\nu}(\phi)}^*),\  c_j \in \mathbb{R})$, then 
$\textrm{exp}(-2X) \in \tilde{Z} \Leftrightarrow \alpha (H) + \sum\limits_{j = 1}^{r} c_j \alpha (H_{\gamma_j}^*) \in \pi \mathbb{Z}$ for all 
$\alpha \in \Delta \Leftrightarrow \phi (H) + \sum\limits_{j = 1}^{r} c_j \phi (H_{\gamma_j}^*) \in \pi \mathbb{Z}$ for all $\phi \in \Phi$. In 
particular, we have $\textrm{exp}(-2X) \in \tilde{Z}$ implies $c_j \in \frac{\pi}{2} \mathbb{Z}$ (taking $\alpha = \gamma_j$) for all 
$1 \le j \le r$. 

   For $X \in \frak{a}$ with $\textrm{exp}(-2X) \in S$, let $\sigma_X = \textrm{exp}(X)$. Then $\sigma_X^{-2} \in \tilde{Z}$ and so 
$\textrm{Ad}(\sigma_X)|_{\frak{u}_0} : \frak{u}_0 \longrightarrow \frak{u}_0$ is an involution. Note that 
$\frak{t}_0 = \sum\limits_{\phi \in \Phi} \mathbb{R} i(H_\phi ^* + H_{\bar{\nu}(\phi)}^*)$ is a maximal abelian subspace of $\frak{u}_0$. 
Now $\frak{t}^- = \sum\limits_{j =1}^{r} \mathbb{R}(iH_{\gamma_j}^*) \subset \frak{t}_0$. Let 
$\frak{t}^+ = \{ H \in \frak{t}_0 : \gamma_j (H) =0 \textrm{ for all } 1 \le j \le r \}$. 
So Ad$(\sigma_X) (H) = H$ for all $H \in \frak{t}^+$. Now \\
$[X, iH_{\gamma_j}^*] = c_j [Y_{\gamma_j}, iH_{\gamma_j}^*] = 2c_j  X_{\gamma_j}, \  
 [X,  X_{\gamma_j}] = c_j [Y_{\gamma_j}, X_{\gamma_j}] = -2c_j iH_{\gamma_j}^*$. Hence \\
Ad$(\sigma_X)(iH_{\gamma_j}^*)= (\cos 2c_j) iH_{\gamma_j}^* + (\sin 2c_j)X_{\gamma_j} = (\cos 2c_j) iH_{\gamma_j}^*$ 
for all $1 \le j \le r$.  \\ 
So Ad$(\sigma_X)(\frak{t}_0) = \frak{t}_0$. Let $\Delta_0 = \Delta(\frak{g}_0,\frak{t}_0^{\mathbb{C}})$ and 
$\Delta_0^+$ be a system of positive roots in $\Delta_0$.  For $X \in \frak{a}$ with $\textrm{exp}(-2X) \in S$, 
let $s'_X \in W(\frak{g}_0,\frak{t}_0^{\mathbb{C}})$ be 
such that Ad$(\sigma_X) \circ s'_X (\Delta_0^+ ) = \Delta_0^+$ and $s_X = \textrm{Ad}(\sigma_X)\circ s'_X$. For $\alpha \in \Delta_0$,  
choose $\bar{E}_\alpha \in (\frak{g}_0)_{\alpha}$ such that $\frak{u}_0 = \frak{t}_0 \oplus \sum\limits_{\alpha \in \Delta_0^+} 
\mathbb{R}(\bar{E}_\alpha - \bar{E}_{-\alpha}) \oplus \sum\limits_{\alpha \in \Delta_0^+} \mathbb{R} 
i(\bar{E}_\alpha + \bar{E}_{-\alpha})$ and $s_X (\bar{E}_\alpha) = a_\alpha \bar{E}_{\alpha'} (s_X(\alpha') = \alpha ,\  a_\alpha \in 
\mathbb{C})$ with $a_\alpha a_{-\alpha} = 1$ and $|a_\alpha| = 1$ \cite[Cor. 5.2, Ch. IX]{helgason}. For $\alpha \in \Delta_0^+$, if 
$\bar{X}_\alpha =  \bar{E}_\alpha - \bar{E}_{-\alpha}$ and $\bar{Y}_\alpha = i(\bar{E}_\alpha + \bar{E}_{-\alpha})$, then 
\[ s_X ( \bar{X}_\alpha) = x_\alpha \bar{X}_{\alpha'} + y_\alpha \bar{Y}_{\alpha'} \]
\[ s_X ( \bar{Y}_\alpha) = -y_\alpha \bar{X}_{\alpha'} + x_\alpha \bar{Y}_{\alpha'}, \]
where $a_\alpha = x_\alpha +i y_\alpha,\ x_\alpha , y_\alpha \in \mathbb{R}$. Then det$(s_X\Big{|}_{\mathbb{R}\bar{X}_\alpha + 
\mathbb{R}\bar{Y}_\alpha}) = x_\alpha ^2 + y_\alpha ^2 = |a_\alpha|^2 = 1$ for all $\alpha \in \Delta_0^+$. So it is sufficient to check 
whether det$(s_X|_{\frak{t}_0^{\mathbb{C}}}) = 1$ for all $X \in \frak{a}$ with $\textrm{exp}(-2X) \in S$. 

   Recall that the Dynkin diagram automorphism $\bar{\nu}$ of $\frak{g}$ induced by $\sigma$ has order $k \ (k=1 \textrm{ or } 2)$. Let 
$\nu$ be an automorphism of $\frak{g}$ induced by $\bar{\nu}$, as in \S \ref{victor}. Then $\nu$ has order $k$ and we have an 
$\mathbb{Z}_k$-gradation of $\frak{g}$ as $\frak{g} = \mathop{\oplus}\limits_{i \in \mathbb{Z}_k} \frak{g}_i ^\nu $ into the 
eigenspaces of $\nu$. Recall that $\frak{g}_0^\nu$ is a simple Lie algebra and $\frak{h}^\nu = \frak{h} \cap \frak{g}_0^\nu$ is a Cartan 
subalgebra of $\frak{g}_0^\nu$. Also the set $\Phi$ of simple roots in $\Delta^+$ induces a basis $\Psi = \{ \psi_1, \psi_2, \ldots , \psi_n \}$ 
(determined by the map $\bar{\nu} : \Phi \longrightarrow \Phi$) of the root system $\Delta(\frak{g}_0^\nu , \frak{h}^\nu )$ 
\cite[the proof of Lemma 5.11, Ch. X]{helgason}. Let $\alpha_0$ be the lowest weight of the $\frak{g}_0^\nu$-module $\frak{g}_{\bar{1}}^\nu $. 
Note that if $k =1$ that is, if $\bar{\nu}$ is the identity map, then $\alpha_0 = - \delta$, where $\delta$ is the highest root of $\Delta^+$.  
Let $\alpha_0 + \sum\limits_{i=1}^{n} a_i \psi_i =0 \ (a_i \in \mathbb{N} \textrm{ for all } 1 \le i \le n)$. Then as in \eqref{sigma}, there are 
non-zero integers $s_0, s_1, \ldots , s_n$ without non-trivial common factor such that $2 = k (s_0 + \sum\limits_{i = 1}^{n} a_i s_i )$ and 
$\sigma$ is an involution of $\frak{g}$ of type $(s_0, s_1, \ldots , s_n; k)$. Let $i_1 , \ldots, i_t$ be all the indices with $s_{i_1} = \cdots = s_{i_t}=0$. 
Then the Lie algebra $\frak{g}_0 = \{ X \in \frak{g} : \sigma (X) = X \}$ is the direct sum of an $(n-t)$-dimensional centre and 
a semisimple Lie algebra whose Dynkin diagram is the subdiagram of the diagram $\frak{g}^{(k)}$ (given in \S \ref{victor}) consisting of the vertices 
$\psi_{i_1} , \ldots , \psi_{i_t}$. From now on we assume that $\Delta_0^+$ is the system of positive roots in $\Delta_0 = \Delta (\frak{g}_0 , 
\frak{t}_0^\mathbb{C})$ corresponding to the basis $\{\psi_{i_1}, \psi_{i_2}, \ldots , \psi_{i_t} \}$. 

\begin{remark}\label{or}
{\em 
(i) {\it We may choose $\bar{E}_\alpha \in (\frak{g}_0)_{\alpha}$ such that $\frak{u}_0 = \frak{t}_0 \oplus \sum\limits_{\alpha 
\in \Delta_0^+} \mathbb{R}(\bar{E}_\alpha - \bar{E}_{-\alpha}) \oplus \sum\limits_{\alpha \in \Delta_0^+} \mathbb{R} 
i(\bar{E}_\alpha + \bar{E}_{-\alpha})$ and $s_X (\bar{E}_\alpha) = a_\alpha \bar{E}_{\alpha'} (s_X (\alpha') = \alpha ,\  a_\alpha \in 
\mathbb{C})$ with $a_\alpha a_{-\alpha} = 1$ and $a_\alpha = \pm 1$} :  For $X \in \frak{a}$ with $\textrm{exp}(-2X) \in S$, 
$\textrm{Ad}(\sigma_X)|_{\frak{u}_0}$ is an involution and 
$s'_X \in W(\frak{g}_0, \frak{t}_0^{\mathbb{C}})$ be such that Ad$(\sigma_X) \circ s'_X (\Delta_0^+ ) = \Delta_0^+$. So $s'_X$ is also an 
involution. Hence $s_X = \textrm{Ad}(\sigma_X)\circ s'_X : \frak{g}_0 \longrightarrow \frak{g}_0$ is an involution with 
$s_X (\Delta_0^+) = \Delta_0^+$. Now the result follows from \cite[Lemma 3.5]{Borel}. 

(ii) {\it If $\frak{g} = \frak{e}_8, \frak{f}_4, \textrm{or }\frak{g}_2$, then the canonical action of $G(\mu)$ on $X(\mu)$ is 
orientation preserving for $\mu = \bar{\sigma}, \bar{\sigma}\bar{\theta}$} : In these cases, the simply connected Lie group $\tilde{U}$ 
has trivial centre, that is $\tilde{Z} = \{e\}$ \cite[Cor. 7.8, Ch. VII, Lemma 3.30 and Th. 3.32, Ch. X]{helgason}. So the result follows. 

(iii) {\it If $\frak{a} = \sum\limits_{\substack{\phi \in \Phi \\ \bar{\nu}(\phi) \neq \phi }}  \mathbb{R} i(H_\phi ^* - H_{\bar{\nu}(\phi)}^*)$ is 
a maximal abelian subspace of $\frak{u}_1$ (this happens exactly when the Dynkin diagram automorphism $\bar{\nu}$ has order $2$ and 
$\sigma = \nu$, the automorphism of $\frak{g}$ induced by $\bar{\nu}$), 
then the canonical action of $G(\mu)$ on $X(\mu)$ is orientation preserving for 
$\mu = \bar{\sigma}, \bar{\sigma}\bar{\theta}$} : For Ad$(\sigma_X)|_{\frak{t}_0}$ is the identity map, in this case. 
 
(iv) {\it If $X(\bar{\sigma}\bar{\theta})$ is a Hermitian symmetric space and is not of tube type, then the canonical action of $G(\mu)$ 
on $X(\mu)$ is orientation preserving for $\mu = \bar{\sigma}, \bar{\sigma}\bar{\theta}$} : If $X(\bar{\sigma}\bar{\theta})$ is a 
Hermitian symmetric space, then $\sigma|_{\frak{h}}$ is the identity map and 
there is a maximal set $\{\gamma_1, \gamma_2, \ldots , \gamma_r\}$ of strongly orthogonal roots in 
$\{\alpha \in \Delta^+ : \frak{g}_\alpha \subset \frak{g}_1 \}$ such that if $\psi \in \Delta_0 ^+$ is 
a simple root of $\frak{g}_0$, then $\psi \Big{|}_{(\frak{t}^-)^\mathbb{C}} = \frac{1}{2} (\gamma_{i+1} - \gamma_i)$ or $0$ or 
$-\frac{1}{2} \gamma_i$ for some $i$ \cite[Lemma.13]{hc1}. Now $X(\bar{\sigma}\bar{\theta})$ is not of tube type iff there is a 
simple root $\psi \in \Delta_0 ^+$ with $\psi \Big{|}_{(\frak{t}^-)^\mathbb{C}} = \frac{1}{2} \gamma_i$ for some $i$ 
\cite[Prop. 4.4 and its Remark]{kor-wolf}. Then for $X = \sum\limits_{j=1}^{r} c_j Y_{\gamma_j} \in \frak{a}$, 
$\sum\limits_{j = 1}^{r} c_j \phi (H_{\gamma_j}^*) \in \pi \mathbb{Z}$ for all $\phi \in \Phi$ $\Leftrightarrow 
2c_1, c_2 - c_1, \ldots , c_r - c_{r-1}, - c_i \ (\textrm{for some }i) \in \pi \mathbb{Z} \Leftrightarrow c_j \in \pi \mathbb{Z}$ for all 
$1 \le j \le r$. So  Ad$(\sigma_X)|_{\frak{t}_0}$ is the identity map, in this case. 

(v) {\it If $X(\bar{\sigma}\bar{\theta})$ is a Hermitian symmetric space and is of tube type, then the canonical action of $G(\mu)$ 
on $X(\mu)$ may not be orientation preserving for $\mu = \bar{\sigma}, \bar{\sigma}\bar{\theta}$} : 
Since $X(\bar{\sigma}\bar{\theta})$ is a Hermitian symmetric space, $\frak{g}_0$ has one dimensional centre. Since 
$X(\bar{\sigma}\bar{\theta})$ is of tube type, the element $Z = \sum\limits_{j =1}^{r} iH_{\gamma_j}^*$ lies in the centre  
of $\frak{g}_0$ \cite [Prop. 3.12]{kor-wolf}. Again if  $\psi \in \Delta_0 ^+$ is a simple root of $\frak{g}_0$, 
then $\psi \Big{|}_{(\frak{t}^-)^\mathbb{C}} = \frac{1}{2} (\gamma_{i+1} - \gamma_i)$ or $0$ for some $i$. So 
for $X = \sum\limits_{j=1}^{r} c_j Y_{\gamma_j} \in \frak{a}$, 
$\sum\limits_{j = 1}^{r} c_j \phi (H_{\gamma_j}^*) \in \pi \mathbb{Z}$ for all $\phi \in \Phi$ $\Leftrightarrow 
2c_1, c_2 - c_1, \ldots , c_r - c_{r-1} \in \pi \mathbb{Z} \Leftrightarrow \cos 2c_j = \cos 2c_1$ for all $1 \le j \le r$, where $2c_1 \in 
\pi \mathbb{Z}$. Hence $s_X (Z)  = \textrm{Ad}(\sigma_X)(Z) = \pm Z$. So if $s_X (Z) = Z$, then 
Ad$(\sigma_X|_{\frak{t}_0})$ is the identity map and hence 
det$(s_X|_{\frak{t}_0^{\mathbb{C}}}) = 1$. If $s_X (Z) = -Z$, then 
det$(s_X|_{\frak{t}_0^{\mathbb{C}}}) = 1$ if the Dynkin diagram automorphism of $[\frak{g}_0, \frak{g}_0]$ induced by $s_X$ is an 
odd permutation. 

(vi) {\it If the Riemannian globally symmetric space $X(\bar{\sigma}\bar{\theta})$ is not a Hermitian symmetric space, then 
the canonical action of $G(\mu)$ on $X(\mu)$ is orientation preserving  for $\mu = \bar{\sigma}, \bar{\sigma}\bar{\theta}$ 
iff the Dynkin diagram automorphism of $\frak{g}_0$ induced 
by $s_X$ is an even permutation for any $X \in \frak{a}$ with $\textrm{exp}(-2X) \in S$} : In this case, $\frak{g}_0$ is semisimple and 
for all $X \in \frak{a}$ with $\textrm{exp}(-2X) \in S$, det$(s_X|_{\frak{t}_0^{\mathbb{C}}}) = 1$  iff the Dynkin diagram automorphism 
of $\frak{g}_0$ induced by $s_X$ is an even permutation. 

}
\end{remark}

  Now we shall check whether det$(s_X|_{\frak{t}_0^{\mathbb{C}}}) = 1$ for all $X \in \frak{a}$ with $\textrm{exp}(-2X) \in \tilde{Z}$, via 
case by case consideration. 

1. 
\begin{center}
\begin{tikzpicture}

\draw (0,0) circle [radius = 0.1];
\draw (1,0) circle [radius = 0.1]; 
\draw (2.5,0) circle [radius = 0.1]; 
\draw (3.5,0) circle [radius = 0.1]; 
\draw (1.75,0.75) circle [radius = 0.1]; 
\node [below] at (0.05,-0.05) {$\psi_1$}; 
\node [below] at (1.05,-0.05) {$\psi_2$}; 
\node [below] at (2.75,-0.05) {$\psi_{n-1}$}; 
\node [below] at (3.55,-0.05) {$\psi_n$}; 
\node [above] at (1.80, 0.80) {$\alpha_0$}; 
\draw (0.1,0) -- (0.9,0); 
\draw (1.1,0) -- (1.5,0); 
\draw [dotted] (1.5,0) -- (2,0); 
\draw (2,0) -- (2.4,0); 
\draw (2.6,0) -- (3.4,0); 
\draw (0.1,0.05) -- (1.65,0.75); 
\draw (1.85,0.75) -- (3.4,0.05); 
\node [left] at (-0.5,0) {$\frak{a}_n^{(1)} :$};  
\node [left] at (-0.5,-0.6) {$(n >1)$};  

\node [left] at (8.5,0) {$\frak{a}_{1}^{(1)} :$}; 
\draw (9,0) circle [radius = 0.1]; 
\draw (10,0) circle [radius = 0.1]; 
\node [below] at (9.05,-0.15) {$\alpha_0$}; 
\node [below] at (10.05,-0.05) {$\psi_1$}; 
\draw (9.08,0.016) -- (9.92,0.016); 
\draw (9.04,0.07) -- (9.96,0.07); 
\draw (9.08,-0.035) -- (9.92,-0.035); 
\draw (9.04,-0.09) -- (9.96,-0.09);

\end{tikzpicture}
\end{center}

  Here $\frak{g} = \frak{a}_n$ and $\alpha_0 + \psi_1 + \psi_2 + \cdots + \psi_n = 0$. Without loss of generality, we may assume that 
$\sigma$ is an involution of $\frak{g}$ of type $(s_0, 0, \ldots ,0, s_p, 0, \ldots ,0; 1) \ (1\le p \le n, \ p \le n+1-p)$ with $s_0 = 1 = s_p$. 
Then $\frak{u}_0 = \frak{su}(p) \oplus \frak{su}(n+1-p) \oplus i\mathbb{R}$ and $X(\bar{\sigma}\bar{\theta})$ is a 
Hermitian symmetric space. Let 
\[ \gamma_1 = \psi_p, \gamma_2 = \psi_{p-1} + \psi_p + \psi_{p+1}, \ldots , \gamma_p = \psi_1 + \cdots + \psi_p + \cdots \psi_{2p-1}.\]
Then $\{\gamma_1, \gamma_2 , \ldots , \gamma_p \}$ is a maximal set of strongly orthogonal roots in 
$\{\alpha \in \Delta^+ : \frak{g}_\alpha \subset \frak{g}_1 \}$. The Hermitian symmetric space $X(\bar{\sigma}\bar{\theta})$ is of 
tube type iff $n = 2p -1$ that is, $p = n+1-p$. So if $p < n+1-p$, then the canonical action of $G(\mu)$ on $X(\mu)$ is 
orientation preserving for $\mu = \bar{\sigma}, \bar{\sigma}\bar{\theta}$, by Remark \ref{or}(iv). 

  Now assume that $p = n+1-p$. Then $\psi_{p-1}\Big{|}_{(\frak{t}^-)^\mathbb{C}} = \frac{1}{2}(\gamma_2 - \gamma_1) = 
\psi_{p+1}\Big{|}_{(\frak{t}^-)^\mathbb{C}}, \psi_{p-2}\Big{|}_{(\frak{t}^-)^\mathbb{C}} = \frac{1}{2}(\gamma_3 - \gamma_2) = 
\psi_{p+2}\Big{|}_{(\frak{t}^-)^\mathbb{C}}, \ldots , \psi_1\Big{|}_{(\frak{t}^-)^\mathbb{C}} = \frac{1}{2}(\gamma_p - \gamma_{p-1})
 = \psi_{2p-1}\Big{|}_{(\frak{t}^-)^\mathbb{C}}$, where $n = 2p-1$. Let $X \in \frak{a}$ with $\textrm{exp}(-2X) \in \tilde{Z}$ and 
$\textrm{Ad}(\sigma_X)(Z) = - Z$, where $Z = \sum\limits_{j =1}^{r} iH_{\gamma_j}^*$. Then \\ 
Ad$(\sigma_X) (\psi_{p\pm j}) = \textrm{Ad}(\sigma_X) (\frac{1}{2}(\gamma_{j+1} - \gamma_j)+ \psi_{p\pm j} 
-\frac{1}{2}(\gamma_{j+1} - \gamma_j)) \\ 
=  -\frac{1}{2}(\gamma_{j+1} - \gamma_j) + \psi_{p \pm j} -\frac{1}{2}(\gamma_{j+1} - \gamma_j) = \psi_{p \pm j} - (\gamma_{j+1} - \gamma_j) \\
= - \psi_{p \mp j}$, for all $1 \le j \le p-1$. \\
Let $w_{\frak{g}_0}^0 \in W(\frak{g}_0,\frak{t}_0^{\mathbb{C}})$ be the 
longest element that is, $w_{\frak{g}_0}^0(\psi_j) = -\psi_{p-j}$ and $w_{\frak{g}_0}^0(\psi_{p+j}) = -\psi_{2p-j}$ for all $1 \le j \le p-1$. Then 
$s_X = \textrm{Ad}(\sigma_X)\circ w_{\frak{g}_0}^0$ with $s_X (\Delta_0^+) = \Delta_0^+$. Now $s_X(\psi_{p-j}) = \psi_{2p-j}$ and 
$s_X(\psi_{p+j}) = \psi_j$ for all $1 \le j \le p-1$. So  det$(s_X|_{\frak{t}_0^{\mathbb{C}}}) = - (-1)^{p-1} = (-1)^p$. So if $p$ is even, 
then the canonical action of $G(\mu)$ on $X(\mu)$ is orientation preserving for $\mu = \bar{\sigma}, \bar{\sigma}\bar{\theta}$. 

2. 
\begin{center}
\begin{tikzpicture} 

\node [left] at (-0.5,0) {$\frak{a}_{2n}^{(2)} :$}; 
\node [left] at (-0.5, -0.6) {$(n >1)$}; 
\draw (0,0) circle [radius = 0.1]; 
\draw (1,0) circle [radius = 0.1]; 
\draw (2,0) circle [radius = 0.1]; 
\draw (3.5,0) circle [radius = 0.1]; 
\draw (4.5,0) circle [radius = 0.1]; 
\node [below] at (0.05,-0.15) {$\alpha_0$}; 
\node [below] at (1.05,-0.05) {$\psi_1$}; 
\node [below] at (2.05,-0.05) {$\psi_2$}; 
\node [below] at (3.75,-0.05) {$\psi_{n-1}$}; 
\node [below] at (4.55,-0.05) {$\psi_n$}; 
\draw (0.9,0) -- (0.8,0.1); 
\draw (0.9,0) -- (0.8,-0.1); 
\draw (0.1,0.025) -- (0.85,0.025); 
\draw (0.1,-0.025) -- (0.85,-0.025); 
\draw (1.1,0) -- (1.9,0); 
\draw (2.1,0) -- (2.5,0); 
\draw [dotted] (2.5,0) -- (3,0); 
\draw (3,0) -- (3.4,0); 
\draw (4.4,0) -- (4.3,0.1); 
\draw (4.4,0) -- (4.3,-0.1); 
\draw (3.6,0.025) -- (4.35, 0.025); 
\draw (3.6,-0.025) -- (4.35,-0.025); 

\node [left] at (8.5,0) {$\frak{a}_2^{(2)} :$}; 
\draw (9,0) circle [radius = 0.1]; 
\draw (10,0) circle [radius = 0.1]; 
\node [below] at (9.05,0.05) {$\psi_1$}; 
\node [below] at (10.05,-0.05) {$\alpha_0$}; 
\draw (9.1,0) -- (9.2,0.1); 
\draw (9.1,0) -- (9.2,-0.1); 
\draw (9.08,0.016) -- (9.92,0.016); 
\draw (9.18,0.07) -- (9.96,0.07); 
\draw (9.08,-0.035) -- (9.92,-0.035); 
\draw (9.18,-0.09) -- (9.96,-0.09); 

\end{tikzpicture}
\end{center}

   Here $\frak{g} = \frak{a}_{2n} ,\  \alpha_0 + 2\psi_1 + 2\psi_2 + \cdots + 2\psi_n = 0$, and $\sigma$ is an involution of 
$\frak{g}$ of type $(1, 0, \ldots ,0; 2)$. Then $\frak{u}_0 = \frak{so}(2n+1)$ and $\frak{g}_0 = \frak{b}_n$, which does not have 
any non-trivial Dynkin diagram automorphism. So det$(s_X|_{\frak{t}_0^{\mathbb{C}}}) = 1$ for all $X \in \frak{a}$ with 
$\textrm{exp}(-2X) \in \tilde{Z}$, by Remark \ref{or}(vi). Hence the canonical action of $G(\mu)$ on $X(\mu)$ is orientation preserving for 
$\mu = \bar{\sigma}, \bar{\sigma}\bar{\theta}$. 

3. 
\begin{center} 
\begin{tikzpicture} 

\node [left] at (-0.5,0) {$\frak{a}_{2n-1}^{(2)} :$}; 
\node [left] at (-0.5,-0.6) {$(n > 2)$}; 
\draw (0,0) circle [radius = 0.1]; 
\draw (1,0) circle [radius = 0.1]; 
\draw (2,0) circle [radius = 0.1]; 
\draw (1,-1) circle [radius = 0.1]; 
\draw (3.5,0) circle [radius = 0.1]; 
\draw (4.5,0) circle [radius = 0.1]; 
\node [above] at (0.05,0.05) {$\psi_1$}; 
\node [above] at (1.05,0.05) {$\psi_2$}; 
\node [above] at (2.05,0.05) {$\psi_3$}; 
\node [below] at (1.05,-1.15) {$\alpha_0$}; 
\node [above] at (3.75,0.05) {$\psi_{n-1}$}; 
\node [above] at (4.55,0.05) {$\psi_n$}; 
\draw (0.1,0) -- (0.9,0); 
\draw (1.1,0) -- (1.9,0); 
\draw (1,-0.1) -- (1,-0.9); 
\draw (2.1,0) -- (2.5,0); 
\draw [dotted] (2.5,0) -- (3,0); 
\draw (3,0) -- (3.4,0); 
\draw (3.6,0) -- (3.7,0.1); 
\draw (3.6,0) -- (3.7,-0.1); 
\draw (3.65,0.025) -- (4.4,0.025); 
\draw (3.65,-0.025) -- (4.4,-0.025); 

\end{tikzpicture} 
\end{center} 

   Now $\frak{g} = \frak{a}_{2n-1} (n >2)$. In this case, $\alpha_0 + \psi_1 +2\psi_2 + \cdots + 2\psi_{n-1} + \psi_n = 0$. 

  (i) First assume that $\sigma$ is an involution of $\frak{g}$ of type $(1, 0, \ldots ,0; 2)$ (similarly for type $(0, 1, 0, \ldots ,0; 2)$). Then 
$\frak{u}_0 = \frak{sp}(n)$ and $\frak{g}_0 = \frak{c}_n (n > 2)$, which does not have 
any non-trivial Dynkin diagram automorphism. So det$(s_X|_{\frak{t}_0^{\mathbb{C}}}) = 1$ for all $X \in \frak{a}$ with 
$\textrm{exp}(-2X) \in \tilde{Z}$, by Remark \ref{or}(vi). Hence the canonical action of $G(\mu)$ on $X(\mu)$ is orientation preserving for 
$\mu = \bar{\sigma}, \bar{\sigma}\bar{\theta}$. 

 (ii) Next assume that $\sigma$ is an involution of type $(0, 0, \ldots ,0,1; 2)$. Then $\frak{u}_0 = \frak{so}(2n)$ and 
$\frak{g}_0 = \frak{\delta}_n (n > 2)$. 
The diagram $\frak{a}_{2n-1}^{(2)}$ is corresponding to the Dynkin diagram automorphism $\bar{\nu}$ 
of $\frak{a}_{2n-1}$ given by $\bar{\nu}(\phi_j) = \phi_{2n-j}$ for all $1 \le j \le 2n-1$. 

\begin{center} 
\begin{tikzpicture}

\draw (0,0) circle [radius = 0.1]; 
\draw (1,0) circle [radius = 0.1]; 
\draw (2.5,0) circle [radius = 0.1]; 
\draw (3.5,0) circle [radius = 0.1]; 
\draw (4.5,0) circle [radius = 0.1]; 
\draw (6,0) circle [radius = 0.1]; 
\draw (7,0) circle [radius = 0.1]; 
\node [left] at (-0.5,-0.1) {$\frak{a}_{2n-1} :$}; 
\node [left] at (-0.5,-0.6) {$(n > 2)$}; 
\node [below] at (0.05,-0.05) {$\phi_1$}; 
\node [below] at (1.05,-0.05) {$\phi_2$}; 
\node [below] at (2.55,-0.05) {$\phi_{n-1}$}; 
\node [below] at (3.55,-0.05) {$\phi_n$}; 
\node [below] at (4.55,-0.05) {$\phi_{n+1}$}; 
\node [below] at (6.05,-0.05) {$\phi_{2n-2}$}; 
\node [below] at (7.2,-0.05) {$\phi_{2n-1}$}; 
\draw (0.1,0) -- (0.9,0); 
\draw (1.1,0) -- (1.5,0); 
\draw [dotted] (1.5,0) -- (2,0); 
\draw (2,0) -- (2.4,0); 
\draw (2.6,0) -- (3.4,0); 
\draw (3.6,0) -- (4.4,0); 
\draw (4.6,0) -- (5,0); 
\draw [dotted] (5,0) -- (5.5,0); 
\draw (5.5,0) -- (5.9,0); 
\draw (6.1,0) -- (6.9,0); 

\end{tikzpicture} 
\end{center} 

  Let $\{ \gamma_1, \gamma_2, \ldots , \gamma_r \}$ be a maximal set of strongly orthogonal roots in 
$\{\alpha \in \Delta^+ : \sigma(H_\alpha ^*) = H_\alpha ^* ,\ \frak{g}_\alpha \subset \frak{g}_1 \}$, where $\gamma_1 = \phi_n$. 
Then $\frak{a} = \sum\limits_{j=1}^{n-1}  \mathbb{R} i(H_{\phi _j}^* - H_{\phi_{2n-j}}^*) \oplus 
\sum\limits_{j=1}^{r}\mathbb{R} Y_{\gamma_j}$ is a maximal abelian subspace of $\frak{u}_1$. Let 
$X = iH + \frac{\pi}{2} Y_{\gamma_1}$, where $H = \frac{\pi}{2n} \sum\limits_{j=1}^{n-1}j (H_{\phi _j}^* - H_{\phi_{2n-j}}^*)$. Now 
$\phi_{n+j} (H) = -\phi_{n-j} (H)$ for all $1 \le j \le n-1$, $\phi_j (H) = 0$ for $1 \le j \le n-2$ and $j = n$, and 
$\phi_{n-1} (H) = \frac{\pi}{2}$. Hence $\phi_j (H) + \frac{\pi}{2} \phi_j (H_{\gamma_1}^*) = 0$ for $1 \le j \le 2n-1, j \neq n, n+1$, 
$\phi_n (H) + \frac{\pi}{2} \phi_n (H_{\gamma_1}^*) = \pi$, and $\phi_{n+1} (H) + \frac{\pi}{2} \phi_{n+1}(H_{\gamma_1}^*) = -\pi$. 
Hence $\phi_j (H) + \frac{\pi}{2} \phi (H_{\gamma_1}^*) \in \pi \mathbb{Z}$ for all $1 \le j \le 2n-1$ that is, $X \in \frak{a}$ with 
exp$(-2X) \in \tilde{Z}$. Now Ad$(\sigma_X)(H_{\gamma_1}^*) = -H_{\gamma_1}^*$ and Ad$(\sigma_X) (H) = H$ 
for all $\{ H \in \frak{h}^\nu = \frak{t}_0^\mathbb{C} : \gamma_1 (H) =0 \}$. Hence \\
$\textrm{Ad}(\sigma_X) (\psi_j) = \psi_j \textrm{ for all } 1 \le j \le n-2, \\ 
\textrm{Ad}(\sigma_X) (\alpha_0) = \alpha_0\ (\textrm{as } n>2), \\
\textrm{Ad}(\sigma_X) (\psi_n) =- \psi_n, \textrm{ and} \\ 
\textrm{Ad}(\sigma_X) (\psi_{n-1}) = \textrm{Ad}(\sigma_X) (-\frac{1}{2}\psi_n + \psi_{n-1} + \frac{1}{2}\psi_n) = 
\frac{1}{2}\psi_n + \psi_{n-1} + \frac{1}{2}\psi_n = \psi_{n-1} + \psi_n \\
= \psi_{n-1} -(\alpha_0 + \psi_1 + 2\psi_2 + \cdots + 2\psi_{n-1}) 
= - \mu$, \\ 
where $\mu$ is the highest root in $\Delta_0^+$. \\
Therefore $\textrm{Ad}(\sigma_X) (\{\alpha_0, \psi_1, \ldots , \psi_{n-2},\psi_{n-1}\}) = \{\alpha_0, \psi_1, \ldots , \psi_{n-2}, -\mu \}$.

\begin{lemma}\label{weyl}
Let $\frak{l}_0$ be a real simple Lie algebra, $\frak{l}_0 = \frak{k}_0 \oplus \frak{e}_0$ be a Cartan decomposition of $\frak{l}_0$, and 
$\frak{k}_0$ has one dimensional centre (that is, the corresponding Riemannian globally symmetric space is Hermitian symmetric space). Let 
$\frak{b}_0$ be a maximal abelian subspace of $\frak{k}_0$, $\frak{l}= \frak{l}_0^\mathbb{C}, \frak{k}= \frak{k}_0^\mathbb{C}$, and 
$\frak{b}=\frak{b}_0^\mathbb{C}$. Then $\frak{b} \subset \frak{k}$ is a Cartan subalgebra of $\frak{l}$. Let 
$\Delta = \Delta(\frak{l}, \frak{b})$, and $\Delta_0 = \Delta(\frak{k}, \frak{b}) = $the set of all compact roots in $\Delta$. Let $\Delta^+$ 
be a system of positive roots in $\Delta$ such that the corresponding simple system contains exactly one non-compact root $\nu$ and the 
coefficient $n_\nu (\mu)$ of $\nu$ in the highest root $\mu$ when expressed as a sum of simple roots is $1$. Let 
$\Delta_0^+ = \Delta_0 \cap \Delta^+ , \Delta_{\pm{1}} = \{ \alpha \in \Delta : n_\nu (\alpha) = \pm{1}\}$, and $w_{\frak{l}}^0$ 
(respectively, $w_{\frak{k}}^0$) denote the longest element of the Weyl group $W(\frak{g}, \frak{b})$ (respectively, 
$W(\frak{k}, \frak{b})$) with respect to the positive system $\Delta^+$ (respectively, $\Delta_0^+ $). Then $\Delta^+ = \Delta_0^+ 
\cup \Delta_1$, and $w_0 (\Delta^+ ) = \Delta_0^+ \cup \Delta_{-1}$, where $w_0 = w_{\frak{k}}^0 w_{\frak{l}}^0 \in 
W(\frak{l}, \frak{b})$. If $w_{\frak{l}}^0 (\nu ) = -\nu$ (that is, the Hermitian symmetric space is of tube type), 
then $w_0 (\Delta_0^+) = \Delta_0^+$ and $w_0 (\nu) = -\mu$. 
\end{lemma}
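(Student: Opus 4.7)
The plan is to use the standard parabolic structure of a Hermitian symmetric pair afforded by the unique non-compact simple root $\nu$. First, since $\nu$ is the only non-compact simple root and $n_\nu(\mu)=1$, every root $\alpha\in\Delta$ satisfies $n_\nu(\alpha)\in\{-1,0,1\}$ (because $\mu$ is the highest root and $-\mu$ the lowest). Consequently $\Delta^+=\{\alpha\in\Delta^+: n_\nu(\alpha)=0\}\cup\{\alpha\in\Delta^+: n_\nu(\alpha)=1\}=\Delta_0^+\cup\Delta_1$, since the roots with $n_\nu=0$ lie in the root subsystem generated by the compact simple roots $\Phi_0=\Phi\setminus\{\nu\}$, namely $\Delta_0$.

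For the second assertion, I would decompose the computation of $w_0=w_{\frak{k}}^0w_{\frak{l}}^0$ applied to $\Delta^+$ in two stages. The longest element $w_{\frak{l}}^0$ reverses $\Delta^+$, so by the first part $w_{\frak{l}}^0(\Delta^+)=\Delta^-=\Delta_0^-\cup\Delta_{-1}$. The key observation for the second stage is that $\Delta_1$ and $\Delta_{-1}$ are precisely the sets of weights of the irreducible $\frak{k}$-modules $\frak{p}^+$ and $\frak{p}^-$ appearing in the decomposition $\frak{l}=\frak{k}\oplus\frak{p}^+\oplus\frak{p}^-$, so the Weyl group $W(\frak{k},\frak{b})$ preserves $\Delta_1$ and $\Delta_{-1}$ each setwise. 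In particular $w_{\frak{k}}^0(\Delta_{-1})=\Delta_{-1}$, while $w_{\frak{k}}^0(\Delta_0^-)=\Delta_0^+$ by definition of the longest element, whence $w_0(\Delta^+)=\Delta_0^+\cup\Delta_{-1}$ as claimed.

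For the tube-type assertion, the hypothesis $w_{\frak{l}}^0(\nu)=-\nu$ means that the Dynkin diagram involution $\phi\mapsto -w_{\frak{l}}^0(\phi)$ fixes $\nu$, hence permutes $\Phi_0$. Thus $w_{\frak{l}}^0$ stabilises the compact subsystem $\Delta_0$, and since it sends positive roots to negative roots, $w_{\frak{l}}^0(\Delta_0^+)=\Delta_0^-$; applying $w_{\frak{k}}^0$ gives $w_0(\Delta_0^+)=\Delta_0^+$. For the final equality, every root in $\Delta_1$ has the form $\nu+$(non-negative integer combination of elements of $\Phi_0$), so $\nu$ is the lowest weight and $\mu$ the highest weight of the irreducible $\frak{k}$-module $\frak{p}^+$; since $w_{\frak{k}}^0$ exchanges highest and lowest weights, $w_{\frak{k}}^0(\nu)=\mu$, and therefore $w_0(\nu)=w_{\frak{k}}^0(-\nu)=-\mu$. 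The main subtlety I anticipate is cleanly justifying the two stability claims (that $W(\frak{k},\frak{b})$ preserves $\Delta_{\pm 1}$ setwise, and that $w_{\frak{l}}^0$ stabilises $\Delta_0$ in the tube-type case); both boil down to the single observation that the compact/non-compact label of a root is encoded in the integer $n_\nu(\alpha)$, which is preserved by any diagram-type map that fixes $\nu$.
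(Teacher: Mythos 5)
Your proof is correct and follows essentially the same approach as the paper: decompose $\frak{l}=\frak{k}\oplus\frak{e}_+\oplus\frak{e}_-$, identify $\Delta_{\pm 1}$ as the weight sets of the irreducible $\frak{k}$-modules $\frak{e}_\pm$ (hence $W(\frak{k},\frak{b})$-stable), and use that $w_{\frak{k}}^0$ swaps the highest and lowest weights of each module to get $w_{\frak{k}}^0(\mu)=\nu$, $w_{\frak{k}}^0(-\nu)=-\mu$. The only difference is that you explicitly justify $w_0(\Delta_0^+)=\Delta_0^+$ in the tube-type case by observing that the diagram involution $\alpha\mapsto -w_{\frak{l}}^0(\alpha)$ fixes $\nu$ and hence permutes $\Phi_0$, a step the paper's proof leaves implicit.
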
 

\begin{proof} 
Let $\frak{e} = \frak{e}_0^\mathbb{C}$. Then $\frak{k} = \frak{b} \oplus \sum\limits_{\substack{\alpha \in \Delta \\ n_\nu (\alpha)=0}} 
\frak{l}_\alpha$, $\frak{e} = \frak{e}_{+} \oplus \frak{e}_{-}$, $\frak{e}_{\pm} = \sum\limits_{\substack
{\alpha \in \Delta \\ n_\nu (\alpha)=\pm{1}}} \frak{l}_\alpha$. Also $[\frak{k}, \frak{e}_+] \subset \frak{e}_+ , 
[\frak{k}, \frak{e}_-] \subset \frak{e}_-$. Since $\frak{l}$ is simple, the $\frak{k}$-modules $\frak{e}_+, \frak{e}_-$ are 
irreducible with highest weight $\mu$, $-\nu$ respectively. Also $\Delta_1$ (respectively, $\Delta_{-1}$) is the set of all weights of the 
$\frak{k}$-module $\frak{e}_+$ (respectively, $\frak{e}_-$). Hence $w_{\frak{k}}^0 (\Delta_1) = \Delta_1 , 
w_{\frak{k}}^0 (\Delta_{-1}) = \Delta_{-1}$, and $w_{\frak{k}}^0(\mu)$ (respectively, $w_{\frak{k}}^0(-\nu)$) is the lowest weight of 
$\frak{e}_+$ (respectively, $\frak{e}_-$). Hence $w_{\frak{k}}^0(\mu) = \nu$, and $w_{\frak{k}}^0(-\nu) = -\mu$.  
 
  Now $w_0 (\Delta^+ ) =  w_{\frak{k}}^0 w_{\frak{l}}^0 ( \Delta_0^+ \cup \Delta_{1} ) = 
w_{\frak{k}}^0 ( -(\Delta_0^+) \cup \Delta_{-1} )=  \Delta_0^+ \cup \Delta_{-1}$, and if $w_{\frak{l}}^0 (\nu ) = -\nu$, 
then  $w_0 (\nu) = w_{\frak{k}}^0 w_{\frak{l}}^0 (\nu) =  w_{\frak{k}}^0 (-\nu) = -\mu$.  
\end{proof}

\begin{remark}
{\em 
The above remark will be useful to determine the Weyl group element $s'_X$ (defined in \S \ref{op}) in case by case consideration. 
}
\end{remark}

  Now returning to the case 3.(ii), let $w_{\frak{g}_0}^0$ denote the longest element of the Weyl group of $\frak{\delta}_n$ 
with respect to the simple system $\{\alpha_0, \psi_1, \ldots , \psi_{n-2},\psi_{n-1}\}$. The hypotheses of the Lemma \ref{weyl} is 
satisfied for the Hermitian symmetric space $SO_0(2,2n-2)/SO(2) \times SO(2n-2)$ and $\nu = \psi_{n-1}$. 
Let $w_0 \in  W(\frak{g}_0, \frak{t}_0^\mathbb{C})$ be as in Lemma \ref{weyl} and $s'_X = w_0$. 
Then $s'_X (\{\alpha_0, \psi_1, \ldots , \psi_{n-2},\psi_{n-1}\})= 
\{\alpha_0, \psi_1, \ldots , \psi_{n-2}, -\mu \}$. 
Since $w_{\frak{g}_0}^0(\psi_{n-1}) = -\psi_{n-1}$, we have $s'_X (\psi_{n-1}) = -\mu$ and $s'_X (\{\alpha_0, \psi_1, \ldots , \psi_{n-2}\}) 
= \{\alpha_0, \psi_1, \ldots , \psi_{n-2}\}$. Now $w_{\frak{g}_0}^0 (\alpha_0)$ is $-\alpha_0$ or $-\psi_1$ according as $n$ is even or $n$ is 
odd. In any case, we have $s'_X (\alpha_0) = \psi_1, s'_X (\psi_1) = \alpha_0, s'_X (\psi_2) = \psi_2, \ldots , s'_X (\psi_{n-2}) =  
\psi_{n-2}$. Let $s_X = \textrm{Ad}(\sigma_X) \circ s'_X$. Then $s_X (\alpha_0) = \psi_1, s_X(\psi_1) = \alpha_0, s_X(\psi_j) = \psi_j$ for 
$2 \le j \le n-1$. So det$(s_X|_{\frak{t}_0^{\mathbb{C}}}) = -1$. 

4. 
\begin{center} 
\begin{tikzpicture}

\draw (0,0) circle [radius = 0.1]; 
\draw (1,0) circle [radius = 0.1]; 
\draw (2,0) circle [radius = 0.1]; 
\draw (1,-1) circle [radius = 0.1]; 
\draw (3.5,0) circle [radius = 0.1]; 
\draw (4.5,0) circle [radius = 0.1]; 
\node [above] at (0.05,0.05) {$\psi_1$}; 
\node [above] at (1.05,0.05) {$\psi_2$}; 
\node [above] at (2.05,0.05) {$\psi_2$}; 
\node [below] at (1.05,-1.15) {$\alpha_0$}; 
\node [above] at (3.75,0.05) {$\psi_{n-1}$}; 
\node [above] at (4.55,0.05) {$\psi_n$}; 
\node [left] at (-0.5,0) {$\frak{b}_n^{(1)} :$}; 
\node[ left] at (-0.5,-0.6) {$(n > 2)$}; 
\draw (0.1,0) -- (0.9,0); 
\draw (1.1,0) -- (1.9,0);
\draw (1,-0.1) -- (1,-0.9); 
\draw (2.1,0) -- (2.5,0); 
\draw [dotted] (2.5,0) -- (3,0); 
\draw (3,0) -- (3.4,0); 
\draw (4.4,0) -- (4.3,0.1); 
\draw (4.4,0) -- (4.3,-0.1); 
\draw (3.6,0.025) -- (4.35,0.025); 
\draw (3.6,-0.025) -- (4.35,-0.025); 

\end{tikzpicture} 
\end{center} 

 Here $\frak{g} = \frak{b}_n$ and $\alpha_0 + \psi_1 + 2\psi_2 + \cdots + 2\psi_n = 0$. 

(i) First assume that 
$\sigma$ is an involution of $\frak{g}$ of type $(1,1,0, \ldots ,0; 1)$. Then $\frak{u}_0 = \frak{so}(2n-1) \oplus i\mathbb{R}$ 
and $X(\bar{\sigma}\bar{\theta})$ is a Hermitian symmetric space of tube type. Let 
\[ \gamma_1 = \psi_1, \gamma_2 = \psi_1 + 2\psi_2 + \cdots +2 \psi_n.\]
Then $\{\gamma_1, \gamma_2 \}$ is a maximal set of strongly orthogonal roots in 
$\{\alpha \in \Delta^+ : \frak{g}_\alpha \subset \frak{g}_1 \}$ and so $\frak{a} = \mathbb{R} Y_{\gamma_1} \oplus 
\mathbb{R} Y_{\gamma_2}$ is a maximal abelian subspace of $\frak{u}_1$.
Since $[\frak{g}_0, \frak{g}_0] = \frak{b}_{n-1}$ does not 
admit any non-trivial Dynkin diagram automorphism, det$(s_X|_{\frak{t}_0^{\mathbb{C}}}) = -1$ for some $X \in \frak{a}$ with 
$\textrm{exp}(-2X) \in \tilde{Z}$, by Remark \ref{or}(v). 

(ii) Next assume that $\sigma$ is an involution of $\frak{g}$ of type $(0, \ldots ,0, s_p, 0, \ldots ,0; 1) \ (2\le p < n)$ with $s_p = 1$. 
Then $\frak{u}_0 = \frak{so}(2p) \oplus \frak{so}(2n+1-2p)$. Let $q = \textrm{min}\{p, n-p\}$, and define \\
$\gamma_1 = \psi_p , \gamma'_1 = \psi_p + 2 \psi_{p+1} + \cdots + 2 \psi_n , \\ 
\gamma_2 = \psi_{p-1} + \psi_p + \psi_{p+1}, \gamma'_2 =\psi_{p-1} + \psi_p + \psi_{p+1} + 
2\psi_{p+2}+ \cdots + 2 \psi_n , \ldots , \\
\gamma_q = \psi_{p-q+1} + \cdots + \psi_p + \cdots + \psi_{p+q-1}, 
\gamma'_q = \psi_{p-q+1} + \cdots + \psi_p + \cdots + \psi_{p+q-1} + 2\psi_{p+q} + \cdots + 2\psi_n $. 
If $n-p < p$, define $\gamma_0 = \psi_{2p-n} + \cdots + \psi_n$.  Let $\Gamma = \{\gamma_1, 
\gamma'_1, \ldots , \gamma_q, \gamma'_q \}$ if $p \le n-p$, and \\ 
$\Gamma = \{\gamma_1, \gamma'_1, \ldots , \gamma_q, \gamma'_q , \gamma_0 \}$ if $p > n-p$. 
Then $\Gamma$ is a maximal set of strongly orthogonal roots in 
$\{\alpha \in \Delta^+ : \frak{g}_\alpha \subset \frak{g}_1 \}$, and 
$\frak{a} = \sum\limits_{\gamma \in \Gamma}\mathbb{R} Y_{\gamma}$ is a maximal abelian subspace of $\frak{u}_1$. Let 
$X = \frac{\pi}{2} (Y_{\gamma_1} + Y_{\gamma'_1})$. Since 
$\frac{\pi}{2} \psi_j (H_{\gamma_1}^* + H_{\gamma'_1}^* ) = 0$ for $1 \le j \le n, j \neq p-1, p$, 
$\frac{\pi}{2} \psi_{p-1} (H_{\gamma_1}^* + H_{\gamma'_1}^* ) = -\pi$, and 
$\frac{\pi}{2} \psi_{p} (H_{\gamma_1}^* + H_{\gamma'_1}^* ) = \pi$; 
hence $X \in \frak{a}$ with exp$(-2X) \in \tilde{Z}$. 
Now Ad$(\sigma_X)(H_{\gamma_1}^*) = -H_{\gamma_1}^*$, 
Ad$(\sigma_X)(H_{\gamma'_1}^*) = -H_{\gamma'_1}^*$, and 
Ad$(\sigma_X) (H) = H$ 
for all $\{ H \in \frak{t}_0^{\mathbb{C}} : \gamma_1 (H) =0= \gamma'_1 (H)\}$. Hence \\
$\textrm{Ad}(\sigma_X) (\psi_j) = \psi_j \textrm{ for all } 1 \le j \le p-2, \ p+2 \le j \le n; \\ 
\textrm{Ad}(\sigma_X) (\alpha_0) = \alpha_0\ (\textrm{if } p>2), \\
\textrm{Ad}(\sigma_X) (\psi_p) =- \psi_p, \textrm{ and} \\ 
\textrm{Ad}(\sigma_X)(\psi_{p-1}) = \textrm{Ad}(\sigma_X) (-\frac{1}{2}(\gamma_1 + \gamma'_1) 
+ \frac{1}{2} (\gamma_1 + \gamma'_1) + \psi_{p-1}) 
= \frac{1}{2}(\gamma_1 + \gamma'_1) + \frac{1}{2} (\gamma_1 + \gamma'_1) + \psi_{p-1} \\
= \psi_{p-1} + \gamma_1 + \gamma'_1 = \psi_{p-1} + 2\psi_p + \cdots + 2 \psi_n $. \\
So $\textrm{Ad}(\sigma_X) (\psi_{p-1})=-\alpha_0 , \textrm{ if } p=2 $. \\
$\textrm{Ad}(\sigma_X) (\psi_{p-1}) = -\alpha_0 -\psi_1 - \psi_2 = - \mu , \textrm{ if } p=3$; and \\
$\textrm{Ad}(\sigma_X) (\psi_{p-1}) = -\alpha_0 - \psi_1 -2\psi_2-\cdots -2\psi_{p-2} - \psi_{p-1} 
= - \mu, \textrm{ if } p >3$; \\ 
where $\mu$ is the highest root of $\frak{\delta}_p$ with respect to the basis
$\{ \alpha_0 , \psi_1, \psi_2, \ldots , \psi_{p-1} \}$ of the root system of $\frak{\delta}_p$.  \\
Similarly if $p=2$, then $\textrm{Ad}(\sigma_X) (\alpha_0) = -\psi_1$. \\ 
Let $s'_X \in W (\frak{g}_0, \frak{t}_0^{\mathbb{C}})$ be such that Ad$(\sigma_X) \circ s'_X (\Delta_0^+) 
= \Delta_0^+$, and $s_X = \textrm{Ad}(\sigma_X)\circ s'_X $. Since 
$\frak{b}_{n-p}$ does not admit any non-trivial Dynkin diagram automorphism, we have \\ 
$s_X(\alpha_0) = \psi_1, s_X(\psi_1) = \alpha_0, s_X(\psi_j) = \psi_j$  for all $2 \le j \le n, j \neq p$, 
as in case 3(ii). So det$(s_X|_{\frak{t}_0^{\mathbb{C}}}) = -1$. 

(iii) Finally assume that $\sigma$ is an involution of $\frak{g}$ of type $(0, \ldots ,0,1; 1)$. 
Then $\frak{u}_0 = \frak{so}(2n)$. Define $\gamma_1 = \psi_n$. Then $\{\gamma_1 \}$ is a maximal set of 
strongly orthogonal roots in $\{\alpha \in \Delta^+ : \frak{g}_\alpha \subset \frak{g}_1 \}$, and 
$\frak{a} = \mathbb{R} Y_{\gamma_1}$ is a maximal abelian subspace of $\frak{u}_1$. Let 
$X = \frac{\pi}{2} Y_{\gamma_1}$. Since 
$\frac{\pi}{2} \psi_j (H_{\gamma_1}^* )= 0$ for $1 \le j \le n-2$,  
$\frac{\pi}{2} \psi_{n-1} (H_{\gamma_1}^*) = -\pi$, and 
$\frac{\pi}{2} \psi_{n} (H_{\gamma_1}^*) = \pi$; 
hence $X \in \frak{a}$ with exp$(-2X) \in \tilde{Z}$. 
Now Ad$(\sigma_X)(H_{\gamma_1}^*) = -H_{\gamma_1}^*$, and 
Ad$(\sigma_X) (H) = H$ 
for all $\{ H \in \frak{t}_0^{\mathbb{C}} : \gamma_1 (H) =0\}$. Hence \\
$\textrm{Ad}(\sigma_X) (\psi_j) = \psi_j \textrm{ for all } 1 \le j \le n-2, \\ 
\textrm{Ad}(\sigma_X) (\alpha_0) = \alpha_0\ (\textrm{as } n>2), \\
\textrm{Ad}(\sigma_X) (\psi_n) =- \psi_n, \textrm{ and} \\ 
\textrm{Ad}(\sigma_X) (\psi_{n-1}) = \textrm{Ad}(\sigma_X)(-\psi_n +  \psi_{n-1}+ \psi_n) 
= \psi_n + \psi_{n-1} + \psi_{n} = \psi_{n-1} + 2\psi_n \\
= \psi_{n-1} -\alpha_0 - \psi_1 - 2\psi_2 - \cdots - 2\psi_{n-1} 
=-\mu$, 
where $\mu$ is the highest root in $\Delta_0^+$.  \\
Let $s'_X \in W (\frak{g}_0, \frak{t}_0^{\mathbb{C}})$ be such that Ad$(\sigma_X)\circ s'_X (\Delta_0^+) 
= \Delta_0^+$,  
and $s_X = \textrm{Ad}(\sigma_X)\circ s'_X $. Then 
$s_X(\alpha_0) = \psi_1, s_X(\psi_1) = \alpha_0, s_X(\psi_j) = \psi_j$  for all $2 \le j \le n-1$, 
as in case 3(ii). So det$(s_X|_{\frak{t}_0^{\mathbb{C}}}) = -1$. 

5. 
\begin{center} 
\begin{tikzpicture}
 
\draw (0,0) circle [radius = 0.1]; 
\draw (1,0) circle [radius = 0.1]; 
\draw (2.5,0) circle [radius = 0.1]; 
\draw (3.5,0) circle [radius = 0.1]; 
\node [above] at (0.05,0.05) {$\alpha_0$}; 
\node [above] at (1.05,0.05) {$\psi_1$}; 
\node [above] at (2.75,0.05) {$\psi_{n-1}$}; 
\node [above] at (3.55,0.05) {$\psi_n$}; 
\node [left] at (-0.5,0) {$\frak{c}_n^{(1)} :$}; 
\node [left] at (-0.5,-0.6) {$(n > 1)$}; 
\draw (0.9,0) -- (0.8,0.1); 
\draw (0.9,0) -- (0.8,-0.1); 
\draw (0.1,0.025) -- (0.85,0.025); 
\draw (0.1,-0.025) -- (0.85,-0.025); 
\draw (1.1,0) -- (1.5,0); 
\draw [dotted] (1.5,0) -- (2,0); 
\draw (2,0) -- (2.4,0); 
\draw (2.6,0) -- (2.7,0.1); 
\draw (2.6,0) -- (2.7,-0.1); 
\draw (2.65,0.025) -- (3.4,0.025); 
\draw (2.65,-0.025) -- (3.4,-0.025); 

\end{tikzpicture} 
\end{center} 

 Here $\frak{g} = \frak{c}_n$ and $\alpha_0 + 2\psi_1 + 2\psi_2 + \cdots + 2\psi_{n-1}+ \psi_n = 0$. 
 
(i) First assume that 
$\sigma$ is an involution of $\frak{g}$ of type $(1, 0, \ldots ,0, 1; 1)$. Then $\frak{u}_0 = 
\frak{su}(n) \oplus i\mathbb{R}$ and $X(\bar{\sigma}\bar{\theta})$ is a 
Hermitian symmetric space of tube type. Let 
\[ \gamma_1 = \psi_n, \gamma_2 = 2\psi_{n-1} + \psi_n, \ldots , \gamma_n = 2\psi_1 + \cdots + 
2\psi_{n-1} + \psi_n.\]
Then $\{\gamma_1, \gamma_2 , \ldots , \gamma_n \}$ is a maximal set of strongly orthogonal roots in 
$\{\alpha \in \Delta^+ : \frak{g}_\alpha \subset \frak{g}_1 \}$. Note that 
$\psi_{n-1} = \frac{1}{2}(\gamma_2 - \gamma_1), \psi_{n-2} = \frac{1}{2}(\gamma_3 - \gamma_2), 
\ldots , \psi_1 = \frac{1}{2}(\gamma_n - \gamma_{n-1})$. 
Let $X \in \frak{a}$ with $\textrm{exp}(-2X) \in \tilde{Z}$ and 
$\textrm{Ad}(\sigma_X)(Z) = - Z$, where $Z = \sum\limits_{j =1}^{n} iH_{\gamma_j}^*$. Then 
Ad$(\sigma_X) (\psi_{n- j} ) = -\frac{1}{2}(\gamma_{j+1} - \gamma_j) = -\psi_{n-j}$ 
for all $1 \le j \le n-1$. Let $w_{\frak{g}_0}^0 \in W(\frak{g}_0,\frak{t}_0^{\mathbb{C}})$ be the 
longest element that is, $w_{\frak{g}_0}^0(\psi_j) = -\psi_{n-j}$ for all $1 \le j \le n-1$. Then 
$s_X = \textrm{Ad}(\sigma_X)\circ w_{\frak{g}_0}^0$ with $s_X (\Delta_0^+) = \Delta_0^+$. Now $s_X(\psi_{j}) = \psi_{n-j}$ for all 
$1 \le j \le n-1$. So  det$(s_X|_{\frak{t}_0^{\mathbb{C}}}) = - (-1)^{[\frac{n-1}{2}]} = 1$ iff $n \in 4\mathbb{Z}$ or $n \in 3 + 4\mathbb{Z}$. So if 
$n \in 4\mathbb{Z}$ or $n \in 3 + 4\mathbb{Z}$, 
then the canonical action of $G(\mu)$ on $X(\mu)$ is orientation preserving for $\mu = \bar{\sigma}, \bar{\sigma}\bar{\theta}$. 

(ii) Next assume that $\sigma$ is an involution of $\frak{g}$ of type $(0, 0, \ldots ,0,s_p,0,\ldots,0; 1)\ 
(1 \le p \le n-1, \ p \neq n-p)$ with $s_p = 1$. Then 
$\frak{u}_0 = \frak{sp}(p) \oplus \frak{sp}(n-p)$ and $\frak{g}_0 = \frak{c}_p \oplus \frak{c}_{n-p} \ 
(p \neq n-p)$, which does not have 
any non-trivial Dynkin diagram automorphism. So det$(s_X|_{\frak{t}_0^{\mathbb{C}}}) = 1$ for all $X \in \frak{a}$ with 
$\textrm{exp}(-2X) \in \tilde{Z}$. Hence the canonical action of $G(\mu)$ on $X(\mu)$ is orientation preserving for 
$\mu = \bar{\sigma}, \bar{\sigma}\bar{\theta}$. 

(iii) Finally assume that $\sigma$ is an involution of $\frak{g}$ of type 
$(0, \ldots ,0, s_p, 0, \ldots ,0; 1)\ (s_p = 1)$, where $n$ is even and $p = \frac{n}{2}$.  
Then $\frak{u}_0 = \frak{sp}(p) \oplus \frak{sp}(p)$. Define \\
$\gamma_1 = \psi_p , \gamma_2 = \psi_{p-1} + \psi_p + \psi_{p+1}, \ldots , 
\gamma_p = \psi_1 + \cdots + \psi_{p-1} + \psi_p + \psi_{p+1} + \cdots + \psi_{n-1} \ 
(\textrm{as } n = 2p)$. Then $\{\gamma_1, \gamma_2, \ldots , \gamma_p \}$ 
is a maximal set of strongly orthogonal roots in 
$\{\alpha \in \Delta^+ : \frak{g}_\alpha \subset \frak{g}_1 \}$, and 
$\frak{a} = \sum\limits_{j=1}^{p} \mathbb{R} Y_{\gamma_j}$ is a maximal abelian subspace of 
$\frak{u}_1$. Let $X = \sum\limits_{j=1}^{p}  c_j Y_{\gamma_j}$. Then 
exp$(-2X) \in \tilde{Z}$ iff 
$c_p - c_{p-1}, c_{p-1} - c_{p-2}, \ldots , c_2 - c_1, 2c_1, -2c_p \in \pi \mathbb{Z}$ iff 
$\cos 2c_j = \cos 2c_1$ for all $1 \le j \le p$, and $\cos 2c_1 = \pm 1$. \\ 
Let $X = \sum\limits_{j=1}^{p}  c_j Y_{\gamma_j}$ with exp$(-2X) \in \tilde{Z}$ and $\cos 2c_1 = -1$. Then 
Ad$(\sigma_X)(H_{\gamma_j}^*) = -H_{\gamma_j}^*$, and 
Ad$(\sigma_X) (H) = H$ 
for all $\{ H \in \frak{t}_0^{\mathbb{C}} : \gamma_j (H) =0 \textrm{ for all } 1\le j \le p\}$. Recall that 
$\frak{t}^- = \sum\limits_{j =1}^{p} \mathbb{R}(iH_{\gamma_j}^*) \subset \frak{t}_0$, and 
$\frak{t}^+ = \{ H \in \frak{t}_0 : \gamma_j (H) =0 \textrm{ for all } 1 \le j \le p \}$. Now 
$\psi_1\Big{|}_{(\frak{t}^-)^{\mathbb{C}}} = \frac{1}{2}(\gamma_p - \gamma_{p-1}), 
\psi_2\Big{|}_{(\frak{t}^-)^{\mathbb{C}} }= \frac{1}{2}(\gamma_{p-1} - \gamma_{p-2}), \ldots , 
\psi_{p-1}\Big{|}_{(\frak{t}^-)^{\mathbb{C}}} = \frac{1}{2}(\gamma_2 - \gamma_1), 
\psi_{p+1}\Big{|}_{(\frak{t}^-)^{\mathbb{C}}} = \frac{1}{2}(\gamma_2 - \gamma_1), \ldots , 
\psi_{n-1}\Big{|}_{(\frak{t}^-)^{\mathbb{C}}}= \frac{1}{2}(\gamma_p - \gamma_{p-1}), 
\psi_n\Big{|}_{(\frak{t}^-)^{\mathbb{C}}} = -\gamma_p , \alpha_0\Big{|}_{(\frak{t}^-)^{\mathbb{C}}} = 
-\gamma_p$. Hence \\
$\textrm{Ad}(\sigma_X) (\psi_{p \pm j}) = \textrm{Ad}(\sigma_X) (\frac{1}{2}(\gamma_{j+1} - 
\gamma_{j}) + \psi_{p\pm j} - \frac{1}{2} (\gamma_{j+1} - \gamma_{j}) ) 
=- \frac{1}{2}(\gamma_{j+1} - \gamma_{j}) + \psi_{p\pm j} - 
\frac{1}{2} (\gamma_{j+1} - \gamma_{j}) 
= \psi_{p\pm j} - (\gamma_{j+1} - \gamma_{j}) = - \psi_{p\mp j}, \textrm{ for all } 1 \le j \le p-1; \\
\textrm{Ad}(\sigma_X) (\psi_n) = \textrm{Ad}(\sigma_X) (-\gamma_p + \psi_n + \gamma_p) 
=\gamma_p + \psi_{n} + \gamma_p = \psi_n + 2\gamma_p = - \alpha_0, \textrm{ and} \\ 
\textrm{Ad}(\sigma_X) (\alpha_0) = \textrm{Ad}(\sigma_X) (-\gamma_p + \alpha_0+ \gamma_p) 
=\gamma_p + \alpha_0 + \gamma_p = \alpha_0 + 2\gamma_p = - \psi_n$. \\
Therefore $\textrm{Ad}(\sigma_X) (\{\alpha_0, \psi_1, \ldots , \psi_{p-1},\psi_{p+1}, \ldots , 
\psi_n\}) = \{-\alpha_0, -\psi_1, \ldots , -\psi_{p-1},-\psi_{p+1}, \ldots , -\psi_n\}$. \\
Let $w_{\frak{g}_0}^0 \in W(\frak{g}_0,\frak{t}_0^{\mathbb{C}})$ be the 
longest element that is, $w_{\frak{g}_0}^0(\psi_j) = -\psi_j$ for all $1 \le j \le n,\ j \neq p$; and 
$w_{\frak{g}_0}^0(\alpha_0) = -\alpha_0$. Then 
$s_X = \textrm{Ad}(\sigma_X)\circ w_{\frak{g}_0}^0$ with $s_X (\Delta_0^+) = \Delta_0^+$. Now 
$s_X(\psi_{p \pm j}) = \psi_{p \mp j}$ for all $1 \le j \le p-1$, $s_X(\psi_n) = \alpha_0, \ 
s_X(\alpha_0) = \psi_n$. So  det$(s_X|_{\frak{t}_0^{\mathbb{C}}}) = (-1)^p$. Hence 
the canonical action of $G(\mu)$ on $X(\mu)$ is orientation preserving for $\mu = \bar{\sigma}, \bar{\sigma}\bar{\theta}$ 
if $p$ is even that is, if $n \in 4\mathbb{Z}$ and $p = \frac{n}{2}$. 

6. 
\begin{center} 
\begin{tikzpicture} 

\draw (0,0) circle [radius = 0.1]; 
\draw (1,0) circle [radius = 0.1]; 
\draw (2,0) circle [radius = 0.1]; 
\draw (1,-1) circle [radius = 0.1]; 
\draw (3.5,0) circle [radius = 0.1]; 
\draw (4.5,0) circle [radius = 0.1]; 
\draw (5.5,1) circle [radius = 0.1]; 
\draw (5.5,-1) circle [radius = 0.1]; 
\node [above] at (0.05,0.05) {$\psi_1$}; 
\node [above] at (1.05,0.05) {$\psi_2$}; 
\node [above] at (2.05,0.05) {$\psi_3$}; 
\node [below] at (1.05,-1.15) {$\alpha_0$}; 
\node [above] at (3.35,0.05) {$\psi_{n-3}$}; 
\node [above] at (4.35,0.05) {$\psi_{n-2}$}; 
\node [above] at (5.55,1.05) {$\psi_{n-1}$}; 
\node [below] at (5.55,-1.05) {$\psi_n$}; 
\node [left] at (-0.5,0) {$\frak{\delta}_n^{(1)} :$}; 
\node [left] at (-0.5,-0.6) {$(n > 3)$}; 
\draw (0.1,0) -- (0.9,0); 
\draw (1.1,0) -- (1.9,0); 
\draw (1,-0.1) -- (1,-0.9); 
\draw (2.1,0) -- (2.5,0); 
\draw [dotted] (2.5,0) -- (3,0); 
\draw (3,0) -- (3.4,0); 
\draw (3.6,0) -- (4.4,0); 
\draw (4.6,0) -- (5.45,0.95); 
\draw (4.6,0) -- (5.45,-0.95); 

\end{tikzpicture} 
\end{center} 

  Here $\frak{g} = \frak{\delta}_n$ and $\alpha_0 + \psi_1 + 2\psi_2 + \cdots + 2\psi_{n-2} + 
\psi_{n-1} + \psi_n = 0$. 
  
(i)  Assume that 
$\sigma$ is an involution of $\frak{g}$ of type $(1, 0,\ldots,0,1; 1)$ (similarly for types 
$(1,0,\ldots , 0,1,0;1), (0,1,0, \ldots , 0,1,0;1), \textrm{ or } (0,1,0,\ldots,0,1;1)$). Then $\frak{u}_0 = \frak{su}(n) 
\oplus i\mathbb{R}$ and $X(\bar{\sigma}\bar{\theta})$ is a 
Hermitian symmetric space. This Hermitian symmetric space is of tube type iff $n$ is even. So if $n$ is 
odd, then the canonical action of $G(\mu)$ on $X(\mu)$ is 
orientation preserving for $\mu = \bar{\sigma}, \bar{\sigma}\bar{\theta}$, by Remark \ref{or}(iv). 

  Now assume that $n$ is even and $r = \frac{n}{2}$. Define 
$\gamma_1 = \psi_n, \gamma_2 = \psi_{n-3} + 2\psi_{n-2}+ \psi_{n-1}+\psi_n, \\
\gamma_3 = \psi_{n-5} + 2\psi_{n-4} + 2\psi_{n-3} + 2\psi_{n-2} + \psi_{n-1} + \psi_n, \ldots , 
\gamma_r = \psi_1 + 2\psi_2 + \cdots + 2\psi_{n-2} + \psi_{n-1} + \psi_n$. That is, 
$\gamma_j = \psi_{n-2j+1} +2 \psi_{n-2j+2} + \cdots + 2\psi_{n-2} + \psi_{n-1} + \psi_n$, for all $2 \le j \le r$; and 
$\gamma_1 = \psi_n$. 
Then $\{\gamma_1, \gamma_2 , \ldots , \gamma_r \}$ is a maximal set of strongly orthogonal roots in 
$\{\alpha \in \Delta^+ : \frak{g}_\alpha \subset \frak{g}_1 \}$.  \\
Now $\psi_1\Big{|}_{(\frak{t}^-)^{\mathbb{C}}}= 0, \psi_2\Big{|}_{(\frak{t}^-)^{\mathbb{C}}} = \frac{1}{2}(\gamma_r - \gamma_{r-1}), 
\psi_3\Big{|}_{(\frak{t}^-)^{\mathbb{C}}}= 0, \psi_4\Big{|}_{(\frak{t}^-)^{\mathbb{C}}} = \frac{1}{2}(\gamma_{r-1} - \gamma_{r-2}), \ldots , 
\psi_{n-2}\Big{|}_{(\frak{t}^-)^{\mathbb{C}}} = \frac{1}{2}(\gamma_2 - \gamma_1), \psi_{n-1}\Big{|}_{(\frak{t}^-)^{\mathbb{C}}}= 0$. 
That is, $\psi_{2j} \Big{|}_{(\frak{t}^-)^{\mathbb{C}}} = \frac{1}{2}(\gamma_{r-j+1} - \gamma_{r-j})$ for all $1 \le j \le r-1$, 
$\psi_{2j-1}\Big{|}_{(\frak{t}^-)^{\mathbb{C}}}= 0$ for all $1 \le j \le r$. \\
Let $X \in \frak{a}$ with $\textrm{exp}(-2X) \in \tilde{Z}$ and 
$\textrm{Ad}(\sigma_X)(Z) = - Z$, where $Z = \sum\limits_{j =1}^{r} iH_{\gamma_j}^*$. Then \\ 
Ad$(\sigma_X) (\psi_{2j-1} ) = \psi_{2j-1}$, for all $1 \le j \le r$, and \\ 
Ad$(\sigma_X) (\psi_{2j} ) = \textrm{Ad}(\sigma_X) (\frac{1}{2}(\gamma_{r-j+1} - \gamma_{r-j}) + \psi_{2j} - \frac{1}{2}(\gamma_{r-j+1} - \gamma_{r-j})) \\ 
= -\frac{1}{2}(\gamma_{r-j+1} - \gamma_{r-j}) + \psi_{2j} - \frac{1}{2}(\gamma_{r-j+1} - \gamma_{r-j}) = \psi_{2j} - (\gamma_{r-j+1} - \gamma_{r-j}) \\ 
= -\psi_{2j-1} - \psi_{2j} - \psi_{2j+1}$, for all $1 \le j \le r-1$. So Ad$(\sigma_X) (\{ \psi_1, \psi_2, \ldots , \psi_{n-1} \}) = \{\psi_1, -\psi_1 - \psi_2 - \psi_3, 
\psi_3, -\psi_3 - \psi_4-\psi_5, \ldots , \psi_{n-3} , -\psi_{n-3} - \psi_{n-2} - \psi_{n-1}, \psi_{n-1} \}$. \\ 
Let $w_{\frak{g}_0}^0 \in W(\frak{g}_0,\frak{t}_0^{\mathbb{C}})$ be the 
longest element and $s'_X = s_{\psi_{n-1}}s_{\psi_{n-3}}\ldots s_{\psi_3}s_{\psi_1}w_{\frak{g}_0}^0$. Then 
$s'_X(\psi_{2j-1}) = \psi_{n-2j+1}$ for all $1 \le j \le r$, and $s'_X(\psi_{2j}) = -\psi_{n-2j-1}-\psi_{n-2j} - \psi_{n-2j+1}$ for all $1 \le j \le r-1$. Then 
$s_X = \textrm{Ad}(\sigma_X)\circ s'_X$ with $s_X (\Delta_0^+) = \Delta_0^+$. Now $s_X(\psi_{2j-1}) = \psi_{n-2j+1}$ for all $1 \le j \le r$, and 
$s_X(\psi_{2j}) = \psi_{n-2j}$ for all $1 \le j \le r-1$. So  det$(s_X|_{\frak{t}_0^{\mathbb{C}}}) = - (-1)^{r-1} = (-1)^r$. Hence  
the canonical action of $G(\mu)$ on $X(\mu)$ is orientation preserving for $\mu = \bar{\sigma}, \bar{\sigma}\bar{\theta}$ if $r$ is even that is, if 
$n \in 4\mathbb{Z}$.  

(ii) Assume that 
$\sigma$ is an involution of $\frak{g}$ of type $(1, 1, 0, \ldots ,0; 1)$ (similarly for type $(0,\ldots , 0,1,1;1)$). 
Then $\frak{u}_0 = \frak{so}(2n-2) \oplus i\mathbb{R}$ and $X(\bar{\sigma}\bar{\theta})$ is a Hermitian symmetric space of tube type. Let 
\[ \gamma_1 = \psi_1, \gamma_2 = \psi_1 + 2\psi_2 + \cdots +2 \psi_{n-2} + \psi_{n-1} + \psi_n.\]
Then $\{\gamma_1, \gamma_2 \}$ is a maximal set of strongly orthogonal roots in 
$\{\alpha \in \Delta^+ : \frak{g}_\alpha \subset \frak{g}_1 \}$ and so $\frak{a} = \mathbb{R} Y_{\gamma_1} \oplus 
\mathbb{R} Y_{\gamma_2}$ is a maximal abelian subspace of $\frak{u}_1$. \\ 
Now $\psi_2\Big{|}_{(\frak{t}^-)^{\mathbb{C}}}= \frac{1}{2}(\gamma_2 - \gamma_1), \psi_j\Big{|}_{(\frak{t}^-)^{\mathbb{C}}} = 0$ 
for all $3 \le j \le n$. \\ 
Let $X \in \frak{a}$ with $\textrm{exp}(-2X) \in \tilde{Z}$ and 
$\textrm{Ad}(\sigma_X)(Z) = - Z$, where $Z = i(H_{\gamma_1}^* + H_{\gamma_2}^*)$. Then \\ 
$\textrm{Ad}(\sigma_X) (\psi_j) = \psi_j \textrm{ for all } 3 \le j \le n, \textrm{ and} \\ 
\textrm{Ad}(\sigma_X) (\psi_2) = \textrm{Ad}(\sigma_X) (\frac{1}{2}(\gamma_2 - \gamma_1) + \psi_2 - \frac{1}{2}(\gamma_2 - \gamma_1)) 
= -\frac{1}{2}(\gamma_2 - \gamma_1) + \psi_2 - \frac{1}{2}(\gamma_2 - \gamma_1) \\
= \psi_2 - (\gamma_2 - \gamma_1) = \psi_2 - (2\psi_2 + \cdots + 2\psi_{n-2} + \psi_{n-1} + \psi_n) 
=-\mu$, 
where $\mu$ is the highest root in $\Delta_0^+$.  \\
Let $s'_X \in W (\frak{g}_0, \frak{t}_0^{\mathbb{C}})$ be such that Ad$(\sigma_X) \circ s'_X (\Delta_0^+) 
= \Delta_0^+$,  
and $s_X = \textrm{Ad}(\sigma_X)\circ s'_X $. Then 
$s_X(\psi_{n-1}) = \psi_n, s_X(\psi_n) = \psi_{n-1}, s_X(\psi_j) = \psi_j$  for all $2 \le j \le n-2$, 
as in case 3(ii). So det$(s_X|_{\frak{t}_0^{\mathbb{C}}}) = -1\times -1 = 1$. Hence  
the canonical action of $G(\mu)$ on $X(\mu)$ is orientation preserving for $\mu = \bar{\sigma}, \bar{\sigma}\bar{\theta}$. 

(iii) Next assume that $\sigma$ is an involution of $\frak{g}$ of type $(0, 0,\ldots ,0, s_p, 0, \ldots ,0; 1) \ (2\le p \le n-2,\ p \le n-p)$ 
with $s_p = 1$. Then $\frak{u}_0 = \frak{so}(2p) \oplus \frak{so}(2n-2p)$. Define \\
$\gamma_1 = \psi_p , \gamma'_1 = (\psi_p + \cdots + \psi_{n-2}) + (\psi_{p+1} + \cdots + \psi_n), \\ 
\gamma_2 = \psi_{p-1} + \psi_p + \psi_{p+1}, \gamma'_2 =(\psi_{p-1} + \cdots + \psi_{n-2}) + (\psi_{p+2}+ \cdots + \psi_n), \ldots , \\
\gamma_p = \psi_{1} + \cdots + \psi_p + \cdots + \psi_{2p-1}, 
\gamma'_p = (\psi_{1} + \cdots + \psi_{n-2}) + (\psi_{2p} + \cdots + \psi_n)$. 
Then $\{\gamma_1, \gamma'_1, \ldots , \gamma_p, \gamma'_p\}$ is a maximal set of strongly orthogonal roots in 
$\{\alpha \in \Delta^+ : \frak{g}_\alpha \subset \frak{g}_1 \}$, and 
$\frak{a} = \sum\limits_{j=1}^{p}(\mathbb{R} Y_{\gamma_j} + \mathbb{R}Y_{\gamma'_j})$ is a maximal abelian subspace of $\frak{u}_1$. 
Let $X = \sum\limits_{j=1}^{p} (c_j Y_{\gamma_j} + c'_j Y_{\gamma'_j}) \in \frak{a}$. Then exp$(-2X) \in \tilde{Z}$ iff 
$c_p + c'_p - c_{p-1} - c'_{p-1}, c_{p-1} + c'_{p-1} - c_{p-2} - c'_{p-2}, \ldots , c_2 + c'_2 - c_1 - c'_1, 2c_1, 
- c_1 + c'_1 + c_2 - c'_2, - c_2 + c'_2 + c_3 - c'_3, \ldots , - c_{p-1} + c'_{p-1} + c_p - c'_p \in \pi \mathbb{Z}$ and 
$- c_p + c'_p(\textrm{respectively,} - c_{p-1} + c'_{p-1} - c_p + c'_p) \in \pi \mathbb{Z}$, if $p < n-p$ (respectively, 
$p = n-p$). This is true iff $\cos 2c_j = \cos 2c'_j = \pm 1$ (respectively, $\cos 2c_j = \pm 1, \cos 2c'_j = \pm 1$) 
for all $1 \le j \le p$, if $p < n-p$ (respectively, $p=n-p$). 

  Let exp$(-2X) \in \tilde{Z}$ and $p < n-p$. Then for $1 \le j \le p$, either $\textrm{Ad}(\sigma_X) (\gamma_j ) = \gamma_j, 
\textrm{Ad}(\sigma_X) (\gamma'_j) = \gamma'_j$, or $\textrm{Ad}(\sigma_X) (\gamma_j ) = -\gamma_j, 
\textrm{Ad}(\sigma_X) (\gamma'_j) = -\gamma'_j$. Thus for $1 \le j \le p$, either $\textrm{Ad}(\sigma_X) 
(\frac{\gamma_j  + \gamma'_j}{2}) = \frac{\gamma_j + \gamma'_j}{2}, \textrm{Ad}(\sigma_X) (\frac{\gamma'_j -\gamma_j}{2}) 
= \frac{\gamma'_j-\gamma_j}{2}$, or \\ 
$\textrm{Ad}(\sigma_X) (\frac{\gamma_j + \gamma'_j}{2}) = -\frac{\gamma_j + \gamma'_j}{2}, 
\textrm{Ad}(\sigma_X) (\frac{\gamma'_j -\gamma_j}{2}) = -\frac{\gamma'_j-\gamma_j}{2}$. Now $\frak{g}_0 = \frak{\delta}_p \oplus 
\frak{\delta}_{n-p}$ and Ad$(\sigma_X) (\frak{\delta}_p) = \frak{\delta}_p, \textrm{Ad}(\sigma_X)(\frak{\delta}_{n-p}) = \frak{\delta}_{n-p}$. So 
Ad$(\sigma_X)$ is an inner automorphism of $\frak{g}_0$ iff $|\{ j : \cos 2c_j = -1\}|$ is even \cite[PLANCHE IV]{bourbaki}. 
So if $s'_X \in W (\frak{g}_0, \frak{t}_0^{\mathbb{C}})$ be such that Ad$(\sigma_X) \circ s'_X (\Delta_0^+) 
= \Delta_0^+$, and $s_X = \textrm{Ad}(\sigma_X)\circ s'_X$, then either 
$s_X(\alpha_0) = \alpha_0, s_X (\psi_j) = \psi_j$  for all $1 \le j \le n, j \neq p$; or 
$s_X(\alpha_0) = \psi_1, s_X(\psi_1) = \alpha_0, s_X(\psi_{n-1}) = \psi_n, s_X(\psi_n) = \psi_{n-1}, 
s_X (\psi_j) = \psi_j$  for all $2 \le j \le n-2,\ j \neq p$. In any case, det$(s_X|_{\frak{t}_0^{\mathbb{C}}}) = 1$. 
Hence the canonical action of $G(\mu)$ on $X(\mu)$ is orientation preserving for $\mu = \bar{\sigma}, \bar{\sigma}\bar{\theta}$. 

  Let $p = n-p$ that is $n=2p$. Then $\frak{g}_0$ is the sum of two ideals, each is isomorphic with $\frak{\delta}_p$. Let $\frak{\delta}_p^{(1)}$ 
be the ideal of $\frak{g}_0$ whose Dynkin diagram is generated by $\{\alpha_0, \psi_1, \ldots , \psi_{p-1} \}$, and 
$\frak{\delta}_p^{(2)}$ be the ideal of $\frak{g}_0$ whose Dynkin diagram is generated by $\{\psi_{p+1}, \ldots , \psi_n \}$.   
Let exp$(-2X) \in \tilde{Z}$. If $\cos 2c_j = \cos 2c'_j$ for all $1 \le j \le p$, then as before det$(s_X|_{\frak{t}_0^{\mathbb{C}}}) = 1$. 
If $\cos 2c_j \neq \cos 2c'_j$ for some $j$, then $\textrm{Ad}(\sigma_X) (\frac{\gamma_j + \gamma'_j}{2}) = \pm \frac{\gamma'_j - \gamma_j}{2}, 
\textrm{Ad}(\sigma_X) (\frac{\gamma'_j - \gamma_j}{2}) = \pm \frac{\gamma_j + \gamma'_j}{2}$. Hence Ad$(\sigma_X)(\frak{\delta}_p^{(1)}) = 
\frak{\delta}_p^{(2)}$, and so Ad$(\sigma_X)$ is not an inner automorphism of $\frak{g}_0$. Therefore $s_X$ induces a non-trivial Dynkin diagram 
automorphism of $\frak{g}_0$. Since Ad$(\sigma_X)(\frak{\delta}_p^{(1)}) = \frak{\delta}_p^{(2)}$, we have 
$s_X(\psi_j) = \psi_{n-j}$ for all $2 \le j \le n-2,\ j \neq p$, $s_X(\alpha_0) = \psi_{n-1} \textrm{ or } \psi_n,\ s_X(\psi_1) 
= \psi_n \textrm{ or } \psi_{n-1},\ s_X (\psi_{n-1}) = \alpha_0 \textrm{ or } \psi_1, s_X (\psi_n) = \psi_1 \textrm{ or } \alpha_0$. So 
det$(s_X|_{\frak{t}_0^{\mathbb{C}}}) = (-1)^p$. Hence the canonical action of $G(\mu)$ on $X(\mu)$ is orientation preserving for 
$\mu = \bar{\sigma}, \bar{\sigma}\bar{\theta}$ if $p$ is even that is, if $n \in 4\mathbb{Z}$ and $p = \frac{n}{2}$.

7. 
\begin{center} 
\begin{tikzpicture} 

\node [left] at (-0.5,0) {$\frak{\delta}_{n+1}^{(2)} :$}; 
\node [left] at (-0.4,-0.6) {$(n > 1)$}; 
\draw (0,0) circle [radius = 0.1]; 
\draw (1,0) circle [radius = 0.1]; 
\draw (2.5,0) circle [radius = 0.1]; 
\draw (3.5,0) circle [radius = 0.1]; 
\node [above] at (0.05, 0.05) {$\alpha_0$}; 
\node [above] at (1.05,0.05) {$\psi_1$}; 
\node [above] at (2.75,0.05) {$\psi_{n-1}$}; 
\node [above] at (3.55,0.05) {$\psi_n$}; 
\draw (0.1,0) -- (0.2,0.1); 
\draw (0.1,0) -- (0.2,-0.1); 
\draw (0.15,0.025) -- (0.9,0.025); 
\draw (0.15,-0.025) -- (0.9,-0.025); 
\draw (1.1,0) -- (1.5,0); 
\draw [dotted] (1.5,0) -- (2,0); 
\draw (2,0) -- (2.4,0); 
\draw (3.4,0) -- (3.3,0.1); 
\draw (3.4,0) -- (3.3,-0.1); 
\draw (2.6,0.025) -- (3.35,0.025); 
\draw (2.6,-0.025) -- (3.35,-0.025); 

\end{tikzpicture} 
\end{center} 

   Here $\frak{g} = \frak{\delta}_{n+1} (n >1)$, and $\alpha_0 + \psi_1 +\psi_2 + \cdots + \psi_{n-1} + \psi_n = 0$. 

  (i) First assume that $\sigma$ is an involution of $\frak{g}$ of type $(0, \ldots ,0, s_p , 0, \ldots , 0; 2)(0 \le p \le n,\ p \neq n-p)$ with $s_p = 1$. Then 
$\frak{u}_0 = \frak{so}(2p+1) \oplus \frak{so}(2n-2p+1)$ and $\frak{g}_0 = \frak{b}_p \oplus \frak{b}_{n-p}\ (p \neq n-p)$, which does not have 
any non-trivial Dynkin diagram automorphism. So det$(s_X|_{\frak{t}_0^{\mathbb{C}}}) = 1$ for all $X \in \frak{a}$ with 
$\textrm{exp}(-2X) \in \tilde{Z}$. Hence the canonical action of $G(\mu)$ on $X(\mu)$ is orientation preserving for 
$\mu = \bar{\sigma}, \bar{\sigma}\bar{\theta}$. 

 (ii) Next assume that $n > 2$ is even and $\sigma$ is an involution of type $(0, \ldots ,0, s_p , 0, \ldots , 0; 2)$ with $p = \frac{n}{2}$ and $s_p = 1$. 
Then $\frak{g}_0$ is the sum of two ideals, each is isomorphic with $\frak{b}_p$. Let $\frak{b}_p^{(1)}$ 
be the ideal of $\frak{g}_0$ whose Dynkin diagram is generated by $\{\alpha_0, \psi_1, \ldots , \psi_{p-1} \}$, and 
$\frak{b}_p^{(2)}$ be the ideal of $\frak{g}_0$ whose Dynkin diagram is generated by $\{\psi_{p+1}, \ldots , \psi_n \}$.   
The diagram $\frak{\delta}_{n+1}^{(2)} (n > 2)$ is corresponding to the Dynkin diagram automorphism $\bar{\nu}$ 
of $\frak{\delta}_{n+1}$ given by $\bar{\nu}(\phi_j) = \phi_{j}$ for all $1 \le j \le n-1$, $\bar{\nu}(\phi_n) = \phi_{n+1}$, 
$\bar{\nu}(\phi_{n+1}) = \phi_{n}$.   

\begin{center} 
\begin{tikzpicture} 

\draw (0,0) circle [radius = 0.1]; 
\draw (1,0) circle [radius = 0.1]; 
\draw (2,0) circle [radius = 0.1]; 
\draw (3.5,0) circle [radius = 0.1]; 
\draw (4.5,0) circle [radius = 0.1]; 
\draw (5.5,1) circle [radius = 0.1]; 
\draw (5.5,-1) circle [radius = 0.1]; 
\node [above] at (0.05,0.05) {$\phi_1$}; 
\node [above] at (1.05,0.05) {$\phi_2$}; 
\node [above] at (2.05,0.05) {$\phi_3$}; 
\node [above] at (3.35,0.05) {$\phi_{n-2}$}; 
\node [above] at (4.35,0.05) {$\phi_{n-1}$}; 
\node [above] at (5.55,1.05) {$\phi_{n}$}; 
\node [below] at (5.55,-1.05) {$\phi_{n+1}$}; 
\node [left] at (-0.5,0) {$\frak{\delta}_{n+1} :$}; 
\node [left] at (-0.5,-0.6) {$(n > 2)$}; 
\draw (0.1,0) -- (0.9,0); 
\draw (1.1,0) -- (1.9,0); 
\draw (2.1,0) -- (2.5,0); 
\draw [dotted] (2.5,0) -- (3,0); 
\draw (3,0) -- (3.4,0); 
\draw (3.6,0) -- (4.4,0); 
\draw (4.6,0) -- (5.45,0.95); 
\draw (4.6,0) -- (5.45,-0.95); 

\end{tikzpicture} 
\end{center} 
  
  Now we want to determine $\{\alpha \in \Delta^+ : \sigma(H_\alpha ^*) = H_\alpha ^* ,\ \frak{g}_\alpha \subset \frak{g}_1 \}$. 
Note that  $\{\alpha \in \Delta^+ : \sigma(H_\alpha ^*) = H_\alpha ^* \} = \{ \alpha \in \Delta^+ : n_{\phi_n}(\alpha) = 
n_{\phi_{n+1}}(\alpha) \} \\ 
= \{ \phi_i + \cdots + \phi_{j-1} , \ (\phi_i + \cdots \phi_{n-1}) + (\phi_j + \cdots + \phi_{n+1}) : 1 \le i < j \le n \}$.  \\
Again since $\sigma|_{\frak{h}} = \nu|_{\frak{h}}$, $\{\alpha \in \Delta^+ : \sigma(H_\alpha ^*) = H_\alpha ^* \} 
\subset (\frak{h}^\nu)^*$. 
Let $E_j$ be a non-zero root vector corresponding to the root $\phi_j$ for all $1 \le j \le n+1$. Define \\ 
$E_{ij} = [E_i , \ldots , E_{j-1}]$ for all $1 \le i < j-1 < j \le n+1$, $E_{ij} = E_i$ for all $1 \le i = j-1 < j \le n+1$; 
and $E'_{i(n+1)} = [E_i , \ldots , E_{n-1}, E_{n+1}]$ for all $1 \le i \le n-1$, $E'_{n(n+1)} = E_{n+1}$.   \\ 
Then $E_{ij} \neq 0,\ E'_{i(n+1)} \neq 0,\ \nu (E_{ij}) = E_{ij}\ (\textrm{for all } 1 \le i < j \le n),\ 
\nu (E_{i(n+1)}) = E'_{i(n+1)},\ \nu (E'_{i(n+1)}) = E_{i(n+1)}\ (\textrm{for all } 1 \le i \le n)$. 
Define \\ 
$E^{ij} = \big{[} E_{j(n+1)}, [ E_{ij} , E'_{j(n+1)}]\big{]}$ for all $1 \le i < j \le n$. \\   
Then $E^{ij} \neq 0,\ \nu (E^{ij}) = E^{ij}\ (\textrm{for all } 1 \le i < j \le n)$. \\ 
This shows that $\frak{g}_\alpha \subset \frak{g}_0^\nu$ for all $\alpha \in \{\alpha \in \Delta^+ : \sigma(H_\alpha ^*)= H_\alpha ^* \}$. 
Thus $\{\alpha \in \Delta^+ : \sigma(H_\alpha ^*) = H_\alpha ^* \} \subset \Delta (\frak{g}_0^\nu , \frak{h}^\nu)$. Now 
$\sigma (\frak{g}_0^\nu) = \frak{g}_0^\nu$ and so $\frak{g}_0^\nu = \frak{k}^\nu \oplus \frak{p}^\nu$, where 
$\frak{k}^\nu = \frak{g}_0^\nu \cap \frak{g}_0,\ \frak{p}^\nu = \frak{g}_0^\nu \cap \frak{g}_1$. Since $\frak{h}^\nu \subset 
\frak{k}^\nu,\ [\frak{k}^\nu , \frak{k}^\nu] \subset \frak{k}^\nu,\ [\frak{k}^\nu , \frak{p}^\nu] \subset \frak{p}^\nu$, and 
$(\frak{g}_0^\nu)_\alpha$ is one-dimensional for all $\alpha \in \Delta (\frak{g}_0^\nu , \frak{h}^\nu)$; we have 
$(\frak{g}_0^\nu)_\alpha \subset \frak{k}^\nu \textrm{ or } \frak{p}^\nu$. Thus \\ 
$\{\alpha \in \Delta^+ : \sigma(H_\alpha ^*) = H_\alpha ^* ,\ \frak{g}_\alpha \subset \frak{g}_1 \} 
= \{\alpha \in \Delta^+ : \sigma(H_\alpha ^*) = H_\alpha ^* ,\ (\frak{g}_0^\nu)_\alpha \subset \frak{p}^\nu \} \\
= \{\alpha \in \Delta^+ : n_{\phi_n}(\alpha)=n_{\phi_{n+1}}(\alpha), \textrm{ and } n_{\phi_p}(\alpha) \textrm{ is odd}\}$. Let \\
$\gamma_1 = \phi_p, \gamma'_1 = (\phi_p + \cdots + \phi_{n-1}) + (\phi_{p+1} + \cdots + \phi_{n+1}), \\ 
\gamma_2 = \phi_{p-1} + \phi_p + \phi_{p+1}, \gamma'_2 = (\phi_{p-1} + \cdots + \phi_{n-1}) + (\phi_{p+2} + \cdots + \phi_{n+1}), \ldots , \\ 
\gamma_p = \phi_1 + \cdots + \phi_p + \cdots + \phi_{n-1}, \gamma'_p = (\phi_1 + \cdots + \phi_{n-1}) + (\phi_n + \phi_{n+1})$. \\
Then $\{\gamma_1, \gamma'_1, \gamma_2, \gamma'_2, \ldots , \gamma_p, \gamma'_p \}$ is a maximal set of strongly orthogonal roots in 
$\{\alpha \in \Delta^+ : \sigma(H_\alpha ^*) = H_\alpha ^* ,\ \frak{g}_\alpha \subset \frak{g}_1 \}$, and 
$\frak{a} = \mathbb{R} i(H_{\phi _n}^* - H_{\phi_{n+1}}^*) \oplus 
\sum\limits_{j=1}^{p}(\mathbb{R} Y_{\gamma_j} + \mathbb{R} Y_{\gamma'_j})$ is a maximal abelian subspace of $\frak{u}_1$. \\ 
Let $X = ic_0(H_{\phi _n}^* - H_{\phi_{n+1}}^*) + \sum\limits_{j=1}^{p}(c_jY_{\gamma_j} + c'_j Y_{\gamma'_j}) \in \frak{a}$. 
Then exp$(-2X) \in \tilde{Z}$ iff $c_p + c'_p - c_{p-1} - c'_{p-1}, c_{p-1} + c'_{p-1} - c_{p-2} - c'_{p-2}, \ldots , c_2 + c'_2 - c_1 - c'_1, 2c_1, 
- c_1 + c'_1 + c_2 - c'_2, - c_2 + c'_2 + c_3 - c'_3, \ldots , - c_{p-1} + c'_{p-1} + c_p - c'_p , 2c_0 - c_p + c'_p, -2c_0 - c_p + c'_p \in \pi \mathbb{Z}$ 
iff $2c_0 \in \pi \mathbb{Z}, \cos 2c_j  = \pm{1}, \cos 2c'_j = \pm{1}$ for all $1 \le j \le p$. \\ 
If $\cos 2c_j  = \cos 2c'_j $ for all $1 \le j \le p$, then either $\textrm{Ad}(\sigma_X) (\gamma_j ) = \gamma_j, 
\textrm{Ad}(\sigma_X)(\gamma'_j) = \gamma'_j$, or $\textrm{Ad}(\sigma_X) (\gamma_j ) = -\gamma_j, 
\textrm{Ad}(\sigma_X) (\gamma'_j) = -\gamma'_j$. Thus for $1 \le j \le p$, either $\textrm{Ad}(\sigma_X) 
(\frac{\gamma_j  + \gamma'_j}{2}) = \frac{\gamma_j + \gamma'_j}{2}, \textrm{Ad}(\sigma_X) (\frac{\gamma'_j -\gamma_j}{2}) 
= \frac{\gamma'_j-\gamma_j}{2}$, or \\ 
$\textrm{Ad}(\sigma_X) (\frac{\gamma_j + \gamma'_j}{2}) = -\frac{\gamma_j + \gamma'_j}{2}, 
\textrm{Ad}(\sigma_X) (\frac{\gamma'_j -\gamma_j}{2}) = -\frac{\gamma'_j-\gamma_j}{2}$. Now $\frak{g}_0 = \frak{b}_p^{(1)} 
\oplus \frak{b}_p^{(2)}$ and Ad$(\sigma_X) (\frak{b}_p^{(1)}) = \frak{b}_p^{(1)}, \textrm{Ad}(\sigma_X)(\frak{b}_p^{(2)}) = \frak{b}_p^{(2)}$. 
Since $\frak{b}_p^{(1)} \cong \frak{b}_p \cong \frak{b}_p^{(2)}$ does not admit any non-trivial Dynkin diagram automorphism, we have 
det$(s_X|_{\frak{t}_0^{\mathbb{C}}}) = 1$. \\  
If $\cos 2c_j \neq \cos 2c'_j$ for some $j$, then $\textrm{Ad}(\sigma_X) (\frac{\gamma_j + \gamma'_j}{2}) = \pm \frac{\gamma'_j - \gamma_j}{2}, 
\textrm{Ad}(\sigma_X) (\frac{\gamma'_j - \gamma_j}{2}) = \pm \frac{\gamma_j + \gamma'_j}{2}$. Hence Ad$(\sigma_X)(\frak{b}_p^{(1)}) = 
\frak{b}_p^{(2)}$, and so Ad$(\sigma_X)$ is not an inner automorphism of $\frak{g}_0$. Therefore $s_X$ induces a non-trivial Dynkin diagram 
automorphism of $\frak{g}_0$. Since Ad$(\sigma_X)(\frak{b}_p^{(1)}) = \frak{b}_p^{(2)}$, we have 
$s_X(\psi_j) = \psi_{n-j}$ for all $1 \le j \le n-1,\ j \neq p$, $s_X(\alpha_0) = \psi_n, s_X(\psi_n) 
= \alpha_0$. So 
det$(s_X|_{\frak{t}_0^{\mathbb{C}}}) = (-1)^p$. Hence the canonical action of $G(\mu)$ on $X(\mu)$ is orientation preserving for 
$\mu = \bar{\sigma}, \bar{\sigma}\bar{\theta}$ if $p$ is even that is, if $n \in 4\mathbb{Z}$ and $p = \frac{n}{2}$. 

(iii) 
\begin{center} 
\begin{tikzpicture}

\node [left] at (-0.5,0) {$\frak{\delta}_3^{(2)} :$}; 
\draw (0,0) circle [radius = 0.1]; 
\draw (1,0) circle [radius = 0.1]; 
\draw (2,0) circle [radius = 0.1]; 
 \node [above] at (0.05, 0.05) {$\alpha_0$}; 
\node [above] at (1.05,0.05) {$\psi_1$}; 
\node [above] at (2.05,0.05) {$\psi_2$}; 
 \draw (0.1,0) -- (0.2,0.1); 
\draw (0.1,0) -- (0.2,-0.1); 
\draw (0.15,0.025) -- (0.9,0.025); 
\draw (0.15,-0.025) -- (0.9,-0.025); 
\draw (1.9,0) -- (1.8,0.1); 
\draw (1.9,0) -- (1.8,-0.1); 
\draw (1.1,0.025) -- (1.85,0.025); 
\draw (1.1,-0.025) -- (1.85,-0.025); 

\end{tikzpicture} 
\end{center} 

  Finally assume that $n = 2$ and $\sigma$ is an involution of type $(0,1,0; 2)$. 
Then $\frak{g}_0$ is the sum of two ideals, each is isomorphic with $\frak{a}_1$. Let $\frak{a}_1^{(1)}$ 
be the ideal of $\frak{g}_0$ whose Dynkin diagram is generated by $\{\alpha_0\}$, and 
$\frak{a}_1^{(2)}$ be the ideal of $\frak{g}_0$ whose Dynkin diagram is generated by $\{\psi_2 \}$.   
The diagram $\frak{\delta}_{3}^{(2)}$ is corresponding to the Dynkin diagram automorphism $\bar{\nu}$ 
of $\frak{\delta}_{3}$ given by $\bar{\nu}(\phi_j) = \phi_{4-j}$ for all $1 \le j \le 3$.

\begin{center} 
\begin{tikzpicture} 

\draw (0,0) circle [radius = 0.1]; 
\draw (1,0) circle [radius = 0.1]; 
\draw (2,0) circle [radius = 0.1]; 
\node [left] at (-0.5,-0.1) {$\frak{\delta}_{3} :$}; 
\node [below] at (0.05,-0.05) {$\phi_1$}; 
\node [below] at (1.05,-0.05) {$\phi_2$}; 
\node [below] at (2.05,-0.05) {$\phi_3$}; 
\draw (0.1,0) -- (0.9,0); 
\draw (1.1,0) -- (1.9,0); 

\end{tikzpicture} 
\end{center} 

  Now $\{\alpha \in \Delta^+ : \sigma(H_\alpha ^*) = H_\alpha ^* ,\ \frak{g}_\alpha \subset \frak{g}_1 \} = 
\{\alpha \in \Delta^+ : n_{\phi_1}(\alpha)=n_{\phi_{3}}(\alpha), \textrm{ and } n_{\phi_2}(\alpha) \textrm{ is odd}\}$ 
(as in the case 7(ii)), since $\psi_1 = \phi_2|_{\frak{h}^\nu}$. Let \\
$\gamma_1 = \phi_2, \gamma_2 = \phi_1 + \phi_2 + \phi_3$.  \\
Then $\{\gamma_1, \gamma_2 \}$ is a maximal set of strongly orthogonal roots in 
$\{\alpha \in \Delta^+ : \sigma(H_\alpha ^*) = H_\alpha ^* ,\ \frak{g}_\alpha \subset \frak{g}_1 \}$, and 
$\frak{a} = \mathbb{R} i(H_{\phi _1}^* - H_{\phi_{3}}^*) \oplus 
\mathbb{R} Y_{\gamma_1} + \mathbb{R} Y_{\gamma_2}$ is a maximal abelian subspace of $\frak{u}_1$. \\ 
Let $X = ic_0(H_{\phi _1}^* - H_{\phi_{3}}^*) + c_1Y_{\gamma_1} + c_2 Y_{\gamma_2} \in \frak{a}$. 
Then exp$(-2X) \in \tilde{Z}$ iff $2c_1, 2c_0 - c_1 + c_2, -2c_0 - c_1 + c_2 \in \pi \mathbb{Z}$ 
iff $2c_0 \in \pi \mathbb{Z}, \cos 2c_1 = \pm{1}, \cos 2c_2 = \pm{1}$. \\ 
Let $\cos 2c_1 = -1, \textrm{ and } \cos 2c_2 = 1$. Then \\ 
$\textrm{Ad}(\sigma_X) (\psi_1) = -\psi_1, \\ 
\textrm{Ad}(\sigma_X) (\alpha_0) = \textrm{Ad}(\sigma_X) (-\frac{\psi_1}{2} + \alpha_0 + \frac{\psi_1}{2}) 
= \frac{\psi_1}{2} + \alpha_0 + \frac{\psi_1}{2} = \alpha_0 + \psi_1 = -\psi_2, \textrm{and similarly} \\ 
\textrm{Ad}(\sigma_X) (\psi_2) = -\alpha_0$. \\ 
Let $w_{\frak{g}_0}^0 \in W(\frak{g}_0,\frak{t}_0^{\mathbb{C}})$ be the 
longest element that is, $w_{\frak{g}_0}^0(\alpha_0) = -\alpha_0$ and 
$w_{\frak{g}_0}^0(\psi_2) = -\psi_2$. Then 
$s_X = \textrm{Ad}(\sigma_X)\circ w_{\frak{g}_0}^0$ with $s_X (\Delta_0^+) = \Delta_0^+$. Now  
$s_X(\alpha_0) = \psi_2$ and $s_X(\psi_2) = \alpha_0$. So 
det$(s_X|_{\frak{t}_0^{\mathbb{C}}}) = -1$. 

8.   
\begin{center} 
\begin{tikzpicture}  

\draw (0,0) circle [radius = 0.1]; 
\draw (1,0) circle [radius = 0.1]; 
\draw (2,0) circle [radius = 0.1]; 
\draw (2,1) circle [radius = 0.1]; 
\draw (2,2) circle [radius = 0.1]; 
\draw (3,0) circle [radius = 0.1]; 
\draw (4,0) circle [radius = 0.1]; 
\node [below] at (0.05,-0.05) {$\psi_6$}; 
\node [below] at (1.05,-0.05) {$\psi_5$}; 
\node [below] at (2.05,-0.05) {$\psi_4$}; 
\node [right] at (2.05,1) {$\psi_2$}; 
\node [right] at (2.05,2) {$\alpha_0$}; 
\node [below] at (3.05,-0.05) {$\psi_3$}; 
\node [below] at (4.05,-0.05) {$\psi_1$}; 
\node [left] at (-0.5,0) {$\frak{e}_6^{(1)} :$}; 
\draw (0.1,0) -- (0.9,0); 
\draw (1.1,0) -- (1.9,0); 
\draw (2,0.1) -- (2,0.9); 
\draw (2,1.1) -- (2,1.9); 
\draw (2.1,0) -- (2.9,0); 
\draw (3.1,0) -- (3.9,0); 

\end{tikzpicture} 
\end{center} 

Here $\frak{g} = \frak{e}_6$ and $\alpha_0 + \psi_1+ 2\psi_2 + 2\psi_3 + 3\psi_4 + 2\psi_5 + \psi_6 = 0$. 

(i) First assume that 
$\sigma$ is an involution of $\frak{g}$ of type $(1, 1, 0,0,0,0,0; 1)$ (similarly for types 
$(1,0,0,0,0, 0,1;1) \textrm{ or }(0,1,0,0,0, 0,1;1)$). Then $\frak{u}_0 = \frak{so}(10) 
\oplus i\mathbb{R}$ and $X(\bar{\sigma}\bar{\theta})$ is a 
Hermitian symmetric space. This Hermitian symmetric space is not of tube type. 
So the canonical action of $G(\mu)$ on $X(\mu)$ is 
orientation preserving for $\mu = \bar{\sigma}, \bar{\sigma}\bar{\theta}$, by Remark \ref{or}(iv). 

(ii) Next assume that $\sigma$ is an involution of $\frak{g}$ of type $(0,0,1,0,0,0 , 0; 1)$ (similarly for types 
$(0,0,0,1,0,0,0;1) \textrm{ or } (0,0,0,0, 0,1,0;1)$). Then 
$\frak{u}_0 = \frak{su}(2) \oplus \frak{su}(6)$ and $\frak{g}_0 = \frak{a}_1 \oplus \frak{a}_5$, which has only one 
non-trivial Dynkin diagram automorphism namely, $\alpha_0 \mapsto \alpha_0, \psi_1 \mapsto \psi_6, 
\psi_3 \mapsto \psi_5, \psi_4 \mapsto \psi_4, \psi_5 \mapsto \psi_3, \psi_6 \mapsto \psi_1$; and this is an even 
permutation. So det$(s_X|_{\frak{t}_0^{\mathbb{C}}}) = 1$ for all $X \in \frak{a}$ with 
$\textrm{exp}(-2X) \in \tilde{Z}$. Hence the canonical action of $G(\mu)$ on $X(\mu)$ is orientation preserving for 
$\mu = \bar{\sigma}, \bar{\sigma}\bar{\theta}$. 

9. 
\begin{center} 
\begin{tikzpicture} 

\node [left] at (-0.5,0) {$\frak{e}_6^{(2)} :$}; 
\draw (0,0) circle [radius = 0.1]; 
\draw (1,0) circle [radius = 0.1]; 
\draw (2,0) circle [radius = 0.1]; 
\draw (3,0) circle [radius = 0.1]; 
\draw (4,0) circle [radius = 0.1]; 
\node [above] at (0.05,0.05) {$\psi_1$}; 
\node [above] at (1.05,0.05) {$\psi_2$}; 
\node [above] at (2.05,0.05) {$\psi_3$}; 
\node [above] at (3.05,0.05) {$\psi_4$}; 
\node [above] at (4.05,0.05) {$\alpha_0$}; 
\draw (0.1,0) -- (0.9,0); 
\draw (1.9,0) -- (1.8,0.1); 
\draw (1.9,0) -- (1.8,-0.1); 
\draw (1.1,0.025) -- (1.85,0.025); 
\draw (1.1,-0.025) -- (1.85,-0.025); 
\draw (2.1,0) -- (2.9,0); 
\draw (3.1,0) -- (3.9,0); 

\end{tikzpicture} 
\end{center} 

   Here $\frak{g} = \frak{e}_6$, and $\alpha_0 + \psi_1 +2\psi_2 + 3\psi_3 + 2\psi_4 = 0$. 

  (i) First assume that $\sigma$ is an involution of $\frak{g}$ of type $(1,0,0,0,0; 2)$. Then 
$\frak{g}_0 = \frak{f}_4$, which does not have 
any non-trivial Dynkin diagram automorphism. So det$(s_X|_{\frak{t}_0^{\mathbb{C}}}) = 1$ for all $X \in \frak{a}$ with 
$\textrm{exp}(-2X) \in \tilde{Z}$. Hence the canonical action of $G(\mu)$ on $X(\mu)$ is orientation preserving for 
$\mu = \bar{\sigma}, \bar{\sigma}\bar{\theta}$. 

 (ii) Next assume that $\sigma$ is an involution of $\frak{g}$ of type $(0, 1, 0, 0, 0; 2)$. Then 
$\frak{g}_0 = \frak{c}_4$, which does not have 
any non-trivial Dynkin diagram automorphism. So det$(s_X|_{\frak{t}_0^{\mathbb{C}}}) = 1$ for all $X \in \frak{a}$ with 
$\textrm{exp}(-2X) \in \tilde{Z}$. Hence the canonical action of $G(\mu)$ on $X(\mu)$ is orientation preserving for 
$\mu = \bar{\sigma}, \bar{\sigma}\bar{\theta}$.

10. 
\begin{center} 
\begin{tikzpicture} 

\draw (0,0) circle [radius = 0.1]; 
\draw (1,0) circle [radius = 0.1]; 
\draw (2,0) circle [radius = 0.1]; 
\draw (3,0) circle [radius = 0.1]; 
\draw (3,1) circle [radius = 0.1]; 
\draw (4,0) circle [radius = 0.1]; 
\draw (5,0) circle [radius = 0.1]; 
\draw (6,0) circle [radius = 0.1]; 
\node [below] at (0.05,-0.05) {$\psi_7$}; 
\node [below] at (1.05,-0.05) {$\psi_6$}; 
\node [below] at (2.05,-0.05) {$\psi_5$}; 
\node [below] at (3.05,-0.05) {$\psi_4$}; 
\node [right] at (3.05,1) {$\psi_2$}; 
\node [below] at (4.05,-0.05) {$\psi_3$}; 
\node [below] at (5.05,-0.05) {$\psi_1$}; 
\node [below] at (6.05, -0.15) {$\alpha_0$}; 
\node [left] at (-0.5,0) {$\frak{e}_7^{(1)} :$}; 
\draw (0.1,0) -- (0.9,0); 
\draw (1.1,0) -- (1.9,0); 
\draw (2.1,0) -- (2.9,0); 
\draw (3,0.1) -- (3,0.9); 
\draw (3.1,0) -- (3.9,0); 
\draw (4.1,0) -- (4.9,0); 
\draw (5.1,0) -- (5.9,0); 

\end{tikzpicture} 
\end{center} 

  Here $\frak{g} = \frak{e}_7$ and $\alpha_0 + 2\psi_1 + 2\psi_2 + 3\psi_3 + 4\psi_4 + 3\psi_5 + 2\psi_6 + \psi_7 = 0$. 

(i) First assume that $\sigma$ is an involution of $\frak{g}$ of type $(1, 0,0,0,0,0, 0,1; 1)$. 
Then $\frak{g}_0 = \frak{e}_6 \oplus \mathbb{C}$ and $X(\bar{\sigma}\bar{\theta})$ is a 
Hermitian symmetric space of tube type. Now $[\frak{g}_0, \frak{g}_0] = \frak{e}_6$, which has only one 
non-trivial Dynkin diagram automorphism namely, $\psi_1 \mapsto \psi_6, \psi_2 \mapsto \psi_2, 
\psi_3 \mapsto \psi_5, \psi_4 \mapsto \psi_4, \psi_5 \mapsto \psi_3, \psi_6 \mapsto \psi_1$; and this is an even 
permutation. Let 
\[ \gamma_1 = \psi_7, \gamma_2 = \psi_2 + \psi_3 + 2\psi_4 + 2\psi_5 + 2\psi_6 + \psi_7, 
\gamma_3 = 2\psi_1 + 2\psi_2 + 3\psi_3 + 4\psi_4 + 3\psi_5 + 2\psi_6 + \psi_7.\]
Then $\{\gamma_1, \gamma_2 ,\gamma_3 \}$ is a maximal set of strongly orthogonal roots in 
$\{\alpha \in \Delta^+ : \frak{g}_\alpha \subset \frak{g}_1 \}$.  Let $X \in \frak{a}$ with $\textrm{exp}(-2X) \in \tilde{Z}$ and 
$\textrm{Ad}(\sigma_X)(Z) = - Z$, where $Z = \sum\limits_{j =1}^{3} iH_{\gamma_j}^*$. Then 
det$(s_X|_{\frak{t}_0^{\mathbb{C}}}) = - 1$, even if $s_X$ induces the non-trivial Dynkin diagram automorphism of 
$[\frak{g}_0 , \frak{g}_0]$. 

(ii) Next assume that $\sigma$ is an involution of $\frak{g}$ of type $(0, 0, 1, 0,0,0,0, 0;1)$. Then $\frak{u}_0 = \frak{su}(8)$. Define \\
$\gamma_1 = \psi_2 , \gamma_2 = \psi_2 + \psi_3 + 2 \psi_4 + \psi_5 , \gamma_3 = \psi_1 + \psi_2 + \psi_3 + 2\psi_4 + \psi_5 + \psi_6, \\
\gamma_4 =\psi_1 + \psi_2+ 2\psi_3 + 2\psi_4 + 2 \psi_5 + \psi_6 , \gamma_5 = \psi_2 + \psi_3 + 2\psi_4 + 2\psi_5 + 2\psi_6 + \psi_7, \\ 
\gamma_6 = \psi_1 + \psi_2+ \psi_3 + 2\psi_4 + 2 \psi_5 + \psi_6 + \psi_7, \gamma_7 = \psi_1 + \psi_2+ 2\psi_3 + 2\psi_4 + \psi_5 + \psi_6 + \psi_7$. \\ 
Then $\{\gamma_1, \gamma_2, \ldots , \gamma_7 \}$ is a maximal set of strongly orthogonal roots in 
$\{\alpha \in \Delta^+ : \frak{g}_\alpha \subset \frak{g}_1 \}$, and 
$\frak{a} = \sum\limits_{j=1}^{7}\mathbb{R} Y_{\gamma_j}$ is a maximal abelian subspace of $\frak{u}_1$. Also we have \\ 
$\psi_1 = \frac{1}{2}(-\gamma_2 + \gamma_3 - \gamma_5 + \gamma_6) , \psi_3 = \frac{1}{2}(-\gamma_3 + \gamma_4 - \gamma_6 + \gamma_7),  
\psi_4 = \frac{1}{2}(-\gamma_1 + \gamma_2 + \gamma_3 - \gamma_4), \psi_5 = \frac{1}{2}(-\gamma_3 + \gamma_4 + \gamma_6 - \gamma_7), \\ 
\psi_6 = \frac{1}{2}(-\gamma_2 + \gamma_3 + \gamma_5 - \gamma_6),  \psi_7 = \frac{1}{2}(-\gamma_3 - \gamma_4 + \gamma_6 + \gamma_7), 
\alpha_0 = \frac{1}{2}(-\gamma_3 - \gamma_4 - \gamma_6 - \gamma_7) $. \\ 
Let $X = \frac{\pi}{2} (Y_{\gamma_5} + Y_{\gamma_6} + Y_{\gamma_7})$. Then  
$\frac{\pi}{2} \psi_j (H_{\gamma_5}^* + H_{\gamma_6}^* + H_{\gamma_7}^* ) = 0$ for $1 \le j \le 6$, and 
$\frac{\pi}{2} \psi_7 (H_{\gamma_5}^* + H_{\gamma_6}^* + H_{\gamma_7}^* ) = \pi$; 
hence $X \in \frak{a}$ with exp$(-2X) \in \tilde{Z}$. \\ 
Now Ad$(\sigma_X)(H_{\gamma_j}^*) = H_{\gamma_j}^*$ for all $1 \le j \le 4$, and 
Ad$(\sigma_X)(H_{\gamma_j}^*) = -H_{\gamma_j}^*$ for all $5 \le j \le 7$. Thus \\ 
$\textrm{Ad}(\sigma_X) (\alpha_0) = \psi_7,   \textrm{Ad}(\sigma_X) (\psi_1) = \psi_6, \textrm{Ad}(\sigma_X) (\psi_3) = \psi_5, \\ 
\textrm{Ad}(\sigma_X) (\psi_4) = \psi_4, \textrm{Ad}(\sigma_X)(\psi_5) = \psi_3, \textrm{Ad}(\sigma_X) (\psi_6) = \psi_1, 
\textrm{Ad}(\sigma_X) (\psi_7) = \alpha_0$. \\ 
Therefore Ad$(\sigma_X) (\Delta_0^+) = \Delta_0^+$, and $s_X = \textrm{Ad}(\sigma_X)$. So det$(s_X|_{\frak{t}_0^{\mathbb{C}}}) = -1$. 

(iii)  Assume that 
$\sigma$ is an involution of $\frak{g}$ of type $(0, 1, 0,0,0,0,0,0; 1)$ (similarly for type  
$(0,0,0,0,0, 0,1,0;1)$). Then $\frak{u}_0 = \frak{su}(2) \oplus \frak{so}(12)$. Define \\ 
$\gamma_1 = \psi_1, \gamma_2 = \psi_1 + \psi_2 + 2\psi_3 + 2\psi_4 + \psi_5, 
\gamma_3 = \psi_1 + \psi_2 + 2\psi_3 + 2\psi_4 + 2\psi_5 + 2\psi_6 + \psi_7, \gamma_4 = \psi_1 + 2\psi_2 + 2\psi_3 + 4\psi_4 + 3\psi_5 + 2\psi_6 + \psi_7$. 
Then $\{\gamma_1, \gamma_2 ,\gamma_3, \gamma_4 \}$ is a maximal set of strongly orthogonal roots in 
$\{\alpha \in \Delta^+ : \frak{g}_\alpha \subset \frak{g}_1 \}$, and 
$\frak{a} = \sum\limits_{j=1}^{4}\mathbb{R} Y_{\gamma_j}$ is a maximal abelian subspace of $\frak{u}_1$. \\ 
Now $\alpha_0\Big{|}_{(\frak{t}^-)^{\mathbb{C}}}= -\frac{1}{2}(\gamma_1 + \gamma_2 + \gamma_3 + \gamma_4), 
\psi_2\Big{|}_{(\frak{t}^-)^{\mathbb{C}}}, \psi_5\Big{|}_{(\frak{t}^-)^{\mathbb{C}}}, \psi_7\Big{|}_{(\frak{t}^-)^{\mathbb{C}}} = 0, 
\psi_3\Big{|}_{(\frak{t}^-)^{\mathbb{C}}}= \frac{1}{2}(-\gamma_1 + \gamma_2 + \gamma_3 - \gamma_4), 
\psi_4\Big{|}_{(\frak{t}^-)^{\mathbb{C}}} = \frac{1}{2}(-\gamma_3 + \gamma_4), \psi_6\Big{|}_{(\frak{t}^-)^{\mathbb{C}}} = \frac{1}{2}(-\gamma_2 + \gamma_3)$. \\ 
Let $X = \sum\limits_{j=1}^{4} c_j Y_{\gamma_j} \in \frak{a}$. Then exp$(-2X) \in \tilde{Z}$ iff 
$2c_1 , -c_1 + c_2 + c_3 - c_4, -c_3 + c_4 , -c_2 + c_3 \in \pi \mathbb{Z}$ iff  $\cos 2c_j = \cos 2c_1 = \pm 1$ for all $1 \le j \le 4$. 
Assume that $X = \sum\limits_{j=1}^{4} c_j Y_{\gamma_j}$ with exp$(-2X) \in \tilde{Z}$ and $\cos 2c_1 = -1$. Then 
Ad$(\sigma_X)(H_{\gamma_j}^*) = -H_{\gamma_j}^*$, and 
Ad$(\sigma_X) (H) = H$ 
for all $\{ H \in \frak{t}_0^{\mathbb{C}} : \gamma_j (H) =0 \textrm{ for all } 1\le j \le 4\}$. Thus \\ 
Ad$(\sigma_X) (\alpha_0 ) = -\alpha_0,\ \textrm{Ad}(\sigma_X) (\psi_j ) = \psi_j$, for $j = 2,5,7$;  \\ 
Ad$(\sigma_X) (\psi_3) = \textrm{Ad}(\sigma_X) (\frac{1}{2}(-\gamma_1 + \gamma_2 + \gamma_3 - \gamma_4) + \psi_3 - 
\frac{1}{2}(-\gamma_1 + \gamma_2 + \gamma_3 - \gamma_4)) \\ 
= -\frac{1}{2}( -\gamma_1 + \gamma_2 + \gamma_3 - \gamma_4) + \psi_3 - \frac{1}{2}(-\gamma_1 + \gamma_2 + \gamma_3 - \gamma_4) 
= \psi_3 -  (-\gamma_1 + \gamma_2 + \gamma_3 - \gamma_4) = -\psi_3; \\ 
\textrm{Ad}(\sigma_X) (\psi_4) = \textrm{Ad}(\sigma_X) (\frac{1}{2}(-\gamma_3 + \gamma_4) + \psi_4 - 
\frac{1}{2}(- \gamma_3 + \gamma_4)) \\ 
= -\frac{1}{2}( -\gamma_3 + \gamma_4) + \psi_4 - \frac{1}{2}(-\gamma_3 + \gamma_4) 
= \psi_4 -  (-\gamma_3 + \gamma_4) = -\psi_2 - \psi_4 - \psi_5; \textrm{ and} \\  
\textrm{Ad}(\sigma_X) (\psi_6) = \textrm{Ad}(\sigma_X) (\frac{1}{2}(-\gamma_2 + \gamma_3) + \psi_6 - 
\frac{1}{2}(- \gamma_2 + \gamma_3)) \\ 
= -\frac{1}{2}( -\gamma_2 + \gamma_3) + \psi_6 - \frac{1}{2}(-\gamma_2 + \gamma_3) 
= \psi_6 -  (-\gamma_2 + \gamma_3) = -\psi_5 - \psi_6 - \psi_7$. \\ 
So Ad$(\sigma_X) (\{ \alpha_0, \psi_2, \psi_3, \psi_4, \psi_5, \psi_6, \psi_7 \}) = \{-\alpha_0, \psi_2, -\psi_3, -\psi_2 - \psi_4 - \psi_5, 
\psi_5, -\psi_5 - \psi_6-\psi_7, \psi_7 \}$. \\ 
Let $w_{\frak{g}_0}^0 \in W(\frak{g}_0,\frak{t}_0^{\mathbb{C}})$ be the 
longest element that is, $w_{\frak{g}_0}^0 (\alpha_0) = -\alpha_0,\ w_{\frak{g}_0}^0 (\psi_j) = -\psi_j$ for all $2 \le j \le 7$; and 
 $s'_X = s_{\psi_7}s_{\psi_5}s_{\psi_2}w_{\frak{g}_0}^0$. Then 
$s'_X(\alpha_0) = -\alpha_0,\ s'_X(\psi_j) = \psi_j$ for $j = 2, 5, 7$, $s'_X(\psi_3) = -\psi_3,\ s'_X(\psi_4) = -\psi_2 - \psi_4 - \psi_5$, and 
$s'_X(\psi_6) = -\psi_5 - \psi_6 - \psi_7$. Thus if  
$s_X = \textrm{Ad}(\sigma_X)\circ s'_X$, then $s_X (\Delta_0^+) = \Delta_0^+$. Clearly $s_X(\psi_j) = \psi_j$ for all $2 \le j \le 7$, and 
$s_X(\alpha_0) = \alpha_0$. So  det$(s_X|_{\frak{t}_0^{\mathbb{C}}}) = 1$. Hence  
the canonical action of $G(\mu)$ on $X(\mu)$ is orientation preserving for $\mu = \bar{\sigma}, \bar{\sigma}\bar{\theta}$.

\noindent 
\subsection{Table for the {\it condition Or} of a connected complex simple Lie group of adjoint type}
   Let $\bar{G} = \textrm{Int}(\frak{g})$, the connected component of Aut$(\frak{g})$. Then $\bar{G}$ is a connected 
complex simple Lie group of adjoint type, Lie$(\bar{G}) = \frak{g}$, and $\bar{G} \cong \tilde{G}/\tilde{Z}$. 
The {\it condition Or} for $\bar{G},\bar{\sigma}, \bar{\sigma}\bar{\theta}$; in each case, is given in the first table. 
If the {\it condition Or} for $\bar{G},\bar{\sigma}, \bar{\sigma}\bar{\theta}$ is satisfied, then the dimensions of 
$X(\bar{\sigma})$ and $X(\bar{\sigma}\bar{\theta})$ are given in the second table. Here \\ 
$S(GL(p,\mathbb{C}) \times GL(q,\mathbb{C})) = \Big{\{}
\begin{pmatrix} 
A_1 & 0 \\
0 & A_2 
\end{pmatrix} 
: A_1 \in GL(p,\mathbb{C}), A_2 \in GL(q,\mathbb{C}), \textrm{ and det}A_1\textrm{ det}A_2 =1 \Big{\}}$, and \\
 $S(U(p) \times U(q)) = \Big{\{}
\begin{pmatrix} 
A_1 & 0 \\
0 & A_2 
\end{pmatrix} 
: A_1 \in U(p), A_2 \in U(q), \textrm{ and det}A_1\textrm{ det}A_2 =1 \Big{\}}$. \\ 
We follow \cite{helgason} for other notations. 

\begin{landscape} 
\begin{table}
\caption{Table for the {\it condition Or} of a connected complex simple Lie group of adjoint type}\label{ortable} 
\begin{tabular}{|c|c|c|c|c|}
\toprule
\addlinespace[5pt]
$\frak{g}$ & 
 type of $\sigma$ & $X(\bar{\sigma})$ & $X(\bar{\sigma}\bar{\theta})$ & 
\begin{tabular}{c} is {\it condition}\\
{\it Or} for $\bar{G},\sigma,$\\ 
$\sigma\theta$ satisfied? \end{tabular} 
\\ 
\midrule
\addlinespace[5pt]
\begin{tabular}{c}$\frak{a}_{n-1}$ \\ $(n>1, n\in 4\mathbb{Z})$ \end{tabular} & 
\begin{tabular}{c}$(s_0,0, \ldots , 0,s_p,0,\ldots , 0;1)$ \\ $(1 \le p \le n/2)$ with $s_0 = 1 = s_p.$ \end{tabular} & 
$\frac{S(GL(p,\mathbb{C}) \times GL(n-p,\mathbb{C}))}{S(U(p) \times U(n-p))}$ & 
$\frac{SU(p,n-p)}{S(U(p) \times U(n-p))}$ & 
yes 
\\ 
\addlinespace[5pt] 
\hline 
\addlinespace[5pt]
\begin{tabular}{c}$\frak{a}_{n-1}$ \\ $(n>4, n\in 4\mathbb{Z})$ \end{tabular} & 
\begin{tabular}{c}$(1,0, \ldots ,0;2)$\\ \\$(0, \ldots ,0,1;2)$ \end{tabular} & 
\begin{tabular}{c}$\frac{Sp(\frac{n}{2},\mathbb{C})}{Sp(\frac{n}{2})}$\\ \\$\frac{SO(n,\mathbb{C})}{SO(n)}$ \end{tabular} & 
\begin{tabular}{c}$\frac{SU^*(n)}{Sp(\frac{n}{2})}$\\ \\$\frac{SL(n,\mathbb{R})}{SO(n)}$ \end{tabular} & 
\begin{tabular}{c} yes \\ \\ no \end{tabular} 
\\ 
\addlinespace[5pt] 
\hline 
\addlinespace[5pt] 
$\frak{\delta}_3$ & 
\begin{tabular}{c}$(1,0,0;2)$\\ \\$(0,1,0;2)$ \end{tabular} & 
\begin{tabular}{c}$\frac{SO(5,\mathbb{C})}{SO(5)}$\\ \\$\frac{SO(3,\mathbb{C})\times SO(3,\mathbb{C})}{SO(3)\times SO(3)}$  \end{tabular} & 
\begin{tabular}{c}$\frac{SO_0(1,5)}{SO(5)}$\\ \\$\frac{SO_0(3,3)}{SO(3)\times SO(3)}$ \end{tabular} & 
\begin{tabular}{c} yes \\ \\ no \end{tabular} 
\\ 
\addlinespace[5pt] 
\hline 
\addlinespace[5pt]
\begin{tabular}{c}$\frak{a}_{n-1}$\\($n>1, n \in 2 + 4\mathbb{Z}$) \end{tabular} & 
\begin{tabular}{c}$(s_0,0, \ldots , 0,s_p,0,\ldots , 0;1)$\\$(1 \le p < n/2)$ with $s_0 = 1 = s_p$\\ \\ 
$(s_0,0, \ldots , 0,s_{\frac{n}{2}},0,\ldots , 0;1)$\\with $s_0 = 1 = s_{\frac{n}{2}}$ \end{tabular} & 
\begin{tabular}{c}$\frac{S(GL(p,\mathbb{C}) \times GL(n-p,\mathbb{C}))}{S(U(p) \times U(n-p))}$\\ \\ 
$\frac{S(GL(\frac{n}{2},\mathbb{C}) \times GL(\frac{n}{2},\mathbb{C}))}{S(U(\frac{n}{2}) \times U(\frac{n}{2}))}$ \end{tabular} & 
\begin{tabular}{c}$\frac{SU(p,n-p)}{S(U(p) \times U(n-p))}$\\ \\$\frac{SU(\frac{n}{2},\frac{n}{2})}
{S(U(\frac{n}{2}) \times U(\frac{n}{2}))}$ \end{tabular} & 
\begin{tabular}{c} yes \\ \\ no \end{tabular} 
\\ 
\addlinespace[5pt]
\hline 
\addlinespace[5pt]
\begin{tabular}{c}$\frak{a}_{n-1}$\\$(n>2, n\in 2+4\mathbb{Z})$ \end{tabular} & 
\begin{tabular}{c}$(1,0, \ldots ,0;2)$\\ \\$(0, \ldots ,0,1;2)$ \end{tabular} & 
\begin{tabular}{c}$\frac{Sp(\frac{n}{2},\mathbb{C})}{Sp(\frac{n}{2})}$\\ \\$\frac{SO(n,\mathbb{C})}{SO(n)}$ \end{tabular} & 
\begin{tabular}{c}$\frac{SU^*(n)}{Sp(\frac{n}{2})}$\\ \\$\frac{SL(n,\mathbb{R})}{SO(n)}$ \end{tabular} & 
\begin{tabular}{c} yes \\ \\ no \end{tabular} 
\\ 
\addlinespace[5pt] 
\hline 
\addlinespace[5pt]
\begin{tabular}{c}$\frak{a}_{n-1}$\\($n>1, n \in 1 + 2\mathbb{Z}$) \end{tabular} & 
\begin{tabular}{c}$(s_0,0, \ldots , 0,s_p,0,\ldots , 0;1)$\\$(1 \le p \le \frac{n-1}{2})$ with $s_0 = 1 = s_p$\\ \\ 
$(0,\ldots,0,1;2)$ \end{tabular} & 
\begin{tabular}{c}$\frac{S(GL(p,\mathbb{C}) \times GL(n-p,\mathbb{C}))}{S(U(p) \times U(n-p))}$\\ \\ 
$\frac{SO(n,\mathbb{C})}{SO(n)}$ \end{tabular} & 
\begin{tabular}{c}$\frac{SU(p,n-p)}{S(U(p) \times U(n-p))}$\\ \\$\frac{SL(n,\mathbb{R})}{SO(n)}$ \end{tabular} & 
\begin{tabular}{c} yes \\ \\ yes \end{tabular} 
\\  
\addlinespace[5pt] 
\bottomrule
\end{tabular} 
\end{table} 
\end{landscape}
 
\begin{landscape} 
\begin{table} 
\begin{tabular}{|c|c|c|c|c|}
\toprule
\addlinespace[5pt]
$\frak{g}$ & 
 type of $\sigma$ & $X(\bar{\sigma})$ & $X(\bar{\sigma}\bar{\theta})$ & 
\begin{tabular}{c} is {\it condition}\\
{\it Or} for $\bar{G},\sigma,$\\ 
$\sigma\theta$ satisfied? \end{tabular} 
\\
\midrule 
\addlinespace[5pt] 
\begin{tabular}{c}$\frak{c}_n$\\$(n\ge 2, n\in 4\mathbb{Z}$,\\or $n \in 3+4\mathbb{Z})$ \end{tabular} & 
\begin{tabular}{c}$(1,0,\ldots,0,1;1)$\\ \\$(0,\ldots,0,s_p,0,\ldots,0;1)$\\$(1\le p \le n-1)$ with $s_p=1$ \end{tabular} & 
\begin{tabular}{c}$\frac{GL(n,\mathbb{C})}{U(n)}$\\ \\ 
$\frac{Sp(p,\mathbb{C})\times Sp(n-p,\mathbb{C})}{Sp(p)\times Sp(n-p)}$ \end{tabular} & 
\begin{tabular}{c}$\frac{Sp(n,\mathbb{R})}{U(n)}$\\ \\
$\frac{Sp(p,n-p)}{Sp(p)\times Sp(n-p)}$ \end{tabular} & 
\begin{tabular}{c} yes \\ \\ yes \end{tabular} 
\\ 
\addlinespace[5pt] 
\hline 
\addlinespace[5pt] 
\begin{tabular}{c}$\frak{c}_n$\\$(n\ge 2, n\in 1+4\mathbb{Z}$,\\or $n \in 2+4\mathbb{Z})$ \end{tabular} & 
\begin{tabular}{c}$(1,0,\ldots,0,1;1)$\\ \\$(0,\ldots,0,s_p,0,\ldots,0;1)$\\$(1\le p \le n-1, p\neq \frac{n}{2}
)$ with \\ $s_p=1$ \end{tabular} & 
\begin{tabular}{c}$\frac{GL(n,\mathbb{C})}{U(n)}$\\ \\ 
$\frac{Sp(p,\mathbb{C})\times Sp(n-p,\mathbb{C})}{Sp(p)\times Sp(n-p)}$ \end{tabular} & 
\begin{tabular}{c}$\frac{Sp(n,\mathbb{R})}{U(n)}$\\ \\
$\frac{Sp(p,n-p)}{Sp(p)\times Sp(n-p)}$ \end{tabular} & 
\begin{tabular}{c} no \\ \\ yes \end{tabular} 
\\ 
\addlinespace[5pt] 
\hline 
\addlinespace[5pt] 
\begin{tabular}{c}$\frak{c}_n$\\$(n\ge 2, n \in 2+4\mathbb{Z})$ \end{tabular} & 
\begin{tabular}{c}$(0,\ldots,0,s_{\frac{n}{2}},0,\ldots,0;1)$\\ with $s_{\frac{n}{2}}=1$ \end{tabular} & 
$\frac{Sp(\frac{n}{2},\mathbb{C})\times Sp(\frac{n}{2},\mathbb{C})}{Sp(\frac{n}{2})\times 
Sp(\frac{n}{2})}$ & 
$\frac{Sp(\frac{n}{2},\frac{n}{2})}{Sp(\frac{n}{2})\times Sp(\frac{n}{2})}$ & 
no 
\\ 
\addlinespace[5pt] 
\hline 
\addlinespace[5pt] 
\begin{tabular}{c}$\frak{b}_n$\\$(n \ge 3)$ \end{tabular} & 
\begin{tabular}{c}$(1,1,0,\ldots,0;1)$\\ \\$(0,\ldots,0,s_p,0,\ldots,0;1)$\\$(2\le p \le n)$ with $s_p=1$ \end{tabular} & 
\begin{tabular}{c}$\frac{SO(2n-1,\mathbb{C})\times SO(2,\mathbb{C})}{SO(2n-1)\times SO(2)}$\\ \\ 
$\frac{SO(2p,\mathbb{C})\times SO(2n-2p+1,\mathbb{C})}{SO(2p)\times SO(2n-2p+1)}$ \end{tabular} & 
\begin{tabular}{c}$\frac{SO_0(2n-1,2)}{SO(2n-1)\times SO(2)}$\\ \\
$\frac{SO_0(2p,2n-2p+1)}{SO(2p)\times SO(2n-2p+1)}$ \end{tabular} & 
\begin{tabular}{c} no \\ \\ no \end{tabular} 
\\ 
\addlinespace[5pt] 
\hline 
\addlinespace[5pt] 
\begin{tabular}{c}$\frak{\delta}_n$\\$(n\ge 4, n \in 4\mathbb{Z}$,\\or $n \in 1 + 4\mathbb{Z})$ \end{tabular} & 
\begin{tabular}{c}$(1,0,\ldots,0,1;1)$\\ \\$(1,1,0,\ldots,0;1)$\\ \\
$(0,\ldots,0,s_p,0,\ldots,0;1)$\\$(2\le p \le n-2, 2p \le n)$\\ with $s_p=1$\\ \\$(0,\ldots,0,s_p,0,\ldots,0;2)$\\$(0\le p \le n-1)$ with $s_p=1$ 
\end{tabular} & 
\begin{tabular}{c}$\frac{GL(n,\mathbb{C})}{U(n)}$\\ \\
$\frac{SO(2n-2,\mathbb{C})\times SO(2,\mathbb{C})}{SO(2n-2)\times SO(2)}$\\ \\ 
$\frac{SO(2p,\mathbb{C})\times SO(2n-2p,\mathbb{C})}{SO(2p)\times SO(2n-2p)}$\\ \\
$\frac{SO(2p+1,\mathbb{C})\times SO(2n-2p-1,\mathbb{C})}{SO(2p+1)\times SO(2n-2p-1)}$ \end{tabular} & 
\begin{tabular}{c}$\frac{SO^*(2n)}{U(n)}$\\ \\
$\frac{SO_0(2n-2,2)}{SO(2n-2)\times SO(2)}$\\ \\
$\frac{SO_0(2p,2n-2p)}{SO(2p)\times SO(2n-2p)}$\\ \\$\frac{SO_0(2p+1,2n-2p-1)}{SO(2p+1)\times SO(2n-2p-1)}$ \end{tabular} & 
\begin{tabular}{c} yes \\ \\ yes \\ \\ yes \\ \\ yes \end{tabular} 
\\ 
\addlinespace[5pt] 
\bottomrule
\end{tabular} 
\end{table} 
\end{landscape} 

\begin{landscape} 
\begin{table} 
\begin{tabular}{|c|c|c|c|c|}
\toprule
\addlinespace[5pt] 
$\frak{g}$ & 
 type of $\sigma$ & $X(\bar{\sigma})$ & $X(\bar{\sigma}\bar{\theta})$ & 
\begin{tabular}{c} is {\it condition}\\
{\it Or} for $\bar{G},\sigma,$\\ 
$\sigma\theta$ satisfied? \end{tabular} 
\\ 
\midrule
\addlinespace[5pt] 
\begin{tabular}{c}$\frak{\delta}_n$\\$(n\ge 4, n \in 2 + 4\mathbb{Z})$ \end{tabular} & 
\begin{tabular}{c}$(1,0,\ldots,0,1;1)$\\ \\$(1,1,0,\ldots,0;1)$\\ \\
$(0,\ldots,0,s_p,0,\ldots,0;1)$\\$(2\le p \le n-2, 2p < n)$\\ with $s_p=1$\\ \\$(0,\ldots,0,s_{\frac{n}{2}},0,\ldots,0;1)$\\ with 
$s_{\frac{n}{2}}=1$ \\ \\$(0,\ldots,0,s_p,0,\ldots,0;2)$\\$(0\le p \le n-1)$ with $s_p=1$ 
\end{tabular} & 
\begin{tabular}{c}$\frac{GL(n,\mathbb{C})}{U(n)}$\\ \\
$\frac{SO(2n-2,\mathbb{C})\times SO(2,\mathbb{C})}{SO(2n-2)\times SO(2)}$\\ \\ 
$\frac{SO(2p,\mathbb{C})\times SO(2n-2p,\mathbb{C})}{SO(2p)\times SO(2n-2p)}$\\ \\ 
$\frac{SO(n,\mathbb{C})\times SO(n,\mathbb{C})}{SO(n)\times SO(n)}$\\ \\ 
$\frac{SO(2p+1,\mathbb{C})\times SO(2n-2p-1,\mathbb{C})}{SO(2p+1)\times SO(2n-2p-1)}$ \end{tabular} & 
\begin{tabular}{c}$\frac{SO^*(2n)}{U(n)}$\\ \\
$\frac{SO_0(2n-2,2)}{SO(2n-2)\times SO(2)}$\\ \\
$\frac{SO_0(2p,2n-2p)}{SO(2p)\times SO(2n-2p)}$\\ \\ 
$\frac{SO_0(n,n)}{SO(n)\times SO(n)}$\\ \\$\frac{SO_0(2p+1,2n-2p-1)}{SO(2p+1)\times SO(2n-2p-1)}$ \end{tabular} & 
\begin{tabular}{c} no \\ \\ yes \\ \\ yes \\ \\ no \\ \\ yes \end{tabular} 
\\ 
\addlinespace[5pt] 
\hline 
\addlinespace[5pt] 
 \begin{tabular}{c}$\frak{\delta}_n$\\$(n\ge 4, n \in 3 + 4\mathbb{Z})$ \end{tabular} & 
\begin{tabular}{c}$(1,0,\ldots,0,1;1)$\\ \\$(1,1,0,\ldots,0;1)$\\ \\
$(0,\ldots,0,s_p,0,\ldots,0;1)$\\$(2\le p \le n-2, 2p \le n)$\\ with $s_p=1$\\ \\$(0,\ldots,0,s_p,0,\ldots,0;2)$\\$(0\le p \le n-1, p \neq 
\frac{n-1}{2})$\\ with $s_p=1$\\ \\$(0,\ldots,0,s_{\frac{n-1}{2}},0,\ldots,0;2)$\\ with $s_p=1$ \end{tabular} & 
\begin{tabular}{c}$\frac{GL(n,\mathbb{C})}{U(n)}$\\ \\
$\frac{SO(2n-2,\mathbb{C})\times SO(2,\mathbb{C})}{SO(2n-2)\times SO(2)}$\\ \\ 
$\frac{SO(2p,\mathbb{C})\times SO(2n-2p,\mathbb{C})}{SO(2p)\times SO(2n-2p)}$\\ \\
$\frac{SO(2p+1,\mathbb{C})\times SO(2n-2p-1,\mathbb{C})}{SO(2p+1)\times SO(2n-2p-1)}$\\ \\  
$\frac{SO(n,\mathbb{C})\times SO(n,\mathbb{C})}{SO(n)\times SO(n)}$\end{tabular} & 
\begin{tabular}{c}$\frac{SO^*(2n)}{U(n)}$\\ \\
$\frac{SO_0(2n-2,2)}{SO(2n-2)\times SO(2)}$\\ \\
$\frac{SO_0(2p,2n-2p)}{SO(2p)\times SO(2n-2p)}$\\ \\$\frac{SO_0(2p+1,2n-2p-1)}{SO(2p+1)\times SO(2n-2p-1)}$\\ \\ 
$\frac{SO_0(n,n)}{SO(n)\times SO(n)}$ \end{tabular} & 
\begin{tabular}{c} yes \\ \\ yes \\ \\ yes \\ \\ yes \\ \\ no \end{tabular} 
\\ 
\addlinespace[5pt] 
\bottomrule
\end{tabular} 
\end{table} 
\end{landscape} 

 \begin{landscape} 
\begin{table} 
\begin{tabular}{|c|c|c|c|c|}
\toprule
\addlinespace[5pt] 
$\frak{g}$ & 
 type of $\sigma$ & $X(\bar{\sigma})$ & $X(\bar{\sigma}\bar{\theta})$ & 
\begin{tabular}{c} is {\it condition}\\
{\it Or} for $\bar{G},\sigma,$\\ 
$\sigma\theta$ satisfied? \end{tabular} 
\\
\midrule
\addlinespace[5pt] 
 $\frak{e}_6$ & \begin{tabular}{c}$(1,1,0,0,0,0,0;1)$\\ \\$(0,0,1,0,0,0,0;1)$\\ \\$(1,0,0,0,0;2)$\\ \\$(0,1,0,0,0;2)$ \end{tabular} & 
\begin{tabular}{c}$\frac{SO(10,\mathbb{C})\times SO(2,\mathbb{C})}{SO(10)\times SO(2)}$\\ \\ 
$\frac{SL(2,\mathbb{C})\times SL(6,\mathbb{C})}{SU(2)\times SU(6)}$\\ \\ 
$\frac{F_4^\mathbb{C}}{F_4}$\\ \\ 
$\frac{Sp(4,\mathbb{C})}{Sp(4)}$ \end{tabular} & 
\begin{tabular}{c}$(\frak{e}_{6(-14)},\frak{so}(10) + \mathbb{R})$\\ \\ 
$(\frak{e}_{6(2)}, \frak{su}(2) + \frak{su}(6))$\\ \\ 
$(\frak{e}_{6(-26)}, \frak{f}_4)$\\ \\ 
$(\frak{e}_{6(6)}, \frak{sp}(4))$ \end{tabular} & 
\begin{tabular}{c} yes \\ \\ yes \\ \\ yes \\ \\ yes \end{tabular} 
\\ 
\addlinespace[5pt] 
\hline 
\addlinespace[5pt] 
$\frak{e}_7$ & \begin{tabular}{c}$(1,0,0,0,0,0,0,1;1)$\\ \\$(0,0,1,0,0,0,0,0;1)$\\ \\$(0,1,0,0,0,0,0,0;1)$ \end{tabular} & 
\begin{tabular}{c}$\frac{E_6^\mathbb{C}\times SO(2,\mathbb{C})}{E_6\times SO(2)}$\\ \\ 
$\frac{SL(8,\mathbb{C})}{SU(8)}$\\ \\ 
$\frac{SL(2,\mathbb{C})\times SO(12,\mathbb{C})}{SU(2)\times SO(12)}$ \end{tabular} & 
\begin{tabular}{c}$(\frak{e}_{7(-25)},\frak{e}_6 + \mathbb{R})$\\ \\ 
$(\frak{e}_{7(7)}, \frak{su}(8))$\\ \\ 
$(\frak{e}_{7(-5)}, \frak{su}(2) + \frak{so}(12))$ \end{tabular} & 
\begin{tabular}{c} no \\ \\ no \\ \\ yes \end{tabular} 
\\
\addlinespace[5pt] 
\hline 
\addlinespace[5pt]
$\frak{e}_8$ & \begin{tabular}{c}$(0,1,0,0,0,0,0,0,0;1)$\\ \\$(0,0,0,0,0,0,0,0,1;1)$ \end{tabular} & 
\begin{tabular}{c} $\frac{SO(16,\mathbb{C})}{SO(16)}$\\ \\ 
$\frac{SL(2,\mathbb{C})\times E_7^\mathbb{C}}{SU(2)\times E_7}$ \end{tabular} & 
\begin{tabular}{c}$(\frak{e}_{8(8)},\frak{so}(16))$\\ \\ 
$(\frak{e}_{8(-24)}, \frak{su}(2) + \frak{e}_7)$ \end{tabular} & 
\begin{tabular}{c} yes \\ \\ yes \end{tabular} 
\\ 
\addlinespace[5pt] 
\hline 
\addlinespace[5pt]
$\frak{f}_4$ & \begin{tabular}{c}$(0,1,0,0,0;1)$\\ \\$(0,0,0,0,1;1)$ \end{tabular} & 
\begin{tabular}{c}$\frac{SL(2,\mathbb{C})\times Sp(3,\mathbb{C})}{SU(2)\times Sp(3)}$\\ \\ 
$\frac{SO(9,\mathbb{C})}{SO(9)}$\end{tabular} & 
\begin{tabular}{c}$(\frak{f}_{4(4)}, \frak{su}(2) + \frak{sp}(3))$\\ \\ 
$(\frak{f}_{4(-20)},\frak{so}(9))$ \end{tabular} & 
\begin{tabular}{c} yes \\ \\ yes \end{tabular} 
\\ 
\addlinespace[5pt]
\hline 
\addlinespace[5pt]
$\frak{g}_2$ & $(0,0,1;1)$ & $\frac{SL(2,\mathbb{C}) \times SL(2, \mathbb{C})}{SU(2) \times SU(2)}$ & 
$(\frak{g}_{2(2)}, \frak{su}(2) + \frak{su}(2))$ & yes 
\\
\addlinespace[5pt] 
\bottomrule
\end{tabular} 
\end{table}
\end{landscape}

\begin{landscape}
\begin{table} 
\caption{Table for the dimensions of $X(\bar{\sigma})$ and $X(\bar{\sigma}\bar{\theta})$ when the 
{\it condition Or} has been satisfied for $\bar{G},\ \bar{\sigma},\ \bar{\sigma}\bar{\theta}$}\label{resulttable} 
\begin{tabular}{|c|c|c|c|c|}
\toprule
\addlinespace[5pt]
$\frak{g}$ & $X(\bar{\sigma})$ & $X(\bar{\sigma}\bar{\theta})$ & dim$(X(\bar{\sigma}))$ & dim$(X(\bar{\sigma}\bar{\theta}))$ 
\\ 
\midrule
\addlinespace[5pt]
\begin{tabular}{c}$\frak{a}_{n-1}$ \\ $(n>1, n\in 4\mathbb{Z})$ \end{tabular} & 
\begin{tabular}{c}$\frac{S(GL(p,\mathbb{C}) \times GL(n-p,\mathbb{C}))}{S(U(p) \times U(n-p))}$\\$(1\le p \le \frac{n}{2})$\\  \\ 
$\frac{Sp(\frac{n}{2},\mathbb{C})}{Sp(\frac{n}{2})}$ \end{tabular} & 
\begin{tabular}{c}$\frac{SU(p,n-p)}{S(U(p) \times U(n-p))}$\\$(1\le p \le \frac{n}{2})$\\ \\   
$\frac{SU^*(n)}{Sp(\frac{n}{2})}$ \end{tabular} & 
\begin{tabular}{c}$p^2 + (n-p)^2 -1$\\ \\$\frac{n(n+1)}{2}$ \end{tabular} & 
\begin{tabular}{c}$2p(n-p)$\\ \\$\frac{(n-2)(n+1)}{2}$ \end{tabular} 
\\ 
\addlinespace[5pt] 
\hline 
\addlinespace[5pt]
\begin{tabular}{c}$\frak{a}_{n-1}$\\($n>1, n \in 2 + 4\mathbb{Z}$) \end{tabular} & 
\begin{tabular}{c}$\frac{S(GL(p,\mathbb{C}) \times GL(n-p,\mathbb{C}))}{S(U(p) \times U(n-p))}$\\$(1\le p < \frac{n}{2})$\\ \\ 
$\frac{Sp(\frac{n}{2},\mathbb{C})}{Sp(\frac{n}{2})}$\\$(n > 2)$ \end{tabular} & 
\begin{tabular}{c}$\frac{SU(p,n-p)}{S(U(p) \times U(n-p))}$\\$(1\le p < \frac{n}{2})$\\ \\
$\frac{SU^*(n)}{Sp(\frac{n}{2})}$\\$(n > 2)$ \end{tabular} & 
\begin{tabular}{c}$p^2 + (n-p)^2-1$\\ \\$\frac{n(n+1)}{2}$ \end{tabular} & 
\begin{tabular}{c}$2p(n-p)$\\ \\$\frac{(n-2)(n+1)}{2}$ \end{tabular} 
\\ 
\addlinespace[5pt]
\hline 
\addlinespace[5pt]
\begin{tabular}{c}$\frak{a}_{n-1}$\\($n>1, n \in 1 + 2\mathbb{Z}$) \end{tabular} & 
\begin{tabular}{c}$\frac{S(GL(p,\mathbb{C}) \times GL(n-p,\mathbb{C}))}{S(U(p) \times U(n-p))}$\\$(1\le p \le \frac{n-1}{2})$\\ \\ 
$\frac{SO(n,\mathbb{C})}{SO(n)}$ \end{tabular} & 
\begin{tabular}{c}$\frac{SU(p,n-p)}{S(U(p) \times U(n-p))}$\\$(1\le p \le \frac{n-1}{2})$\\ \\
$\frac{SL(n,\mathbb{R})}{SO(n)}$ \end{tabular} & 
\begin{tabular}{c}$p^2 + (n-p)^2-1$\\ \\$\frac{n(n-1)}{2}$ \end{tabular} & 
\begin{tabular}{c}$2p(n-p)$\\ \\$\frac{(n-1)(n+2)}{2}$ \end{tabular} 
\\  
\addlinespace[5pt]
\hline 
\addlinespace[5pt] 
\begin{tabular}{c}$\frak{\delta}_n$\\$(n\ge 4)$ \end{tabular} & 
\begin{tabular}{c}$\frac{GL(n,\mathbb{C})}{U(n)}$\\$(n \not \in 2 + 4\mathbb{Z})$\\ \\
$\frac{SO(p,\mathbb{C})\times SO(2n-p,\mathbb{C})}{SO(p)\times SO(2n-p)}$\\$(1 \le p < n)$\\ \\ 
$\frac{SO(n,\mathbb{C})\times SO(n,\mathbb{C})}{SO(n)\times SO(n)}$\\$(n \in 4\mathbb{Z}, \textrm{or } 1 + 4\mathbb{Z})$ \end{tabular} & 
\begin{tabular}{c}$\frac{SO^*(2n)}{U(n)}$\\$(n \not \in 2 + 4\mathbb{Z})$\\ \\
$\frac{SO_0(p,2n-p)}{SO(p)\times SO(2n-p)}$\\$(1 \le p < n)$\\ \\ 
$\frac{SO_0(n,n)}{SO(n)\times SO(n)}$\\$(n \in 4\mathbb{Z}, \textrm{or } 1 + 4\mathbb{Z})$ \end{tabular} & 
\begin{tabular}{c}$n^2$\\ \\$\frac{p(p-1)+(2n-p)(2n-p-1)}{2}$\\ \\$n(n-1)$ \end{tabular} & 
\begin{tabular}{c}$n(n-1)$\\ \\$p(2n-p)$ \\ \\$n^2$ \end{tabular} 
\\ 
\addlinespace[5pt] 
\bottomrule
\end{tabular} 
\end{table}
\end{landscape} 

\begin{landscape} 
\begin{table} 
\begin{tabular}{|c|c|c|c|c|}
\toprule
\addlinespace[5pt] 
$\frak{g}$ & $X(\bar{\sigma})$ & $X(\bar{\sigma}\bar{\theta})$ & 
dim$(X(\bar{\sigma}))$ & dim$(X(\bar{\sigma}\bar{\theta}))$ 
\\
\midrule 
\addlinespace[5pt] 
\begin{tabular}{c}$\frak{c}_n$\\$(n\ge 2, n\in 4\mathbb{Z}$,\\or $n \in 3+4\mathbb{Z})$ \end{tabular} & 
\begin{tabular}{c}$\frac{GL(n,\mathbb{C})}{U(n)}$\\ \\ 
$\frac{Sp(p,\mathbb{C})\times Sp(n-p,\mathbb{C})}{Sp(p)\times Sp(n-p)}$\\$(1\le p \le n-1)$ \end{tabular} & 
\begin{tabular}{c}$\frac{Sp(n,\mathbb{R})}{U(n)}$\\ \\
$\frac{Sp(p,n-p)}{Sp(p)\times Sp(n-p)}$\\$(1\le p \le n-1)$  \end{tabular} & 
\begin{tabular}{c}$n^2$\\ \\$p(2p+1)+$\\$(n-p)(2n-2p+1)$ \end{tabular} & 
\begin{tabular}{c}$n(n+1)$\\ \\$4p(n-p)$ \end{tabular} 
\\ 
\addlinespace[5pt] 
\hline 
\addlinespace[5pt] 
\begin{tabular}{c}$\frak{c}_n$\\$(n\ge 2, n\in 1+4\mathbb{Z}$,\\or $n \in 2+4\mathbb{Z})$ \end{tabular} & 
\begin{tabular}{c}$\frac{Sp(p,\mathbb{C})\times Sp(n-p,\mathbb{C})}{Sp(p)\times Sp(n-p)}$\\ 
$(1\le p \le n-1, p\neq \frac{n}{2})$ \end{tabular} & 
\begin{tabular}{c}$\frac{Sp(p,n-p)}{Sp(p)\times Sp(n-p)}$\\ 
$(1\le p \le n-1, p\neq \frac{n}{2})$ \end{tabular} & 
\begin{tabular}{c}$p(2p+1)+$\\$(n-p)(2n-2p+1)$ \end{tabular} & $4p(n-p)$ 
\\ 
\addlinespace[5pt] 
\hline 
\addlinespace[5pt] 
$\frak{e}_6$ & 
\begin{tabular}{c}$\frac{SO(10,\mathbb{C})\times SO(2,\mathbb{C})}{SO(10)\times SO(2)}$\\ \\ 
$\frac{SL(2,\mathbb{C})\times SL(6,\mathbb{C})}{SU(2)\times SU(6)}$\\ \\ 
$\frac{F_4^\mathbb{C}}{F_4}$\\ \\ 
$\frac{Sp(4,\mathbb{C})}{Sp(4)}$ \end{tabular} & 
\begin{tabular}{c}$(\frak{e}_{6(-14)},\frak{so}(10) + \mathbb{R})$\\ \\ 
$(\frak{e}_{6(2)}, \frak{su}(2) + \frak{su}(6))$\\ \\ 
$(\frak{e}_{6(-26)}, \frak{f}_4)$\\ \\ 
$(\frak{e}_{6(6)}, \frak{sp}(4))$ \end{tabular} & 
\begin{tabular}{c}$46$\\ \\$38$\\ \\$52$\\ \\$36$ \end{tabular} & 
\begin{tabular}{c}$32$\\ \\$40$\\ \\$26$\\ \\$42$ \end{tabular}
\\ 
\addlinespace[5pt] 
\hline 
\addlinespace[5pt] 
$\frak{e}_7$ & 
$\frac{SL(2,\mathbb{C})\times SO(12,\mathbb{C})}{SU(2)\times SO(12)}$ & 
$(\frak{e}_{7(-5)}, \frak{su}(2) + \frak{so}(12))$ & 
$69$ & $64$ 
\\
\addlinespace[5pt] 
\hline 
\addlinespace[5pt]
$\frak{e}_8$ & 
\begin{tabular}{c} $\frac{SO(16,\mathbb{C})}{SO(16)}$\\ \\ 
$\frac{SL(2,\mathbb{C})\times E_7^\mathbb{C}}{SU(2)\times E_7}$ \end{tabular} & 
\begin{tabular}{c}$(\frak{e}_{8(8)},\frak{so}(16))$\\ \\ 
$(\frak{e}_{8(-24)}, \frak{su}(2) + \frak{e}_7)$ \end{tabular} & 
\begin{tabular}{c}$120$\\ \\$136$ \end{tabular} & 
\begin{tabular}{c}$128$\\ \\$112$ \end{tabular}
\\ 
\addlinespace[5pt] 
\hline 
\addlinespace[5pt]
$\frak{f}_4$ & 
\begin{tabular}{c}$\frac{SL(2,\mathbb{C})\times Sp(3,\mathbb{C})}{SU(2)\times Sp(3)}$\\ \\ 
$\frac{SO(9,\mathbb{C})}{SO(9)}$\end{tabular} & 
\begin{tabular}{c}$(\frak{f}_{4(4)}, \frak{su}(2) + \frak{sp}(3))$\\ \\ 
$(\frak{f}_{4(-20)},\frak{so}(9))$ \end{tabular} & 
\begin{tabular}{c}$24$\\ \\$36$ \end{tabular} & 
\begin{tabular}{c}$28$\\ \\$16$ \end{tabular} 
\\ 
\addlinespace[5pt]
\hline 
\addlinespace[5pt]
$\frak{g}_2$ &  
$\frac{SL(2,\mathbb{C}) \times SL(2, \mathbb{C})}{SU(2) \times SU(2)}$ & 
$(\frak{g}_{2(2)}, \frak{su}(2) + \frak{su}(2))$ & $6$ & $8$ 
\\
\addlinespace[5pt] 
\bottomrule
\end{tabular} 
\end{table} 
\end{landscape}

\noindent 
\subsection{Proof of Theorem \ref{th1}}\label{pfth1}  

  Note that $X = G/U$ is a Riemannian globally symmetric space of type IV.  Let $\bar{G} = \textrm{Ad}(G)$ be the 
adjoint group of $G$, and $\frak{g}$ be the Lie algebra 
of $G$. Let $\frak{u}$ be a compact real form of $\frak{g}$, and $\theta$ be the Cartan involution of $\frak{g}^\mathbb{R}$ 
corresponding to the Cartan decomposition $\frak{g}^\mathbb{R} = \frak{u} \oplus i\frak{u}$. Let $\bar{\theta}$ denote the 
corresponding Cartan involution of $\bar{G}$. Let $\bar{U} = \{ g \in \bar{G} : \bar{\theta}(g) = g \}$. Then $X = \bar{G}/\bar{U}$. 
Let $\frak{t}$ be a maximal abelian subspace of $\frak{u}$, 
and $\frak{h} = \frak{t}^\mathbb{C}$. Then $\frak{h}$ is a Cartan subalgebra of $\frak{g}$. Choose a system of positive roots 
$\Delta^+ $ in the set of all non-zero roots $\Delta = \Delta(\frak{g}, \frak{h})$. Let $\Phi$ be the set of simple roots in $\Delta^+$. Let 
$\{ H_\phi ^* \  , E_\alpha : \phi \in \Phi , \alpha \in \Delta \}$ be a Chevalley basis for $\frak{g}$ as in  \eqref{chevalley}. 
Then 
\[ \frak{u} =  \sum_{\phi \in \Phi} \mathbb{R} (i  H_\phi ^*) \oplus \sum_{\alpha \in \Delta^+} \mathbb{R} X_\alpha \oplus 
\sum_{\alpha \in \Delta^+} \mathbb{R} Y_\alpha, \]
where $X_\alpha = E_\alpha - E_{-\alpha} , Y_\alpha = i(E_\alpha + E_{-\alpha})$ for all $\alpha \in \Delta^+$. 

  Let $\sigma$ of an involution of $\frak{g}$ as in \eqref{sigma} and $\bar{\sigma} : \bar{G} \longrightarrow \bar{G}$ be the involution 
with $d\bar{\sigma} = \sigma$. Then $\sigma \theta = \theta \sigma$. 
Let $\bar{\nu}$ be the Dynkin diagram automorphism induced by $\sigma$ and $\nu$ be the linear 
extension of $\bar{\nu}$ on the dual space of $i\frak{t}$. Recall that \\ 
$\sigma (iH_\phi ^*) = iH_{\nu(\phi)}^*$ for all $\phi \in \Phi$, and \\ 
$\sigma (X_\alpha) = q_\alpha X_{\nu(\alpha)},\ \sigma (Y_\alpha) = q_\alpha Y_{\nu(\alpha)}$ ($q_\alpha = \pm 1$); 
for all $\alpha \in \Delta^+$. Note that $q_\alpha= q_{\nu(\alpha)}$ for all $\alpha \in \Delta^+$. 

  Let $\frak{u} = \frak{u}_0 \oplus \frak{u}_1$ be the decomposition of $\frak{u}$ in to $1$ and $-1$ eigenspaces of $\sigma$. Then 
$\frak{g}^\sigma = \frak{u}_0 \oplus i\frak{u}_1$ is a non-compact real form of $\frak{g}$, and $\sigma \Big{|}_{\frak{g}^\sigma}$ 
is a Cartan involution of $\frak{g}^\sigma$. Note that \\ 
$\frak{u}_0 = \sum\limits_{\phi \in \Phi} \mathbb{R}i(H_\phi ^* + H_{\nu(\phi)}^* ) \oplus \sum\limits_{\substack
{\alpha \in \Delta^+ \\ q_\alpha = 1}} (\mathbb{R}(X_\alpha + X_{\nu(\alpha)}) \oplus \mathbb{R}(Y_\alpha + Y_{\nu(\alpha)})) 
\oplus \sum\limits_{\substack
{\alpha \in \Delta^+ \\ q_\alpha = -1}} (\mathbb{R}(X_\alpha - X_{\nu(\alpha)}) \oplus \mathbb{R}(Y_\alpha - Y_{\nu(\alpha)}))$, and \\ 
$i\frak{u}_1 = \sum\limits_{\phi \in \Phi} \mathbb{R}(H_\phi ^* - H_{\nu(\phi)}^* ) \oplus \sum\limits_{\substack
{\alpha \in \Delta^+ \\ q_\alpha = 1}} (\mathbb{R}i(X_\alpha - X_{\nu(\alpha)}) \oplus \mathbb{R}i(Y_\alpha - Y_{\nu(\alpha)})) 
\oplus \sum\limits_{\substack
{\alpha \in \Delta^+ \\ q_\alpha = -1}} (\mathbb{R}i(X_\alpha + X_{\nu(\alpha)}) \oplus \mathbb{R}i(Y_\alpha + Y_{\nu(\alpha)}))$. 
 
  Let $B' \subset \{i(H_\phi ^* + H_{\nu(\phi)}^* ), \ (H_\phi ^* - H_{\nu(\phi)}^*) : \phi \in \Phi \} \cup 
\{X_\alpha + X_{\nu(\alpha)},\ Y_\alpha + Y_{\nu(\alpha)},\ i(X_\alpha - X_{\nu(\alpha)}),\ i(Y_\alpha - Y_{\nu(\alpha)}) : 
\alpha \in \Delta^+ ,\ q_\alpha = 1\} \cup 
\{X_\alpha - X_{\nu(\alpha)},\ Y_\alpha - Y_{\nu(\alpha)},\ i(X_\alpha + X_{\nu(\alpha)}),\ i(Y_\alpha + Y_{\nu(\alpha)}) : 
\alpha \in \Delta^+ ,\ q_\alpha = -1\}$ be a basis of $\frak{g}^\sigma$. Then $B'$ is a basis of $\frak{g}^\sigma$ consisting of 
eigenvectors of the Cartan involution $\sigma \Big{|}_{\frak{g}^\sigma}$, with respect to which the structural constants are 
all integers. Let $\Gamma$ be an arithmetic uniform lattice of Aut$(\frak{g})$ of type $3$ with respect to the non-compact real form 
$\frak{g}^\sigma$ and the basis $B'$ of $\frak{g}^\sigma$. Then $\sigma \in \Gamma$. Also $\sigma \in \Gamma$ for any 
arithmetic uniform lattice of Aut$(\frak{g}^\mathbb{R})$ of type $i$, $i = 1$, or $2$. 

  Now assume that $\Gamma'$ be an arithmetic uniform lattice of Aut$(\frak{g}^\mathbb{R})$ of type $i (i =1,2,$ or $3)$, and $F$ be 
the corresponding algebraic number field with ring of integers $\mathcal{O}$. Arithmetic uniform lattices of 
Aut$(\frak{g}^\mathbb{R})$ of type $3$ considered here are defined with respect to the non-compact real form 
$\frak{g}^\sigma$ and the basis $B'$ of $\frak{g}^\sigma$. Let $\Gamma$ be 
the set of all torsion-free elements of $\Gamma' \cap \bar{G}$. Then $\bar{G}$ is defined over $F$, $\bar{\theta},\ \bar{\sigma}$ are 
defined over $F$, and $\Gamma \subset \bar{G}_{\mathcal{O}}$ is a torsion-free, $\langle \bar{\sigma} , \bar{\theta} \rangle$-stable, 
arithmetic uniform lattice of $\bar{G}$. Then if the {\it condition Or} is satisfied for 
$\bar{G}, \bar{\sigma}, \bar{\sigma}\bar{\theta}$; there exists a $\langle \bar{\sigma} , \bar{\theta} \rangle$-stable subgroup 
$\Gamma''$ of $\Gamma$ of finite index such that the cohomology 
classes defined by $[C(\bar{\sigma} , \Gamma'')], [C(\bar{\sigma}\bar{\theta}, \Gamma'')]$ via Poincar\'e duality are non-zero and 
are not represented by $\bar{G}$-invariant differential forms on $X$, by Theorem \ref{mira}. Since $G$ is a covering group 
of $\bar{G}$, the cohomology classes defined by $[C(\bar{\sigma} , \Gamma'')], [C(\bar{\sigma}\bar{\theta}, \Gamma'')]$ via 
Poincar\'e duality are also not represented by $G$-invariant differential forms on $X$. This completes the proof.

\noindent
\section{Automorphic representations of a connected complex simple Lie group} 

Let $G$ be a non-compact semisimple Lie group with finite centre and $\Gamma \subset G$ be a lattice. 
Consider the Hilbert space $L^2 (\Gamma \backslash G)$ of square integrable functions on $\Gamma \backslash G$ with respect to 
a finite $G$-invariant measure. The group $G$ acts unitarily on the Hilbert space $L^2 (\Gamma \backslash G)$ via the right translation 
action of $G$ on $\Gamma \backslash G$. 

  When $\Gamma$ is a uniform lattice, we have 
\[L^2 (\Gamma \backslash G) \cong \widehat{\bigoplus}_{\pi \in \hat{G}} m(\pi , \Gamma ) H_\pi,\]
due to Gelfand and Pyatetskii-Shapiro \cite{ggp}, \cite{gp}; 
where $\hat{G}$ denotes the unitary dual of $G$; $H_\pi$ is the representation space of $\pi \in \hat{G}$; 
and $m(\pi , \Gamma ) \in \mathbb{N} \cup \{0\}$, 
the multiplicity of $\pi$ in $L^2 (\Gamma \backslash G)$. 
If $(\tau, \mathbb{C})$ is the trivial representation of $G$, then $m(\tau , \Gamma) = 1$. 

  A unitary representation $\pi \in \hat{G}$ such that $m(\pi , \Gamma ) > 0$ for some uniform lattice $\Gamma$, is called an
automorphic representation with respect to $\Gamma$. 
The connection between geometric cycles and automorphic representations has been made 
by the Matsushima's isomorphism. 

  Let $G$ be a connected semisimple Lie group with finite centre and $K$ be a maximal compact subgroup of $G$ with 
Cartan involution $\theta$. Let $X = G \slash K$ be the associated Riemannian globally symmetric space, 
$\frak{g}$ be the Lie algebra of $G$ and $\frak{g}^\mathbb{C}$ be the complexification of $\frak{g}$. 
If $\pi$ be an admissible unitary representation of $G$ on a Hilbert space $H_\pi$, 
we denote by $H_{\pi,K}$ the space of all $K$-finite vectors of $H_\pi$. 
The space $H_{\pi, K}$ is the associated $(\frak{g}^\mathbb{C}, K)$-module.  
 
  Let $\Gamma \subset G$ be a torsion-free uniform lattice. 
Then the isomorphism $L^2 (\Gamma \backslash G) \cong \widehat{\bigoplus}_{\pi \in \hat{G}} m(\pi, \Gamma) H_\pi$ implies 
\[\bigoplus_{\pi \in \hat{G}} m_\pi H_{\pi,K} \xhookrightarrow { } C^\infty (\Gamma \backslash G)_K.\] 
Matsushima's formula \cite{matsushima} says that the above inclusion induces an isomorphism 
\[ \bigoplus_{\pi \in \hat{G}} m_\pi H^p (\frak{g}^\mathbb{C}, K; H_{\pi,K}) \cong 
H^p (\frak{g}^\mathbb{C}, K; C^\infty (\Gamma \backslash G)_K). \] 
Also we have the well-known isomorphism 
\[H^p (\frak{g}^\mathbb{C}, K; C^\infty (\Gamma \backslash G)_K) \cong 
H^p (\Gamma \backslash X; \mathbb{C}). \] See \cite[Cor. 2.7, Ch. VII]{borel-wallach}. Hence 
\[H^p (\Gamma \backslash X; \mathbb{C}) \cong  \bigoplus_{\pi \in \hat{G}} m_\pi H^p (\frak{g}^\mathbb{C}, K; H_{\pi,K}). \]
 
  Hence a non-vanishing (in the cohomology level) geometric cycle will contribute to the LHS and it may help to detect occurrence of 
some $\pi \in \hat{G}$ with non-zero $(\frak{g}^\mathbb{C}, K)$-cohomology. If $X_u$ denotes the compact dual of $X$, 
then the image of the Matsushima map $k_\Gamma : H^*(X_u ; \mathbb{C}) \longrightarrow H^* (\Gamma \backslash X ; \mathbb{C})$ 
corresponds to the trivial representation $(\tau, \mathbb{C})$ of $G$. So if the cohomology class of a geometric cycle 
does not lie in the image $k_\Gamma (H^*(X_u ; \mathbb{C}))$, then it may help to detect occurrence of 
some non-trivial $\pi \in \hat{G}$ with non-zero $(\frak{g}^\mathbb{C}, K)$-cohomology. For this purpose, it is important 
to know the irreducible unitary representations of $G$ with non-zero $(\frak{g}^\mathbb{C}, K)$-cohomology. 
The details are given in the following subsections.

\noindent
\subsection{Irreducible unitary representations with non-zero $(\frak{g}^\mathbb{C}, K)$-cohomology}\label{general} 

    Let $G$ be a connected semisimple Lie group with finite centre and $\frak{g}$ be the Lie algebra of $G$. Let $\frak{g} = 
\frak{k} \oplus \frak{p}$ be a Cartan decomposition and $\theta$ be the corresponding Cartan involution. Let $K$ be the connected Lie 
subgroup of $G$ with Lie$(K) = \frak{k}$. Then $K$ is a maximal compact subgroup of $G$. 
Let $\frak{g}^{\mathbb{C}}$ be the complexification of $\frak{g}$ and $\frak{k}^\mathbb{C},\ \frak{p}^\mathbb{C} \subset 
\frak{g}^\mathbb{C}$ be the complexifications of $\frak{k},\ \frak{p}$ respectively. The complex linear extension of $\theta$ to 
$\frak{g}^\mathbb{C}$ is denoted by the same notation $\theta$. If $\pi$ be an admissible unitary representation of $G$ on a 
Hilbert space $H_\pi$, recall that $H_{\pi,K}$ is the space of all $K$-finite vectors of $H_\pi$. By a theorem of D. Wigner, 
if $\pi \in \hat{G}$, then $H^*( \frak{g}^\mathbb{C}, K; H_{\pi , K}) \neq 0$ implies the infinitesimal character $\chi_\pi$ of $\pi$ is trivial that is, 
$\chi_\pi = \chi_0$, the infinitesimal character of the trivial representation of $G$.  
Hence there are only finitely irreducible unitary representations 
with non-zero $(\frak{g}^\mathbb{C}, K)$-cohomology. 
In fact, the irreducible unitary representations with non-zero relative Lie algebra 
cohomology have been classified in terms of the $\theta$-stable parabolic subalgebras 
$\frak{q} \subset \frak{g}^\mathbb{C}$ of $\frak{g}$. 
  
  A $\theta$-stable parabolic subalgebra of $\frak{g}$ is by definition, a parabolic subalgebra $\frak{q}$ of $\frak{g}^\mathbb{C}$ 
such that 
(a) $\theta(\frak{q}) = \frak{q}$, and (b) $\bar{\frak{q}} \cap \frak{q}= \frak{l}^\mathbb{C}$ is a Levi subalgebra of $\frak{q}$, 
where $\bar{\ }$ denotes the conjugation of $\frak{g}^\mathbb{C}$ with respect to $\frak{g}$. By (b), $\frak{l}^\mathbb{C}$ is the 
complexification of a real subalgebra $\frak{l}$ of $\frak{g}$. Also $\theta(\frak{l}) = \frak{l}$ and and $\frak{l}$ contains a maximal 
abelian subalgebra $\frak{t}$ of $\frak{k}$. Then $\frak{h} = \frak{z}_\frak{g} (\frak{t})$ is a $\theta$-stable Cartan 
subalgebra of $\frak{g}$, $\frak{h}^\mathbb{C}$ is a Cartan subalgebra of $\frak{g}^\mathbb{C}$ and $\frak{h}^\mathbb{C} \subset 
\frak{q}$. Let $\frak{u}_\frak{q}$ be the nilradical of $\frak{q}$ so that $\frak{q} = \frak{l}^\mathbb{C} \oplus \frak{u}_\frak{q}$. Then 
$\frak{u}_\frak{q}$ is $\theta$-stable and so $\frak{u}_\frak{q} = (\frak{u}_\frak{q} \cap \frak{k}^\mathbb{C}) \oplus (\frak{u}_\frak{q} 
\cap \frak{p}^\mathbb{C})$. 

   If $V$ is finite dimensional complex $L$-module, where $L$ is an abelian Lie algebra; we denote by $\Delta (V)$ ( or by $\Delta (V , L)$) , 
the set of all non-zero weights of $V$ and by $\delta (V)$ (or by $\delta (V , L)$), $1/2$ of the sum of elements in $\Delta (V)$ counted 
with their respective multiplicities. 

  Fix systems of positive roots $\Delta ^+ ((\frak{l} \cap \frak{k})^\mathbb{C} , \frak{t}^\mathbb{C})$ and 
$\Delta ^+ (\frak{l}^\mathbb{C} , \frak{h}^\mathbb{C})$, compatible with 
$\Delta ^+ ((\frak{l} \cap \frak{k})^\mathbb{C} , \frak{t}^\mathbb{C})$. Then $\Delta ^+ _{\frak{k}} = \Delta ^+ ((\frak{l} \cap \frak{k})^\mathbb{C} , 
\frak{t}^\mathbb{C}) \cup \Delta (\frak{u}_\frak{q} \cap \frak{k}^\mathbb{C} , \frak{t}^\mathbb{C})$ and 
$\Delta ^+  = \Delta ^+ (\frak{l}^\mathbb{C} , \frak{h}^\mathbb{C}) \cup \Delta (\frak{u}_\frak{q} , \frak{h}^\mathbb{C})$ are system 
of positive roots in $\Delta (\frak{k}^\mathbb{C}, \frak{t}^\mathbb{C})$ and 
$\Delta = \Delta (\frak{g}^\mathbb{C} , \frak{h}^\mathbb{C})$ respectively. 

  Now associated with a $\theta$-stable parabolic subalgebra $\frak{q}$, we have an irreducible unitary representation 
$\mathcal{R}^S _\frak{q} (\mathbb{C}) = A_\frak{q}$ of $G$ with trivial infinitesimal character, where $S = \textrm{dim}
(\frak{u}_\frak{q} \cap \frak{k}^\mathbb{C})$. The associated $(\frak{g}^\mathbb{C}, K)$-module $A_{\frak{q}, K}$ contains an 
irreducible $K$-submodule $V$ of highest weight (with respect to $\Delta ^+ _{\frak{k}}$) 
$2 \delta (\frak{u}_\frak{q} \cap \frak{p}^\mathbb{C}, \frak{t}^\mathbb{C}) = 
\sum\limits_{\alpha \in \Delta (\frak{u}_\frak{q} \cap \frak{p}^\mathbb{C} , \frak{t}^\mathbb{C})} \alpha $ and 
it occurs with multiplicity one in $A_{\frak{q}, K}$. Any other irreducible $K$-module that occurs in $A_{\frak{q}, K}$ has highest weight 
of the form $2 \delta (\frak{u}_\frak{q} \cap \frak{p}^\mathbb{C}, \frak{t}^\mathbb{C}) + 
\sum\limits_{\gamma \in \Delta (\frak{u}_\frak{q} \cap \frak{p}^\mathbb{C} , \frak{t}^\mathbb{C})} n_\gamma \gamma $,
with $n_\gamma$ a non-negative integer \cite[Th. 2.5]{voganz}.  

  If $\frak{q}$ is a $\theta$-stable parabolic subalgebra, then so is Ad$(k)(\frak{q})$ ($k \in K$); and
$A_\frak{q}$, $A_{\textrm{Ad}(k)(\frak{q})}$ are unitarily equivalent. So it is sufficient to consider $\theta$-stable parabolic 
 subalgebras of $\frak{g}$ which contain $\frak{t}$, and $\Delta^+_{\frak{k}}$ is contained in the corresponding system of positive roots $\Delta^+$. 
It is known that \cite[Prop. 4.5]{riba}, 
for two such parabolic subalgebras $\frak{q}$ and $\frak{q}'$, $A_\frak{q}$ is unitarily equivalent to $A_{\frak{q}'}$ if and only if 
$\frak{u}_\frak{q} \cap \frak{p}^\mathbb{C} = \frak{u}_{\frak{q}'} \cap \frak{p}^\mathbb{C}$.  Actually we have 
$A_\frak{q}$ is unitarily equivalent to $A_{\frak{q}'}$ if and only if  
$\delta (\frak{u}_\frak{q} \cap \frak{p}^\mathbb{C}, \frak{t}^\mathbb{C}) =  
\delta (\frak{u}_{\frak{q}'} \cap \frak{p}^\mathbb{C}, \frak{t}^\mathbb{C})$. The proof can be deduced from 
\cite[Lemma 4.6 and Lemma 4.8]{riba} just noting the fact that if $\frak{q}, \tilde{\frak{q}}$ are two $\theta$-stable parabolic subalgebras 
with $\frak{q} \subset \tilde{\frak{q}}$, then as they contain the same Borel subalgebra of $\frak{g}^\mathbb{C}$, we have 
$\frak{u}_\frak{q} \cap \frak{p}^\mathbb{C}= \frak{u}_{\tilde{\frak{q}}} \cap \frak{p}^\mathbb{C}$ if and only if 
$\delta (\frak{u}_\frak{q} \cap \frak{p}^\mathbb{C}, \frak{t}^\mathbb{C}) = 
\delta (\frak{u}_{\tilde{\frak{q}}} \cap \frak{p}^\mathbb{C}, \frak{t}^\mathbb{C})$. 

   If $\frak{q}$ is a $\theta$-stable parabolic subalgebra of $\frak{g}$, then the Levi subgroup $L = \{g \in G : \textrm{Ad}(g) (\frak{q}) = 
\frak{q} \}$ is a connected reductive Lie subgroup of $G$ with Lie algebra $\frak{l}$. As $\theta(\frak{l}) = \frak{l}$, $L \cap K$ is a maximal 
compact subgroup of $L$. One has 
\[ H^p (\frak{g}^\mathbb{C}, K; A_{\frak{q}, K}) \cong H^{p-R(\frak{q})} (\frak{l}^\mathbb{C}, L\cap K ; \mathbb{C}), \]
where $R(\frak{q}) := \textrm{dim}(\frak{u}_\frak{q} \cap \frak{p}^\mathbb{C})$. Let $Y_\frak{q}$ denote the compact dual of the 
Riemannian globally symmetric space $L/{L\cap K}$. Then $H^p (\frak{l}^\mathbb{C}, L\cap K ; \mathbb{C}) \cong 
H^p (Y_\frak{q} ; \mathbb{C})$. And hence 
\[  H^p (\frak{g}^\mathbb{C}, K; A_{\frak{q}, K}) \cong H^{p-R(\frak{q})} (Y_\frak{q} ; \mathbb{C}).\]
If $P(\frak{q}, t)$ denotes the Poincar\'{e} polynomial of $ H^* (\frak{g}^\mathbb{C}, K; A_{\frak{q}, K})$. Then by the above result, we 
have 
\[ P(\frak{q}, t) = t^{R(\frak{q})} P(Y_\frak{q} , t). \]

  If rank$(G) =$ rank$(K)$ and $\frak{q}$ is a $\theta$-stable Borel subalgebra that is, $\frak{q}$ is a Borel subalgebra of $\frak{g}^\mathbb{C}$ 
containing a Cartan subalgebra of $\frak{k}^\mathbb{C}$, then $A_\frak{q}$ is a 
discrete series representation of $G$ with trivial infinitesimal character. In this case, $R(\frak{q})= \frac{1}{2} \textrm{ dim}(G/K)$,  
$L$ is a maximal torus in $K$ and hence 
\[H^p (\frak{g}^\mathbb{C}, K; A_{\frak{q}, K}) =
\begin{cases}
0 & \textrm{if } p \neq R(\frak{q}), \\
\mathbb{C} & \textrm{if } p = R(\frak{q}). 
\end{cases}
\]
If we take $\frak{q} = \frak{g}^\mathbb{C}$, then $L=G$ and $A_\frak{q} = \mathbb{C}$, the trivial representation of $G$. 

  Conversely, if $\pi  \in \hat{G}$ with $H^*( \frak{g}^\mathbb{C}, K; H_{\pi , K}) \neq 0$, then $H_\pi $ is unitarily equivalent 
to $A_\frak{q}$ for some $\theta$-stable parabolic subalgebra $\frak{q}$ of $\frak{g}$ \cite[Th. 4.1]{voganz}. 

  The $(\frak{g}^\mathbb{C} , K)$-modules $A_{\frak{q} , K}$ were first constructed, in general, by Parthasarathy \cite{parthasarathy1}. 
Delorme \cite{delorme} and Enright \cite{enright} gave a construction of those for complex Lie groups. 
Vogan and Zuckerman \cite{voganz} gave a construction of 
the $(\frak{g}^\mathbb{C} , K)$-modules $A_{\frak{q} , K}$ via cohomological induction and Vogan \cite{vogan} proved that these are 
unitarizable. See \cite{vogan97} for a beautiful description of Matsushima isomorphism and the theory of 
$(\frak{g}^\mathbb{C} , K)$-modules $A_{\frak{q} , K}$.

\noindent
\subsection{Irreducible unitary representations with non-zero $(\frak{g}^\mathbb{C}, K)$-cohomology of a connected complex 
semisimple Lie group}\label{particular} 

  Now assume that $\frak{g}$ is a complex semisimple Lie algebra and $\theta$ is a Cartan involution on $\frak{g}^\mathbb{R}$. Let 
$\frak{g}^\mathbb{R} = \frak{u} \oplus i \frak{u}$ be the corresponding Cartan decomposition, for some compact real form $\frak{u}$ of 
$\frak{g}$. Let $G$ be a connected Lie group with Lie algebra $\frak{g}$ and $U$ be a Lie subgroup of $G$ corresponding to the 
subalgebra $\frak{u}$ of $\frak{g}^\mathbb{R}$. Then $U$ is a maximal compact subgroup of $G$. Recall that we shall identify $\frak{g}$ with the 
subalgebra $\{(X, X) + i(Y, -Y) : X, Y \in \frak{u} \}$ of $(\frak{g}^\mathbb{R})^\mathbb{C} \cong \frak{g} \times \frak{g}$ and via this 
identification the complex linear extension of $\theta$ (denoted by the same notation) on $\frak{g} \times \frak{g}$ is given by 
$(Z_1, Z_2) \mapsto (Z_2, Z_1)$, where $Z_1 , Z_2 \in \frak{g}$. Then $\frak{k} = \{ (Z, Z) : Z \in \frak{g} \}$ and 
$\frak{p} = \{ (Z, -Z) : Z \in \frak{g} \}$ are the eigenspaces of $\theta$ corresponding to the eigenvalues $1$ and $-1$ respectively. 

  A parabolic subalgebra of $(\frak{g}^\mathbb{R})^\mathbb{C} \cong \frak{g} \times \frak{g}$ is of the form 
$\frak{q}_1 \times \frak{q}_2$, for some parabolic subalgebras $\frak{q}_1 , \frak{q}_2$ of $\frak{g}$. Hence $\theta (\frak{q}_1 \times 
\frak{q}_2 ) = \frak{q}_1 \times \frak{q}_2$ if and only if $\frak{q}_1 = \frak{q}_2$. If $\frak{q} \times \frak{q}$ is $\theta$-stable, then 
$\frak{q}$ contains a $\theta$-stable Cartan subalgebra of $\frak{g}$ (see \S \ref{general}).  
Let $\frak{t}$ be a maximal abelian subalgebra of $\frak{u}$. Then $\frak{h} = \frak{t}^\mathbb{C}$ is $\theta$-stable 
Cartan subalgebra of $\frak{g}$. Let $\Delta = \Delta (\frak{g}, \frak{h})$. Since 
$\frak{h}$ is $\theta$-stable, define $\theta (\alpha) (H) = \overline{\alpha(\theta H)}$ for all $H \in \frak{h}$, where $\alpha \in \frak{h}^*$. 
Note that $\theta (\alpha) = - \alpha$ for all $\alpha \in \Delta$. So if $\frak{q}$ is a parabolic subalgebra of $\frak{g}$ containing the 
Cartan subalgebra $\frak{h}$, then $\frak{q} \cap \theta(\frak{q}) = \frak{l}$, the Levi factor of $\frak{q}$ relative to the 
Cartan subalgebra $\frak{h}$. Let $\bar{\ }$ denote the conjugation of $\frak{g} \times \frak{g}$ with respect to the real form 
$\frak{g} \cong \{(X, X) + i(Y, -Y) : X, Y \in \frak{u} \}$. The map $\bar{\ } : \frak{g} \times \frak{g} \longrightarrow \frak{g} \times 
\frak{g}$ is given by $(Z_1, Z_2) \mapsto (\theta(Z_2), \theta(Z_1))$. 
Hence $\overline{\frak{q} \times \frak{q}} = \theta(\frak{q}) \times \theta(\frak{q})$  
and so $\overline{(\frak{q} \times \frak{q})} \cap (\frak{q} \times \frak{q}) = 
(\theta(\frak{q}) \cap \frak{q}) \times (\theta(\frak{q}) \cap \frak{q}) = \frak{l} \times \frak{l}$, the Levi factor of the parabolic subalgebra 
$\frak{q} \times \frak{q}$ of $\frak{g} \times \frak{g}$ relative to the Cartan subalgebra $\frak{h} \times \frak{h}$. Consequently, we have 
a parabolic subalgebra of $\frak{g} \times \frak{g}$ is $\theta$-stable if and only if it is of the form $\frak{q} \times \frak{q}$, for some 
parabolic subalgebra $\frak{q}$ of $\frak{g}$ containing a $\theta$-stable Cartan subalgebra of $\frak{g}$. 

   Fix a maximal abelian subalgebra $\frak{t}$ of $\frak{u}$ and a system of positive roots $\Delta^+$ in $\Delta = 
\Delta (\frak{g}, \frak{h})$, where $\frak{h} = \frak{t}^\mathbb{C}$, a $\theta$-stable Cartan subalgebra of $\frak{g}$. By \S \ref{general}, 
it is sufficient to consider $\theta$-stable parabolic subalgebras of $\frak{g}$ which contain $\frak{t}$, and $\Delta^+$ is contained 
in the corresponding system of positive roots in $\Delta(\frak{g} \times \frak{g}, \frak{h} \times \frak{h})$. Now the 
$\theta$-stable parabolic subalgebras of $\frak{g} \times \frak{g}$,  which contain $\frak{t}$ and $\Delta^+$ is contained in the 
corresponding system of positive roots in $\Delta(\frak{g} \times \frak{g}, \frak{h} \times \frak{h})$, 
are of the form $\frak{q} \times \frak{q}$, where $\frak{q}$ is a parabolic subalgebra of $\frak{g}$ 
containing the Borel subalgebra $ \frak{b} = \frak{h} \oplus \sum\limits_{\alpha \in \Delta^+} \frak{g}_\alpha$. Let $\Phi $ be the set 
of simple roots in $\Delta^+$. 
The parabolic subalgebras of $\frak{g}$ containing the Borel subalgebra $\frak{b}$ are in one-one 
correspondence with $p(\Phi)$, the power set of $\Phi$. Namely, for $\Phi' \subset \Phi$, the parabolic subalgebra $\frak{q}_{\Phi'}$ 
corresponding to $\Phi'$ is given by 
\[\frak{q}_{\Phi'} = \frak{l}_{\Phi'} \oplus \frak{u}_{\Phi'} ,\]
where $\frak{l}_{\Phi'} = \frak{h} \oplus \sum\limits_{\substack{n_\psi (\alpha) =0 \\ \forall \psi \in \Phi'}} \frak{g}_\alpha$,  
$\frak{u}_{\Phi'} = \sum\limits_{\substack{n_\psi (\alpha) >0 \\ \textrm{for some } \psi \in \Phi'}} \frak{g}_\alpha$; and 
$\alpha =\sum\limits_{\psi \in \Phi}  n_\psi (\alpha) \psi \in \Delta$. So the $\theta$-stable parabolic subalgebras of 
$\frak{g} \times \frak{g}$,  which contain $\frak{t}$ and $\Delta^+$ is contained in the corresponding system of positive roots 
in $\Delta(\frak{g} \times \frak{g}, \frak{h} \times \frak{h})$, are in 
one-one correspondence with $p(\Phi)$. The one corresponding to $\Phi' \subset \Phi$ is given by 
$\frak{q}_{\Phi'} \times \frak{q}_{\Phi'}$, where $\frak{q}_{\Phi'}$ is given above. 

   Note that the root space decomposition of $\frak{g} \times \frak{g}$ with respect to the Cartan subalgebra $\frak{h} \times \frak{h}$ 
is given by 
\[ \frak{g} \times \frak{g} = \frak{h} \times \frak{h} \oplus \sum\limits_{\alpha \in \Delta} \frak{g}_{(\alpha , 0)} \oplus 
\frak{g}_{(0 , \alpha)}, \]
where $ \frak{g}_{(\alpha , 0)} = \{(Z, 0) : Z \in \frak{g}_\alpha \}, \frak{g}_{(0 , \alpha)} = \{(0, Z) : Z \in \frak{g}_\alpha \}$ for all 
$\alpha \in \Delta$. So for $\Phi' \subset \Phi$, if $\tilde{\frak{u}}_{\Phi'}$ denotes the nilradical of the $\theta$-stable parabolic 
subalgebra $\frak{q}_{\Phi'} \times \frak{q}_{\Phi'}$, then   
\[ \tilde{\frak{u}}_{\Phi'} = \sum\limits_{\substack{\alpha \in \Delta \\ n_\psi (\alpha) >0 \\ \textrm{for some } \psi \in \Phi'}} 
\frak{g}_{(\alpha , 0)} \oplus \frak{g}_{(0 , \alpha)}. \]
Again $\theta(\tilde{\frak{u}}_{\Phi'}) = \tilde{\frak{u}}_{\Phi'}$ implies $\tilde{\frak{u}}_{\Phi'} = (\tilde{\frak{u}}_{\Phi'}  
\cap \frak{k}) \oplus (\tilde{\frak{u}}_{\Phi'} \cap \frak{p})$. Hence $\tilde{\frak{u}}_{\Phi'} \cap \frak{p} =
\sum\limits_{\substack{\alpha \in \Delta \\ n_\psi (\alpha) >0 \\ \textrm{for some } \psi \in \Phi'}}\{ (Z, -Z) : Z \in \frak{g}_\alpha \}$, 
and so dim$((\tilde{\frak{u}}_{\Phi'} \cap \frak{p})=$ dim$(\frak{u}_{\Phi'})$. 

  The Levi subgroup $L = \{ g \in G : \textrm{Ad}(g)(\frak{q}_{\Phi'} \times \frak{q}_{\Phi'}) = 
\frak{q}_{\Phi'} \times \frak{q}_{\Phi'} \}$ is a connected reductive Lie subgroup 
of $G$ with Lie algebra $\frak{l}_{\Phi'}$. As $\theta(\frak{l}_{\Phi'}) = \frak{l}_{\Phi'}$, $\frak{l}_{\Phi'} \cap \frak{u}$ is compact 
real form of $\frak{l}_{\Phi'}$ and $L \cap U$ is a maximal compact subgroup of $L$. Also the centre of the reductive Lie algebra 
$\frak{l}_{\Phi'}$ is $|\Phi'|$-dimensional, where $|\Phi'|$ denotes the cardinality of the set $\Phi'$. Let $Y_{\Phi'}$ 
denote the compact dual of the Riemannian globally symmetric space $L /L \cap U$. Then $Y_{\Phi'} = L \cap U$, a connected compact Lie group. 
Hence 
\[ H^p (Y_{\Phi'} ; \mathbb{C}) = H^p ((\frak{l}_{\Phi'} \cap \frak{u})^\mathbb{C} ; \mathbb{C} ) = 
H^p (\frak{l}_{\Phi'} ; \mathbb{C}). \]
If $\frak{s}$ is a finite dimensional complex Lie algebra, we denote by $P(\frak{s}, t)$, the Poincar\'{e} polynomial of 
$H^* (\frak{s}; \mathbb{C})$. So 
\[ P(Y_{\Phi'} , t ) = (1 + t)^{|\Phi' |} P(\frak{l}_1 , t) P(\frak{l}_2, t) \cdots P(\frak{l}_k , t) , \textrm{ by Ku\"{n}neth formula}\  ;  \]
where $\frak{l}_1 , \frak{l}_2 , \ldots \frak{l}_k $ are the simple factors of the semisimple part $[\frak{l}_{\Phi'} , \frak{l}_{\Phi'} ]$ of 
$\frak{l}_{\Phi'} $. If $\frak{s}$ is a finite dimensional complex simple Lie algebra, the Poincar\'{e} polynomial $P(\frak{s}, t)$ is given by 
\[ P(\frak{s}, t) = (1 + t^{2d_1 + 1} )(1 + t ^{2d_2 +1 }) \cdots (1 + t^{2d_l + 1} )\  \cdots \cdots \cdots \cdots \cdots  (*), \]
where $l = \textrm{rank}(\frak{s})$ and $d_1 , d_2 , \ldots d_l$ are the exponents of $\frak{s}$ (see the table given below). 
If $A_{\Phi'}$ is the irreducible unitary representation of $G$ associated with the $\theta$-stable parabolic subalgebra $\frak{q}_{\Phi'} 
\times \frak{q}_{\Phi'} $, then the Poincar\'{e} polynomial of $H^* (\frak{g} \times \frak{g} , U; A_{\Phi' , U})$ is given by 
\[ P( \Phi' , t ) = t^{\textrm{dim}(\frak{u}_{\Phi'})}(1 + t)^{|\Phi'|} P(\frak{l}_1 , t) P(\frak{l}_2, t) \cdots P(\frak{l}_k , t), \]
where each $ P(\frak{l}_i , t)$ is given by the formula $(*)$. Also for $\Phi',\ \Phi'' \subset \Phi$, $A_{\Phi'}$ is unitarily equivalent to 
$A_{\Phi''}$ if and only if $\tilde{\frak{u}}_{\Phi'} \cap \frak{p} =  \tilde{\frak{u}}_{\Phi''} \cap \frak{p}$ if and only if 
$\frak{u}_{\Phi'} = \frak{u}_{\Phi''}$ if and only if $\Phi' = \Phi''$. 

  The exponents of complex simple Lie algebras are given below : 

\begin{center} 
\begin{table} 
\caption{Table for the exponents of a complex simple Lie algebra $\frak{s}$ of rank $l$}\label{exponents}
\begin{tabular}{|c|c|} 
\toprule 
\addlinespace[5pt] 
$\frak{s}$ & $d_1,d_2,\ldots ,d_l$ \\ 
\midrule 
\addlinespace[5pt] 
$\frak{a}_l$ & $1,2,\ldots ,l$ \\ 
\hline 
\addlinespace[5pt] 
$\frak{b}_l$ & $1,3,\ldots ,2l-1$ \\ 
\hline 
\addlinespace[5pt] 
$\frak{c}_l$ & $1,3,\ldots , 2l-1$ \\ 
\hline 
\addlinespace[5pt] 
$\frak{\delta}_l$ & $1,3,\ldots ,2l-3,l-1$ \\ 
\hline 
\addlinespace[5pt] 
$\frak{e}_6$ & $1,4,5,7,8,11$ \\ 
\hline 
\addlinespace[5pt] 
$\frak{e}_7$ & $1,5,7,9,11,13,17$ \\ 
\hline 
\addlinespace[5pt] 
$\frak{e}_8$ & $1,7,11,13,17,19,23,29$ \\ 
\hline 
\addlinespace[5pt] 
$\frak{f}_4$ & $1,5,7,11$ \\ 
\hline 
\addlinespace[5pt] 
$\frak{g}_2$ & $1,5$ \\ 
\bottomrule 
\end{tabular} 
\end{table} 
\end{center} 

\noindent 
\subsection{Proof of Theorem \ref{th2}}\label{pfth2}  

  Let $G$ be a connected complex simple Lie group with Lie$(G) = \frak{g}$. Let $\frak{u}$ be a compact real form of $\frak{g}$, 
$\theta$ be the corresponding Cartan involution of $\frak{g}^\mathbb{R}$, and $U$ be the connected Lie subgroup of $G$ with 
Lie algebra $\frak{u}$. Let $\frak{h}$ be a $\theta$-stable Cartan subalgebra of $\frak{g}$. 
Choose a system of positive roots $\Delta^+$ in the set of all non-zero roots $\Delta = \Delta(\frak{g}, \frak{h})$. 
Let $\Phi$ be the set of all simple roots in $\Delta^+$. We have seen that up to unitary equivalence, 
the irreducible unitary representations of $G$ with non-zero $(\frak{g} \times \frak{g} , U)$-cohomology 
are in one-one correspondence with $p(\Phi)$, the power set of $\Phi$. The one corresponding to $\Phi' \subset \Phi$ is 
$A_{\Phi'}$ which is the irreducible unitary representation of $G$ associated with the $\theta$-stable parabolic subalgebra 
$\frak{q}_{\Phi'} \times \frak{q}_{\Phi'}$ of $\frak{g}$, where 
$\frak{q}_{\Phi'} = \frak{l}_{\Phi'} \oplus \frak{u}_{\Phi'}$, 
$\frak{l}_{\Phi'} = \frak{h} \oplus \sum\limits_{\substack{n_\psi (\alpha) =0 \\ \forall \psi \in \Phi'}} \frak{g}_\alpha$,  
$\frak{u}_{\Phi'} = \sum\limits_{\substack{n_\psi (\alpha) >0 \\ \textrm{for some } \psi \in \Phi'}} \frak{g}_\alpha$, and 
$\alpha =\sum\limits_{\psi \in \Phi}  n_\psi (\alpha) \psi \in \Delta$. 
The Poincar\'{e} polynomial of $H^* (\frak{g} \times \frak{g} , U; A_{\Phi' , U})$ is given by 
$P( \Phi' , t ) = t^{\textrm{dim}(\frak{u}_{\Phi'})}(1 + t)^{|\Phi'|} P(\frak{l}_1 , t) P(\frak{l}_2, t) \cdots P(\frak{l}_k , t)$, 
where $\frak{l}_1 , \frak{l}_2 , \ldots \frak{l}_k $ are the simple factors of the semisimple part $[\frak{l}_{\Phi'} , \frak{l}_{\Phi'} ]$ of 
$\frak{l}_{\Phi'} $. We begin with the following results regarding $P( \Phi' , t )$. 

\begin{lemma}\label{lemma1} 
If $\bar{\nu}$ is an automorphism of the Dynkin diagram of $\frak{g}$, then $P( \Phi' , t ) = P( \bar{\nu}(\Phi') , t )$ for any 
$\Phi' \subset \Phi$. 
\end{lemma}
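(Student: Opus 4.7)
The plan is to realize both sides of the claimed equality by transporting the data defining $P(\Phi', t)$ through the Lie algebra automorphism of $\frak{g}$ induced by $\bar{\nu}$, and then reading off that every factor in the product formula is preserved.

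First I would invoke the construction from \S\ref{victor}: associated to the Dynkin diagram automorphism $\bar{\nu}$ of order $k$ there is a unique Lie algebra automorphism $\nu \in \textrm{Aut}(\frak{g})$ with $\nu(H_\phi^*) = H_{\bar{\nu}(\phi)}^*$, $\nu(E_{\pm\phi}) = E_{\pm\bar{\nu}(\phi)}$ for $\phi \in \Phi$. This $\nu$ preserves $\frak{h}$, satisfies $\nu(\alpha)(H) = \alpha(\nu^{-1}H)$ on $\frak{h}^*$, extends $\bar{\nu}$ on $\Phi$, permutes $\Delta$ and $\Delta^+$, and sends $\frak{g}_\alpha$ onto $\frak{g}_{\bar{\nu}(\alpha)}$.

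Next I would verify that $\nu$ carries the parabolic data indexed by $\Phi'$ onto that indexed by $\bar{\nu}(\Phi')$. For $\alpha = \sum_{\psi \in \Phi} n_\psi(\alpha)\psi$, the expansion $\bar{\nu}(\alpha) = \sum_\psi n_\psi(\alpha)\bar{\nu}(\psi)$ gives $n_{\bar{\nu}(\psi)}(\bar{\nu}(\alpha)) = n_\psi(\alpha)$. Hence $n_\psi(\alpha) = 0$ for all $\psi \in \Phi'$ if and only if $n_{\psi'}(\bar{\nu}(\alpha)) = 0$ for all $\psi' \in \bar{\nu}(\Phi')$, and likewise $n_\psi(\alpha) > 0$ for some $\psi \in \Phi'$ iff $n_{\psi'}(\bar{\nu}(\alpha)) > 0$ for some $\psi' \in \bar{\nu}(\Phi')$. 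Combined with $\nu(\frak{g}_\alpha) = \frak{g}_{\bar{\nu}(\alpha)}$ and $\nu(\frak{h}) = \frak{h}$, this yields $\nu(\frak{l}_{\Phi'}) = \frak{l}_{\bar{\nu}(\Phi')}$ and $\nu(\frak{u}_{\Phi'}) = \frak{u}_{\bar{\nu}(\Phi')}$; in particular $\frak{l}_{\Phi'}$ and $\frak{l}_{\bar{\nu}(\Phi')}$ are isomorphic complex reductive Lie algebras, and $\dim\frak{u}_{\Phi'} = \dim\frak{u}_{\bar{\nu}(\Phi')}$.

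Finally I would read off the equality of Poincar\'e polynomials. Since $\bar{\nu}$ is a bijection of $\Phi$, $|\Phi'| = |\bar{\nu}(\Phi')|$. The isomorphism $\nu : \frak{l}_{\Phi'} \to \frak{l}_{\bar{\nu}(\Phi')}$ restricts to an isomorphism of the derived algebras and hence sets up a bijection between the simple factors $\frak{l}_1, \ldots, \frak{l}_k$ of $[\frak{l}_{\Phi'}, \frak{l}_{\Phi'}]$ and the simple factors $\frak{l}_1', \ldots, \frak{l}_k'$ of $[\frak{l}_{\bar{\nu}(\Phi')}, \frak{l}_{\bar{\nu}(\Phi')}]$ with $\frak{l}_i \cong \frak{l}_{\pi(i)}'$ for some permutation $\pi$. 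Since the Poincar\'e polynomial of a complex simple Lie algebra depends only on its isomorphism class, $P(\frak{l}_i, t) = P(\frak{l}_{\pi(i)}', t)$, so $\prod_i P(\frak{l}_i, t) = \prod_i P(\frak{l}_i', t)$. Substituting into
\[
P(\Phi', t) = t^{\dim \frak{u}_{\Phi'}}(1+t)^{|\Phi'|} \prod_{i=1}^k P(\frak{l}_i, t)
\]
gives $P(\Phi', t) = P(\bar{\nu}(\Phi'), t)$. There is no real obstacle here; the only point requiring care is the bookkeeping with the coefficients $n_\psi(\alpha)$ under $\bar{\nu}$, which as shown above is entirely formal.
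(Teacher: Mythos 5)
Your proof is correct and follows essentially the same route as the paper: verify that $\bar{\nu}$ preserves $\dim\frak{u}_{\Phi'}$, $|\Phi'|$, and the isomorphism type of $[\frak{l}_{\Phi'},\frak{l}_{\Phi'}]$, then substitute into the product formula for $P(\Phi',t)$. The only difference is cosmetic — you lift $\bar{\nu}$ to the Lie algebra automorphism $\nu$ and transport subalgebras, whereas the paper argues directly at the level of root sets and Dynkin subdiagrams; the bookkeeping $n_{\bar{\nu}(\psi)}(\bar{\nu}(\alpha)) = n_\psi(\alpha)$ is the same in both.
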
 

\begin{proof} 
Note that $\{ \alpha \in \Delta : n_\psi (\alpha) > 0 \textrm{ for some } \psi \in \bar{\nu}(\Phi' )\} = 
\bar{\nu}(\{\alpha \in \Delta : n_\psi (\alpha) > 0 \textrm{ for some } \psi \in \Phi' \})$, and so 
dim$(\frak{u}_{\Phi'}) =$ dim$(\frak{u}_{\bar{\nu}(\Phi')})$. Also the Dynkin diagram of $[\frak{l}_{\Phi'} , \frak{l}_{\Phi'} ]$ 
is the subdiagram of the Dynkin diagram of $\frak{g}$ consisting of the vertices in $\Phi \setminus \Phi'$. Now 
$\Phi \setminus \bar{\nu}(\Phi') = \bar{\nu}(\Phi \setminus \Phi')$ and $\bar{\nu}$ maps the subdiagram consisting of the vertices in 
$\Phi \setminus \Phi'$ onto the subdiagram consisting of the vertices in $\bar{\nu}(\Phi \setminus \Phi')$. 
So $[\frak{l}_{\Phi'} , \frak{l}_{\Phi'} ] \cong [\frak{l}_{\bar{\nu}(\Phi'}), \frak{l}_{\bar{\nu}(\Phi')} ]$. Hence the proof is complete.  
\end{proof} 

\begin{lemma}\label{lemma2} 
If $\Phi',\ \Phi'' \subset \Phi$, then the degree of $P(\Phi' , t)$ - the degree of $P(\Phi'',t) =$ 
dim$(\frak{u}_{\Phi''}) - \textrm{dim}(\frak{u}_{\Phi'})$. 
\end{lemma}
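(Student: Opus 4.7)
The plan is to reduce both sides to a single explicit expression in terms of $\dim(\frak{g})$ and $\dim(\frak{u}_{\Phi'})$. First I would read off the degree of $P(\Phi', t)$ directly from the product formula
\[ P(\Phi', t) = t^{\dim(\frak{u}_{\Phi'})} (1+t)^{|\Phi'|} P(\frak{l}_1, t) \cdots P(\frak{l}_k, t), \]
obtaining
\[ \deg P(\Phi', t) = \dim(\frak{u}_{\Phi'}) + |\Phi'| + \sum_{i=1}^{k} \deg P(\frak{l}_i, t). \]
So the whole computation hinges on identifying $\deg P(\frak{l}_i, t)$ with $\dim(\frak{l}_i)$. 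This follows from the formula $P(\frak{s}, t) = \prod_{j=1}^{l}(1+t^{2d_j+1})$ for a complex simple $\frak{s}$ of rank $l$ with exponents $d_1, \ldots, d_l$: its degree is $\sum_{j=1}^{l}(2d_j+1) = 2 \sum_j d_j + l$, and by the classical identity $\sum_j d_j = |\Delta^+(\frak{s})|$ this equals $2|\Delta^+(\frak{s})| + l = \dim(\frak{s})$ (alternatively, this is just the statement that the top cohomology of the associated compact simply connected group sits in degree equal to the dimension of the group, which is also the content of Poincaré duality for compact Lie groups).

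Next I would assemble the pieces. The semisimple part $[\frak{l}_{\Phi'}, \frak{l}_{\Phi'}]$ is a direct sum of the simple factors $\frak{l}_1, \ldots, \frak{l}_k$, whose Dynkin diagram is the subdiagram of $\Phi$ obtained by deleting $\Phi'$; consequently $\sum_i \dim(\frak{l}_i) = \dim([\frak{l}_{\Phi'}, \frak{l}_{\Phi'}])$. Since the centre of the reductive Levi $\frak{l}_{\Phi'}$ has dimension exactly $|\Phi'|$ (this was recorded in \S\ref{particular}), we obtain
\[ |\Phi'| + \sum_{i=1}^{k} \deg P(\frak{l}_i, t) = |\Phi'| + \dim([\frak{l}_{\Phi'}, \frak{l}_{\Phi'}]) = \dim(\frak{l}_{\Phi'}). \]
Combining with the preceding display yields $\deg P(\Phi', t) = \dim(\frak{u}_{\Phi'}) + \dim(\frak{l}_{\Phi'})$.

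Finally I would invoke the root space decomposition $\frak{g} = \frak{l}_{\Phi'} \oplus \frak{u}_{\Phi'} \oplus \frak{u}_{\Phi'}^{-}$, where $\frak{u}_{\Phi'}^{-}$ is the opposite nilradical, which has the same dimension as $\frak{u}_{\Phi'}$. This gives $\dim(\frak{l}_{\Phi'}) = \dim(\frak{g}) - 2\dim(\frak{u}_{\Phi'})$, hence
\[ \deg P(\Phi', t) = \dim(\frak{g}) - \dim(\frak{u}_{\Phi'}). \]
Subtracting the analogous identity for $\Phi''$ immediately yields the claim. No step is really an obstacle here; the only ingredient worth double-checking is the identity $\sum d_j = |\Delta^+|$ for the exponents of a simple Lie algebra (or, equivalently, the table of exponents in Table \ref{exponents} against the known dimensions of the simple Lie algebras), which is what tethers the combinatorial degree to the Lie-theoretic dimension.
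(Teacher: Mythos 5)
Your proposal is correct and takes essentially the same approach as the paper: both arguments reduce to the identity $\deg P(\Phi', t) = \dim(\frak{g}) - \dim(\frak{u}_{\Phi'})$ by combining $\deg P(\Phi', t) = \dim(\frak{u}_{\Phi'}) + \dim(\frak{l}_{\Phi'})$ with $\dim(\frak{l}_{\Phi'}) = \dim(\frak{g}) - 2\dim(\frak{u}_{\Phi'})$. The only cosmetic difference is that the paper invokes $\deg P(Y_{\Phi'}, t) = \dim(Y_{\Phi'})$ directly (Poincar\'e duality for the compact group $Y_{\Phi'}$), whereas you re-derive this from the product formula and the exponent identity $\sum_j d_j = |\Delta^+(\frak{s})|$, which you correctly flag as the one fact worth double-checking.
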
 

\begin{proof} 
Recall that $P( \Phi' , t ) = t^{\textrm{dim}(\frak{u}_{\Phi'})}P(Y_{\Phi'} , t )$, where $Y_{\Phi'}$ is the connected Lie subgroup of $U$ 
with Lie algebra $\frak{l}_{\Phi'} \cap \frak{u}$, which is a compact real form of $\frak{l}_{\Phi'}$. Hence the degree 
of $P(Y_{\Phi'} , t )$ is the dimension of $Y_{\Phi'}$, which is equal to dim$(\frak{l}_{\Phi'}) = \textrm{dim}(\frak{g}) - 
2\textrm{dim}(\frak{u}_{\Phi'})$. Thus the degree of $P( \Phi' , t ) = \textrm{dim}(\frak{g}) - \textrm{dim}(\frak{u}_{\Phi'})$. 
Similarly the degree of $P( \Phi'' , t ) = \textrm{dim}(\frak{g}) - \textrm{dim}(\frak{u}_{\Phi''})$. So 
the degree of $P(\Phi' , t)$ - the degree of $P(\Phi'',t) = \textrm{dim}(\frak{g}) - \textrm{dim}(\frak{u}_{\Phi'}) - 
\textrm{dim}(\frak{g}) + \textrm{dim}(\frak{u}_{\Phi''}) = \textrm{dim}(\frak{u}_{\Phi''}) - \textrm{dim}(\frak{u}_{\Phi'})$. 
\end{proof} 

\begin{remark} 
{\em 
If $\Phi',\ \Phi'' \subset \Phi$ with $\Phi' \subset \Phi''$, then 
dim$(\frak{u}_{\Phi'}) \le $ dim$(\frak{u}_{\Phi''})$. Thus 
dim$(\frak{u}_{\Phi'}) \le $ dim$(\frak{u}_{\Phi''}) <$ the degree of $P(\Phi'' , t) \le $ 
the degree of $P(\Phi' , t)$. In particular,  
dim$(\frak{u}_{\{\alpha \}}) \le $ dim$(\frak{u}_{\Phi}) <$ the degree of $P(\Phi , t) \le $ the degree of 
$P(\{\alpha\}, t)$, for any $\alpha \in \Phi$. 
} 
\end{remark} 

\begin{lemma}\label{lemma3} 
If $f(t) = (1 +t)(1+t^3)(1+t^5)\cdots (1+t^{2l+1})\ (l \in \mathbb{N})$, then the coefficients of $t^2$ and 
$t^{(l+1)^2 - 2}$ in $f(t)$ are zero and the coefficients of $t^n \ (0 \le n \le (l+1)^2,\ n \neq 2,\ (l+1)^2 - 2)$ 
in $f(t)$ are non-zero. 
\end{lemma}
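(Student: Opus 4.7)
The polynomial $f(t)=\prod_{k=1}^{l+1}(1+t^{2k-1})$ is the generating function for the number of representations of $n$ as a sum of distinct elements of $S_l=\{1,3,5,\ldots,2l+1\}$; denote this count by $c^{(l)}_n$, so $f(t)=\sum_n c^{(l)}_n t^n$. The plan is to (i) reduce the problem to one "side" by a palindromy argument, (ii) check by inspection that $n=2$ is not representable, and (iii) prove that all remaining values in $[0,(l+1)^2]$ are representable by induction on $l$ using the recursion $f_l(t)=f_{l-1}(t)(1+t^{2l+1})$, i.e.\ $c^{(l)}_n=c^{(l-1)}_n+c^{(l-1)}_{n-(2l+1)}$.

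First I would observe the self-reciprocity $t^{(l+1)^2}f(1/t)=f(t)$, which gives $c^{(l)}_n=c^{(l)}_{(l+1)^2-n}$ (combinatorially: complement the chosen subset of $S_l$). This immediately reduces the two "zero" claims to one and the "non-zero" statement to the range $0\le n\le (l+1)^2/2$. For the zero part, any sum of distinct positive odd integers equal to $2$ would require using only $1$ (the sole odd integer not exceeding $2$) at least twice, which is impossible; hence $c^{(l)}_2=0$, and by symmetry $c^{(l)}_{(l+1)^2-2}=0$.

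For the positive part I would induct on $l$, with base $l=1$: $f_1(t)=1+t+t^3+t^4$, which is exactly the claim. For the inductive step, assume $c^{(l-1)}_m>0$ for every $m\in[0,l^2]\setminus\{2,l^2-2\}$ and $c^{(l-1)}_2=c^{(l-1)}_{l^2-2}=0$. Given $n\in[0,(l+1)^2]\setminus\{2,(l+1)^2-2\}$, split into two ranges. If $0\le n\le l^2$ and $n\ne l^2-2$, then $c^{(l)}_n\ge c^{(l-1)}_n>0$ directly from the hypothesis. If $l^2<n\le (l+1)^2$, then $c^{(l-1)}_n=0$ and the set $m:=n-(2l+1)$ lies in $[(l-1)^2-1,l^2]$; one checks that $m=2$ forces $n=(l+1)^2-2$ (excluded), so $c^{(l)}_n=c^{(l-1)}_m>0$ by the hypothesis, provided also $m\ne l^2-2$, which is exactly the exclusion $n\ne (l+1)^2-2$.

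The only delicate point, and the step I expect to be the main obstacle, is the boundary value $n=l^2-2$ (which appears only for $l\ge 3$, since for $l=2$ it coincides with the excluded $n=2$). Here $c^{(l-1)}_n=0$, so one must extract positivity from the second summand $c^{(l-1)}_{n-(2l+1)}=c^{(l-1)}_{l^2-2l-3}=c^{(l-1)}_{(l-3)(l+1)}$. A short arithmetic verification shows that for $l\ge 3$ the index $(l-3)(l+1)$ lies in $[0,l^2]$ and avoids the two forbidden values $2$ and $l^2-2$, so the inductive hypothesis applies and yields $c^{(l)}_{l^2-2}>0$. Combining with the symmetry, this completes the induction and proves the lemma.
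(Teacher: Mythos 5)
Your proof is correct and uses the same inductive scheme as the paper: induction on $l$ together with the palindromy $c^{(l)}_n=c^{(l)}_{(l+1)^2-n}$ and the factorization $f_l=f_{l-1}(1+t^{2l+1})$; you run the inductive step through the recursion $c^{(l)}_n=c^{(l-1)}_n+c^{(l-1)}_{n-(2l+1)}$ where the paper instead writes the small complementary values $(l+1)^2-n$ as explicit sums of distinct odd integers (e.g.\ $2l+3=1+3+(2l-1)$), but this is equivalent bookkeeping, and both treat $n=l^2-2$ as the one index needing special attention. One tiny imprecision to smooth: ``$m=2$ forces $n=(l+1)^2-2$'' is literally true only when $l=2$; for $l\ge 3$, once $n>l^2$ one has $m=n-(2l+1)\ge (l-1)^2-1\ge 3$, so $m=2$ simply cannot arise---your conclusion is unaffected either way.
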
 

\begin{proof} 
We shall prove this by induction on $l$. For $l=1$, the result is obviously true. Assume that $l >1$ and 
the result is true for $l-1$. So the coefficients of $t^2,\ t^{l^2 - 2}$ in 
$(1 +t)(1+t^3)(1+t^5)\cdots (1+t^{2l-1})$ are zero and the coefficients of $t^n 
\ (0 \le n \le l^2,\ n \neq 2,\ l^2 - 2)$ are non-zero. Now the degree of $f(t) = (l+1)^2$, and so the coefficient of $t^n$ is 
non-zero iff the coefficient of $t^{(l+1)^2 - n}$ is non-zero. 
Since $(l+1)^2 -(l^2 -2) = 2l+3 = 1 + 3 + (2l-1),\ (l+1)^2-(l^2 + 2l) = 1$, 
and $(l+1)^2 - (l^2 + 2m-1) = 2l-2m +2 = 1 + (2(l-m)+1),\ (l+1)^2 - (l^2 + 2m) = 2(l-m) +1$ for all $1 \le m \le l-1$, we have 
the coefficients of $t^{(l+1)^2 -(l^2 -2)},\ t^{(l+1)^2 -(l^2 + 2l)},\ t^{(l+1)^2 - (l^2 + 2m-1)},\ t^{(l+1)^2 - (l^2 + 2m)}$ in 
$f(t)$ are non-zero for all $1 \le m \le l-1$, and thus the coefficients of $t^{(l^2 -2)},\ t^{(l^2 + 2l)},\ t^{(l^2 + 2m-1)},\ 
t^{(l^2 + 2m)}$ in $f(t)$ are non-zero for all $1 \le m \le l-1$. Clearly the coefficient of $t^2$ and 
so the coefficient of $t^{(l+1)^2 - 2}$ are zero. Hence the result. 
\end{proof} 

  Now we shall return to the proof of Theorem \ref{th2}. Note that $X := G/U$ is a Riemannian globally 
symmetric space of type IV, and 
\[H^k (\Gamma \backslash X; \mathbb{C}) \cong  \bigoplus_{\Phi' \subset \Phi} m_{\Phi'} 
H^k (\frak{g}\times \frak{g}, U; A_{\Phi' , U}) \] 
for all $k$ and for any uniform lattice $\Gamma$ of $G$, where $m_{\Phi'}$ is the multiplicity of $A_{\Phi'}$ in 
$L^2(\Gamma \backslash G)$. For the empty subset $\phi,\  m_\phi = 1$. Now by Theorem \ref{th1}, for each 
$i = 1,2,$ or $3$, there exists $\Gamma \in \mathcal{L}_i (G)$ such that 
$H^k (\Gamma \backslash X ; \mathbb{C})$ contains a non-zero cohomology class which has no 
$H^k (\frak{g}\times \frak{g}, U; A_{\phi , U})$-component, for some $k$ (which depends only on $\frak{g}$ if 
$\Gamma \in \mathcal{L}_1 (G)$ or $\mathcal{L}_2 (G)$, and it depends on $\frak{g}$ and $\Gamma$ if 
$\Gamma \in \mathcal{L}_3 (G)$) given as 
dim$(X(\bar{\sigma}))$ and dim$(X(\bar{\sigma}\bar{\theta}))$ in the Table \ref{resulttable}. 
Now we shall determine possible $\Phi' \subset \Phi$ such that the non-zero cohomology class in 
$H^k (\Gamma \backslash X ; \mathbb{C})$ 
has a $H^k (\frak{g}\times \frak{g}, U; A_{\Phi', U})$-component, via case by case consideration. 
Note that $\textrm{dim}(X(\bar{\sigma})) + \textrm{dim}(X(\bar{\sigma}\bar{\theta})) = 
\textrm{dim}_\mathbb{C}(\frak{g}) = m$(say). Let $q = \textrm{min}\{\textrm{dim}(X(\bar{\sigma})),\ 
\textrm{dim}(X(\bar{\sigma}\bar{\theta}))\}$. Then for $\Phi' \subset \Phi$, 
the degree of $P(\Phi' , t) - (m-q) = \textrm{dim}(\frak{u}_{\Phi'}) + \textrm{dim}(\frak{l}_{\Phi'}) - m +q = 
q -  \textrm{dim}(\frak{u}_{\Phi'})$. Thus dim$(\frak{u}_{\Phi'}) \le q \le$ the degree of $P(\Phi' , t)$ if and only if 
dim$(\frak{u}_{\Phi'}) \le m-q \le$ the degree of $P(\Phi' , t)$, and in this case 
$q -  \textrm{dim}(\frak{u}_{\Phi'}) =$ the degree of $P(\Phi' , t) - (m-q)$. So the coefficient of $t^q$ 
in $P(\Phi' , t)$ is non-zero if and only if the coefficient of $t^{m-q}$ in $P(\Phi' , t)$ is non-zero. Hence it is 
sufficient to determine possible $\Phi' \subset \Phi$ such that the non-zero cohomology class in 
$H^k (\Gamma \backslash X ; \mathbb{C})$ (where $k = \textrm{min}\{\textrm{dim}(X(\bar{\sigma})),\ 
\textrm{dim}(X(\bar{\sigma}\bar{\theta}))\}$)  
has a $H^k (\frak{g}\times \frak{g}, U; A_{\Phi', U})$-component. 

1. $\frak{g} = \frak{a}_{n-1}$, $n > 1$. Then $k = 2p(n-p),\ p^2 + (n-p)^2 -1,\ \frac{(n-2)(n+1)}{2},\ 
\frac{n(n+1)}{2}\  (1\le p \le \frac{n}{2})$, if $n\in 4\mathbb{Z}$; 
$k = 2p(n-p),\ p^2 + (n-p)^2 -1,\ \frac{(n-2)(n+1)}{2},\ 
\frac{n(n+1)}{2}\  (1\le p < \frac{n}{2})$, if $n\in 2 + 4\mathbb{Z},\ n >2$; and 
$k = 2p(n-p),\ p^2 + (n-p)^2 -1,\ \frac{n(n-1)}{2},\ \frac{(n-1)(n+2)}{2},\ (1\le p \le \frac{n-1}{2})$, if 
$n\in 1 +2\mathbb{Z}$. 

\begin{center}
\begin{tikzpicture}

\draw (0,0) circle [radius = 0.1];
\draw (1,0) circle [radius = 0.1]; 
\draw (2.5,0) circle [radius = 0.1]; 
\draw (3.5,0) circle [radius = 0.1]; 
\node [below] at (0.05,-0.05) {$\psi_1$}; 
\node [below] at (1.05,-0.05) {$\psi_2$}; 
\node [below] at (2.50,-0.05) {$\psi_{n-2}$}; 
\node [below] at (3.80,-0.05) {$\psi_{n-1}$}; 
\draw (0.1,0) -- (0.9,0); 
\draw (1.1,0) -- (1.5,0); 
\draw [dotted] (1.5,0) -- (2,0); 
\draw (2,0) -- (2.4,0); 
\draw (2.6,0) -- (3.4,0); 
\node [left] at (-0.5,0) {$\frak{a}_{n-1} :$};  
\node [left] at (-0.5,-0.6) {$(n > 1)$};  

\end{tikzpicture} 
\end{center} 

  Here $\Phi = \{\psi_1, \psi_2, \ldots , \psi_{n-1} \}$, and dim$(\frak{u}_{\{\psi_j\}}) = j(n-j)$ 
for all $1 \le j \le n-1$.  \\ 
$P(\{\psi_1\}, t) = t^{n-1}(1+t)(1 + t^3)(1+t^5) \cdots (1+t^{2n - 3}) = P(\{\psi_{n-1}\}, t)$, and \\ 
$P(\{\psi_j \}, t) = t^{j(n-j)}(1+t)(1+t^3)(1+t^5) \cdots (1+t^{2j-1})(1+t^3)(1+t^5) \cdots (1 +t^{2n-2j-1})$ for all 
$2 \le p \le n-2$. \\  
Also $P(\Phi, t) = t^{\frac{n(n-1)}{2}}(1+t)^{n-1}$. \\ 
Now dim$(\frak{u}_{\{\psi_j\}})-$ dim$(\frak{u}_{\{\psi_{j-1}\}}) = j(n-j) - (j-1)(n-j+1) = -j + n-j+1 = n-2j + 1$. 
Thus dim$(\frak{u}_{\{\psi_j\}})-$ dim$(\frak{u}_{\{\psi_{j-1}\}}) 
>0$ for all $1 < j < \frac{n+1}{2}$. That is,  \\ 
dim$(\frak{u}_{\{\psi_1\}})<$  dim$(\frak{u}_{\{\psi_2\}})< \cdots <$  dim$(\frak{u}_{\{\psi_c\}})$, \\ 
where $c = \frac{n-1}{2}$, if $n$ is odd, and $c = \frac{n}{2}$, if $n$ is even. \\ 
Clearly dim$(\frak{u}_{\{\psi_{n-j}\}}) =$dim$(\frak{u}_{\{\psi_{j}\}})$ for all $1 \le j \le c$. 

  First consider $k = 2(n-1)$. Note that $2(n-1) < 3(n-3) =$ dim$(\frak{u}_{\{\psi_3\}})$ iff $n > 7$. 
So if $n > 7,\ 2(n-1) <$ dim$(\frak{u}_{\Phi'})$ for any $\Phi' \subset \Phi$ with 
$\psi_j \in \Phi'$ for some $3 \le j \le n-3$, since dim$(\frak{u}_{\Phi'}) \ge $ dim$(\frak{u}_{\{\alpha\}})$ for any 
$\alpha \in \Phi'$. 
So the coefficient of $t^{2(n-1)}$ in $P(\Phi', t)$ is zero for any $\Phi' \subset \Phi$ with 
$\psi_j \in \Phi'$ for some $3 \le j \le n-3$, if $n > 7$. Also the coefficient of $t^{2(n-1)}$ in 
$P(\{\psi_1\},t)$ is non-zero iff $n \neq 3$ (by Lemma \ref{lemma3}). 
Since $2(n-1) = 2(n-2) +2$, the coefficient of $t^{2(n-1)}$ in $P(\{\psi_2 \},t)$ is always 
zero (by Lemma \ref{lemma3}).
Now dim$(\frak{u}_{\{\psi_1, \psi_2\}}) = 2n-3=$ 
dim$(\frak{u}_{\{\psi_1, \psi_{n-1}\}})$, dim$(\frak{u}_{\{\psi_1, \psi_{n-2}\}}) = 3n - 7$, 
dim$(\frak{u}_{\{\psi_2, \psi_{n-2}\}}) = 4n - 12$. 
So the coefficients of $t^{2(n-1)}$ in 
$P(\{\psi_1, \psi_2 \},t),\ P(\{\psi_1, \psi_{n-1} \},t),\ P(\{\psi_{n-1}, \psi_{n-2} \},t)$ are non-zero. Thus we do not get 
any significant result. The other values of $k$ also do not give any significant result. 

2. $\frak{g} = \frak{b}_n\ (n \ge 2)$. 

\begin{center} 
\begin{tikzpicture}

\draw (0,0) circle [radius = 0.1]; 
\draw (1,0) circle [radius = 0.1]; 
\draw (2,0) circle [radius = 0.1]; 
\draw (3.5,0) circle [radius = 0.1]; 
\draw (4.5,0) circle [radius = 0.1]; 
\node [above] at (0.05,0.05) {$\psi_1$}; 
\node [above] at (1.05,0.05) {$\psi_2$}; 
\node [above] at (2.05,0.05) {$\psi_3$}; 
 \node [above] at (3.75,0.05) {$\psi_{n-1}$}; 
\node [above] at (4.55,0.05) {$\psi_n$}; 
\node [left] at (-0.5,0) {$\frak{b}_n :$}; 
\node[ left] at (-0.5,-0.6) {$(n \ge 2)$}; 
\draw (0.1,0) -- (0.9,0); 
\draw (1.1,0) -- (1.9,0);
\draw (2.1,0) -- (2.5,0); 
\draw [dotted] (2.5,0) -- (3,0); 
\draw (3,0) -- (3.4,0); 
\draw (4.4,0) -- (4.3,0.1); 
\draw (4.4,0) -- (4.3,-0.1); 
\draw (3.6,0.025) -- (4.35,0.025); 
\draw (3.6,-0.025) -- (4.35,-0.025); 

\end{tikzpicture} 
\end{center} 

 Here $\Phi = \{\psi_1, \psi_2, \ldots , \psi_n \}$, and dim$(\frak{u}_{\{\psi_j\}}) = 2j(n-j)+\frac{j(j+1)}{2}$ 
for all $1 \le j \le n$.  \\  
$P(\{\psi_j \}, t) = t^{2j(n-j)+\frac{j(j+1)}{2}}(1+t)(1+t^3)(1+t^5) \cdots (1+t^{2j-1})(1+t^3)(1+t^7) \cdots (1 +t^{4n-4j-1})$ 
for all $1 \le j \le n-1$, and  \\   
$P(\{\psi_n \}, t) = t^{\frac{n(n+1)}{2}}(1+t)(1+t^3)(1+t^5) \cdots (1+t^{2n-1})$. \\ 
Also $P(\Phi, t) = t^{n^2}(1+t)^{n}$. 

 In this case, we do not have Theorem \ref{th1}. See Table \ref{resulttable}. 

3. $\frak{g} = \frak{c}_{n}$, $n \ge 3$. Then $k = 4p(n-p),\ p(2p+1) + (n-p)(2n-2p+1),\ n^2,\ 
n(n+1)\  (1\le p \le n-1)$, if $n\in 4\mathbb{Z}$ or $n \in 3 + 4\mathbb{Z}$; 
$k = 4p(n-p),\ p(2p+1) + (n-p)(2n-2p+1),\ (1\le p \le n-1,\ p \neq \frac{n}{2})$, if $n\in 1 +4\mathbb{Z}$ or 
$n\in 2 +4\mathbb{Z}$.  

\begin{center} 
\begin{tikzpicture}

\draw (0,0) circle [radius = 0.1];  
\draw (1,0) circle [radius = 0.1]; 
\draw (2.5,0) circle [radius = 0.1]; 
\draw (3.5,0) circle [radius = 0.1]; 
\node [above] at (0.05,0.05) {$\psi_1$}; 
\node [above] at (1.05,0.05) {$\psi_2$}; 
\node [above] at (2.75,0.05) {$\psi_{n-1}$}; 
\node [above] at (3.55,0.05) {$\psi_n$}; 
\node [left] at (-0.5,0) {$\frak{c}_n :$}; 
\node [left] at (-0.5,-0.6) {$(n \ge 3)$}; 
\draw (0.1,0) -- (0.9,0); 
\draw (1.1,0) -- (1.5,0); 
\draw [dotted] (1.5,0) -- (2,0); 
\draw (2,0) -- (2.4,0); 
\draw (2.6,0) -- (2.7,0.1); 
\draw (2.6,0) -- (2.7,-0.1); 
\draw (2.65,0.025) -- (3.4,0.025); 
\draw (2.65,-0.025) -- (3.4,-0.025); 

\end{tikzpicture} 
\end{center} 

  Here $\Phi = \{\psi_1, \psi_2, \ldots , \psi_{n-1}, \psi_n \}$, and dim$(\frak{u}_{\{\psi_j\}}) = 
2j(n-j) + \frac{j(j+1)}{2}$. \\  
$P(\{\psi_j \}, t) = t^{2j(n-j)+\frac{j(j+1)}{2}}(1+t)(1+t^3)(1+t^5) \cdots (1+t^{2j-1})(1+t^3)(1+t^7) \cdots (1 +t^{4n-4j-1})$ 
for all $1 \le j \le n-1$, and  \\   
$P(\{\psi_n \}, t) = t^{\frac{n(n+1)}{2}}(1+t)(1+t^3)(1+t^5) \cdots (1+t^{2n-1})$. \\ 
Also $P(\Phi, t) = t^{n^2}(1+t)^n$. \\ 
Now dim$(\frak{u}_{\{\psi_j\}})-$ dim$(\frak{u}_{\{\psi_{j-1}\}}) = 2j(n-j) + \frac{j(j+1)}{2} - 2(j-1)(n-j+1) - 
\frac{(j-1)j}{2} = 2(n-j+1) - 2j + j = 2n + 2 - 3j$. 
So dim$(\frak{u}_{\{\psi_j\}})-$ dim$(\frak{u}_{\{\psi_{j-1}\}}) 
>0$ for all $1 < j < \frac{2n+2}{3}$. Thus \\ 
dim$(\frak{u}_{\{\psi_1\}})<$  dim$(\frak{u}_{\{\psi_2\}})< \cdots <$  dim$(\frak{u}_{\{\psi_c\}}) \ge $ 
dim$(\frak{u}_{\{\psi_{c+1}\}})>$ dim$(\frak{u}_{\{\psi_{c+2}\}})> \cdots >$ dim$(\frak{u}_{\{\psi_n\}})$, \\ 
where $c = \frac{2n}{3}$, if $n \in 3\mathbb{Z}$, $c = \frac{2n+1}{3}$, if $n \in 1 + 3\mathbb{Z}$, and 
$c = \frac{2n-1}{3}$, if $n \in 2 + 3\mathbb{Z}$. 

  First consider $k = 4(n-1)$. Note that $4(n-1) <$ dim$(\frak{u}_{\{\psi_3\}})$, and 
$4(n-1)< \frac{n(n+1)}{2} =$ dim$(\frak{u}_{\{\psi_n\}})$ iff $n \ge 6$. 
So if $n \ge 6,\ 4(n-1) <$ dim$(\frak{u}_{\Phi'})$ for any $\Phi' \subset \Phi$ with 
$\psi_j \in \Phi'$ for some $3 \le j \le n$, since dim$(\frak{u}_{\Phi'}) \ge $ dim$(\frak{u}_{\{\alpha\}})$ for any 
$\alpha \in \Phi'$. 
So the coefficient of $t^{4(n-1)}$ in $P(\Phi', t)$ is zero for any $\Phi' \subset \Phi,\ \Phi' \neq \phi,\ \{\psi_1\},\ \{\psi_2\},\ 
\{\psi_1, \psi_2\}$, if $n \ge 6$.  
Since $4(n-1)=$ dim$(\frak{u}_{\{\psi_2\}}) + 1 =$ dim$(\frak{u}_{\{\psi_1,\ \psi_2\}})$, 
the coefficients of $t^{4(n-1)}$ in $P(\{\psi_2\}, t)$ and $P(\{\psi_1,\ \psi_2\}, t)$ are non-zero. Now 
$4(n-1) - $ dim$(\frak{u}_{\{\psi_1\}}) = 2n - 3$, and 
$P(\{\psi_1 \}, t) = t^{2n-1}(1+t)(1+t^3)(1+t^7) \cdots (1 +t^{4(n-1)-1})$. Note that 
$2n - 3 = \frac{4(n-1)}{2} - 1$ if $n$ is odd, and $2n-3 = 3 + 7 + (\frac{4(n-6)}{2} -1)$ if $n$ is even. 
So if $n$ is odd, then the coefficient of $t^{4(n-1)}$ in $P(\{\psi_1\}, t)$ is non-zero. And if $n$ is even with 
$n \ge 12$, then the coefficient of $t^{4(n-1)}$ in $P(\{\psi_1\}, t)$ is non-zero. Also for $n = 6,\ 8$, or $10$, 
the coefficient of $t^{4(n-1)}$ in $P(\{\psi_1\}, t)$ is zero. 
Thus if $n \ge 6$, the non-zero cohomology class in $H^{4(n-1)} (\Gamma \backslash X ; \mathbb{C})$ has a 
$H^{4(n-1)} (\frak{g}\times \frak{g}, U; A_{\Phi', U})$-component, where $\Phi' = \{\psi_1\}$, or $\{\psi_2\}$, or $\{\psi_1,\ \psi_2\}$. 
If $n = 6, 8$, or $10$, we can discard $\{\psi_1\}$ among these. 
This implies in particular that 
if Lie$(G) = \frak{c}_n (n \ge 6)$, 
then for each $i = 1,2,$ or $3$, there is a uniform lattice $\Gamma \in \mathcal{L}_i(G)$, such that 
$L^2(\Gamma \backslash G)$ has an irreducible $A_{\Phi'}$-component for at least one $\Phi'$ given above. 
The other values of $k$ do not give any significant result. 

4. $\frak{g} = \frak{\delta}_{n}$, $n \ge 4$. Then $k = p(2n-p),\ \frac{p(p-1) + (2n-p)(2n-p-1)}{2},\ n(n-1), 
\ n^2 \ (1\le p \le n-1)$, if $n\not \in 2 + 4\mathbb{Z}$; $k = p(2n-p),\ \frac{p(p-1) + (2n-p)(2n-p-1)}{2}\ 
(1\le p \le n-1)$, if $n \in 2 + 4\mathbb{Z}$.  

\begin{center} 
\begin{tikzpicture} 

\draw (0,0) circle [radius = 0.1]; 
\draw (1,0) circle [radius = 0.1]; 
\draw (2,0) circle [radius = 0.1]; 
\draw (3.5,0) circle [radius = 0.1]; 
\draw (4.5,0) circle [radius = 0.1]; 
\draw (5.5,1) circle [radius = 0.1]; 
\draw (5.5,-1) circle [radius = 0.1]; 
\node [above] at (0.05,0.05) {$\psi_1$}; 
\node [above] at (1.05,0.05) {$\psi_2$}; 
\node [above] at (2.05,0.05) {$\psi_3$}; 
\node [above] at (3.35,0.05) {$\psi_{n-3}$}; 
\node [above] at (4.35,0.05) {$\psi_{n-2}$}; 
\node [above] at (5.55,1.05) {$\psi_{n-1}$}; 
\node [below] at (5.55,-1.05) {$\psi_n$}; 
\node [left] at (-0.5,0) {$\frak{\delta}_n :$}; 
\node [left] at (-0.5,-0.6) {$(n \ge 4)$}; 
\draw (0.1,0) -- (0.9,0); 
\draw (1.1,0) -- (1.9,0); 
\draw (2.1,0) -- (2.5,0); 
\draw [dotted] (2.5,0) -- (3,0); 
\draw (3,0) -- (3.4,0); 
\draw (3.6,0) -- (4.4,0); 
\draw (4.6,0) -- (5.45,0.95); 
\draw (4.6,0) -- (5.45,-0.95); 

\end{tikzpicture} 
\end{center} 

 Here $\Phi = \{\psi_1, \psi_2, \ldots , \psi_{n-2}, \psi_{n-1}, \psi_n \}$; dim$(\frak{u}_{\{\psi_j\}}) = 
2j(n-j) + \frac{j(j-1)}{2}$ for all $1 \le j \le n, j \neq n-1$, and dim$(\frak{u}_{\{\psi_{n-1}\}}) =$ dim$(\frak{u}_{\{\psi_n\}})$.  \\ 
$P(\{\psi_j \}, t) = t^{2j(n-j)+\frac{j(j-1)}{2}}(1+t)(1+t^3)(1+t^5) \cdots (1+t^{2j-1})(1+t^3)(1+t^7) \cdots (1 +t^{4n-4j-5})(1+t^{2n-2j-1})$ 
for all $1 \le j \le n-2$, and  \\   
$P(\{\psi_n \}, t) = t^{\frac{n(n-1)}{2}}(1+t)(1+t^3)(1+t^5) \cdots (1+t^{2n-1}) = P(\{\psi_{n -1}\}, t)$. \\ 
Also $P(\Phi, t) = t^{n(n-1)}(1+t)^n$. \\ 
Now dim$(\frak{u}_{\{\psi_j\}})-$ dim$(\frak{u}_{\{\psi_{j-1}\}}) = 2j(n-j) + \frac{j(j-1)}{2} - 2(j-1)(n-j+1) - 
\frac{(j-1)(j-2)}{2} = 2(n-j+1) - 2j + j - 1 = 2n + 1 - 3j$ for all $1 < j \le n-2$.   
Thus,  \\ 
dim$(\frak{u}_{\{\psi_1\}})<$  dim$(\frak{u}_{\{\psi_2\}})< \cdots <$  dim$(\frak{u}_{\{\psi_c\}}) \ge $ 
dim$(\frak{u}_{\{\psi_{c+1}\}})>$ dim$(\frak{u}_{\{\psi_{c+2}\}})> \cdots >$ dim$(\frak{u}_{\{\psi_{n-2}\}})>$ 
dim$(\frak{u}_{\{\psi_{n-1}\}})=$ dim$(\frak{u}_{\{\psi_n\}})$,  \\ 
where $c = \frac{2n}{3}$, if $n \in 3\mathbb{Z}$, $c = \frac{2n-2}{3}$, if $n \in 1 + 3\mathbb{Z}$, and 
$c = \frac{2n-1}{3}$, if $n \in 2 + 3\mathbb{Z}$. 

  First consider $k = (2n-1)$. Note that $2n-1 < 2j(n-j) < 2j(n-j) + \frac{j(j-1)}{2} =$ dim$(\frak{u}_{\{\psi_j\}})$ for all 
$2 \le j \le n-2$, and $2n-1< \frac{n(n-1)}{2} =$ dim$(\frak{u}_{\{\psi_{n-1}\}})=$ dim$(\frak{u}_{\{\psi_n\}})$ iff $n >4$. 
So if $n > 4,\ 2n-1 <$ dim$(\frak{u}_{\Phi'})$ for any $\Phi' \subset \Phi$ with $|\Phi'| \ge 2$, 
as $\psi_j \in \Phi'$ for some $2 \le j \le n$, and dim$(\frak{u}_{\Phi'}) \ge $ dim$(\frak{u}_{\{\alpha\}})$ for any 
$\alpha \in \Phi'$. 
So the coefficient of $t^{2n-1}$ in $P(\Phi', t)$ is zero for any $\Phi' \subset \Phi,\ \Phi' \neq \phi,\ \{\psi_1\}$, if $n >4$.  
Since $2n-1 =$ dim$(\frak{u}_{\{\psi_1\}}) + 1$, the coefficient of $t^{2n-1}$ in $P(\{\psi_1\}, t)$ is non-zero. 
Thus the non-zero cohomology class in $H^{2n-1} (\Gamma \backslash X ; \mathbb{C})$ has only 
$H^{2n-1} (\frak{g}\times \frak{g}, U; A_{\{\psi_1\}, U})$-component. This implies in particular that 
if Lie$(G) = \frak{\delta}_n (n > 4)$, 
then for each $i = 1,2,$ or $3$, there is a uniform lattice $\Gamma \in \mathcal{L}_i(G)$, such that 
$L^2(\Gamma \backslash G)$ has an irreducible $A_{\{\psi_1\}}$-component. If $n = 4$, then 
corresponding to the value $k= 2n-1 = 7$, we can say that at least one 
$A_{\{\psi_j\}}\ (j=1,3,\textrm{or }4)$ will occur in $L^2(\Gamma \backslash G)$. 
The other values of $k$ do not give any significant result. 

5. $\frak{g} = \frak{e}_6$. Then $k = 26,\ 52,\ 32,\ 46,\ 36,\ 42,\ 38,\ 40$. 

\begin{center} 
\begin{tikzpicture}  

\draw (0,0) circle [radius = 0.1]; 
\draw (1,0) circle [radius = 0.1]; 
\draw (2,0) circle [radius = 0.1]; 
\draw (2,1) circle [radius = 0.1]; 
\draw (3,0) circle [radius = 0.1]; 
\draw (4,0) circle [radius = 0.1]; 
\node [below] at (0.05,-0.05) {$\psi_6$}; 
\node [below] at (1.05,-0.05) {$\psi_5$}; 
\node [below] at (2.05,-0.05) {$\psi_4$}; 
\node [right] at (2.05,1) {$\psi_2$}; 
\node [below] at (3.05,-0.05) {$\psi_3$}; 
\node [below] at (4.05,-0.05) {$\psi_1$}; 
\node [left] at (-0.5,0) {$\frak{e}_6 :$}; 
\draw (0.1,0) -- (0.9,0); 
\draw (1.1,0) -- (1.9,0); 
\draw (2,0.1) -- (2,0.9); 
\draw (2.1,0) -- (2.9,0); 
\draw (3.1,0) -- (3.9,0); 

\end{tikzpicture} 
\end{center} 

  Here $\Phi = \{\psi_1, \psi_2, \ldots , \psi_6 \}$, and \\ 
$P(\{\psi_1\}, t) = t^{16}(1 + t)(1 + t^3)(1+t^7)(1 + t^{11})(1+t^{15})(1+t^9) = P(\{\psi_6\}, t)$, \\ 
$P(\{\psi_2\}, t) = t^{21}(1 + t)(1 + t^3)(1+t^5)(1 + t^7)(1+t^9)(1+t^{11})$, \\ 
$P(\{\psi_3\}, t) = t^{25}(1 + t)(1 + t^3)^2(1+t^5)(1 + t^7)(1+t^9) = P(\{\psi_5\}, t)$, and \\ 
$P(\{\psi_4\}, t) = t^{29}(1 + t)(1 + t^3)^3(1+t^5)^2$.  \\ 
Also $P(\Phi, t) = t^{36}(1+t)^6$. 

  Clearly dim$(\frak{u}_{\{\psi_j\}})< 26,\ 52,\ 32,\ 46,$  
$36,\ 42,\ 38,\ 40 <$ the degree of $P(\{\psi_j\} , t)$ for all $j$ except $j =4$, and 
dim$(\frak{u}_{\{\psi_4\}})< 32,\ 46,\ 36,\ 42,\ 38,\ 40 <$ the degree of $P(\{\psi_4\} , t)$. Not only these, but also 
the coefficients of $t^k$ in $P(\{\psi_j\}, t)$ are non-zero for all $1 \le j \le 6$, $k = 32,\ 46,\ 36,\ 42,\ 38,\ 40$; and 
the coefficients of $t^k$ in $P(\{\psi_j\}, t)$ are non-zero for all $1 \le j \le 6,\ j \neq 4$, $k = 26,\ 52$. Thus we do not get any 
significant result. 

6. $\frak{g} = \frak{e}_7$. Then $k = 64,\ 69$. 

\begin{center} 
\begin{tikzpicture} 

\draw (0,0) circle [radius = 0.1]; 
\draw (1,0) circle [radius = 0.1]; 
\draw (2,0) circle [radius = 0.1]; 
\draw (3,0) circle [radius = 0.1]; 
\draw (3,1) circle [radius = 0.1]; 
\draw (4,0) circle [radius = 0.1]; 
\draw (5,0) circle [radius = 0.1]; 
\node [below] at (0.05,-0.05) {$\psi_7$}; 
\node [below] at (1.05,-0.05) {$\psi_6$}; 
\node [below] at (2.05,-0.05) {$\psi_5$}; 
\node [below] at (3.05,-0.05) {$\psi_4$}; 
\node [right] at (3.05,1) {$\psi_2$}; 
\node [below] at (4.05,-0.05) {$\psi_3$}; 
\node [below] at (5.05,-0.05) {$\psi_1$}; 
\node [left] at (-0.5,0) {$\frak{e}_7 :$}; 
\draw (0.1,0) -- (0.9,0); 
\draw (1.1,0) -- (1.9,0); 
\draw (2.1,0) -- (2.9,0); 
\draw (3,0.1) -- (3,0.9); 
\draw (3.1,0) -- (3.9,0); 
\draw (4.1,0) -- (4.9,0); 

\end{tikzpicture} 
\end{center} 

  Here $\Phi = \{\psi_1, \psi_2, \ldots , \psi_7 \}$, and \\ 
$P(\{\psi_1\}, t) = t^{33}(1 + t)(1 + t^3)(1+t^7)(1 + t^{11})(1+t^{15})(1+t^{19})(1+t^{11})$, \\ 
$P(\{\psi_2\}, t) = t^{42}(1 + t)(1 + t^3)(1+t^5)(1 + t^7)(1+t^9)(1+t^{11})(1+t^{13})$, \\ 
$P(\{\psi_3\}, t) = t^{47}(1 + t)(1 + t^3)^2(1+t^5)(1 + t^7)(1+t^9)(1+t^{11})$, \\   
$P(\{\psi_4\}, t) = t^{53}(1 + t)(1 + t^3)^3(1+t^5)^2(1+t^7)$,  \\   
$P(\{\psi_5\}, t) = t^{50}(1 + t)(1 + t^3)^2(1+t^5)^2(1 + t^7)(1+t^9)$, \\  
$P(\{\psi_6\}, t) = t^{42}(1 + t)(1 + t^3)^2(1+t^7)(1 + t^{11})(1+t^{15})(1+t^9)$, and \\   
$P(\{\psi_7\} , t) = t^{27}(1+t)(1 + t^3)(1+t^9)(1 + t^{11})(1+t^{15})(1+t^{17})(1+t^{23})$. \\ 
Also $P(\Phi, t) = t^{63}(1+t)^7$. 

  Clearly dim$(\frak{u}_{\{\psi_j\}})<$  dim$(\frak{u}_{\Phi})<64,\ 69<$ 
the degree of $P(\Phi , t)<$ the degree of $P(\{\psi_j\} , t)$ for all $j$. Not only these, but also 
the coefficients of $t^k$ in $P(\{\psi_j\}, t)$ are non-zero for all $1 \le j \le 7$, $k = 64,\ 69$. Thus we do not get any 
significant result. 

7. $\frak{g} = \frak{e}_8$. Then $k = 112,\ 136,\ 120,\ 128$. 

\begin{center} 
\begin{tikzpicture} 

\draw (0,0) circle [radius = 0.1]; 
\draw (1,0) circle [radius = 0.1]; 
\draw (2,0) circle [radius = 0.1]; 
\draw (3,0) circle [radius = 0.1]; 
\draw (4,0) circle [radius = 0.1]; 
\draw (4,1) circle [radius = 0.1]; 
\draw (5,0) circle [radius = 0.1]; 
\draw (6,0) circle [radius = 0.1]; 
\node [below] at (0.05,-0.05) {$\psi_8$}; 
\node [below] at (1.05,-0.05) {$\psi_7$}; 
\node [below] at (2.05,-0.05) {$\psi_6$}; 
\node [below] at (3.05,-0.05) {$\psi_5$}; 
\node [below] at (4.05,-0.05) {$\psi_4$}; 
\node [right] at (4.05,1) {$\psi_2$}; 
\node [below] at (5.05,-0.05) {$\psi_3$}; 
\node [below] at (6.05,-0.05) {$\psi_1$}; 
\node [left] at (-0.5,0) {$\frak{e}_8 :$}; 
\draw (0.1,0) -- (0.9,0); 
\draw (1.1,0) -- (1.9,0); 
\draw (2.1,0) -- (2.9,0); 
\draw (3.1,0) -- (3.9,0); 
\draw (4.1,0) -- (4.9,0); 
\draw (4,0.1) -- (4,0.9); 
\draw (5.1,0) -- (5.9,0); 

\end{tikzpicture} 
\end{center} 

  Here $\Phi = \{ \psi_1, \psi_2, \ldots , \psi_8 \}$, and \\ 
$P(\{\psi_1\}, t) = t^{78}(1 + t)(1 + t^3)(1+t^7)(1 + t^{11})(1+t^{15})(1+t^{19})(1+t^{23})(1+t^{13})$, \\ 
$P(\{\psi_2\}, t) = t^{92}(1 + t)(1 + t^3)(1+t^5)(1 + t^7)(1+t^9)(1+t^{11})(1+t^{13})(1+t^{15})$, \\ 
$P(\{\psi_3\}, t) = t^{98}(1 + t)(1 + t^3)^2(1+t^5)(1 + t^7)(1+t^9)(1+t^{11})(1+t^{13})$, \\   
$P(\{\psi_4\}, t) = t^{106}(1 + t)(1 + t^3)^3(1+t^5)^2(1+t^7)(1+t^9)$,  \\   
$P(\{\psi_5\}, t) = t^{104}(1 + t)(1 + t^3)^2(1+t^5)^2(1 + t^7)^2(1+t^9)$, \\  
$P(\{\psi_6\}, t) = t^{97}(1 + t)(1 + t^3)^2(1+t^5)(1+t^7)(1 + t^{11})(1+t^{15})(1+t^9)$, \\   
$P(\{\psi_7\} , t) = t^{83}(1+t)(1 + t^3)^2(1+t^9)(1 + t^{11})(1+t^{15})(1+t^{17})(1+t^{23})$, and \\ 
$P(\{\psi_8 \} , t) = t^{57}(1+t)(1 + t^3)(1+t^{11})(1 + t^{15})(1+t^{19})(1+t^{23})(1+t^{27})(1+t^{35})$. \\  
Also $P(\Phi, t) = t^{120}(1+t)^8$. 

  Clearly dim$(\frak{u}_{\{\psi_j\}})< 112,\ 136,\ 120,\ 128 <$ 
the degree of $P(\{\psi_j\} , t)$ for all $j$. Not only these, but also 
the coefficients of $t^k$ in $P(\{\psi_j\}, t)$ are non-zero for all $1 \le j \le 8$, $k = 112,\ 136,\ 120,\ 128$. Thus we do not get any 
significant result. 

8. $\frak{g} = \frak{f}_4$. Then $k = 16,\ 36,\ 24,\ 28$. 

\begin{center} 
\begin{tikzpicture} 
 
\draw (0,0) circle [radius = 0.1]; 
\draw (1,0) circle [radius = 0.1]; 
\draw (2,0) circle [radius = 0.1]; 
\draw (3,0) circle [radius = 0.1]; 
\node [below] at (0.05,-0.05) {$\psi_1$}; 
\node [below] at (1.05,-0.05) {$\psi_2$}; 
\node [below] at (2.05,-0.05) {$\psi_3$}; 
\node [below] at (3.05,-0.05) {$\psi_4$}; 
\node [left] at (-0.5,0) {$\frak{f}_4 :$}; 
\draw (0.1,0) -- (0.9,0); 
\draw (1.9,0) -- (1.8,0.1); 
\draw (1.9,0) -- (1.8,-0.1); 
\draw (1.1,0.025) -- (1.85,0.025); 
\draw (1.1,-0.025) -- (1.85,-0.025); 
\draw (2.1,0) -- (2.9,0); 

\end{tikzpicture} 
\end{center} 

  Here $\Phi = \{\psi_1, \psi_2, \psi_3, \psi_4 \}$, and  \\  
$P(\{\psi_1\}, t) = t^{15}(1 + t)(1 + t^3)(1+t^7)(1 + t^{11})$, \\ 
$P(\{\psi_2\}, t) = t^{20}(1 + t)(1 + t^3)^2(1+t^5)$, \\ 
$P(\{\psi_3\}, t) = t^{20}(1 + t)(1 + t^3)^2(1+t^5)$, and \\ 
$P(\{\psi_4\}, t) = t^{15}(1 + t)(1 + t^3)(1+t^7)(1+t^{11})$.  \\ 
Also $P(\Phi, t) = t^{24}(1+t)^4$. 

  Clearly dim$(\frak{u}_{\{\psi_j\}})<$  dim$(\frak{u}_{\Phi}) = 24 < 28=$ 
the degree of $P(\Phi , t)<$ the degree of $P(\{\psi_j\} , t)$ for all $j$. But 
the coefficients of $t^k$ in $P(\{\psi_j\}, t)$ are non-zero only for $j = 2,\ 3$; $k = 24,\ 28$. Yet these values do not give any 
significant result. Now consider the values $k = 16,\ 36$. Clearly the coefficient of $t^k\ (k = 16,\ 36)$ in $P(\Phi', t)$ is zero 
for any $\Phi' \subset \Phi$ such that $\Phi'$ contains $\psi_2$ or $\psi_3$. Also $16<20 =$ dim$(\frak{u}_{\{\psi_1,\ \psi_4\}}) < 32=$ 
the degree of $P(\{\psi_1, \psi_4\} , t) < 36$, and the coefficients of $t^k\ (k = 16,\ 36)$ in $P(\{\psi_1\}, t),\ P(\{\psi_4\}, t)$ are 
non-zero. Thus the non-zero cohomology class in $H^k (\Gamma \backslash X ; \mathbb{C})\ (k=16,\ 36)$ has a 
$H^k (\frak{g}\times \frak{g}, U; A_{\Phi', U})$-component, where \\ 
$\Phi' = \{\psi_1\}$, or $\{\psi_4\}$. \\  
This implies in particular that 
if Lie$(G) = \frak{f}_4$, 
then for each $i = 1,2,$ or $3$, there is a uniform lattice $\Gamma \in \mathcal{L}_i(G)$, such that 
$L^2(\Gamma \backslash G)$ has an irreducible $A_{\Phi'}$-component for at least one $\Phi'$ given above. 

9. $\frak{g} = \frak{g}_2$. Then $k = 6,\ 8$. 

\begin{center} 
\begin{tikzpicture} 
 
\draw (0,0) circle [radius = 0.1]; 
\draw (1,0) circle [radius = 0.1]; 
\node [below] at (0.05,-0.05) {$\psi_2$}; 
\node [below] at (1.05,-0.05) {$\psi_1$}; 
\node [left] at (-0.5,0) {$\frak{g}_2 :$}; 
\draw (0.9,0) -- (0.8,0.1); 
\draw (0.9,0) -- (0.8,-0.1); 
\draw (0.1,0) -- (0.9,0); 
\draw (0.1,0.075) -- (0.8,0.075); 
\draw (0.1,-0.075) -- (0.8,-0.075); 

\end{tikzpicture}
\end{center} 

  Here $\Phi = \{\psi_1, \psi_2 \}$, and  \\  
$P(\{\psi_1\}, t) = t^5(1 + t)(1 + t^3)$, and \\ 
$P(\{\psi_2\}, t) = t^5(1 + t)(1 + t^3)$. \\ 
Also $P(\Phi, t) = t^6(1+t)^2$. 

 Clearly the coefficients of $t^k$ in $P(\{\psi_1\}, t),\ P(\{\psi_2\}, t),\ P(\{\psi_1,\ \psi_2\}, t)$ are non-zero, $k = 6,\ 8$. 
Thus we do not get any significant result. 

  Thus the proof of Theorem \ref{th2} is complete. 

\begin{remark}\label{conclusion}  
{\em 
(i) If $\frak{g} = \frak{a}_2$, then $k = 3,\ 4,\ 5$. Also the coefficients of $t^4$ in $P(\{\psi_1\}, t)$, 
$P(\{\psi_2\}, t)$ are zero, and 
the coefficient of $t^4$ in $P(\{\psi_1,\ \psi_2\}, t)$ is non-zero. This shows that if Lie$(G) = \frak{a}_2$, 
then for each $i = 1,2,$ or $3$, there is a uniform lattice $\Gamma \in \mathcal{L}_i(G)$, such that 
$L^2(\Gamma \backslash G)$ has an irreducible $A_{\{\psi_1, \psi_2\}}$-component. That is, 
$A_{\{\psi_1, \psi_2\}}$ is an automorphic representation of $G$. See \cite[Cor. 7.7]{schimpf} for $n =3$. 

(ii) Let $G$ be a connected non-compact semisimple Lie group with finite centre, $K$ be a maximal compact 
subgroup of $G$ with $\theta$, the corresponding Cartan involution, and $X = G/K$. Let $\sigma$ be an 
involutive automorphism of $G$ such that $\sigma \theta = \theta \sigma$, $G(\sigma) = 
\{g \in G : \sigma (g) = g\}$, $K(\sigma) = K \cap G(\sigma)$, and $X(\sigma) = G(\sigma)/K(\sigma)$. 
Let $\frak{g} =$ Lie$(G)$, and $\frak{g}(\sigma)=$ Lie$(G(\sigma))$. 
Let $\Gamma$ be a torsion-free $\sigma$-stable uniform lattice in $G$ such that $\Gamma(\sigma) \backslash X(\sigma)$ 
is embedded inside $\Gamma \backslash X$, where $\Gamma (\sigma) = \Gamma \cap G(\sigma)$. Let $C(\sigma, \Gamma)$ 
be the image of $\Gamma(\sigma) \backslash X(\sigma)$ in $\Gamma \backslash X$, and $\mathcal{P}(C(\sigma, \Gamma))$ 
be the Poincar\'{e} dual of the fundamental class $[C(\sigma, \Gamma)]$. Let $A_\frak{q}$ be the irreducible unitary 
representation with trivial infinitesimal character associated with the $\theta$-parabolic subalgebra $\frak{q}$ of $\frak{g}$. 
Then we have \\ 
{\it If $G$ is simple, $A_\frak{q}$ is not the trivial representation of $G$, and $A_\frak{q}$ is discretely decomposable as a 
$(\frak{g}(\sigma), K(\sigma))$-module, then $\mathcal{P}(C(\sigma, \Gamma))$ does not have a $A_\frak{q}$-component} 
\cite[Cor. 4.2]{mondal1}. \\ 
This is a modification of \cite[Th. 4.3]{koboda}, and is corollary of a more general result in 
\cite[Th. 4.1]{mondal1}, \cite[Th. 1.2]{kobpreprint}. Now there is a classification of all pairs $(\frak{g}, \frak{g}(\sigma))$ and 
the modules $A_\frak{q}$, such that $A_\frak{q}$ is discretely decomposable as a $(\frak{g}(\sigma), K(\sigma))$-module. See 
\cite{kobosh}. According to this classification, if $\frak{g} = \frak{a}_{2n},\ \frak{b}_n,\ \frak{c}_n,\ \frak{e}_6,\ \frak{e}_7, 
\ \frak{e}_8,\ \frak{f}_4,\ \frak{g}_2$, then there is no involutive automorphism $\sigma$ and $\theta$-stable parabolic 
subalgebra $\frak{q} (\neq \frak{g})$ such that $A_\frak{q}$ is discretely decomposable as a $(\frak{g}(\sigma), K(\sigma))$-module 
(\cite[Th. 4.12]{kobosh}). If $\frak{g} = \frak{a}_{2n-1}$, and $\frak{g}(\sigma) = \frak{sp}(n,\mathbb{C})$, or 
$\frak{su}^*(2n)$, then $A_\frak{q}$ is discretely decomposable as a $(\frak{g}(\sigma), K(\sigma))$-module iff 
$\frak{q} = \frak{q}_{\{\psi_1\}}$, or $\frak{q}_{\{\psi_{2n-1}\}}$, or $\frak{q} = \frak{q}_{\phi}$. 
For any other $\frak{g}(\sigma)$, no non-trivial representation $A_\frak{q}$ is discretely decomposable as a $(\frak{g}(\sigma), K(\sigma))$-module. 
 If $\frak{g} = \frak{\delta}_{n}$, and $\frak{g}(\sigma) = \frak{so}(2n-1,\mathbb{C})$, or 
$\frak{so}(2n-1,1)$, then $A_\frak{q}$ is discretely decomposable as a $(\frak{g}(\sigma), K(\sigma))$-module iff 
$\frak{q} = \frak{q}_{\{\psi_n\}}$, or $\frak{q} = \frak{q}_{\phi}$. 
For any other $\frak{g}(\sigma)$, no non-trivial representation $A_\frak{q}$ is discretely decomposable as a $(\frak{g}(\sigma), K(\sigma))$-module. See 
\cite[Table C.4, C.5]{kobosh}. We know from the proof of Theorem \ref{th2} that if $\frak{g}= \delta_4$, at least one 
$A_{\{\psi_j\}}\ (j=1,3,\textrm{or }4)$ is an automorphic representation. 
Now if we apply the above results for $\frak{g} = \frak{\delta}_4$, we see that either $A_{\{\psi_1\}}$, or 
$A_{\{\psi_3\}}$ is an automorphic representation. 
}
\end{remark}

\noindent 
{\bf Acknowledgements.} The author thank Parameswaran Sankaran for his continuous encouragement, support and 
for many helpful comments. The proof of Lemma \ref{nf} is due to him. 
He also shared his articles \cite{mondal-sankaran1}, \cite{mondal-sankaran2}, which 
are primary motivation for this paper.

\end{document}